\newcommand{\Trans}{\mathcal{T}}
\newcommand{\Cycles}{\mathcal{C}}
\newcommand{\Special}{\Cycles^{2,3}}
\newcommand{\spcl}{\operatorname{spcl}}
\newcommand{\Pairs}{\mathcal{P}}
\newcommand{\Dyck}{\mathcal{F}}
\newcommand{\Adm}{\mathcal{A}}
\newcommand{\adm}{admissible}
\newcommand{\admis}{admissibility}
\newcommand{\SEss}{\mathfrak{E}}
\newcommand{\dcr}{decoration}
\newcommand{\dcrt}{decorated}
\newcommand{\dbi}{defined by inclusions}
\newcommand{\prtt}{\mathfrak{X}}
\newcommand{\prtset}{X}
\newcommand{\grph}{\mathcal{G}}
\newcommand{\cycr}{R}
\newcommand{\cycl}{L}
\newcommand{\Ess}{\mathcal{E}}
\newcommand{\tbl}{\mathbf{C}}
\newcommand{\tblc}{\mathbf{D}}
\newcommand{\OPairs}{\mathcal O}
\newcommand{\EPairs}{\mathcal E}
\newcommand{\smth}{{\operatorname{sm}}}
\newcommand{\precdot}{\prec\mathrel{\mkern-5mu}\mathrel{\cdot}}
\newcommand{\adj}{\precdot}
\newcommand{\rshft}[2]{\cycr_{[{#1},{#2}]}}
\newcommand{\lshft}[2]{\cycl_{[{#1},{#2}]}}
\newcommand{\maxperm}[1]{\mu_{#1}}
\newtheorem{theorem}{Theorem}[section]
\newtheorem{lemma}[theorem]{Lemma}
\newtheorem{claim}[theorem]{Claim}
\newtheorem{definition}[theorem]{Definition}
\newtheorem{proposition}[theorem]{Proposition}
\newtheorem{corollary}[theorem]{Corollary}
\theoremstyle{remark}
\newtheorem{remark}[theorem]{Remark}
\newtheorem{example}[theorem]{Example}
\newtheorem{question}[theorem]{Question}
\newtheorem{observation}[theorem]{Observation}
\numberwithin{equation}{section}
\begin{document}

\title{Some combinatorial results on smooth permutations}

\author{Shoni Gilboa}
\address{Department of Mathematics and computer Science, The Open University of Israel, Raanana 4353701, Israel}

\author{Erez Lapid}
\address{Department of Mathematics, Weizmann Institute of Science, Rehovot 7610001, Israel}
\email{erez.lapid@weizmann.ac.il}

\begin{abstract}
We show that any smooth permutation $\sigma\in S_n$ is characterized by the set $\tbl(\sigma)$ of transpositions and $3$-cycles
in the Bruhat interval $(S_n)_{\leq\sigma}$, and that $\sigma$ is the product (in a certain order) of the transpositions
in $\tbl(\sigma)$.
We also characterize the image of the map $\sigma\mapsto\tbl(\sigma)$.
As an application, we show that $\sigma$ is smooth if and only if the intersection of $(S_n)_{\leq\sigma}$
with every conjugate of a parabolic subgroup of $S_n$ admits a maximum.
This also gives another approach for enumerating smooth permutations and subclasses thereof.
Finally, we relate covexillary permutations to smooth ones and rephrase the results in terms of the (co)essential set
in the sense of Fulton.
\end{abstract}

\keywords{Bruhat order, Smooth permutations, Pattern avoidance, Covexillary permutations}
\subjclass{05A05}

\maketitle

\setcounter{tocdepth}{1}
\tableofcontents

\section{Introduction}\label{sec:intro}

\subsection{}
Fix an integer $n\geq1$.
Consider the symmetric group $S_n$ of all the permutations of the set $[n]=\{1,2,\ldots,n\}$,
with the Bruhat order $\leq$.
Let $\Trans=\{T_{i,j}:1\leq i<j\leq n\}\subset S_n$ be the set of transpositions.
For every permutation $\sigma\in S_n$ define the \emph{2-table} of $\sigma$ to be
\[
\tbl_\Trans(\sigma)=\{\tau\in\Trans:\tau\leq\sigma\}.
\]
For every $\sigma\in S_n$ we have $\ell(\sigma)\leq\# \tbl_\Trans(\sigma)$ where
\[
\ell(\sigma)=\#\{i<j:\sigma(i)>\sigma(j)\}
\]
is the number of inversions of $\sigma$ \cite{MR752799}.
If $\ell(\sigma)=\# \tbl_\Trans(\sigma)$, then $\sigma$ is called \emph{smooth},
a terminology that is justified by the fact that this condition also characterizes the smoothness
of the Schubert variety $X_\sigma$ pertaining to $\sigma$ [ibid.].
Another well-known combinatorial characterization of the smoothness of $\sigma$ is that
$\sigma$ is $4231$ and $3412$ avoiding \cite{MR1051089}.
We refer to \cite{MR1782635} and the references therein for more information about singularities of Schubert varieties.

Distinct smooth permutations may have the same $2$-table (for example, for $n=3$,
$\tbl_\Trans((231))=\{T_{1,2},T_{2,3}\}=\tbl_\Trans((312))$).
However, we show that smooth permutations are distinguishable from each other at the `next level'.
More precisely, let $\Special\subset S_n$ be the set of permutations consisting of a single cycle of length $2$ or $3$.
Denote the $3$-cycle permutation $i\mapsto j\mapsto k\mapsto i$ with $i<j<k$ by $R_{i,j,k}$, so that
\[
\Special=\Trans\cup\{R_{i,j,k},R_{i,j,k}^{-1}:i<j<k\}.
\]
We define the \emph{$2$-$3$-table} of a permutation $\sigma\in S_n$ to be
\[
\tbl(\sigma)=\{\tau\in\Special:\tau\leq\sigma\}.
\]
Clearly, $\tbl(\sigma)$ is downward closed and it is easy to see that
if $R_{i,j,l},R_{i,k,l}^{-1}\in\tbl(\sigma)$ with $i<j,k<l$, then $T_{i,l}\in\tbl(\sigma)$.

Our main result is the following.
\begin{theorem}\label{thm:bijection} [See \S\ref{sec:bijection}]
The map $\sigma\mapsto\tbl(\sigma)$ defines a bijection between the smooth permutations of $S_n$
and the downward closed subsets $A$ of $\Special$ that satisfy the following two conditions.
\begin{itemize}
\item If $R_{i,j,l},R_{i,k,l}^{-1}\in A$ with $i<j,k<l$, then $T_{i,l}\in A$.
\item Whenever $T_{i,j},T_{j,k}\in A$, $i<j<k$, at least one of $R_{i,j,k}$ and $R_{i,j,k}^{-1}$ belongs to $A$.
\end{itemize}
The inverse bijection $A\mapsto\pi(A)$ is given by
\begin{equation} \label{eq: pialt}
\pi(A)=\max\{\tau\in S_n:\tbl(\tau)=A\}=\max\{\tau\in S_n:\tbl_\Trans(\tau)=A_\Trans,\,\tbl(\tau)\subseteq A\}
\end{equation}
where the maximum (i.e., the greatest element, which in particular exists) is with respect to the Bruhat order.
\end{theorem}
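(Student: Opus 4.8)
\emph{Reduction to one-line notation.}
The plan is to translate everything into the one-line notation of $\sigma$. From the rank-function form of the Bruhat order one obtains, for $1\le i<j\le n$, that $T_{i,j}\le\sigma$ if and only if $\max\{\sigma(a):a\le i\}\ge j$ and $\min\{\sigma(b):b\ge j\}\le i$; running the same reduction for $3$-cycles shows that $R_{i,j,k}\le\sigma$ amounts to the three conditions ``$T_{i,j}\le\sigma$'', ``$\max\{\sigma(a):a\le j\}\ge k$'' and ``$\min\{\sigma(b):b\ge k\}\le i$'', while $R_{i,j,k}^{-1}\le\sigma$ amounts to ``$T_{j,k}\le\sigma$'', ``$\max\{\sigma(a):a\le i\}\ge k$'' and ``$\min\{\sigma(b):b\ge j\}\le i$''. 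These criteria make the two closure conditions transparent (the first is exactly the implication already noted) and reduce the rest of the proof to finite bookkeeping with positions and values.

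\emph{Smooth permutations give valid tables.}
First I would check that $\tbl(\sigma)$ satisfies the two conditions when $\sigma$ is smooth; downward-closedness and the first condition hold for every $\sigma$. For the second, suppose $T_{i,j},T_{j,k}\le\sigma$ with $i<j<k$ but neither $R_{i,j,k}$ nor $R_{i,j,k}^{-1}$ lies below $\sigma$. By the criteria above and since $T_{i,j}\le\sigma$, $T_{j,k}\le\sigma$, the failure $R_{i,j,k}\not\le\sigma$ forces $\sigma(b)>i$ for all $b\ge k$, and $R_{i,j,k}^{-1}\not\le\sigma$ forces $\sigma(a)<k$ for all $a\le i$. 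Feeding these into the witnesses provided by $T_{i,j}\le\sigma$ and $T_{j,k}\le\sigma$ yields positions $a<c<b<d$ with $\sigma(b)<\sigma(d)<\sigma(a)<\sigma(c)$, i.e. a $3412$ pattern, contradicting the smoothness criterion of \cite{MR1051089}. This is a short argument (with a degenerate subcase $j=i+1$, $k=i+2$ that collapses to a direct contradiction).

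\emph{The inverse map.}
Next, given a downward-closed $A\subseteq\Special$ satisfying the two conditions, set $J(A)=\{\tau\in S_n:\tbl(\tau)\subseteq A\}$, a finite order ideal of $(S_n,\le)$ since $\tbl$ is monotone. The crux is that $J(A)$ has a greatest element: I would prove it is \emph{directed} --- given $\tau_1,\tau_2\in J(A)$, produce $\tau\in J(A)$ with $\tau_1,\tau_2\le\tau$ by suitably merging their one-line notations --- the two conditions on $A$ being precisely what prevents the merge from creating a transposition outside $A_\Trans$ or a $3$-cycle outside $A$. (That some such hypothesis is genuinely needed is visible from $A=\{T_{i,i+1},T_{i+1,i+2}\}$, which violates the second condition and for which $J(A)$ has two maximal elements.) Write $\pi(A)$ for this greatest element. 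Then $\tbl(\pi(A))=A$: the inclusion $\tbl(\pi(A))\subseteq A$ is immediate from $\pi(A)\in J(A)$, and for the reverse, each $t\in A$, regarded as a permutation, satisfies $\tbl(t)\subseteq A$ by downward-closedness of $A$, hence $t\in J(A)$, hence $t\le\pi(A)$, i.e. $t\in\tbl(\pi(A))$. Finally I would verify that $\pi(A)$ is smooth, i.e. $\ell(\pi(A))=\#\tbl_\Trans(\pi(A))=\#A_\Trans$; this I expect to read off from an explicit description of $\pi(A)$ as the product of the transpositions in $A_\Trans$ taken in an order dictated by the $3$-cycles in $A$.

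\emph{Bijectivity and the formulas.}
It remains to assemble the pieces. Well-definedness and surjectivity of $\sigma\mapsto\tbl(\sigma)$, together with $\tbl\circ\pi=\mathrm{id}$, follow from the previous two paragraphs. For injectivity, let $\sigma$ be smooth; then $\sigma\in J(\tbl(\sigma))$, so $\sigma\le\pi(\tbl(\sigma))$, while $\ell(\sigma)=\#\tbl_\Trans(\sigma)=\#\tbl_\Trans(\pi(\tbl(\sigma)))=\ell(\pi(\tbl(\sigma)))$ using smoothness of both permutations and $\tbl(\pi(\tbl(\sigma)))=\tbl(\sigma)$; since $u\le v$ and $\ell(u)=\ell(v)$ force $u=v$ in the Bruhat order, we get $\pi(\tbl(\sigma))=\sigma$. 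Thus $\tbl$ and $\pi$ are mutually inverse bijections between the smooth permutations and the downward-closed subsets of $\Special$ satisfying the two conditions. The two displayed formulas for $\pi(A)$ then agree with $\pi(A)=\max J(A)$, because
\[
\{\tau:\tbl(\tau)=A\}\ \subseteq\ \{\tau:\tbl_\Trans(\tau)=A_\Trans,\ \tbl(\tau)\subseteq A\}\ \subseteq\ J(A),
\]
and $\pi(A)$ lies in the smallest of these sets since $\tbl(\pi(A))=A$, so it is the maximum of all three. The main obstacle is the directedness of $J(A)$ in the third paragraph --- equivalently, the construction of $\pi(A)$ and the proof that it dominates $J(A)$ --- which is where the combinatorics of $4231$- and $3412$-avoiding permutations (and, ultimately, the essential-set reformulation) must genuinely be used rather than soft poset arguments.
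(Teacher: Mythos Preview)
Your third paragraph contains a genuine error: the ideal $J(A)=\{\tau:\tbl(\tau)\subseteq A\}$ is \emph{not} directed in general, so your definition of $\pi(A)$ as $\max J(A)$ is ill-posed. For a concrete counterexample (which the paper itself records as a remark after Corollary~\ref{cor:smoothorder}), take $n\ge 5$, let $\sigma=w_0T_{n-1,n}$ (which is smooth) and $A=\tbl(\sigma)$; then $\tau=\rshft 1n$ satisfies $\tbl(\tau)\subseteq A$ but $\tau\not\le\sigma$, and since the only permutation strictly above $\sigma$ is $w_0\notin J(A)$, there is no common upper bound for $\tau$ and $\sigma$ in $J(A)$. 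Note that the formula~\eqref{eq: pialt} only asserts that $\pi(A)$ is the maximum of the \emph{smaller} set $\{\tau:\tbl_\Trans(\tau)=A_\Trans,\ \tbl(\tau)\subseteq A\}$, with the crucial extra constraint $\tbl_\Trans(\tau)=A_\Trans$; your fourth-paragraph claim that the maxima of all three sets coincide therefore over-reaches, and your slick argument that $A\subseteq\tbl(\pi(A))$ (each $t\in A$ lies in $J(A)$, hence below $\pi(A)$) collapses with it.

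The paper proceeds in the opposite order: it first \emph{constructs} $\pi(A)$ explicitly as the product of the transpositions in $A_\Trans$ taken in a compatible order (Theorem~\ref{rek}), and then proves by induction on $\#A_\Trans$, via the wedge and derived-set machinery of \S\ref{sec:wedges}, that $\pi(A)$ is smooth with $\tbl(\pi(A))=A$ and that $\pi(\tbl(\sigma))=\sigma$ for smooth $\sigma$ (Propositions~\ref{propos:order} and~\ref{propos:bijection}). The maximum formula~\eqref{eq: pialt} is deduced only afterwards (Corollary~\ref{cor:smoothorder}), and that step genuinely requires the fact that smooth permutations are \dbi, together with a comparison between $\tbl(\sigma)$ and the larger cycle table $\tblc(\sigma)$. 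Your first, second and fourth paragraphs are essentially sound and parallel the paper, but the construction of $\pi(A)$ cannot be shortcut to a soft directedness argument on $J(A)$.
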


\subsection{}
The subsets $A\subset\Special$ satisfying the properties of Theorem \ref{thm:bijection} will be called \emph{\adm}.
We give an alternative, more constructive definition of $\pi(A)$ for an {\adm} set $A\subset\Special$.
We say that a linear order $\prec$ on $A_\Trans=A\cap\Trans$ is \emph{compatible} (with $A$) if whenever $T_{i,j},T_{j,k}\in A$, $i<j<k$:
\begin{itemize}
\item If $T_{i,k}\in A$, then either $T_{i,j}\prec T_{i,k}\prec T_{j,k}$ or $T_{j,k}\prec T_{i,k}\prec T_{i,j}$.
\item If $T_{i,k}\notin A$, then $R_{i,j,k}\in A\iff T_{i,j}\prec T_{j,k}$.
\end{itemize}

\begin{theorem} \label{rek} [See \S\S\ref{sec:orders}, \ref{sec:bijection}]
Let $A$ be an {\adm} subset of $\Special$. Then, a compatible order on $A_\Trans$ always exists
and $\pi(A)$ is equal to the product of the elements of $A_\Trans$ taken with respect to a compatible order $\prec$.
(In particular, the product depends only on $A$.)
Consequently, every smooth permutation may be written as the product, in an appropriate order,
of the transpositions in its $2$-table (each appearing exactly once).
\end{theorem}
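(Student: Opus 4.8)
The plan is to establish the two assertions in turn — first that a compatible order exists, then that the product of $A_\Trans$ along \emph{any} compatible order equals $\pi(A)$ — after which the closing sentence is immediate: for a smooth $\sigma$ the set $A=\tbl(\sigma)$ is {\adm} by Theorem \ref{thm:bijection}, so $\sigma=\pi(A)$ is the product of the transpositions of $A_\Trans=\tbl_\Trans(\sigma)$, each used exactly once, taken in any compatible order.

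\emph{Existence of a compatible order.} One proceeds by induction on $\#A_\Trans$, the empty case being trivial. Since the compatibility conditions only constrain the relative order of pairs $T_{i,j},T_{j,k}\in A_\Trans$ with $i<j<k$, the idea is to peel off one transposition ``from the top''. Concretely, the plan is to select a transposition $\tau=T_{p,q}\in A_\Trans$ that is not forced below any other element of $A_\Trans$ by a compatibility condition — for instance one with $[p,q]$ minimal for inclusion among all such ``unforced'' transpositions — and to let $B\subseteq\Special$ be obtained from $A$ by deleting $\tau$ together with exactly the $3$-cycles that the two table-conditions attach to $\tau$. Then one would: (a) check that $B$ is {\adm}; (b) obtain, by the inductive hypothesis, a compatible order on $B_\Trans=A_\Trans\setminus\{\tau\}$; (c) place $\tau$ on top; (d) verify that all compatibility conditions of $A$ involving $\tau$ now hold, which is exactly where the unforcedness of $\tau$ and the definition of $B$ are used, the conditions not involving $\tau$ being handled by (b). The one point needing real care is showing that an ``unforced'' transposition always exists — equivalently, that the relations forced by the compatibility conditions contain no directed cycle — and that the resulting $B$ is indeed {\adm}.

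\emph{The product is $\pi(A)$.} Here one inducts on $m=\#A_\Trans$. Fix a compatible order $\tau_1\prec\cdots\prec\tau_m$, put $\tau_m=T_{p,q}$, and let $B$ be the {\adm} set of the previous step, so that $\prec$ restricts to a compatible order on $B_\Trans$ and, by the inductive hypothesis, $\tau_1\cdots\tau_{m-1}=\pi(B)$, which by Theorem \ref{thm:bijection} is smooth with $\tbl(\pi(B))=B$. Everything then reduces to the lemma that $\pi(B)\,T_{p,q}$ is smooth and $\tbl(\pi(B)\,T_{p,q})=A$. Granting this, Theorem \ref{thm:bijection} forces $\pi(B)\,T_{p,q}=\pi(A)$, hence $\tau_1\cdots\tau_m=\pi(B)\,T_{p,q}=\pi(A)$; this closes the induction and, since no property of $\prec$ beyond compatibility was used, simultaneously shows that the product depends only on $A$.

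The heart of the matter — and the step I expect to be hardest — is this lemma, which is where the structure theory of smooth permutations ($4231$/$3412$-avoidance, and any normal form established earlier) enters. The plan is to translate ``$T_{p,q}$ is $\prec$-maximal'' into a statement about $\pi(B)$ in one-line notation, namely that $p$ and $q$ sit ``outermost'' in such a way that right-multiplication by $T_{p,q}$ is a Bruhat cover $\pi(B)\lessdot\pi(B)\,T_{p,q}$; then $\ell(\pi(B)\,T_{p,q})=m$, and one checks that this cover adjoins exactly the transposition $T_{p,q}$ to the $2$-table (so $\tbl_\Trans(\pi(B)\,T_{p,q})=A_\Trans$ has $m$ elements and $\pi(B)\,T_{p,q}$ is smooth by definition of smoothness), while the $3$-cycles below it are precisely those in $A$, the two table-conditions together with the construction of $B$ controlling exactly which $3$-cycles appear and which disappear. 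The characterization \eqref{eq: pialt} of $\pi(B)$ and $\pi(A)$ as Bruhat-maxima of explicit sets of permutations should be useful for the ``$\subseteq A$'' half of this. The real crux, though, is the very first move: extracting the ``outermost'' configuration of $p$ and $q$ inside $\pi(B)$ from the purely order-theoretic fact that $\tau_m$ is $\prec$-maximal — this is the link between the abstract order $\prec$ and the permutation itself, and everything else should follow routinely once it is in place.
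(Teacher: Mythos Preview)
Your plan diverges from the paper's approach in a way that creates real difficulties, and the step you flag as the ``crux'' is precisely where it breaks down.

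In the paper, $\pi(A)$ is \emph{defined} as the product along a compatible order (Corollary~\ref{cor:def_pi}); the content of Theorem~\ref{rek} is therefore (a) existence of a compatible order and (b) well-definedness of the product. Part (b) is handled not by induction on $\#A_\Trans$ but by showing directly that any two compatible orders are connected by elementary moves (Lemma~\ref{lem: admind}), each of which preserves the product by the braid/commutation relations. Your plan instead assumes $\pi(A)$ is already given by Theorem~\ref{thm:bijection} and tries to match it to the product inductively; but in the paper Theorem~\ref{thm:bijection} is proved \emph{using} the product definition of $\pi$, so invoking it here is circular.

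The substantive gap is your proposed lemma ``$\pi(B)\,T_{p,q}$ is smooth with $\tbl(\pi(B)\,T_{p,q})=A$'' for an arbitrary $\prec$-maximal $T_{p,q}$. This is hard precisely because $T_{p,q}$ need not be a simple reflection: right-multiplication by a general transposition can fail to be a Bruhat cover and can destroy smoothness. The paper sidesteps this entirely via the wedge machinery of \S\ref{sec:wedges}: a wedge $T_{i,j}$ is a structurally special element of $A_\Trans$ (always available for $A$ or $A^{-1}$, Remark~\ref{rem: existwedge}), and the associated derived set $A'$ satisfies $\pi(A)=\pi(A')\,T_{j-1,j}$ (Lemma~\ref{lem:pi}) --- a \emph{simple} reflection, not $T_{i,j}$ itself. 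This is what makes the smoothness induction (Lemma~\ref{lem: indsmooth}, Proposition~\ref{propos:bijection}) tractable. For existence, too, the paper does not search for an abstract ``unforced'' transposition but peels off the whole block $T_{i,j},T_{i,j-1},\ldots,T_{i,i+1}$ attached to a wedge (Lemma~\ref{lem: compatible order+iterated derived set}), and the connectedness proof (Lemma~\ref{lem: admind}) again uses the wedge to reduce to a canonical ``right-aligned'' order. Your plan to remove a single $\prec$-maximal transposition and recover an admissible $B$ with $\pi(A)=\pi(B)\,T_{p,q}$ does not match this structure, and I do not see how to carry it through without essentially redeveloping the wedge theory.
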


More precisely, we define a graph $\grph_A$ whose vertices are the compatible orders on $A_\Trans$
and whose edges connect two compatible orders that can be obtained from one another by one of the following elementary operations.
\begin{itemize}
\item Interchanging the order of two adjacent commuting transpositions.
\item Switching the order of consecutive $T_{i,j}$, $T_{i,k}$, $T_{j,k}$ (with $i<j<k$) to $T_{j,k}$, $T_{i,k}$, $T_{i,j}$,
or vice versa.
\end{itemize}
These operations do not change the product of the elements of $A_\Trans$, taken in the respective orders.
In Section \ref{sec:orders} we show that $\grph_A$ is connected (and in particular, non-empty).
In other words, a compatible order exists and every two compatible orders are obtained from one another
by a sequence of elementary operations. The situation is reminiscent of the case of reduced decompositions
of a permutation $\sigma$, which form the vertices of a connected graph $G(\sigma)$ whose edges are given by basic Coxeter relations.
In fact, for $A=\Special$ itself, there is a natural isomorphism between $\grph_A$ and $G(w_0)$ where
$w_0$ is the longest permutation.
However, for a general smooth permutation $\sigma$, the number of compatible orders on $\tbl_\Trans(\sigma)$
with respect to $\tbl(\sigma)$ does not agree with the number of reduced decompositions of $\sigma$.

\subsection{}
As an application of Theorem \ref{thm:bijection} we give another remarkable characterization of smooth permutations.
Let $\prtt$ be a partition (i.e., an equivalence relation) of $[n]$.
Consider the subgroup $S_{\prtt}$ of $S_n$ preserving every $\prtset\in\prtt$.
The group $S_{\prtt}$ is isomorphic to the direct product of $S_{\# \prtset}$ over $\prtset\in \prtt$.
The product order on $S_{\prtt}$, which we denote by $\leq_{\prtt}$, is (in general, strictly) stronger than the one induced from $S_n$.
We say that an element of $S_{\prtt}$ is \emph{$\prtt$-smooth} if all its coordinates in $S_{\# \prtset}$, $\prtset\in \prtt$ are smooth.
(This is weaker than smoothness in $S_n$.)

\begin{theorem} \label{thm: bruintH} [See \S\ref{sec:misc}]
$\sigma\in S_n$ is smooth if and only if for every partition $\prtt$ of $[n]$, the set
\[
\{\tau\in S_{\prtt}:\tau\leq\sigma\}
\]
admits a maximum $\sigma_{\prtt}$ with respect to $\leq_{\prtt}$.
Moreover, in this case $\sigma_{\prtt}$ is $\prtt$-smooth.
\end{theorem}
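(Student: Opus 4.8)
\emph{Plan.} I would prove the two implications separately, leaning on Theorem~\ref{thm:bijection} throughout, together with the (presumably already available, cf.\ \eqref{eq: pialt}) fact that for \emph{smooth} $w$ and arbitrary $\rho$ in the same symmetric group, $\rho\le w$ iff $\tbl(\rho)\subseteq\tbl(w)$. For the forward direction, let $\sigma$ be smooth, put $A=\tbl(\sigma)$ (an \adm\ set by Theorem~\ref{thm:bijection}), and fix a partition $\prtt$. For a block $X\in\prtt$ let $\Special_X\subseteq\Special$ be the cycles supported in $X$ and set $A_X=A\cap\Special_X$. First I would check that $A_X$ is an \adm\ subset of $\Special_X$ (for $S_X\cong S_{\#X}$): the two \admis\ conditions only constrain indices lying inside a single triple, so they descend from $A$ to $A_X$, and $A_X$ is downward closed because every permutation below an element of $S_X$ already lies in $S_X$. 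Theorem~\ref{thm:bijection} applied inside $S_X$ then yields $\pi_X=\pi(A_X)\in S_X$, smooth in $S_X$, with $\tbl(\pi_X)=A_X$. Put $\sigma_\prtt=\prod_{X\in\prtt}\pi_X\in S_\prtt$ (well defined, the supports being disjoint); it is $\prtt$-smooth by construction and satisfies $\tbl(\sigma_\prtt)=\bigsqcup_X A_X\subseteq A$. The quoted fact gives $\sigma_\prtt\le\sigma$; and if $\tau\in S_\prtt$ with $\tau\le\sigma$, writing $\tau=\prod_X\tau_X$ one has $\tau_X\le_\prtt\tau\le\sigma$, hence $\tbl(\tau_X)\subseteq A\cap\Special_X=\tbl(\pi_X)$, hence $\tau_X\le\pi_X$ inside $S_X$, hence $\tau\le_\prtt\sigma_\prtt$. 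So $\sigma_\prtt=\max\{\tau\in S_\prtt:\tau\le\sigma\}$ and it is $\prtt$-smooth.

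For the converse I would argue the contrapositive, so assume $\sigma$ is not smooth. If $\tbl(\sigma)$ is not \adm, then---$\tbl(\sigma)$ being always downward closed and always satisfying the first condition---there are $i<j<k$ with $T_{i,j},T_{j,k}\le\sigma$ but $R_{i,j,k},R_{i,j,k}^{-1}\not\le\sigma$; taking $\prtt$ with block $\{i,j,k\}$ and singletons, so that $\le_\prtt$ on $S_{\{i,j,k\}}\cong S_3$ is just the Bruhat order, the elements below $\sigma$ are exactly $e,T_{i,j},T_{j,k}$, whose only common upper bounds in $S_3$ are $R_{i,j,k}$, $R_{i,j,k}^{-1}$, $T_{i,k}$ (the last because $R_{i,j,k}\le T_{i,k}$)---none below $\sigma$---so no maximum exists. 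If instead $\tbl(\sigma)$ is \adm, then $\pi:=\pi(\tbl(\sigma))$ is smooth with $\tbl(\pi)=\tbl(\sigma)$, and $\sigma<\pi$ by \eqref{eq: pialt} and Theorem~\ref{thm:bijection}; here I would produce two transpositions $T_{i,j},T_{k,l}\le\sigma$ with $\{i,j\}\cap\{k,l\}=\emptyset$ and $T_{i,j}T_{k,l}\not\le\sigma$, and take $\prtt$ with blocks $\{i,j\},\{k,l\}$ and singletons, so that $S_\prtt=\langle T_{i,j}\rangle\times\langle T_{k,l}\rangle$ is a four-element Boolean lattice whose elements below $\sigma$ contain $e,T_{i,j},T_{k,l}$ but not their only common upper bound $T_{i,j}T_{k,l}$---again no maximum. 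To find such a disjoint pair I would walk down a saturated chain $\sigma=\rho_0\lessdot\cdots\lessdot\rho_m=\pi$, observe that each $\rho_t$ has $\tbl(\rho_t)=\tbl(\sigma)$ (being squeezed between $\sigma$ and $\pi$), and reduce to the single-step situation of a non-smooth $\rho_{m-1}$ covered by the smooth $\pi$ with the same $2$-$3$-table; from the $4231$ or $3412$ pattern forced by non-smoothness one reads off the two transpositions, whose product is a copy of $w_0\in S_4$ that violates a rank inequality for the permutation matrix of $\sigma$.

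\emph{Main obstacle.} The forward direction, and the non-\adm\ branch of the converse, should be routine once the ``$\rho\le w\iff\tbl(\rho)\subseteq\tbl(w)$ for smooth $w$'' fact is in hand. The delicate point is the \adm\ branch of the converse: extracting from a non-smooth $\sigma$ with \adm\ $2$-$3$-table a concrete disjoint pair of transpositions below $\sigma$ whose product is not below $\sigma$. I expect the chain-descent reduction above to work, but verifying the pattern/rank-matrix step is where the real effort lies.
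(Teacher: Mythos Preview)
Your forward direction rests on a lemma that is \emph{false}: it is not true that for smooth $w$ and arbitrary $\rho$ one has $\rho\le w\iff\tbl(\rho)\subseteq\tbl(w)$. The paper gives an explicit counterexample right after Corollary~\ref{cor:smoothorder}: for $n>3$, $\tau=\rshft 1n$ and $\sigma=w_0T_{n-1,n}$ are both smooth and $\tbl(\tau)\subseteq\tbl(\sigma)$, yet $\tau\not\le\sigma$. The available statement (Corollary~\ref{cor:smoothorder}, which is what \eqref{eq: pialt} encodes) requires the additional hypothesis $\tbl_\Trans(\rho)=\tbl_\Trans(w)$, and neither of your two applications---showing $\sigma_\prtt\le\sigma$, and showing $\tau_X\le\pi_X$ inside $S_X$---comes with that hypothesis. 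Both steps therefore collapse. The paper repairs this by passing from the $2$-$3$-table to the larger cycle table $\tblc$ (\S\ref{sec:cycles}) and using that smooth permutations are \emph{defined by inclusions}: Lemma~\ref{lem:tblc_dbi} gives $\tau\le\sigma\iff\tblc(\tau)\subseteq\tblc(\sigma)$ for $\sigma$ \dbi, Lemma~\ref{lem:tblc_r} shows the $\tblc$-table behaves well under block restriction, and Lemma~\ref{lem:1perm} assembles these into the blockwise maxima. Even granting each $\iota_X(\pi_X)\le\sigma$, showing that their \emph{product} $\sigma_\prtt$ is $\le\sigma$ is again a \dbi\ statement (Proposition~\ref{prop: chardbi}), not a $2$-$3$-table statement.

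Your converse is partly sound and partly different from the paper. The non-\adm\ branch is correct as written. For the \adm\ branch, your case split (``$\tbl(\sigma)$ \adm'' vs.\ not) is legitimate but cuts across the paper's split (``$\sigma$ \dbi'' vs.\ not), and your chain-descent reduction does not obviously transfer the inequality $T_{i,j}T_{k,l}\not\le\rho_{m-1}$ down to $\sigma$: equal $2$-$3$-tables control membership of elements of $\Special$, not of products of disjoint transpositions. The paper instead handles ``not \dbi'' via the pattern characterization (Proposition~\ref{prop: chardbi}) and handles ``\dbi\ but not smooth'' by a direct construction: Lemma~\ref{lem:4} extracts indices $i,j,k$ with $\maxperm\sigma(k)>j$, $\maxperm{\sigma^{-1}}(j)>k$, and then a single-block partition on $\{i,j,\dots,k+1\}$ together with Lemma~\ref{lem:coxeter} shows no maximum exists.
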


\subsection{}
We may also interpret the bijection of Theorem \ref{thm:bijection} in terms of more familiar
combinatorial objects, namely Dyck paths.
We may view a Dyck path as a weakly increasing function $f:[n]\to[n]$ such that $f(i)\geq i$ for all $i$.
(Their number is the Catalan number $C_n=\frac1{n+1}{\binom{2n}n}$.)
By definition, a \emph{\dcrt}\ Dyck path is such a function $f$ together with a function $g:[n]\rightarrow\{0,1\}$ such that
\begin{enumerate}
\item $g(i)=0$ whenever $f(f(i))=f(i)$.
\item $g(i)=g(i+1)$ whenever $i<n$ and $f(i+1)<f(f(i))$.
\end{enumerate}

In terms of Dyck paths, such a \dcr\ $g$ corresponds to an (unrestricted) $2$-coloring of a certain distinguished set of vertices of the path.

Write $g^{-1}(0)=\{i_1,\ldots,i_k\}$ and  $g^{-1}(1)=\{j_1,\ldots,j_l\}$ with $i_1<\cdots<i_k$ and $j_1<\cdots<j_l$.

For every $1\leq i<j\leq n$ let $\rshft ij\in S_n$ be the cycle permutation $i\to i+1\to\cdots\to j\to i$.
For consistency, $R_{[i,i]}$ is the identity permutation for all $i$.

\begin{theorem}\label{thm:sigma_fg_bijection} [See \S\ref{sec:sigma_fg_bijection}]
The map
\begin{equation}  \label{eq: fgbij}
(f,g)\to(\rshft{j_l}{f(j_l)}\cdots\rshft{j_1}{f(j_1)})^{-1}\rshft{i_k}{f(i_k)}\cdots\rshft{i_1}{f(i_1)}
\end{equation}
is a bijection between the \dcrt\ Dyck paths and the smooth permutations in $S_n$.
Moreover, the expression on the right-hand side of \eqref{eq: fgbij} is reduced.
\end{theorem}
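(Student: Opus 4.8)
The plan is to deduce the theorem from Theorem~\ref{thm:bijection}. I would construct an explicit bijection $\Phi$ from the set of {\dcrt} Dyck paths onto the set of {\adm} subsets of $\Special$ and then show that, writing $\sigma_{f,g}$ for the right-hand side of \eqref{eq: fgbij}, one has $\sigma_{f,g}=\pi(\Phi(f,g))$. Since by Theorem~\ref{thm:bijection} the map $\sigma\mapsto\tbl(\sigma)$ is a bijection between smooth permutations and {\adm} sets with inverse $\pi$, this gives at once that $(f,g)\mapsto\sigma_{f,g}$ is a bijection onto the smooth permutations, with inverse $\sigma\mapsto\Phi^{-1}(\tbl(\sigma))$.

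For the construction of $\Phi$, note that if $A$ is {\adm} then $A_\Trans=A\cap\Trans$ is downward closed in $\Trans$, and that $T_{a,b}\le T_{c,d}$ in the Bruhat order precisely when $[a,b]\subseteq[c,d]$; thus $\Trans$ is the poset of subintervals of $[n]$ under inclusion, and $A_\Trans=\{T_{a,b}:a<b\le f(a)\}$ for a unique $f\colon[n]\to[n]$, with downward closure forcing $f$ to be weakly increasing and $f(i)\ge i$, i.e.\ a Dyck path. The decoration $g$ then encodes the remaining, $3$-cycle, data: the positions $i$ at which $g$ is not already pinned (neither set to $0$ because $f(f(i))=f(i)$, nor forced to equal $g(i-1)$) correspond to the outermost triples $(i,f(i),f(f(i)))$ with $T_{i,f(i)},T_{f(i),f(f(i))}\in A$ and $T_{i,f(f(i))}\notin A$, and $g(i)$ records which of $R_{i,f(i),f(f(i))}$, $R_{i,f(i),f(f(i))}^{-1}$ lies in $A$. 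The substance of this step is to show that the two conditions in the definition of a {\dcrt} Dyck path are \emph{equivalent} to the two {\admis} conditions for the set $A=\Phi(f,g)$ reconstructed from $(f,g)$ — the first accounting for the absence of a triple to decorate when $f(f(i))=f(i)$, and the second being exactly the consistency forced by downward closure among the triples $(i,j,k)$ with $T_{i,j},T_{j,k}\in A$, $T_{i,k}\notin A$, that must receive a common choice — and hence that $\Phi$ is a well-defined bijection. I expect this translation to be the main obstacle.

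Granting $\Phi$, I would establish $\sigma_{f,g}=\pi(\Phi(f,g))$ by showing $\sigma_{f,g}$ is smooth with $\tbl(\sigma_{f,g})=\Phi(f,g)$; Theorem~\ref{thm:bijection} then gives $\sigma_{f,g}=\pi(\tbl(\sigma_{f,g}))=\pi(\Phi(f,g))$. This requires computing the $2$-$3$-table of the product \eqref{eq: fgbij}: that $\tbl_\Trans(\sigma_{f,g})=A_\Trans$ and that the remaining elements of $\tbl(\sigma_{f,g})$ are exactly the $3$-cycles prescribed by $A$. I would prove this by induction on the number of cyclic-shift factors, tracking the effect on the one-line notation and on the table of multiplying by one factor $\rshft ij$ or $\rshft ij^{-1}$; the delicate case is the interaction between the ``forward'' factors $\rshft{i_m}{f(i_m)}$ coming from $g^{-1}(0)$ and the ``backward'' factors $\rshft{j_m}{f(j_m)}^{-1}$ coming from $g^{-1}(1)$, where cancellations occur and where conditions (1) and (2) get used a second time. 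For the ``moreover'' clause, observe that $\rshft ij=T_{i,i+1}T_{i+1,i+2}\cdots T_{j-1,j}$ is reduced of length $j-i$, and $\rshft ij^{-1}$ likewise, so concatenating these reduced words over all factors of \eqref{eq: fgbij} expresses $\sigma_{f,g}$ by a word in simple transpositions of length $\sum_m(f(i_m)-i_m)+\sum_m(f(j_m)-j_m)=\sum_{a=1}^n(f(a)-a)=\#A_\Trans$; proving this word reduced — e.g.\ by reading the one-line notation of $\sigma_{f,g}$ off \eqref{eq: fgbij} and checking it has exactly $\#A_\Trans$ inversions — then gives, together with $\tbl_\Trans(\sigma_{f,g})=A_\Trans$ and the inequality $\ell\le\#\tbl_\Trans$, that $\ell(\sigma_{f,g})=\#\tbl_\Trans(\sigma_{f,g})$, reconfirming smoothness and supplying the reducedness assertion. (Alternatively one may identify $\sigma_{f,g}$ with $\pi(A)$ through Theorem~\ref{rek}, writing $\pi(A)$ as a product of the transpositions of $A_\Trans$ in a compatible order and matching it with \eqref{eq: fgbij} via the ``fan'' identities $\rshft ij=T_{i,j}T_{i,j-1}\cdots T_{i,i+1}$ and $\rshft ij^{-1}=T_{i,j}T_{i+1,j}\cdots T_{j-1,j}$; but the $2$-$3$-table computation seems the more direct route.)
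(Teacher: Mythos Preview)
Your high-level strategy coincides with the paper's: build a bijection $\Phi:\Pairs_n\to\Adm_n$ (the paper writes $(f,g)\mapsto A_{f,g}$ and proves bijectivity in Proposition~\ref{prop:Afg_bijection}, with essentially the same dictionary you sketch), then show $\sigma_{f,g}=\pi(A_{f,g})$, and conclude via Theorem~\ref{thm:bijection}. The difference lies in how $\sigma_{f,g}=\pi(A_{f,g})$ is proved. You propose to compute $\tbl(\sigma_{f,g})$ directly by induction on the number of cyclic-shift factors; the paper instead inducts on $\ell(f)=\sum_i(f(i)-i)$, lowering $f$ by one at its minimal non-fixed point $i$ (Lemma~\ref{lem:sigma_fg}), which on the admissible side is precisely passing to the derived set at the wedge $T_{i,f(i)}$, and then invoking $\pi(A)=\pi(A')T_{j-1,j}$ from Lemma~\ref{lem:pi}. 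Your ``delicate case'' of backward factors is dispatched by the observation $\sigma(f,g)^{-1}=\sigma(f,\tilde g)$ for the flipped decoration $\tilde g$ (Lemma~\ref{lem:tilde_g}), which reduces to $g(i)=0$ at the minimal non-fixed point. The advantage of the paper's scheme is that every intermediate object remains a decorated Dyck path, so the already-built wedge/derived-set machinery of \S\ref{sec:wedges} applies verbatim; your factor-by-factor induction would have to analyse partial products that need not correspond to any $(f',g')\in\Pairs_n$ (removing a whole cycle $\rshft{i_1}{f(i_1)}$ can destroy monotonicity of $f$), making the bookkeeping for $\tbl$ heavier. Your alternative via the fan identities and Theorem~\ref{rek} is in fact closer in spirit to what the paper does. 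The reducedness statement is handled identically, by the length count $\ell(\sigma_{f,g})=\ell(f)$ (Proposition~\ref{propos:sigma_fg}).
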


Theorem \ref{thm:sigma_fg_bijection} is in the spirit of Skandera's factorization of smooth permutation  \cite{MR2387587}.
Using Theorem \ref{thm:sigma_fg_bijection}, we can recover several known enumerative results concerning smooth permutations.

\subsection{}
Using Theorem \ref{thm:bijection}, we can also give a curious relation between smooth
permutations and \emph{covexillary} ones.
Recall that a permutation is called covexillary if it avoids the pattern $3412$.

\begin{theorem}\label{thm: idemp} [See \S\ref{sec:retract}]
For any covexillary $\tau\in S_n$, $\tbl(\tau)$ is {\adm}.
Therefore, the map $\tau\mapsto\pi(\tbl(\tau))$ is an idempotent function from
the set of covexillary permutations onto the subset of smooth permutations.
Moreover, this map is order preserving and for any covexillary $\tau\in S_n$,
\[
\pi(\tbl(\tau))=\min\{\sigma\in S_n\text{ smooth}:\sigma\geq\tau\}.
\]
\end{theorem}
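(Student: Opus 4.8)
The plan is to reduce the whole statement to its first assertion — that $\tbl(\tau)$ is \adm\ for covexillary $\tau$ — since everything else then follows from Theorem~\ref{thm:bijection}. Recall that $\tbl(\tau)$ is always downward closed and always satisfies the first \admis\ condition (both observed in the Introduction), so the only thing to prove is: whenever $T_{i,j},T_{j,k}\le\tau$ with $i<j<k$, at least one of $R_{i,j,k},R_{i,j,k}^{-1}$ is $\le\tau$. The first step is to record convenient Bruhat criteria, read off from the rank‑matrix (Fulton) description: $T_{a,b}\le\sigma$ iff there is a position $\le a$ carrying a value $\ge b$ \emph{and} a position $\ge b$ carrying a value $\le a$; and, computing the essential sets of the two $3$‑cycles, $R_{i,j,k}\le\sigma$ iff there is a position $\le i$ with value $\ge j$, a position $\le j$ with value $\ge k$, and a position $\ge k$ with value $\le i$, while $R_{i,j,k}^{-1}\le\sigma$ iff there is a position $\ge j$ with value $\le i$, a position $\ge k$ with value $\le j$, and a position $\le i$ with value $\ge k$.

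Now suppose $\tau$ is covexillary, $T_{i,j},T_{j,k}\le\tau$, but neither $R_{i,j,k}\le\tau$ nor $R_{i,j,k}^{-1}\le\tau$. From $T_{i,j}\le\tau$ and $T_{j,k}\le\tau$ the first two of the three clauses in each $3$‑cycle criterion already hold, so the two failures say precisely: no position $\ge k$ carries a value $\le i$, and no position $\le i$ carries a value $\ge k$. Feeding this back into the witnesses supplied by $T_{i,j}\le\tau$ and $T_{j,k}\le\tau$ pins them down to: a position $a_0\le i$ with $\tau(a_0)\in[j,k-1]$; a position $a_1\in[i+1,j]$ with $\tau(a_1)\ge k$; a position $b_0\in[j,k-1]$ with $\tau(b_0)\le i$; and a position $b_1\ge k$ with $\tau(b_1)\in[i+1,j]$. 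Then $a_0<a_1<b_0<b_1$ (the only borderline case, $a_1=b_0=j$, is impossible since $\tau(j)$ cannot be both $\ge k$ and $\le i$), and $\tau(b_0)<\tau(b_1)<\tau(a_0)<\tau(a_1)$, so these four positions realize a $3412$ pattern, contradicting covexillarity. Hence $\tbl(\tau)$ is \adm, and by Theorem~\ref{thm:bijection} $\pi(\tbl(\tau))$ is a well‑defined smooth permutation with $\tbl(\pi(\tbl(\tau)))=\tbl(\tau)$. Since every smooth permutation is $3412$‑avoiding, hence covexillary, and since smooth permutations are fixed by $\tau\mapsto\pi(\tbl(\tau))$ (immediate from the bijection), this map is idempotent with image exactly the smooth permutations; moreover $\pi(\tbl(\tau))\ge\tau$ because, by \eqref{eq: pialt}, $\pi(\tbl(\tau))$ is the Bruhat‑maximum of $\{\rho:\tbl(\rho)=\tbl(\tau)\}$, which contains $\tau$.

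It remains to show the map is order preserving and that $\pi(\tbl(\tau))=\min\{\sigma\text{ smooth}:\sigma\ge\tau\}$; both follow from one property of $\pi$, namely \emph{monotonicity}: $A_1\subseteq A_2$ \adm\ $\Rightarrow$ $\pi(A_1)\le\pi(A_2)$. Granting this: $\tau_1\le\tau_2$ covexillary gives $\tbl(\tau_1)\subseteq\tbl(\tau_2)$, hence $\pi(\tbl(\tau_1))\le\pi(\tbl(\tau_2))$; and if $\sigma$ is smooth with $\sigma\ge\tau$ then $\tbl(\sigma)\supseteq\tbl(\tau)$ and $\sigma=\pi(\tbl(\sigma))$, so $\sigma\ge\pi(\tbl(\tau))$, which with $\pi(\tbl(\tau))\ge\tau$ and the smoothness of $\pi(\tbl(\tau))$ yields the minimum formula. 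I expect monotonicity of $\pi$ to be the main obstacle (unless it is already in hand from the proof of Theorem~\ref{thm:bijection}). I would deduce it, via \eqref{eq: pialt}, from the claim that $\tbl(\rho)\subseteq A$ (with $A$ \adm) implies $\rho\le\pi(A)$ — applied with $\rho=\pi(A_1)$, $A=A_2$ — proving the claim by induction: if $\tbl_\Trans(\rho)=A\cap\Trans$ it is immediate from \eqref{eq: pialt}, and otherwise one raises $\rho$ in the Bruhat order (moving a large value into an early position, dictated by a Bruhat‑minimal element of $(A\cap\Trans)\setminus\tbl_\Trans(\rho)$) to some $\rho'>\rho$ with $\tbl_\Trans(\rho')\supsetneq\tbl_\Trans(\rho)$ while keeping $\tbl(\rho')\subseteq A$. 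The delicate point is exactly this last clause — controlling which transpositions and $3$‑cycles enter the table under the move — and it is here that the \admis\ of $A$ must be invoked through a case analysis of the same flavour as the one above.
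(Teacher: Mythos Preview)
Your argument for \admis\ of $\tbl(\tau)$ is essentially the paper's Lemma~\ref{lem: covexisadm}, and your derivation of idempotence and of $\pi(\tbl(\tau))\ge\tau$ from Theorem~\ref{thm:bijection} and \eqref{eq: pialt} is correct and matches the paper.

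The gap is in the order-preserving step. The monotonicity property you propose---$A_1\subseteq A_2$ \adm\ implies $\pi(A_1)\le\pi(A_2)$---is \emph{false}. Equivalently, it is not true that $\tbl(\rho)\subseteq\tbl(\sigma)$ forces $\rho\le\sigma$ even when both $\rho$ and $\sigma$ are smooth. The paper gives an explicit counterexample (the Remark following Corollary~\ref{cor:smoothorder}): for $n>3$ take $\rho=\rshft 1n$ and $\sigma=w_0T_{n-1,n}$; both are smooth, $\tbl(\rho)\subseteq\tbl(\sigma)$, yet $\rho\nleq\sigma$ since $\rho(n)=1<2=\sigma(n)$. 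With $A_1=\tbl(\rho)$ and $A_2=\tbl(\sigma)$ this kills both your monotonicity claim and the auxiliary claim ``$\tbl(\rho)\subseteq A\Rightarrow\rho\le\pi(A)$'' that you planned to prove by induction. No case analysis in the inductive step can succeed, because the target statement is simply wrong.

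The paper's route around this is to pass from the $2$-$3$-table $\tbl$ to the larger \emph{cycle table} $\tblc(\sigma)=\{\tau\in\Cycles:\tau\le\sigma\}$. Two facts do the work: (i) a smooth permutation is \emph{\dbi}, so by Lemma~\ref{lem:tblc_dbi} the inclusion $\tblc(\tau)\subseteq\tblc(\sigma)$ \emph{does} force $\tau\le\sigma$; and (ii) for covexillary $\tau$, Corollary~\ref{cor:tbl_tblc} upgrades $\tbl(\pi(\tbl(\tau)))=\tbl(\tau)$ to $\tblc(\pi(\tbl(\tau)))=\tblc(\tau)$. Then for covexillary $\tau_1\le\tau_2$ one has $\tblc(\pi(\tbl(\tau_1)))=\tblc(\tau_1)\subseteq\tblc(\tau_2)=\tblc(\pi(\tbl(\tau_2)))$, and (i) gives $\pi(\tbl(\tau_1))\le\pi(\tbl(\tau_2))$. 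The minimum formula then follows exactly as you wrote. So the missing idea is that one must compare through all cycles (equivalently, through the functions $\maxperm{\sigma},\maxperm{\sigma^{-1}}$), not merely through $2$- and $3$-cycles.
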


\subsection{}

Finally, we can relate our results to Fulton's notion of essential set \cite{MR1154177}.
For any $\sigma\in S_n$ let $\Ess(\sigma)$ be the set of pairs $(i,j)\in [n-1]\times[n-1]$ such that
\[
\sigma(i)\le j<\sigma(i+1)\text{ and }\sigma^{-1}(j)\le i<\sigma^{-1}(j+1).
\]
Up to change of coordinates, this is the essential set of $w\sigma_0$ in Fulton's formulation.
\begin{theorem} \label{thm: essset} [See \S\ref{sec: ess}]
The map $\sigma\mapsto\Ess(\sigma)$ defines a bijection between the smooth permutations in $S_n$ and
the subsets $E\subseteq [n-1]\times[n-1]$ satisfying the property that for every two distinct points
$(i_1,j_1)$ and $(i_2,j_2)$ in $E$ such that $\min(i_2,j_2)\ge\min(i_1,j_1)$ we have
\[
i_2\ge i_1,\ \ j_2\ge j_1,\ \ \max(i_2,j_2)>\max(i_1,j_1)\text{ and }\min(i_2,j_2)>\min(i_1,j_1).
\]
\end{theorem}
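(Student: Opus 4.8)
The plan is to deduce Theorem~\ref{thm: essset} from Theorem~\ref{thm:bijection} by matching the two descriptions of smooth permutations. Concretely, I will set up an explicit bijection between the \adm\ subsets $A\subseteq\Special$ and the subsets $E\subseteq[n-1]\times[n-1]$ satisfying the displayed property, and show that it intertwines $\sigma\mapsto\tbl(\sigma)$ with $\sigma\mapsto\Ess(\sigma)$. Since $\sigma\mapsto\tbl(\sigma)$ is already known to be a bijection onto the \adm\ sets, this gives the theorem in one stroke, injectivity and surjectivity included.

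The first step is to describe $\Ess(\sigma)$ through the rank function $r_\sigma(p,q)=\#\{a\le p:\sigma(a)\le q\}$: by definition $(i,j)\in\Ess(\sigma)$ precisely when $r_\sigma$ increases coming from the north and from the west but is constant going south and going east at $(i,j)$. The classical input, which I would quote from Fulton~\cite{MR1154177}, is that the Bruhat relation $\tau\le\sigma$ amounts to finitely many rank inequalities, one at each point of $\Ess(\sigma)$. I then compute $r_\tau$ for every $\tau\in\Special$ --- a transposition changes the identity rank matrix by $-1$ on a square block, the $3$-cycle $R_{i,j,k}$ by $-1$ on an L-shaped region --- and thereby read off, from the coordinates of the points of $\Ess(\sigma)$, which elements of $\Special$ lie below $\sigma$. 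This produces a dictionary: the points of $\Ess(\sigma)$ off the diagonal correspond to the $3$-cycles that are maximal in $\tbl(\sigma)$ (the side of the diagonal recording cycle versus inverse), and the diagonal of $[n-1]$ together with its ``gaps'' records the maximal transpositions; in particular $\tbl(\sigma)$, hence $\sigma$, is recovered from $\Ess(\sigma)$ alone.

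With the dictionary in hand, the two \adm\ conditions of Theorem~\ref{thm:bijection} translate into a single statement about $E$: if the points of $E$ are listed in increasing order of their smaller coordinate, then the $i$-coordinates and the $j$-coordinates are each weakly increasing and both $\min(i,j)$ and $\max(i,j)$ are strictly increasing along the list --- which one checks is equivalent to the property in the theorem. (Heuristically, the condition $R_{i,j,l},R_{i,k,l}^{-1}\in A\Rightarrow T_{i,l}\in A$ forces the maxima to separate, while the existence of a compatible order forces the list to form a chain.) Conversely, from any $E$ with the displayed property one reads off an \adm\ set $A=A(E)$, sets $\sigma=\pi(A)$, and verifies $\Ess(\sigma)=E$ by the same dictionary.

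I expect the main obstacle to be the dictionary itself, and within it the point that for a \emph{smooth} permutation no data beyond the \emph{positions} of the essential points is needed to recover $\sigma$, whereas a general permutation is pinned down only by $\Ess(\sigma)$ together with the rank values there. It may well be cleaner to obtain this recovery not from rank matrices directly but by feeding the dictionary into the factorization of $\sigma$ as an ordered product of the transpositions of $\tbl_\Trans(\sigma)$ (Theorem~\ref{rek}), or into the decorated Dyck path of Theorem~\ref{thm:sigma_fg_bijection}, where the essential points should be readable off the building blocks $\rshft{a}{f(a)}$. The configurations needing care are the essential points lying on the diagonal and the place where a pair $R_{i,j,l},R_{i,k,l}^{-1}$ meets --- exactly where the \adm\ conditions are non-vacuous. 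Alternatively one could prove the ``only if'' direction more directly, by showing that any violation of the displayed condition on $\Ess(\sigma)$ exhibits a $3412$ or $4231$ pattern in $\sigma$, and use the dictionary only for injectivity and surjectivity.
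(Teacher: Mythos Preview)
Your plan is correct and is essentially the paper's approach: the paper sets up exactly the dictionary you describe (Observation~\ref{obs: 23tabfromess}, giving $A_{\Ess^\circ(\sigma)}=\tbl(\sigma)$) and then assembles a commutative square of bijections among $(S_n)_{\smth}$, $\Adm_n$, $\Pairs_n$ and $\SEss_n$. The one difference is that the paper makes your ``alternative'' route the primary one: rather than proving $\Adm_n\leftrightarrow\SEss_n$ directly, it establishes an explicit bijection $\Pairs_n\leftrightarrow\SEss_n$ (via $(f,g)\mapsto E(f,g)$ and an explicit inverse) and composes with the already-known $\Pairs_n\leftrightarrow\Adm_n$; the dictionary is then only used to check commutativity, not to prove bijectivity. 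Your heuristic about ``maximal $3$-cycles off the diagonal'' is a bit loose compared to the paper's clean formulation $T_{i,j}\in\tbl(\sigma)\iff\Ess^\circ(\sigma)\cap[i,j)^2=\emptyset$ (and analogues for $R$, $L$), and the paper handles your anticipated obstacle---that for smooth $\sigma$ the positions alone determine the ranks---by the one-line remark that $\sigma$ defined by inclusions forces $\#(\sigma([i])\cap[j])=\min(i,j)$ on $\Ess(\sigma)$, whence $\Ess(\sigma)=\Ess^\circ(\sigma)$.
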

We also relate this result to the previous theorems.

\subsection{}

Although we will not discuss it any further here, we mention that smooth permutations are important in representation theory.
This is because of the \emph{Kazhdan--Lusztig conjecture} \cite{MR560412}
(proved independently by Bernstein--Beilinson and Brylinski--Kashiwara) and the fact that smooth permutations are
characterized in terms of \emph{Kazhdan--Lusztig polynomials} \cite{MR788771}.
See \cite{MR3866895} for a more recent surprising occurrence of smooth permutations in representation theory.

Theorem \ref{thm: bruintH} was the original motivation of the paper.
It came up in studying a related problem, which is discussed in \cite{MR3880654}.
The result of [ibid.] is relevant for a certain representation-theoretic context.
We hope that the same will be true for Theorem \ref{thm: bruintH} and its variants,
although we will not discuss these possible applications here.

Likewise, it would be interesting to find a geometric context for Theorems \ref{thm:bijection} and \ref{thm: bruintH}.

It is natural to ask whether Theorem \ref{thm:bijection} admits an analogue for other Weyl groups $W$.
In particular, one may ask whether any smooth element $w$ of $W$ can be written as the product (in a suitable order)
of the reflexions that are smaller than or equal to $w$ in the Bruhat order (each reflexion occurring exactly once).

\medskip
The rest of the paper is organised as follows.
In Section \ref{sec:adm} we study the notion of {\adm} sets.
In Section \ref{sec:wedges} we develop some tools that will enable us to apply inductive arguments.
In Section \ref{sec:orders} we study the notion of a compatible order.
Sections \ref{sec:bijection}, \ref{sec:misc}, \ref{sec:sigma_fg_bijection}, \ref{sec:retract} and \ref{sec: ess} are devoted to
the proofs of Theorems \ref{thm:bijection}, \ref{thm: bruintH}, \ref{thm:sigma_fg_bijection}, \ref{thm: idemp} and \ref{thm: essset}, respectively. In Section \ref{sec:enumeration} we use Theorem \ref{thm:sigma_fg_bijection} to reproduce some
known enumerative results concerning smooth permutations.

\smallskip
An extended abstract of this paper appears in \cite{ShoniProc}.

\subsection{Notation and preliminaries}

Recall that the Bruhat order on $S_n$ is defined by inclusion of Schubert varieties.
(See \cite{MR2133266} for standard facts about the Bruhat order.)
It can be described combinatorially as the partial order generated by
\[
\sigma<\sigma T_{i,j}\text{ if }\sigma(i)<\sigma(j).
\]
It is also be given by
\begin{equation}\label{eq:Bruhat}
\tau\leq\sigma\text{ if and only if }\#(\tau([i])\cap[j])\geq\#(\sigma([i])\cap[j]) \text{ for every }  i,j.
\end{equation}
This relation endows $S_n$ with the structure of a ranked poset, with rank function $\ell$.
The minimum of $S_n$ is the identity permutation $e$ and the
maximum is the permutation $w_0$ given by $w_0(i)=n+1-i$, $i\in[n]$.

The Bruhat order is invariant under inversion $\sigma\mapsto\sigma^{-1}$ and under upending $\sigma\mapsto w_0\sigma w_0$.

For any $\sigma\in S_n$ we denote by $(S_n)_{\leq\sigma}$ the Bruhat interval defined by $\sigma$.

For any $\sigma\in S_n$ define the ``maximal function'' $\maxperm{\sigma}:[n]\rightarrow[n]$ of $\sigma$ by
\[
\maxperm{\sigma}(i)=\max\sigma([i]).
\]
Clearly, $\maxperm\sigma(i)\geq i$ with equality if and only if $\sigma([i])=[i]$.
Also, if $\tau\leq\sigma$ then $\maxperm{\tau}\leq\maxperm{\sigma}$ pointwise, although the converse in not true in general.

Recall that $\Trans=\{T_{i,j}\}_{1\leq i<j\leq n}$ where $T_{i,j}$ is the transposition interchanging $i$ and $j$.
These are the reflexions of $S_n$, as a Coxeter group.

We denote by $(S_n)_\smth$ the set of smooth permutations.

Recall that $R_{i,j,k}$, $1\leq i<j<k\leq n$ is the $3$-cycle permutation $i\mapsto j\mapsto k\mapsto i$.
Let also $L_{i,j,k}=R_{i,j,k}^{-1}$.
We have $w_0R_{i,j,k}w_0=L_{w_0(k),w_0(j),w_0(i)}$ and $w_0L_{i,j,k}w_0=R_{w_0(k),w_0(j),w_0(i)}$.
We write
\[
\Special=\Special_n=\Trans\cup\{R_{i,j,k},L_{i,j,k}\}_{1\leq i<j<k\leq n}
\]
and for any $\sigma\in S_n$
\[
\tbl(\sigma)=\{\tau\in\Special:\tau\leq\sigma\}.
\]
Note that $\tbl(\sigma^{-1})=\tbl(\sigma)^{-1}$ and $\tbl(w_0\sigma w_0)=w_0\tbl(\sigma)^{-1}w_0$.

It is useful to bear in mind the following explication of the Bruhat order for $S_n$.
\begin{subequations} \label{eq: easy1<=}
\begin{align} \label{eq: easy1<=a}
T_{i,j}\leq\sigma&\iff\maxperm{\sigma}(i)\geq j\text{ and }\maxperm{\sigma^{-1}}(i)\geq j,\\
R_{i,j,k}\leq\sigma&\iff\maxperm{\sigma}(i)\geq j,\,\maxperm{\sigma}(j)\geq k\text{ and }\maxperm{\sigma^{-1}}(i)\geq k,\\
L_{i,j,k}\leq\sigma&\iff\maxperm{\sigma^{-1}}(i)\geq j,\,\maxperm{\sigma^{-1}}(j)\geq k\text{ and }\maxperm{\sigma}(i)\geq k.
\end{align}
\end{subequations}
In particular,
\begin{subequations} \label{eq:for_wedge1<=}
\begin{align}
T_{i-1,i}\leq\sigma&\iff\maxperm\sigma(i)>i\iff\maxperm{\sigma^{-1}}(i)>i\iff\sigma([i-1])\ne[i-1], \label{eq:wedgeT}\\
R_{i,j,j+1}\leq\sigma&\iff\maxperm{\sigma}(i)\geq j\text{ and }\maxperm{\sigma^{-1}}(i)\geq j+1. \label{eq:wedgeR}
\end{align}
\end{subequations}
It also follows that for any $ i<j,k<l$,
\begin{equation}\label{eq: ijkl}
R_{i,k,l}\vee L_{i,j,l}=T_{i,l}\text{, i.e., for every }\sigma\in S_n:
\sigma\geq R_{i,k,l},L_{i,j,l}\iff\sigma\geq T_{i,l}.
\end{equation}

The comparisons among the elements of $\Special$ with respect to the Bruhat order are summarized in the following list.
\begin{subequations} \label{eq: easy<=}
\begin{align}
T_{i,j}\leq T_{x,y}\iff& x\leq i<j\leq y, \label{eq: east<=a}\\
R_{i,j,k}\leq T_{x,y}\iff L_{i,j,k}\leq T_{x,y}\iff& T_{i,k}\leq T_{x,y},\\
T_{i,j}\leq R_{x,y,z}\iff T_{i,j}\leq L_{x,y,z}\iff& \text{either }T_{i,j}\leq T_{x,y}\text{ or }T_{i,j}\leq T_{y,z},\\
R_{i,j,k}\leq L_{x,y,z}\iff L_{i,j,k}\leq R_{x,y,z}\iff&\text{either }T_{i,k}\leq T_{x,y}\text{ or }T_{i,k}\leq T_{y,z},\\
R_{i,j,k}\leq R_{x,y,z}\iff L_{i,j,k}\leq L_{x,y,z}\iff& \begin{aligned}&\text{either }T_{i,k}\leq T_{x,y}\text{ or }T_{i,k}\leq T_{y,z}\\
&\text{or }x\leq i<j=y<k\leq z.\end{aligned}
\end{align}
\end{subequations}

\section{Admissible sets}\label{sec:adm}

\subsection{}
\begin{definition}\label{def:adm}
We say that a subset $A\subseteq\Special$ is \emph{{\adm}} if it satisfies the following three conditions.
\begin{subequations}
\begin{align}
\label{item: adm1} &\text{$A$ is downward closed, i.e., if }\sigma\in A, \tau\in\Special \text{ and }\tau\leq\sigma,\text{ then }\tau\in A.\\
\label{item: adm2} &\text{If }R_{i,j,l},L_{i,k,l}\in A \text{ for some $i<j,k<l$, then }T_{i,l}\in A.\\
\label{item: adm3} &\text{If }T_{i,j},T_{j,k}\in A \text{ for some $i<j<k$, then at least one of }R_{i,j,k} \text{ or }L_{i,j,k} \text{ is in }A.
\end{align}
\end{subequations}
\end{definition}

\smallskip
Note that by \eqref{eq: easy<=}, the first two conditions imply that for every $i<j<k$,
\[
\text{$T_{i,k}\in A$ if and only if $R_{i,j,k}\in A$ and $L_{i,j,k}\in A$}.
\]

It is clear that if $A$ is {\adm}, then so are the inverted set $A^{-1}=\{\sigma^{-1}:\sigma\in A\}$ and
the upended set $w_0Aw_0=\{w_0\sigma w_0:\sigma\in A\}$.

We verify the first part of Theorem \ref{thm: idemp} in the following lemma.
\begin{lemma} \label{lem: covexisadm}
If $\tau\in S_n$ is covexillary, then $\tbl(\tau)$ is {\adm}.
\end{lemma}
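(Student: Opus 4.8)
The plan is to verify the three defining conditions of admissibility for $A=\tbl(\tau)$ when $\tau$ is covexillary (i.e.\ $3412$-avoiding). Condition \eqref{item: adm1} is automatic: $\tbl(\tau)$ is downward closed for \emph{any} $\tau$, since the Bruhat order is transitive. So the work is in \eqref{item: adm2} and \eqref{item: adm3}, and the hypothesis $3412$-avoidance must be used precisely there. The basic tool throughout will be the explicit criteria \eqref{eq: easy1<=} expressing $T_{i,j}\leq\tau$, $R_{i,j,k}\leq\tau$, $L_{i,j,k}\leq\tau$ in terms of $\maxperm\tau$ and $\maxperm{\tau^{-1}}$.

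For \eqref{item: adm2}, assume $R_{i,j,l}\leq\tau$ and $L_{i,k,l}\leq\tau$ with $i<j,k<l$. By \eqref{eq: easy1<=}, $R_{i,j,l}\leq\tau$ gives $\maxperm\tau(i)\geq j$, $\maxperm\tau(j)\geq l$, $\maxperm{\tau^{-1}}(i)\geq l$; and $L_{i,k,l}\leq\tau$ gives $\maxperm{\tau^{-1}}(i)\geq k$, $\maxperm{\tau^{-1}}(k)\geq l$, $\maxperm\tau(i)\geq l$. In particular $\maxperm\tau(i)\geq l$ and $\maxperm{\tau^{-1}}(i)\geq l$, so by \eqref{eq: easy1<=a} we immediately get $T_{i,l}\leq\tau$, i.e.\ $T_{i,l}\in A$. (This step does not even need covexillarity, which is consistent with the general identity \eqref{eq: ijkl}.)

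The real content is \eqref{item: adm3}: suppose $T_{i,j},T_{j,k}\in A$ with $i<j<k$ but neither $R_{i,j,k}$ nor $L_{i,j,k}$ lies in $A$. From $T_{i,j}\leq\tau$ we have $\maxperm\tau(i)\geq j$ and $\maxperm{\tau^{-1}}(i)\geq j$; from $T_{j,k}\leq\tau$ we have $\maxperm\tau(j)\geq k$ and $\maxperm{\tau^{-1}}(j)\geq k$. Now $R_{i,j,k}\not\leq\tau$ together with $\maxperm\tau(i)\geq j$ and $\maxperm\tau(j)\geq k$ forces $\maxperm{\tau^{-1}}(i)<k$, hence $j\leq\maxperm{\tau^{-1}}(i)<k$; symmetrically (applying the same reasoning to $\tau^{-1}$, using $L_{i,j,k}\not\leq\tau\iff R_{i,j,k}\not\leq\tau^{-1}$), $L_{i,j,k}\not\leq\tau$ forces $j\leq\maxperm\tau(i)<k$. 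I would then unwind these inequalities into concrete positions: $\maxperm\tau(i)<k$ with $\maxperm\tau(j)\geq k$ means some value $\geq k$ is attained by $\tau$ at a position in $\{i+1,\dots,j\}$, say at position $b$ with $i<b\leq j$; dually, reading off $\maxperm{\tau^{-1}}(i)<k$ with $\maxperm{\tau^{-1}}(j)\geq k$, some position $\geq k$ receives a value in $\{i+1,\dots,j\}$, i.e.\ there is a column $d\geq k$ with $i<\tau(d)\leq j$ — wait, more carefully, $\tau^{-1}(c)\in\{i+1,\dots,j\}$ for some $c\geq k$. Meanwhile $\maxperm\tau(i)\geq j$ and $\maxperm{\tau^{-1}}(i)\geq j$ (from the transpositions being in $A$), combined with the strict bounds $<k$, should let me locate a value in $\{j,\dots,k-1\}$ at some position $\leq i$, and a position in $\{j,\dots,k-1\}$ whose value is $\leq i$. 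Assembling a value $\geq k$ at a column in $(i,j]$, a large value $\approx j$ at a column $\leq i$, a value $\leq i$ at a column $\geq k$, and a small value $\leq i$ at a column in $[j,k)$ — arranged as (large, small, then later another block) — should exhibit a $3412$ pattern in $\tau$ (or, after the symmetry $\tau\leftrightarrow\tau^{-1}$, a $3412$ pattern, since $3412$ is an involution as a pattern). I expect the main obstacle to be the careful bookkeeping of which of $\tau$ or $\tau^{-1}$ the four offending positions live in, and checking that the four located entries are genuinely distinct and occur in the correct relative order to form a $3412$ occurrence; the cleanest route may be to first reduce, via \eqref{eq: easy<=} or a restriction-to-window argument, to the case $n=4$ and $\{i,j,k\}$ together with one more index, and then simply enumerate the $4$-permutations for which both transpositions are $\leq\tau$ but neither $3$-cycle is, verifying each such $\tau$ contains $3412$.
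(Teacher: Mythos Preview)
Your approach is essentially the paper's own proof: conditions \eqref{item: adm1} and \eqref{item: adm2} hold for all $\tau$, and for \eqref{item: adm3} one extracts the six inequalities $j\le\maxperm\tau(i)<k$, $j\le\maxperm{\tau^{-1}}(i)<k$, $\maxperm\tau(j)\ge k$, $\maxperm{\tau^{-1}}(j)\ge k$, and reads off four positions exhibiting a $3412$ pattern. The only issue is that you stop short of the final bookkeeping, which is in fact straightforward and does not require any reduction to $n=4$: your four witnesses are a position $u\le i$ with $\tau(u)\in[j,k)$, a position $v\in(i,j]$ with $\tau(v)\ge k$, a position $x\in[j,k)$ with $\tau(x)\le i$, and a position $y\ge k$ with $\tau(y)\in(i,j]$ (note your summary sentence misstates the value at the column $\ge k$; you had it right two lines earlier). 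These satisfy $u<v\le x<y$ automatically, and since $\tau(v)\ge k>\tau(x)$ forces $v\ne x$, one gets $u<v<x<y$ with $\tau(x)<\tau(y)\le\tau(u)<\tau(v)$, a $3412$ occurrence.
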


\begin{proof}
By transitivity of the Bruhat order and \eqref{eq: ijkl},
the set $A=\tbl(\tau)$ satisfies properties \eqref{item: adm1} and \eqref{item: adm2} for every $\tau\in S_n$.

To prove \eqref{item: adm3} for covexillary $\tau$, assume on the contrary that $T_{i,j},T_{j,k}\in A$ but $R_{i,j,k},L_{i,j,k}\notin A$.
Then,
\begin{itemize}
\item $\maxperm{\tau}(i)\geq j,\,\maxperm{\tau^{-1}}(i)\geq j$,
\item $\maxperm{\tau}(j)\geq k,\,\maxperm{\tau^{-1}}(j)\geq k$,
\item $\maxperm{\tau}(i)<k,\,\maxperm{\tau^{-1}}(i)<k$.
\end{itemize}
Therefore, there exist $u,v,x,y\in [n]$ such that
\begin{itemize}
\item $u\leq i$ and $j\leq\tau(u)<k$,
\item $i<v\leq j$ and $\tau(v)\geq k$,
\item $j\leq x<k$ and $\tau(x)\leq i$,
\item $y\geq k$ and $i<\tau(y)\leq j$.
\end{itemize}
Thus, $u<v\leq x<y$ and $\tau(x)<\tau(y)\leq\tau(u)<\tau(v)$ in contradiction to the assumption on $\tau$.
\end{proof}

\begin{remark}
The assumption on $\tau$ in Lemma \ref{lem: covexisadm} is essential.
For instance, for $n=4$ and $\tau=(3412)$ we have
$$
\tbl(\tau)=\Special\setminus\{T_{1,4},R_{1,2,4},L_{1,2,4},R_{1,3,4},L_{1,3,4}\}
$$
which is not {\adm} since \eqref{item: adm3} is not satisfied.

On the other hand, the converse to Lemma \ref{lem: covexisadm} is also false.
For $n=5$ and $\tau=(45231)$ we have
$$
\tbl(\tau)=\Special\setminus\{T_{1,5},L_{1,2,5},L_{1,3,5},L_{1,4,5}\}
$$
which is {\adm} although $\tau$ is not covexillary.
\end{remark}

\subsection{}
The following observation follows directly from \eqref{eq: easy1<=a}.
\begin{observation}\label{obs:sub_nsub}
Let $\tau\in S_n$, $i<j$ and $x<y$. Then,
\begin{enumerate}
\item If $\maxperm{\tau}(x)<y\leq\maxperm{\tau T_{i,j}}(x)$, then $i\leq x<j$ and $\tau(i)<y\leq\tau(j)$.
\item If $\maxperm{\tau^{-1}}(x)<y\leq\maxperm{(\tau T_{i,j})^{-1}}(x)$, then $i<y\leq j$ and $\tau(i)\leq x<\tau(j)$.
\end{enumerate}
\end{observation}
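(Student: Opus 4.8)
The plan is to prove both items by direct inspection, using only the definition $\maxperm{\sigma}(x)=\max\sigma([x])$ together with the elementary fact that $\tau T_{i,j}$ differs from $\tau$ only at the positions $i$ and $j$, where the two values $\tau(i)$ and $\tau(j)$ are interchanged; equivalently, $(\tau T_{i,j})([x])$ and $\tau([x])$ coincide \emph{as sets} unless $[x]$ contains exactly one of $i,j$, that is, unless $i\leq x<j$. (This is also the source of \eqref{eq: easy1<=a}.)

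For item (1) I would first note that the hypothesis forces $\maxperm{\tau}(x)<\maxperm{\tau T_{i,j}}(x)$ strictly, so by the observation just made we must have $i\leq x<j$. In that range $(\tau T_{i,j})([x])=\bigl(\tau([x])\setminus\{\tau(i)\}\bigr)\cup\{\tau(j)\}$; since deleting an element cannot increase a maximum, the strict inequality $\maxperm{\tau T_{i,j}}(x)>\maxperm{\tau}(x)$ can only be caused by the newly added value $\tau(j)$, so $\tau(j)>\maxperm{\tau}(x)$ and in fact $\maxperm{\tau T_{i,j}}(x)=\tau(j)$. Combining with $\tau(i)\in\tau([x])$, hence $\tau(i)\leq\maxperm{\tau}(x)$, gives $\tau(i)\leq\maxperm{\tau}(x)<y\leq\maxperm{\tau T_{i,j}}(x)=\tau(j)$, which is the assertion.

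For item (2) I would rewrite $(\tau T_{i,j})^{-1}=T_{i,j}\tau^{-1}$ and observe that $T_{i,j}\tau^{-1}$ is obtained from $\tau^{-1}$ by interchanging the two \emph{values} $i$ and $j$, which occupy the positions $\tau(i)$ and $\tau(j)$; hence $(\tau T_{i,j})^{-1}([x])$ and $\tau^{-1}([x])$ coincide as sets unless $[x]$ contains exactly one of $\tau(i),\tau(j)$. As before, the hypothesis forces a strict increase of the maximum, so $[x]$ contains exactly one of them; if it were $\tau(j)$ (and not $\tau(i)$) the value $j$ would be replaced by the smaller $i$ and the maximum could not grow, so in fact $\tau(i)\leq x<\tau(j)$. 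Then $(\tau T_{i,j})^{-1}([x])=\bigl(\tau^{-1}([x])\setminus\{i\}\bigr)\cup\{j\}$, whence $j>\maxperm{\tau^{-1}}(x)$ and $\maxperm{(\tau T_{i,j})^{-1}}(x)=j$; together with $i\in\tau^{-1}([x])$ this yields $i\leq\maxperm{\tau^{-1}}(x)<y\leq j$, i.e.\ $i<y\leq j$ and $\tau(i)\leq x<\tau(j)$, as claimed.

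I expect no serious obstacle here: the whole content is the bookkeeping above, and the only point requiring a little care is to notice that a \emph{strict} increase of the maximum pins down which of the (at most two) modified entries is responsible and forces it to equal the new maximum. Item (2) could alternatively be deduced from item (1) by a symmetry argument, but since $(\tau T_{i,j})^{-1}=T_{i,j}\tau^{-1}$ is a left, not a right, multiplication by a transposition, it is cleanest to repeat the mirror-image computation directly.
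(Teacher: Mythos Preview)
Your argument is correct and complete. The paper itself does not give a proof of this observation, stating only that it ``follows directly from~\eqref{eq: easy1<=a}''; your direct computation with the sets $\tau([x])$ and $(\tau T_{i,j})([x])$ (and their analogues for $\tau^{-1}$) is exactly the kind of elementary bookkeeping the paper is implicitly invoking.
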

For inductive arguments, the following result will be useful.
\begin{lemma} \label{lem: indsmooth}
Suppose that $\sigma\in S_n$ and $i\in [n]$ are such that $\sigma([i-1])=[i-1]$ and $\sigma(i)\geq j:=\sigma^{-1}(i)>i$.
(In particular, $\sigma(j)=i<\sigma(j-1)$.) Let $\sigma'=\sigma T_{j-1,j}$. Then,
\begin{equation} \label{eq: tblcntn}
\tbl_\Trans(\sigma')\subseteq\tbl_\Trans(\sigma)\text{ and }T_{i,j}\in\tbl_\Trans(\sigma)\setminus\tbl_\Trans(\sigma')
\end{equation}
and
\begin{equation} \label{eq: tblll}
\tbl_\Trans(\sigma')\supseteq\{T_{r,s}\in\tbl_\Trans(\sigma):r\ne j-1\text{ and }s\ne j\}.
\end{equation}
Moreover, $\sigma$ is smooth if and only if $\sigma'$ is smooth and $\tbl_\Trans(\sigma)=\tbl_\Trans(\sigma')\cup\{T_{i,j}\}$.
\end{lemma}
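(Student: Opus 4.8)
The plan is to first pin down the elementary combinatorics of the configuration, then to establish the three set-theoretic claims — the containment $\tbl_\Trans(\sigma')\subseteq\tbl_\Trans(\sigma)$, the fact that $T_{i,j}\in\tbl_\Trans(\sigma)\setminus\tbl_\Trans(\sigma')$, and the reverse inclusion \eqref{eq: tblll} — and finally to deduce the smoothness equivalence from these by a short counting argument. The tools used are the formula \eqref{eq: easy1<=a} describing when a transposition lies below a permutation, the bound $\ell\le\#\tbl_\Trans$ (an equality exactly for smooth permutations), and the standard fact that right multiplication by a simple reflexion changes $\ell$ by exactly $1$. As a preliminary I would observe that, since $\sigma([i-1])=[i-1]$ and $\sigma(j)=i$, a direct check gives $\sigma^{-1}([i])=[i-1]\cup\{j\}$, hence $\maxperm{\sigma^{-1}}(i)=j$; and, writing $v:=\sigma(j-1)$, the same reasoning gives $v\ge i+1$, so $\sigma(j)=i<v$. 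The point is that $\sigma'=\sigma T_{j-1,j}$ agrees with $\sigma$ off $\{j-1,j\}$ and merely swaps the values $i<v$ there, so that $\sigma'([r])=\sigma([r])$ for every $r\ne j-1$, while $(\sigma')^{-1}([r])=\sigma^{-1}([r])$ unless $i\le r<v$, in which case $j\in\sigma^{-1}([r])$ and $(\sigma')^{-1}([r])=(\sigma^{-1}([r])\setminus\{j\})\cup\{j-1\}$.

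For \eqref{eq: tblcntn}: since $T_{j-1,j}$ is a simple reflexion and $\sigma(j-1)=v>i=\sigma(j)$, we have $\sigma'<\sigma$, whence $\ell(\sigma')=\ell(\sigma)-1$ and $\tbl_\Trans(\sigma')\subseteq\tbl_\Trans(\sigma)$. For the membership of $T_{i,j}$: one has $\maxperm{\sigma}(i)\ge\sigma(i)\ge j$ and $\maxperm{\sigma^{-1}}(i)=j$, so $T_{i,j}\le\sigma$ by \eqref{eq: easy1<=a}; on the other hand the preliminary computation gives $(\sigma')^{-1}([i])=[i-1]\cup\{j-1\}$, so $\maxperm{(\sigma')^{-1}}(i)=j-1<j$ and $T_{i,j}\not\le\sigma'$, again by \eqref{eq: easy1<=a}.

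For \eqref{eq: tblll}: let $T_{r,s}\in\tbl_\Trans(\sigma)$ with $r\ne j-1$ and $s\ne j$; I must show $\maxperm{\sigma'}(r)\ge s$ and $\maxperm{(\sigma')^{-1}}(r)\ge s$, and then invoke \eqref{eq: easy1<=a}. The first holds because $r\ne j-1$ gives $\maxperm{\sigma'}(r)=\maxperm{\sigma}(r)\ge s$. For the second: if $r<i$ or $r\ge v$, then $(\sigma')^{-1}([r])=\sigma^{-1}([r])$, so $\maxperm{(\sigma')^{-1}}(r)=\maxperm{\sigma^{-1}}(r)\ge s$; and if $i\le r<v$, then $j\in\sigma^{-1}([r])$, so $\maxperm{\sigma^{-1}}(r)\ge j$, and either this maximum is strictly larger than $j$ — in which case it is unaffected by exchanging $j$ for $j-1$ and $\maxperm{(\sigma')^{-1}}(r)=\maxperm{\sigma^{-1}}(r)\ge s$ — or it equals $j$, in which case the hypothesis $s\ne j$ forces $s\le j-1\le\maxperm{(\sigma')^{-1}}(r)$, the last inequality because $j-1\in(\sigma')^{-1}([r])$. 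Thus $T_{r,s}\le\sigma'$ in every case. (Alternatively, the comparison of the maximal functions of $\sigma$ and $\sigma'$ can be read off from Observation~\ref{obs:sub_nsub}.)

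Finally, the equivalence. We always have $\ell\le\#\tbl_\Trans$, and the previous two steps give $\tbl_\Trans(\sigma')\subseteq\tbl_\Trans(\sigma)\setminus\{T_{i,j}\}$ together with $\ell(\sigma')=\ell(\sigma)-1$. If $\sigma$ is smooth, then $\#\tbl_\Trans(\sigma')\le\#\tbl_\Trans(\sigma)-1=\ell(\sigma)-1=\ell(\sigma')\le\#\tbl_\Trans(\sigma')$, so all of these are equalities: $\sigma'$ is smooth and $\tbl_\Trans(\sigma')=\tbl_\Trans(\sigma)\setminus\{T_{i,j}\}$. Conversely, if $\sigma'$ is smooth and $\tbl_\Trans(\sigma)=\tbl_\Trans(\sigma')\cup\{T_{i,j}\}$, then — the union being disjoint because $T_{i,j}\notin\tbl_\Trans(\sigma')$ — we get $\#\tbl_\Trans(\sigma)=\#\tbl_\Trans(\sigma')+1=\ell(\sigma')+1=\ell(\sigma)$, so $\sigma$ is smooth. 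I expect the only delicate point to be the bookkeeping in \eqref{eq: tblll}: verifying that for $i\le r<v$ the set $(\sigma')^{-1}([r])$ is obtained from $\sigma^{-1}([r])$ exactly by replacing $j$ with $j-1$, and noting that the hypothesis $s\ne j$ is precisely what rules out the single case in which this replacement could destroy the inequality $\maxperm{(\sigma')^{-1}}(r)\ge s$.
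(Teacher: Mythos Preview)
Your proof is correct and follows essentially the same approach as the paper: the containment \eqref{eq: tblcntn} and the smoothness equivalence are handled identically via $\sigma'<\sigma$ and the counting $\ell(\sigma')=\ell(\sigma)-1$, while for \eqref{eq: tblll} the paper simply invokes Observation~\ref{obs:sub_nsub} (applied to $\tau=\sigma'$ and the simple transposition $T_{j-1,j}$, yielding directly that $T_{r,s}\le\sigma$ but $T_{r,s}\not\le\sigma'$ forces $r=j-1$ or $s=j$), which is exactly the alternative you mention after your explicit case analysis of the maximal functions.
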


\begin{proof}
It is clear from the assumptions that $\sigma'<\sigma$ and $T_{i,j}\in\tbl_\Trans(\sigma)\setminus\tbl_\Trans(\sigma')$.
Hence, \eqref{eq: tblcntn}.

The inclusion \eqref{eq: tblll} follows from Observation \ref{obs:sub_nsub} and \eqref{eq: easy1<=a}.

Consider the second part.
Note that by our assumptions, $\ell(\sigma')=\ell(\sigma)-1$.
Since $\#\tbl_\Trans(\sigma')\geq\ell(\sigma')$, it follows from \eqref{eq: tblcntn} that the two conditions are equivalent.
\end{proof}

\begin{corollary}
Let $e\ne\sigma\in (S_n)_\smth$. Then, there exists $\sigma'\in (S_n)_\smth$ such that
$\ell(\sigma')=\ell(\sigma)-1$ and at least one of $\sigma^{-1}\sigma'$ or $\sigma'\sigma^{-1}$ is a simple reflection.
\end{corollary}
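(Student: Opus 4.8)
The plan is to deduce the statement directly from Lemma~\ref{lem: indsmooth}; essentially all that is needed is to arrange a choice of $i$ for which its hypotheses hold, and here the minimal non-fixed point of $\sigma$, together with the inversion symmetry, does the job.

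First I would pick $i\in[n]$ minimal with $\sigma(i)\ne i$, which exists since $\sigma\ne e$. As $\sigma$ fixes $1,\dots,i-1$ pointwise, we have $\sigma([i-1])=[i-1]$, and consequently both $\sigma(i)>i$ and $\sigma^{-1}(i)>i$, since the values $1,\dots,i-1$ occupy exactly the positions $1,\dots,i-1$. Now, up to replacing $\sigma$ by $\sigma^{-1}$ --- which is legitimate because $\ell$, the Bruhat order, and hence smoothness are preserved under inversion, while the two factorizations $\sigma^{-1}\sigma'$ and $\sigma'\sigma^{-1}$ are interchanged by this substitution --- we may assume $\sigma(i)\ge\sigma^{-1}(i)$. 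Setting $j:=\sigma^{-1}(i)>i$, we then have $\sigma(j)=i$ and $\sigma(i)\ge j$, which are precisely the hypotheses of Lemma~\ref{lem: indsmooth} (the inequality $\sigma(j-1)>i=\sigma(j)$ recorded there being automatic, as above, so that $T_{j-1,j}$ is a descent of $\sigma$).

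Then I would take $\sigma'=\sigma T_{j-1,j}$. By Lemma~\ref{lem: indsmooth}, $\sigma'$ is smooth (since $\sigma$ is) and $\ell(\sigma')=\ell(\sigma)-1$, while $\sigma^{-1}\sigma'=T_{j-1,j}$ is a simple reflection; undoing the substitution of $\sigma^{-1}$ for $\sigma$ in the complementary case produces instead $\sigma'=T_{j-1,j}\sigma$ (smooth and of length $\ell(\sigma)-1$ by the same reasoning applied to $\sigma^{-1}$), so that $\sigma'\sigma^{-1}$ is the simple reflection. Either way $\sigma'$ has the desired properties. There is no genuine obstacle beyond checking the hypotheses of the cited lemma; the one point deserving care is the inversion step, which is exactly what accounts for the ``$\sigma^{-1}\sigma'$ or $\sigma'\sigma^{-1}$'' alternative in the conclusion.
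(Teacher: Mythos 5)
Your proposal is correct and is essentially the paper's own proof: take the minimal non-fixed point $i$, pass to $\sigma^{-1}$ if necessary so that $j=\sigma^{-1}(i)\le\sigma(i)$, and apply Lemma~\ref{lem: indsmooth} to $\sigma'=\sigma T_{j-1,j}$. The only difference is that you spell out the verification of the hypotheses and the undoing of the inversion, which the paper leaves implicit.
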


\begin{proof}
Let $i$ be the minimal non-fixed point of $\sigma$. Replacing $\sigma$ by $\sigma^{-1}$ if necessary, we may assume that
$j:=\sigma^{-1}(i)\leq \sigma(i)$.
Note that $j>i$.
Then $\sigma':=\sigma T_{j-1,j}$ is smooth by Lemma \ref{lem: indsmooth}, $\sigma^{-1}\sigma'=T_{j-1,j}$ is a simple reflection and $\ell(\sigma')=\ell(\sigma)-1$.
\end{proof}

\begin{example}
The permutation $\sigma=(365214)$ is smooth but there is no $i\in[5]$ such that $\sigma(i)>\sigma(i+1)$ and $\sigma T_{i,i+1}$ is smooth.
\end{example}

\section{Wedges and derived sets}\label{sec:wedges}

In this section we define a few technical notions that are useful for future inductive proofs.

\subsection{}
Given a subset $A\subseteq\Special$ we will write for simplicity $A_\Trans=A\cap\Trans$.
Note that $(\tbl(\sigma))_\Trans=\tbl_\Trans(\sigma)$.

\begin{observation}\label{obs:adm_subset}
If $A_1\subseteq A_2$ are {\adm} sets and $(A_1)_\Trans=(A_2)_\Trans$, then necessarily $A_1=A_2$.
\end{observation}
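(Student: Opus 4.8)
The plan is to argue by contradiction: assume $A_1\subsetneq A_2$ are both {\adm} with $(A_1)_\Trans=(A_2)_\Trans$, and pick $\sigma\in A_2\setminus A_1$. Since the two sets have the same transpositions, $\sigma\notin\Trans$, so $\sigma=R_{i,j,k}$ or $\sigma=L_{i,j,k}$ for some $i<j<k$. Inverting a set preserves {\admis} and does not change its set of transpositions (each $T_{i,j}$ being an involution), and $L_{i,j,k}=R_{i,j,k}^{-1}$; so after replacing $A_1,A_2$ by $A_1^{-1},A_2^{-1}$ if necessary, I may assume $\sigma=R_{i,j,k}$.

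The next step is to locate the transpositions that $\sigma$ forces into both sets. By \eqref{eq: easy<=} we have $T_{i,j}\leq R_{i,j,k}$ and $T_{j,k}\leq R_{i,j,k}$, so downward closure \eqref{item: adm1} of $A_2$ gives $T_{i,j},T_{j,k}\in A_2$, hence $T_{i,j},T_{j,k}\in(A_2)_\Trans=(A_1)_\Trans\subseteq A_1$. Applying the third {\admis} axiom \eqref{item: adm3} to $A_1$, at least one of $R_{i,j,k}$ and $L_{i,j,k}$ belongs to $A_1$; since $R_{i,j,k}=\sigma\notin A_1$, we conclude $L_{i,j,k}\in A_1$.

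Finally I would close the loop. From $L_{i,j,k}\in A_1\subseteq A_2$ and $R_{i,j,k}\in A_2$, axiom \eqref{item: adm2} gives $T_{i,k}\in A_2$, so $T_{i,k}\in(A_2)_\Trans=(A_1)_\Trans\subseteq A_1$. But $R_{i,j,k}\leq T_{i,k}$ by \eqref{eq: easy<=}, so downward closure of $A_1$ forces $R_{i,j,k}\in A_1$, contradicting the choice of $\sigma$. Hence no such $\sigma$ exists and $A_1=A_2$.

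The argument is short and I do not expect a genuine obstacle; the only thing that requires care is bookkeeping — making sure every one of the three {\admis} conditions is invoked for the right set (downward closure for $A_2$ and then again for $A_1$; \eqref{item: adm3} for $A_1$; \eqref{item: adm2} for $A_2$) and that each Bruhat comparison is read off correctly from \eqref{eq: easy<=}. Conceptually, the point worth isolating is that inside an {\adm} set a $3$-cycle $R_{i,j,k}$ is pinned: it lies above the transpositions $T_{i,j},T_{j,k}$ which, by \eqref{item: adm3}, force a $3$-cycle on $\{i,j,k\}$ into the set, and once its companion $L_{i,j,k}$ is present as well, \eqref{item: adm2} together with downward closure force in $T_{i,k}$ and hence $R_{i,j,k}$ itself — so $R_{i,j,k}$ cannot be toggled without changing the set of transpositions. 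The same remark applied with $R$ and $L$ interchanged covers the other case, which is why the reduction to $\sigma=R_{i,j,k}$ is harmless.
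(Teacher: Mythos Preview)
Your proof is correct and is precisely the intended argument: the paper states this as an Observation without proof, relying on the remark after Definition~\ref{def:adm} that $T_{i,k}\in A$ if and only if both $R_{i,j,k}$ and $L_{i,j,k}$ lie in $A$, which is exactly the mechanism you unwind via \eqref{item: adm1}--\eqref{item: adm3}. The WLOG reduction by inversion is harmless but unnecessary, since the argument is symmetric in $R$ and $L$.
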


\begin{definition} \label{def: wedge}
Suppose that $A\subseteq\Special$ is {\adm} and $T=T_{i,j}\in A_\Trans$.
Then, we say that $T$ is a wedge for $A$ if $T_{i-1,i}\notin A$ (or $i=1$) and $R_{i,j,j+1}\notin A$ (or $j=n$).
\end{definition}

\begin{observation}\label{obs:derived_wedges}
If $T_{i,j}$ is a wedge for $A$, then $\{T\in A_\Trans: T(i)\neq i\}=\{T_{i,r}\}_{r=i+1}^j$.
\end{observation}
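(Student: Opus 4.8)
The plan is to prove the two inclusions separately and to observe that both of them follow from downward closure \eqref{item: adm1} of $A$ together with the two conditions in the definition of a wedge (Definition \ref{def: wedge}); no other consequence of {\admis} is needed. Throughout I would use only the explicit comparison tables \eqref{eq: easy<=} among elements of $\Special$.

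For the inclusion $\{T\in A_\Trans:T(i)\neq i\}\subseteq\{T_{i,r}\}_{r=i+1}^{j}$, I would first note that a transposition moving $i$ is either of the form $T_{r,i}$ with $r<i$ or of the form $T_{i,s}$ with $s>i$. To rule out the former: if $T_{r,i}\in A$ with $r<i$, then by \eqref{eq: east<=a} we have $T_{i-1,i}\leq T_{r,i}$, so downward closure would give $T_{i-1,i}\in A$, contradicting the wedge condition (this case being vacuous when $i=1$). To rule out $T_{i,s}\in A$ with $s>j$: since $j+1\leq s$, \eqref{eq: east<=a} gives $T_{i,j+1}\leq T_{i,s}$, hence $T_{i,j+1}\in A$ by downward closure; but then by \eqref{eq: easy<=} we have $R_{i,j,j+1}\leq T_{i,j+1}$ (as $T_{i,j+1}\leq T_{i,j+1}$), so $R_{i,j,j+1}\in A$, again contradicting the wedge condition (vacuous when $j=n$). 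What remains are exactly the transpositions $T_{i,r}$ with $i<r\leq j$.

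For the reverse inclusion I would simply use $T_{i,j}\in A_\Trans$: for every $i<r\leq j$, \eqref{eq: east<=a} yields $T_{i,r}\leq T_{i,j}$, so $T_{i,r}\in A$ by downward closure, and of course $T_{i,r}(i)\neq i$. Combining the two inclusions gives the claimed equality.

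Since every comparison invoked is an immediate instance of the tables in \eqref{eq: easy<=} and the only structural input is downward closure, there is no real obstacle in this argument; the only point requiring a little care is bookkeeping of the boundary cases $i=1$ and $j=n$, in which the relevant half of the wedge condition is replaced by an emptiness statement and the corresponding step above becomes vacuous rather than a genuine contradiction.
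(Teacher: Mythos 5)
Your argument is correct and is exactly the reasoning the paper leaves implicit in this Observation: downward closure of $A$ together with the two wedge conditions ($T_{i-1,i}\notin A$ unless $i=1$, and $R_{i,j,j+1}\notin A$ unless $j=n$), applied via the comparison table \eqref{eq: easy<=}, with the boundary cases handled as vacuous. Nothing further is needed.
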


\begin{remark} \label{rem: existwedge}
Note that the definition of wedge is not symmetric with respect to either $A\mapsto A^{-1}$, $A\mapsto w_0Aw_0$, or $A\mapsto w_0A^{-1}w_0$.
In other words, if $T$ is a wedge for $A$, then $T$ is not necessarily a wedge for $A^{-1}$,
nor is $w_0Tw_0$ a wedge for either $w_0Aw_0$ or $w_0A^{-1}w_0$ in general.
However, for every non-empty {\adm} $A$, at least one of $A$ or $A^{-1}$ has a wedge.
Namely, if $T_{i,j}\in A_\Trans$ with $i$ minimal and $j$ maximal (with respect to this $i$), then $T_{i,j}$ is a wedge
for $A$ or $A^{-1}$ (or both).
\end{remark}

We record some simple properties of wedges in the following lemma.
\begin{lemma} \label{lem: simplewedge}
Suppose that $A\subseteq\Special$ is {\adm} and $T_{i,j}$ is a wedge for $A$.
Then,
\begin{enumerate}
\item For every $k>j$, $L_{i,j,k}\in A$ if and only if $T_{j,k}\in A$.
\item \label{item: RiklnotinA}
$R_{i,k,l}\notin A$ for all $i<k\leq j<l$.
\end{enumerate}
\end{lemma}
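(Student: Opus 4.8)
The plan is to unwind the definitions using the Bruhat-order criteria in \eqref{eq: easy1<=a} and \eqref{eq:for_wedge1<=}, letting $A=\tbl(\sigma)$ for some $\sigma\in S_n$ (admissibility lets us reduce to this case via Theorem \ref{thm:bijection}, or we may argue purely combinatorially from the definition of \adm). The wedge hypothesis on $T_{i,j}$ gives two ingredients: $T_{i-1,i}\notin A$ (or $i=1$), which by \eqref{eq:wedgeT} forces $\sigma([i-1])=[i-1]$; and $R_{i,j,j+1}\notin A$ (or $j=n$), which by \eqref{eq:wedgeR} says we do \emph{not} have both $\maxperm{\sigma}(i)\geq j$ and $\maxperm{\sigma^{-1}}(i)\geq j+1$. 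Since $T_{i,j}\in A$ means $\maxperm\sigma(i)\geq j$ and $\maxperm{\sigma^{-1}}(i)\geq j$, the failure of $R_{i,j,j+1}\leq\sigma$ must come from $\maxperm{\sigma^{-1}}(i)<j+1$, i.e. $\maxperm{\sigma^{-1}}(i)=j$. Symmetrically, Observation \ref{obs:derived_wedges} tells us $\{T\in A_\Trans:T(i)\neq i\}=\{T_{i,i+1},\dots,T_{i,j}\}$, which already pins down a lot: $\maxperm\sigma(i)$ and $\maxperm{\sigma^{-1}}(i)$ are both exactly $j$ in fact — wait, I should be careful, $\maxperm\sigma(i)$ could be larger; but $T_{i,\maxperm\sigma(i)}\leq\sigma$, so $\maxperm\sigma(i)\leq j$, hence $\maxperm\sigma(i)=j$ too. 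So the clean picture is $\maxperm\sigma(i)=\maxperm{\sigma^{-1}}(i)=j$, combined with $\sigma([i-1])=[i-1]$.

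For part (1), I would prove both implications. The forward direction $L_{i,j,k}\in A\Rightarrow T_{j,k}\in A$ is essentially automatic: by \eqref{eq: easy<=}, $T_{j,k}\leq L_{i,j,k}$ (since $i<j$), so downward closure of $A$ gives $T_{j,k}\in A$ — actually I should double-check against \eqref{eq: easy<=}, the relevant line is $T_{x,y}\leq L_{i,j,k}\iff T_{x,y}\leq T_{i,j}$ or $T_{x,y}\leq T_{j,k}$, and $T_{j,k}\leq T_{j,k}$ holds, so yes. For the converse $T_{j,k}\in A\Rightarrow L_{i,j,k}\in A$ with $k>j$: translate via \eqref{eq: easy1<=a}, $L_{i,j,k}\leq\sigma\iff\maxperm{\sigma^{-1}}(i)\geq j$, $\maxperm{\sigma^{-1}}(j)\geq k$, and $\maxperm\sigma(i)\geq k$. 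From the wedge analysis $\maxperm{\sigma^{-1}}(i)=j\geq j$ holds, and $T_{j,k}\leq\sigma$ gives $\maxperm{\sigma^{-1}}(j)\geq k$. The remaining condition $\maxperm\sigma(i)\geq k$ is where I need to work: I claim $\maxperm\sigma(i)\geq\maxperm\sigma(j)\wedge(\text{something})$... more precisely, since $\sigma([i-1])=[i-1]$ and $\maxperm\sigma(i)=j$, the values $\sigma(1),\dots,\sigma(i)$ are exactly $\{1,\dots,i-1,j\}$, so $\sigma(i)=j$. Then $T_{j,k}\leq\sigma$ gives $\maxperm\sigma(j)\geq k$, meaning some $\sigma(m)\geq k$ with $m\leq j$; since $\sigma(i)=j<k$ we need $m\neq i$, and since $\sigma([i-1])=[i-1]$ we need $m>i$ — but that only gives $\maxperm\sigma(j)\geq k$, not $\maxperm\sigma(i)\geq k$. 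So in fact $L_{i,j,k}\leq\sigma$ would \emph{fail} unless $k\leq j$... This suggests part (1) as stated, with $k>j$, should be read together with the hypothesis — let me re-examine: actually $\maxperm\sigma(i)=j<k$, so $L_{i,j,k}\not\leq\sigma$, hence $L_{i,j,k}\notin A$, so the statement (1) would claim $T_{j,k}\notin A$ — but that's false in general. I must be misreading Observation \ref{obs:derived_wedges} or the wedge consequence; the resolution is surely that $\maxperm{\sigma^{-1}}(i)=j$ but $\maxperm\sigma(i)$ need NOT equal $j$ — the bound $T_{i,r}\in A$ for $r\leq j$ only constrains $\min(\maxperm\sigma(i),\maxperm{\sigma^{-1}}(i))$, so $\maxperm\sigma(i)$ can be large. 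Good — so I retract that: we have $\maxperm{\sigma^{-1}}(i)=j$ and $\maxperm\sigma(i)\geq j$ arbitrary. Then for (1): $L_{i,j,k}\leq\sigma$ needs $\maxperm{\sigma^{-1}}(i)\geq j$ (true, $=j$), $\maxperm{\sigma^{-1}}(j)\geq k$ (equivalent to $T_{j,k}\leq\sigma$ combined with... no: $T_{j,k}\leq\sigma\iff\maxperm\sigma(j)\geq k$ and $\maxperm{\sigma^{-1}}(j)\geq k$), and $\maxperm\sigma(i)\geq k$. So I need: $T_{j,k}\in A$ and the wedge data imply $\maxperm\sigma(i)\geq k$. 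Since $T_{j,k}\leq\sigma$ gives $\maxperm\sigma(j)\geq k$, i.e. $\sigma(m)\geq k$ for some $m\leq j$; as $\sigma([i-1])=[i-1]$ and $k>j>i-1$, we get $m\geq i$, so $\maxperm\sigma(i)\geq\max_{i\leq m'\leq j}\sigma(m')\geq k$. That closes it.

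For part (2), I want $R_{i,k,l}\notin A$ for all $i<k\leq j<l$. By \eqref{eq: easy1<=a}, $R_{i,k,l}\leq\sigma\iff\maxperm\sigma(i)\geq k$, $\maxperm\sigma(k)\geq l$, and $\maxperm{\sigma^{-1}}(i)\geq l$. The last condition already fails: $\maxperm{\sigma^{-1}}(i)=j<l$. Hence $R_{i,k,l}\not\leq\sigma$, so $R_{i,k,l}\notin A$. I should also handle the purely combinatorial route not passing through a realizing $\sigma$: if one prefers to argue from \adm{}ity directly, use that $T_{i,k}\leq R_{i,k,l}$ and $R_{i,k,l}\leq R_{i,k,l}$ with the comparison list \eqref{eq: easy<=} to show $R_{i,k,l}\in A$ would force (via \eqref{item: adm3} applied to $T_{i,k},T_{k,l}$, or via downward closure chasing $R_{i,j,j+1}$) a contradiction with the wedge hypotheses; concretely, $R_{i,k,l}\in A$ and downward closure would yield $R_{i,j,j+1}\in A$ by the last line of \eqref{eq: easy<=} when $k\leq j$, $j+1\leq l$ — since $R_{i,j,j+1}\leq R_{i,k,l}$ holds exactly when $T_{i,j+1}\leq T_{i,k}$ (no, $k\leq j<j+1$ so that fails) or $T_{i,j+1}\leq T_{k,l}$ (needs $k\leq i$, fails) or $i\leq i<j=k<j+1\leq l$ (needs $k=j$); so this direct argument only works cleanly when $k=j$, and for $k<j$ one iterates or invokes the realizing permutation. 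The main obstacle, then, is not any single step but choosing the cleanest framework: I expect working with a realizing $\sigma=\pi(A)$ and the explicit $\maxperm{}$ translations in \eqref{eq: easy1<=a}–\eqref{eq:for_wedge1<=} to be the path of least resistance, with the one genuinely non-trivial point being the deduction $\maxperm\sigma(i)\geq k$ in part (1) from $\maxperm\sigma(j)\geq k$ together with $\sigma([i-1])=[i-1]$ — i.e. that the witness index for $\maxperm\sigma(j)$ cannot lie in $[i-1]$.
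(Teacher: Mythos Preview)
Your approach has two genuine problems.

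First, \textbf{circularity}. You propose to realize $A=\tbl(\sigma)$ via Theorem~\ref{thm:bijection}. But this lemma is a building block for that theorem: it feeds into Lemma~\ref{lem: simplederived}, which underlies the whole derived-set machinery used to prove Propositions~\ref{propos:order} and~\ref{propos:bijection}. You cannot invoke Theorem~\ref{thm:bijection} here. You do flag ``or we may argue purely combinatorially,'' but you never actually carry that out.

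Second, \textbf{a wrong inequality} in part~(1). In the converse direction you need $\maxperm\sigma(i)\geq k$, and you write
\[
\maxperm\sigma(i)\ \geq\ \max_{i\leq m'\leq j}\sigma(m')\ \geq\ k.
\]
This is false: by definition $\maxperm\sigma(i)=\max_{m'\leq i}\sigma(m')$, which in your setup equals $\sigma(i)$, and there is no reason this dominates $\max_{i\leq m'\leq j}\sigma(m')$. (The inequality $\sigma(i)\geq k$ does hold for \emph{smooth} $\sigma$, but proving it requires a $3412$-avoidance argument you have not given --- and in any case would still be circular.)

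The paper's proof avoids all this by working directly from the three admissibility axioms; no realizing $\sigma$ is needed. For part~(1): since $T_{i,j},T_{j,k}\in A$, axiom~\eqref{item: adm3} gives $R_{i,j,k}\in A$ or $L_{i,j,k}\in A$. But $R_{i,j,j+1}\leq R_{i,j,k}$ by \eqref{eq: easy<=} (the case $x\leq i<j=y<k\leq z$), so $R_{i,j,k}\in A$ would force $R_{i,j,j+1}\in A$ by \eqref{item: adm1}, contradicting the wedge hypothesis. Hence $L_{i,j,k}\in A$. For part~(2): if $R_{i,k,l}\in A$ with $i<k\leq j<l$, then $R_{i,k,j+1}\in A$ by downward closure; also $T_{j,j+1}\leq T_{k,l}\leq R_{i,k,l}$ gives $T_{j,j+1}\in A$, whence $L_{i,j,j+1}\in A$ by part~(1). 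Now axiom~\eqref{item: adm2} applied to $R_{i,k,j+1}$ and $L_{i,j,j+1}$ forces $T_{i,j+1}\in A$, contradicting Observation~\ref{obs:derived_wedges}. Your attempted combinatorial route for part~(2) stalled precisely because you tried to compare $R_{i,j,j+1}$ directly with $R_{i,k,l}$; the missing idea is to pass through $R_{i,k,j+1}$ and use~\eqref{item: adm2}.
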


\begin{proof}
Suppose that $k>j$. If $L_{i,j,k}\in A$, then $T_{j,k}\in A$, by \eqref{eq: easy<=} and \eqref{item: adm1}.
If $T_{j,k}\in A$, then $L_{i,j,k}\in A$ or $R_{i,j,k}\in A$, by \eqref{item: adm3}, but $R_{i,j,k}\notin A$ by \eqref{eq: easy<=}
and \eqref{item: adm1}, since $R_{i,j,j+1}\notin A$. This proves the first part.
The second part holds since otherwise we would have $R_{i,k,j+1}\in A$ in contradiction to the property \eqref{item: adm2} for $A$
and the fact that $L_{i,j,j+1}\in A$ (by the first part).
\end{proof}

\subsection{}
\begin{definition} \label{def: derived set}
Suppose that $A\subseteq\Special$ is {\adm} and $T=T_{i,j}$ is a wedge for $A$.
Then, the derived set of $A$ with respect to $T$ is
$$
A':=A\setminus\big(\{T\}\cup\{L_{i,j,k}:k>j\}\cup\{R_{i,k,j}:i<k<j\}\big).
$$
In particular,
$$
A'_{\Trans}=A_\Trans\setminus\{T\}.
$$
\end{definition}

We record some simple properties of derived sets in the following lemma.
\begin{lemma} \label{lem: simplederived}
Suppose that $A\subseteq\Special$ is {\adm} and $T=T_{i,j}$ is a wedge for $A$.
Then,
\begin{enumerate}
\item \label{item:recover}
$A=A'\cup\{T\}\cup\{L_{i,j,k}:T_{j,k}\in A'\}\cup\{R_{i,k,j}:i<k<j\}$.
In particular, $A$ is determined by $T$ and $A'$.
\item \label{item: LkjlinA'}
$L_{k,j,l}\in A'$ for every $i<k<j<l$ such that $T_{j,l}\in A$.
\item $A'$ is {\adm}.
\item If $j>i+1$, then $T_{i,j-1}$ is a wedge for $A'$.
\end{enumerate}
\end{lemma}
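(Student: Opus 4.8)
The plan is to verify the four items in order, using the explicit description of $A'$ from Definition \ref{def: derived set} together with Observation \ref{obs:derived_wedges} and Lemma \ref{lem: simplewedge}. For item \eqref{item:recover}, one direction is immediate: the right-hand side is contained in $A$ because $A'\subseteq A$, $T\in A$, each $L_{i,j,k}$ with $T_{j,k}\in A'\subseteq A$ lies in $A$ by condition \eqref{item: adm3} (recalling $R_{i,j,k}\notin A$ by Lemma \ref{lem: simplewedge}\eqref{item: RiklnotinA} or \eqref{eq: easy<=}, so the missing member must be $L_{i,j,k}$), and each $R_{i,k,j}$ with $i<k<j$ lies in $A$ since $T_{i,j}\in A$ forces $R_{i,k,j},L_{i,k,j}\in A$ by \eqref{eq: easy<=} and \eqref{item: adm1}. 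Conversely, comparing with the definition of $A'$, the only elements of $A$ possibly removed are $T$, the $L_{i,j,k}$ with $k>j$, and the $R_{i,k,j}$ with $i<k<j$; the $R$'s are all restored, $T$ is restored, and an $L_{i,j,k}\in A$ with $k>j$ satisfies $T_{j,k}\in A$ by \eqref{eq: easy<=}, hence $T_{j,k}\in A'$ since $T_{j,k}\ne T$ and $T_{j,k}$ is not of the forbidden cycle types, so $L_{i,j,k}$ is restored. This also yields the last sentence of \eqref{item:recover}.

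For item \eqref{item: LkjlinA'}, fix $i<k<j<l$ with $T_{j,l}\in A$. Since $T_{i,j}\in A$ and $T_{j,l}\in A$, condition \eqref{item: adm3} gives $R_{i,j,l}\in A$ or $L_{i,j,l}\in A$; but by Lemma \ref{lem: simplewedge}\eqref{item: RiklnotinA} (with the roles $i<j\le j<l$) we have $R_{i,j,l}\notin A$, hence $L_{i,j,l}\in A$, and then $R_{k,j,l}\notin A$ would contradict \eqref{item: adm2} applied to $R_{k,j,l}$... more directly: from $L_{i,j,l}\in A$ and \eqref{eq: easy<=} we get $L_{k,j,l}\le L_{i,j,l}$ (as $i<k$), so $L_{k,j,l}\in A$ by \eqref{item: adm1}. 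Finally $L_{k,j,l}\in A'$ because $L_{k,j,l}$ is not $T$, not of the form $L_{i,j,\cdot}$ (its first index is $k\ne i$), and not of the form $R_{i,\cdot,j}$; so none of the removed families contains it.

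For item (3), {\admis} of $A'$: downward closure follows because if $\tau\in A'$, $\rho\in\Special$, $\rho\le\tau$, then $\rho\in A$; one checks $\rho$ is not among the removed elements, the only delicate case being $\rho=L_{i,j,k}$ with $k>j$ below some $\tau\in A'$, which by \eqref{eq: easy<=} forces $T_{i,k}\le\tau$ or already handled cases, and one argues $\tau$ would then have to be a removed element too --- this case analysis using \eqref{eq: easy<=} is where the bookkeeping is heaviest. Condition \eqref{item: adm2} for $A'$: if $R_{a,b,d},L_{a,c,d}\in A'$ then they lie in $A$, so $T_{a,d}\in A$; we must show $T_{a,d}\in A'$, i.e. $T_{a,d}\ne T_{i,j}$, which holds because $L_{a,c,d}\in A'$ excludes $a=i$ (an $L_{i,\cdot,\cdot}$ with third index $>j$ is removed, and $d=j$ is impossible since $c<d$ would put... one checks $a=i,d=j$ cannot occur for an element of $A'$). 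Condition \eqref{item: adm3} for $A'$: if $T_{a,b},T_{b,d}\in A'$ with $a<b<d$, then both are in $A$, so $R_{a,b,d}\in A$ or $L_{a,b,d}\in A$; if $a\ne i$ this element is automatically in $A'$, while if $a=i$ then by Observation \ref{obs:derived_wedges} we have $b\le j$; the subcase $a=i,b=j$ is impossible since $T_{j,d}\in A'$ with $d>j$ would force (by Lemma \ref{lem: simplewedge}(1)) $L_{i,j,d}\in A$, contradicting $T_{i,j}\notin A'$... rather, it simply cannot be that $T_{i,b},T_{b,d}$ are both in $A'$ with $b=j$ because $T_{i,j}=T\notin A'$; and for $i=a<b<j$ one uses item \eqref{item: LkjlinA'} to produce $L_{a,b,d}\in A'$ when $T_{b,d}\in A$. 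The main obstacle is precisely this last verification --- threading the wedge hypothesis ($T_{i-1,i},R_{i,j,j+1}\notin A$) through condition \eqref{item: adm3} for $A'$ in the case $a=i$.

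For item (4), suppose $j>i+1$; we must show $T_{i,j-1}$ is a wedge for $A'$. First $T_{i,j-1}\in A$ since $T_{i,j-1}\le T_{i,j}\in A$, and $T_{i,j-1}\ne T$, and it is not of the removed cycle types, so $T_{i,j-1}\in A'_\Trans$. Now $T_{i-1,i}\notin A'$ because $T_{i-1,i}\notin A\supseteq A'$ (or $i=1$). It remains to show $R_{i,j-1,j}\notin A'$ (or $j-1=n$, impossible here). But $R_{i,j-1,j}=R_{i,k,j}$ with $k=j-1$ satisfying $i<k<j$ (using $j>i+1$), which is exactly one of the families explicitly removed in the definition of $A'$. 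Hence $R_{i,j-1,j}\notin A'$, completing the proof.
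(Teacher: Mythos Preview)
Your approach is essentially the paper's: verify items (1), (2), (4) directly from the definitions, and for item (3) check axioms \eqref{item: adm1}--\eqref{item: adm3} for $A'=A\setminus B$ where $B=\{T\}\cup\{L_{i,j,k}:k>j\}\cup\{R_{i,k,j}:i<k<j\}$. Items (1), (2), and (4) are handled correctly. However, your verification of \eqref{item: adm2} and \eqref{item: adm3} for $A'$ contains two slips.

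For \eqref{item: adm2}: you try to exclude $(a,d)=(i,j)$ via the hypothesis $L_{a,c,d}\in A'$, but this does not work --- the only $L$-cycles in $B$ have \emph{middle} index $j$, so $L_{i,c,j}$ with $i<c<j$ is perfectly allowed in $A'$. The exclusion comes instead from the other hypothesis $R_{a,b,d}\in A'$: if $(a,d)=(i,j)$ then $R_{i,b,j}\in B$ by definition, contradicting $R_{i,b,j}\in A'$. Hence $T_{a,d}\ne T$ and $T_{a,d}\in A'$.

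For \eqref{item: adm3}: your appeal to item \eqref{item: LkjlinA'} in the case $a=i$, $i<b<j$ is misplaced --- that item produces elements of the form $L_{k,j,l}$ (middle index $j$, first index $\ne i$), whereas you need $L_{i,b,d}$. The direct argument is this: $L_{i,b,d}\notin B$ since its middle index is $b\ne j$, so it suffices to show $L_{i,b,d}\in A$. If $d\le j$ then $T_{i,d}\in A$, whence $L_{i,b,d}\le T_{i,d}$ gives $L_{i,b,d}\in A$. If $d>j$ then $R_{i,b,d}\notin A$ by Lemma \ref{lem: simplewedge}\eqref{item: RiklnotinA}, so \eqref{item: adm3} for $A$ forces $L_{i,b,d}\in A$. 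Either way $L_{i,b,d}\in A'$. (The paper dispatches both \eqref{item: adm2} and \eqref{item: adm3} as ``immediate'' for exactly this reason: every $3$-cycle in $B$ has first index $i$, and the problematic configurations force one of the \emph{given} hypotheses to lie in $B$.)
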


\begin{proof}
The first two parts follow easily from the first part of Lemma \ref{lem: simplewedge}.

Write $A'=A\setminus B$ where
\[
B=\{T\}\cup\{L_{i,j,k}:k>j\}\cup\{R_{i,k,j}:i<k<j\}.
\]
To check that $A'$ is {\adm}, we check the properties \eqref{item: adm1}--\eqref{item: adm3}.

For \eqref{item: adm1} we need to check that if $\tau\leq\sigma$ for some $\tau\in B$ and $\sigma\in\Special$,
then $\sigma\in B$ or $\sigma\notin A$. This follows from the table \eqref{eq: easy<=} and the second part of Lemma \ref{lem: simplewedge}.

The properties \eqref{item: adm2},\eqref{item: adm3} are immediate.

If $j>i+1$, then clearly $R_{i,j-1,j}\notin A'$.
Also, $T_{i,j-1}\in A'$ since $T_{i,j-1}\in A$.
Finally, $T_{i-1,i}\notin A$ (or $i=1$), since $T$ is a wedge for $A$, so in particular $T_{i-1,i}\notin A'$ (or $i=1$) and the last part follows.
\end{proof}

\begin{lemma}\label{lem: indsmooth_continuation}
Let $\sigma\in S_n$ be such that $\tbl(\sigma)$ is {\adm}, and let $1\leq i<j\leq n$. Then,
\begin{enumerate}
\item $T_{i,j}$ is a wedge for $\tbl(\sigma)$ if and only if $\sigma([i-1])=[i-1]$ and $\sigma(i)\geq j=\sigma^{-1}(i)$.
\label{part: wedget}
\item Suppose that $\sigma\in (S_n)_\smth$ and $T_{i,j}$ is a wedge for $\tbl(\sigma)$. Let $\sigma':=\sigma T_{j-1,j}$. Then,
$$\left(\tbl(\sigma)\right)'=\tbl(\sigma')$$
(where the derived set is taken with respect to $T_{i,j}$).
\end{enumerate}
\end{lemma}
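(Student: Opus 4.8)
The plan is to prove the two parts in order, using the combinatorial description of the Bruhat order via the maximal functions $\maxperm{\sigma}$, $\maxperm{\sigma^{-1}}$ together with the explications \eqref{eq: easy1<=} and \eqref{eq:for_wedge1<=}. For part \eqref{part: wedget}, recall that $T_{i,j}$ is a wedge for $\tbl(\sigma)$ means $T_{i,j}\in\tbl_\Trans(\sigma)$ while $T_{i-1,i}\notin\tbl(\sigma)$ (or $i=1$) and $R_{i,j,j+1}\notin\tbl(\sigma)$ (or $j=n$). By \eqref{eq:wedgeT}, the condition $T_{i-1,i}\notin\tbl(\sigma)$ is exactly $\sigma([i-1])=[i-1]$, which forces $\sigma(i)>i$ and $\sigma^{-1}(i)>i$. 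Under this hypothesis, $T_{i,j}\leq\sigma$ becomes, via \eqref{eq: easy1<=a}, the pair of inequalities $\maxperm{\sigma}(i)\geq j$ and $\maxperm{\sigma^{-1}}(i)\geq j$; and since $\sigma([i-1])=[i-1]$, we have $\maxperm{\sigma}(i)=\sigma(i)$ and $\maxperm{\sigma^{-1}}(i)=\sigma^{-1}(i)$, so this says $\sigma(i)\geq j$ and $\sigma^{-1}(i)\geq j$. Finally I will show that, given $\sigma([i-1])=[i-1]$ and $\sigma(i),\sigma^{-1}(i)\geq j$, the condition $R_{i,j,j+1}\notin\tbl(\sigma)$ is equivalent to forcing $\sigma^{-1}(i)=j$ (together with $\sigma(i)\ge j$): by \eqref{eq:wedgeR}, $R_{i,j,j+1}\leq\sigma\iff\maxperm{\sigma}(i)\geq j$ and $\maxperm{\sigma^{-1}}(i)\geq j+1$, i.e. $\sigma(i)\geq j$ and $\sigma^{-1}(i)\geq j+1$; so its negation, given $\sigma(i)\geq j$, is exactly $\sigma^{-1}(i)=j$. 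Assembling these gives the stated equivalence; I should also check the converse direction and the boundary cases $i=1$, $j=n$ separately, which are immediate from the same computations. The parenthetical remark $\sigma(j)=i<\sigma(j-1)$ follows since $\sigma^{-1}(i)=j$ and $\sigma(j-1)\neq i$ while $\sigma([j-1])\supseteq\{i\}\cup([i-1])$ forces $\sigma(j-1)>i$; I may cite Lemma \ref{lem: indsmooth} for this setup.

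For part (2), I will work under the hypotheses of part \eqref{part: wedget}, now with $\sigma$ smooth, so Lemma \ref{lem: indsmooth} applies to $\sigma'=\sigma T_{j-1,j}$: it tells us $\sigma'$ is smooth and $\tbl_\Trans(\sigma)=\tbl_\Trans(\sigma')\sqcup\{T_{i,j}\}$. Since both $\tbl(\sigma')$ and $(\tbl(\sigma))'$ are known to be {\adm} — the former because $\sigma'$ is a permutation hence satisfies \eqref{item: adm1},\eqref{item: adm2} and is smooth hence covexillary so \eqref{item: adm3} holds by Lemma \ref{lem: covexisadm}; the latter by Lemma \ref{lem: simplederived} — and since their transposition parts agree (both equal $\tbl_\Trans(\sigma)\setminus\{T_{i,j}\} = \tbl_\Trans(\sigma')$), Observation \ref{obs:adm_subset} will close the argument \emph{provided} I establish one inclusion between them. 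Concretely, I plan to show $(\tbl(\sigma))'\subseteq\tbl(\sigma')$: take $\tau\in(\tbl(\sigma))'$, so $\tau\leq\sigma$, $\tau\in\Special$, $\tau\neq T_{i,j}$, and $\tau$ is not of the form $L_{i,j,k}$ ($k>j$) or $R_{i,k,j}$ ($i<k<j$); I must verify $\tau\leq\sigma'$. Using \eqref{eq: easy1<=}, this amounts to checking, for each shape of $\tau$, that the relevant inequalities among $\maxperm{\sigma'}(\cdot)$, $\maxperm{(\sigma')^{-1}}(\cdot)$ still hold. Since $\sigma'=\sigma T_{j-1,j}$ swaps the values $\sigma(j-1)$ and $\sigma(j)=i$ in positions $j-1,j$, the functions $\maxperm{\sigma'}$ and $\maxperm{\sigma}$ can differ only at the single argument $j-1$, where $\maxperm{\sigma'}(j-1)=\maxperm{\sigma}(j-2)\vee i$ may drop below $\maxperm{\sigma}(j-1)$; and $\maxperm{(\sigma')^{-1}}$ differs from $\maxperm{\sigma^{-1}}$ only on the interval $[i,\sigma(j-1)-1]$, dropping from $j$ to $j-1$ there. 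So the only way $\tau\leq\sigma$ could fail to imply $\tau\leq\sigma'$ is if $\tau$ genuinely uses one of these dropped values — and a short case analysis (mirroring the proof of Lemma \ref{lem: indsmooth} via Observation \ref{obs:sub_nsub}) shows that this happens precisely when $\tau\in\{T_{i,j}\}\cup\{L_{i,j,k}:k>j\}\cup\{R_{i,k,j}:i<k<j\}$, i.e. exactly the elements we have removed.

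The main obstacle I anticipate is this last case analysis: pinning down exactly which $\tau\in\Special$ with $\tau\leq\sigma$ fail $\tau\leq\sigma'$, and matching that set on the nose with the three families removed in Definition \ref{def: derived set}. The subtlety is that the drop in $\maxperm{(\sigma')^{-1}}$ happens on a whole interval $[i,\sigma(j-1)-1]$ rather than at a single point, so I need to argue carefully that any $R_{i,k,l}$ or $L_{\cdot,\cdot,\cdot}$ witnessing membership in $\tbl(\sigma)$ but not $\tbl(\sigma')$ must have its ``first index'' equal to $i$ (using $\sigma([i-1])=[i-1]$, so indices $<i$ are fixed and irrelevant) and must interact with position $j$ in one of the two prescribed ways. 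I expect to lean on \eqref{eq: easy1<=} and \eqref{eq: easy<=} to organize the bookkeeping, and on the already-proven equality $\tbl_\Trans(\sigma)=\tbl_\Trans(\sigma')\cup\{T_{i,j}\}$ from Lemma \ref{lem: indsmooth} to handle the transposition case for free. Once the inclusion is in hand, Observation \ref{obs:adm_subset} finishes everything.
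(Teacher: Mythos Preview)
Your treatment of part~\eqref{part: wedget} matches the paper's almost line for line.

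For part~(2), you have the right global strategy---both sets are admissible with the same transposition part, so Observation~\ref{obs:adm_subset} reduces everything to a single inclusion---but you pick the harder direction. The paper instead shows
\[
\tbl(\sigma')\subseteq\bigl(\tbl(\sigma)\bigr)',
\]
which is almost immediate: since $\sigma'<\sigma$ we have $\tbl(\sigma')\subseteq\tbl(\sigma)$, and every element of the removed set $\{T_{i,j}\}\cup\{L_{i,j,k}:k>j\}\cup\{R_{i,k,j}:i<k<j\}$ requires $\maxperm{(\sigma')^{-1}}(i)\ge j$ by \eqref{eq: easy1<=}, whereas $(\sigma')^{-1}(i)=j-1$ and $\sigma'([i-1])=[i-1]$ force $\maxperm{(\sigma')^{-1}}(i)=j-1$. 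That is the whole argument.

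Your direction $(\tbl(\sigma))'\subseteq\tbl(\sigma')$ is of course also true, but proving it directly is genuinely more delicate than you indicate. For instance, to rule out losing some $R_{a,b,j}$ with $i<a<b<j$, you would need $\maxperm{(\sigma')^{-1}}(a)\ge j$; this does not follow from Observation~\ref{obs:sub_nsub} alone but requires leaning on the transposition equality from Lemma~\ref{lem: indsmooth} (or on smoothness via a pattern argument) in the middle of your case analysis. Similarly, your claim that $\maxperm{\sigma'}(j-1)$ ``may drop'' is misleading: in fact smoothness forces $\sigma(j-1)\le\sigma(i)$, so $\maxperm{\sigma'}\equiv\maxperm{\sigma}$, and recognising this is itself a small lemma. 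None of this is insurmountable, but it is exactly the work the paper avoids by reversing the inclusion. I would recommend you switch directions.
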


\begin{proof}
Suppose that $T_{i,j}$ is a wedge for $\tbl(\sigma)$. Since $T_{i-1,i}\not\leq\sigma$ (or $i=1$), $\sigma([i-1])=[i-1]$, by \eqref{eq:wedgeT}.
Therefore, since $T_{i,j}\leq\sigma$, we have $\sigma(i)\geq j$ and $\sigma^{-1}(i)\geq j$.
If $j<n$, then since $R_{i,j,j+1}\not\leq\sigma$ but $T_{i,j}\leq\sigma$ we must have
$\maxperm{\sigma^{-1}}(i)=j$, by \eqref{eq:wedgeR}.
Hence, $\sigma^{-1}(i)=j$.

Conversely, if $\sigma([i-1])=[i-1]$ and $\sigma(i)\geq j=\sigma^{-1}(i)$, then it is easy to see
that $T_{i,j}$ is a wedge for $\tbl(\sigma)$.

Assume now that $\sigma$ is smooth and that $T_{i,j}$ is a wedge for $\tbl(\sigma)$.
By the first part, $\sigma([i-1])=[i-1]$ and $\sigma(i)\geq j=\sigma^{-1}(i)$.
Therefore, by Lemma \ref{lem: indsmooth},
\[
\left(\tbl(\sigma')\right)_\Trans=\tbl_\Trans(\sigma')=\tbl_\Trans(\sigma)\setminus\{T_{i,j}\}=\tbl(\sigma)'_\Trans
\]
and $\sigma'$ is smooth, and hence $\tbl(\sigma')$ is {\adm}.
The set $\tbl(\sigma)'$ is {\adm} as well, by the third part of Lemma \ref{lem: simplederived}, and it is easy to see that
$\tbl(\sigma')\subseteq \tbl(\sigma)'$. Therefore, $\tbl(\sigma')=\tbl(\sigma)'$, by Observation \ref{obs:adm_subset}.
\end{proof}

\subsection{}
The last two parts of Lemma \ref{lem: simplederived} justify the following definition.

\begin{definition} \label{def: iterated derived set}
Suppose that $A\subseteq\Special$ is {\adm} and $T_{i,j}$ is a wedge for $A$.
The iterated derived set $A^\circ$ of $A$ with respect to $T_{i,j}$ is the set obtained from $A$ by deriving it repeatedly
$j-i$ times with respect to $T_{i,j}, T_{i,j-1},\ldots,T_{i,i+1}$.
Explicitly,
\begin{align*}
A^\circ=&A\setminus\big(\{T_{i,k}:k>i\}\cup\{L_{i,k,l},R_{i,k,l}:l>k>i\}\big)\\
=&A\setminus\big(\{T_{i,k}:i<k\leq j\}\cup\{L_{i,k,l}:i<k<l\}\cup\{R_{i,k,l}:i<k<l\leq j\}\big).
\end{align*}
In particular,
$$
A^\circ_{\Trans}=A_\Trans\setminus\{T_{i,k}:k>i\}=A_\Trans\setminus\{T_{i,k}:i<k\leq j\}.
$$
\end{definition}

\begin{lemma} \label{lem: simpleiteratedderived}
Suppose that $A\subseteq\Special$ is {\adm} and $T_{i,j}$ is a wedge for $A$.
Then,
\begin{enumerate}
\item $A^\circ$ is {\adm}.
\item If $j>i+1$, then there exists $k\geq j$ such that $T_{i+1,k}$ is a wedge for $A^\circ$ or $(A^\circ)^{-1}$ (or both). \label{part: i+1}
\end{enumerate}
\end{lemma}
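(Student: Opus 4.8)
The plan is to handle the two parts separately: the first is a short induction built on Lemma \ref{lem: simplederived}, and the second is a maximality argument analogous to the one in Remark \ref{rem: existwedge}.

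For part (1) I would induct on $j-i$. If $j=i+1$, then $A^\circ$ is just the derived set $A'$ of $A$ with respect to $T_{i,i+1}$, which is {\adm} by the third part of Lemma \ref{lem: simplederived}. If $j>i+1$, let $A'$ be the derived set of $A$ with respect to $T_{i,j}$; by parts (3) and (4) of Lemma \ref{lem: simplederived}, $A'$ is {\adm} and $T_{i,j-1}$ is a wedge for it. Comparing with Definition \ref{def: iterated derived set}, the iterated derived set of $A'$ with respect to $T_{i,j-1}$ is exactly $A^\circ$, and since $(j-1)-i<j-i$ the inductive hypothesis applies to $A'$ and yields that $A^\circ$ is {\adm}. (This is really just spelling out the remark that precedes Definition \ref{def: iterated derived set}.)

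For part (2), recall from Definition \ref{def: iterated derived set} that $A^\circ_\Trans=A_\Trans\setminus\{T_{i,k}:i<k\le j\}$, so for every $k>i+1$ we have $T_{i+1,k}\in A^\circ\iff T_{i+1,k}\in A$. Since $T_{i,j}\in A$ (as $T_{i,j}$ is a wedge) and $T_{i+1,j}\le T_{i,j}$ by \eqref{eq: east<=a}, downward closure gives $T_{i+1,j}\in A$, hence $T_{i+1,j}\in A^\circ$. Thus $\{k>i+1:T_{i+1,k}\in A^\circ\}$ is nonempty; let $m$ be its maximum, so $m\ge j\ge i+2$. I claim $T_{i+1,m}$ is a wedge for $A^\circ$ or for $(A^\circ)^{-1}$, which proves the statement with $k=m$. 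Indeed, $T_{i+1,m}$ lies in $A^\circ_\Trans=((A^\circ)^{-1})_\Trans$, and $T_{i,i+1}\notin A^\circ$ (it was removed, since $i<i+1\le j$), hence also $T_{i,i+1}\notin(A^\circ)^{-1}$. If $m=n$, the remaining wedge condition is vacuous and we are done. If $m<n$, then by maximality $T_{i+1,m+1}\notin A^\circ$, so by part (1) together with the remark following Definition \ref{def:adm} (applied to the triple $i+1<m<m+1$ in the {\adm} set $A^\circ$), at least one of $R_{i+1,m,m+1}$, $L_{i+1,m,m+1}$ is not in $A^\circ$. If $R_{i+1,m,m+1}\notin A^\circ$, then $T_{i+1,m}$ is a wedge for $A^\circ$; if $L_{i+1,m,m+1}\notin A^\circ$, then $R_{i+1,m,m+1}\notin(A^\circ)^{-1}$, so $T_{i+1,m}$ is a wedge for $(A^\circ)^{-1}$, which is {\adm} as the inverse of an {\adm} set.

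I do not anticipate a genuine obstacle. Part (1) is bookkeeping over Lemma \ref{lem: simplederived}, and part (2) reduces, via the identity $T_{i+1,m+1}\in A^\circ\iff R_{i+1,m,m+1},L_{i+1,m,m+1}\in A^\circ$, to the right choice of $m$. The only points needing a little care are verifying that $T_{i+1,j}$ survives into $A^\circ$ (so that $m\ge j$, as the statement demands) and tracking whether the absent element is $R_{i+1,m,m+1}$ or $L_{i+1,m,m+1}$, which dictates whether the wedge is for $A^\circ$ or for $(A^\circ)^{-1}$.
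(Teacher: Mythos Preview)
Your proposal is correct and follows essentially the same approach as the paper: part (1) is obtained by iterating the last two parts of Lemma \ref{lem: simplederived}, and part (2) proceeds by taking the maximal $k\ge j$ with $T_{i+1,k}\in A^\circ$, noting $T_{i,i+1}\notin A^\circ$, and using that $T_{i+1,k+1}\notin A^\circ$ forces one of $R_{i+1,k,k+1}$, $L_{i+1,k,k+1}$ to be missing. The only cosmetic difference is that the paper invokes condition \eqref{item: adm2} directly for this last step, whereas you cite the equivalent remark after Definition \ref{def:adm}.
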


\begin{proof}
The first part follows by repeatedly using the last two parts of Lemma \ref{lem: simplederived}.
Suppose that $j>i+1$.
Clearly, $T_{i+1,j}\in A^\circ$ since $T_{i+1,j}\in A$.
Take the maximal $k\geq j$ for which  $T_{i+1,k}\in A^\circ$. Note that $T_{i,i+1}\notin A^\circ$.
Therefore, if $k=n$, then $T_{i+1,k}$ is a wedge for both $A^\circ$ and $(A^\circ)^{-1}$.
If $k<n$, then $T_{i+1,k+1}\notin A^\circ$, by the maximality of $k$, and therefore $R_{i+1,k,k+1}\notin  A^\circ$
or $L_{i+1,k,k+1}\notin  A^\circ$, by the admissability of $A^\circ$ and \eqref{item: adm2}.
If $R_{i+1,k,k+1}\notin  A^\circ$, then $T_{i+1,k}$ is a wedge for $A^\circ$ and if $L_{i+1,k,k+1}\notin  A^\circ$,
i.e., $R_{i+1,k,k+1}\notin  (A^\circ)^{-1}$, then $T_{i+1,k}$ is a wedge for $(A^\circ)^{-1}$.
\end{proof}

\section{Compatible orders}\label{sec:orders}
In this section we define the notion of a compatible order for an {\adm} set.
We show that a compatible order always exists and any two are obtained from one another
by a sequence of elementary operations.

\subsection{}
\begin{definition}
Given an {\adm} subset $A\subseteq\Special$, a compatible order for $A$ is a (strict) total order $\prec$ on $A_\Trans$
such that for all $1\leq i<j<k\leq n$, the following three conditions are satisfied.
\begin{subequations}
\begin{align}
\label{item: compat1} \text{If }& R_{i,j,k}\in A \text{ but } L_{i,j,k}\notin A,\text{ then } T_{i,j}\prec T_{j,k}.\\
\label{item: compat2} \text{If  }& L_{i,j,k}\in A \text{ but } R_{i,j,k}\notin A,\text { then } T_{i,j}\succ T_{j,k}.\\
\label{item: compat3} \text{If }& T_{i,k}\in A,\text{ then either } T_{i,j}\prec T_{i,k}\prec T_{j,k} \text{ or } T_{i,j}\succ T_{i,k}\succ T_{j,k}.
\end{align}
\end{subequations}
\end{definition}

This notion is closely related to \emph{reflection order} (cf. \cite{MR1248893}, \cite[\S5.2]{MR2133266})
except that we do not consider a total order on the whole of $\Trans$.

\begin{remark} \label{rem: firsttwo}
Note that (by the {\admis} of $A$) we can rephrase \eqref{item: compat1}--\eqref{item: compat2} by requiring that for all
$1\leq i<j<k\leq n$ such that $T_{i,j},T_{j,k}\in A_\Trans$ but $T_{i,k}\notin A_\Trans$ we have $T_{i,j}\prec T_{j,k}$
if and only if $R_{i,j,k}\in A$ (or equivalently, if and only if $L_{i,j,k}\notin A$).
\end{remark}

\begin{observation}\label{obs: reverse order}
If $\prec$ is a compatible order for $A$, then the reverse order is a compatible order for $A^{-1}$.
Similarly, $T\prec' T'\iff w_0Tw_0\succ w_0T'w_0$ is a compatible order for $w_0Aw_0$.
\end{observation}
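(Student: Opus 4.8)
The plan is to prove both assertions by directly checking the three conditions \eqref{item: compat1}--\eqref{item: compat3} defining a compatible order, for the two orders proposed in the statement. The inputs are: that $A^{-1}$ and $w_0Aw_0$ are {\adm} whenever $A$ is (noted after Definition \ref{def:adm}); that $(A^{-1})_\Trans=A_\Trans$ and $(w_0Aw_0)_\Trans=w_0A_\Trans w_0$; and the explicit action of the two involutions on $\Special$ --- inversion fixes every transposition and interchanges $R_{i,j,k}$ with $L_{i,j,k}$, while conjugation by $w_0$ sends $T_{i,j}\mapsto T_{n+1-j,\,n+1-i}$, $R_{i,j,k}\mapsto L_{n+1-k,\,n+1-j,\,n+1-i}$ and $L_{i,j,k}\mapsto R_{n+1-k,\,n+1-j,\,n+1-i}$. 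The unifying observation is that both operations swap $R$ and $L$, and this is exactly what makes conditions \eqref{item: compat1} and \eqref{item: compat2} change places, while \eqref{item: compat3}, which does not involve $3$-cycles, is preserved up to reversal of the chain it asserts.

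For the first assertion, write $\prec^{\mathrm{op}}$ for the total order on $A_\Trans=(A^{-1})_\Trans$ with $T\prec^{\mathrm{op}}T'\iff T'\prec T$, and fix $i<j<k$. Since $R_{i,j,k}\in A^{-1}\iff L_{i,j,k}\in A$, $L_{i,j,k}\in A^{-1}\iff R_{i,j,k}\in A$ and $T_{i,k}\in A^{-1}\iff T_{i,k}\in A$, the hypothesis of \eqref{item: compat1} for $(A^{-1},\prec^{\mathrm{op}})$ is literally the hypothesis of \eqref{item: compat2} for $(A,\prec)$; the latter yields $T_{i,j}\succ T_{j,k}$, i.e.\ $T_{i,j}\prec^{\mathrm{op}}T_{j,k}$, which is what \eqref{item: compat1} for $(A^{-1},\prec^{\mathrm{op}})$ requires. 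Symmetrically, \eqref{item: compat2} for $(A^{-1},\prec^{\mathrm{op}})$ follows from \eqref{item: compat1} for $(A,\prec)$; and \eqref{item: compat3} for $(A^{-1},\prec^{\mathrm{op}})$, whose hypothesis $T_{i,k}\in A^{-1}$ is equivalent to $T_{i,k}\in A$, follows from \eqref{item: compat3} for $(A,\prec)$ because reversing either of the chains $T_{i,j}\prec T_{i,k}\prec T_{j,k}$, $T_{i,j}\succ T_{i,k}\succ T_{j,k}$ produces the other.

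For the second assertion I would run the same argument with $w_0$-conjugation in place of inversion. One first records, for a triple $i<j<k$ of indices in $w_0Aw_0$, the conjugate triple $i':=n+1-k<j':=n+1-j<k':=n+1-i$, the identities $w_0T_{i,j}w_0=T_{j',k'}$, $w_0T_{j,k}w_0=T_{i',j'}$, $w_0T_{i,k}w_0=T_{i',k'}$, and the translations $R_{i,j,k}\in w_0Aw_0\iff L_{i',j',k'}\in A$ (and the same with $R,L$ interchanged), $T_{i,k}\in w_0Aw_0\iff T_{i',k'}\in A$. Feeding these, together with the defining equivalence of $\prec'$, into conditions \eqref{item: compat1}--\eqref{item: compat3} for $(w_0Aw_0,\prec')$ reduces each of them to one of \eqref{item: compat1}--\eqref{item: compat3} for $(A,\prec)$ applied to the triple $i'<j'<k'$.

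The main obstacle is the bookkeeping in this second part: two reversals interact --- the one built into the definition of $\prec'$, and the index reversal $m\mapsto n+1-m$, which on its own already exchanges the outer transpositions $T_{i,j}$ and $T_{j,k}$ of a triple --- and one must check that their composition against the $R\leftrightarrow L$ swap sends each axiom to its intended partner, in particular that the orientation chosen in $\prec'$ is the one that makes this work. I would pin this down by writing out one representative case (say \eqref{item: compat1}) in full, the other two being entirely analogous, and by sanity-checking the orientation on the smallest nontrivial example, $n=3$.
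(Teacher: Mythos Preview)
The paper states this as an \emph{Observation} and gives no proof, so there is nothing to compare your argument against; the intended justification is exactly the kind of direct axiom-check you outline. Your treatment of the first assertion is correct as written.

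For the second assertion, your instinct to flag the orientation as the crux and to sanity-check on $n=3$ is well placed --- in fact, carrying out that check shows that the statement as printed is \emph{not} correct. Take $n=3$ and $A=\{T_{1,2},T_{2,3},R_{1,2,3}\}$; the unique compatible order on $A_\Trans$ is $T_{1,2}\prec T_{2,3}$. Then $w_0Aw_0=\{T_{1,2},T_{2,3},L_{1,2,3}\}$, whose unique compatible order (from \eqref{item: compat2}) has $T_{2,3}$ before $T_{1,2}$. But the order $\prec'$ defined by $T\prec'T'\iff w_0Tw_0\succ w_0T'w_0$ gives $T_{1,2}\prec'T_{2,3}$ (since $w_0T_{1,2}w_0=T_{2,3}\succ T_{1,2}=w_0T_{2,3}w_0$), which is the wrong way round.

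The fix is to replace $\succ$ by $\prec$ in the definition of $\prec'$. With $T\prec'T'\iff w_0Tw_0\prec w_0T'w_0$, your reduction goes through exactly as you describe: writing $i'=n+1-k$, $j'=n+1-j$, $k'=n+1-i$, condition \eqref{item: compat1} for $(w_0Aw_0,\prec')$ at $(i,j,k)$ becomes ``$L_{i',j',k'}\in A$, $R_{i',j',k'}\notin A$ $\Rightarrow$ $T_{j',k'}\prec T_{i',j'}$'', i.e.\ \eqref{item: compat2} for $(A,\prec)$ at $(i',j',k')$; condition \eqref{item: compat2} similarly reduces to \eqref{item: compat1}; and \eqref{item: compat3} reduces to itself since conjugation by $w_0$ swaps the outer transpositions $T_{i,j}\leftrightarrow T_{j,k}$ and fixes the chain as a set. (Correspondingly, the second formula in Observation~\ref{obs:aut_pi} should read $\pi(w_0Aw_0)=w_0\pi(A)w_0$; this is not used elsewhere in the paper.)
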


\begin{remark}
Consider $\sigma=w_0$. Then, it is well known that the compatible orders for $\Special=\tbl(w_0)$
are in one-to-one correspondence with the reduced decomposition of $w_0$ \cite{MR911771}.
Their number is given by a well-known formula of Stanley \cite{MR782057}.
In general, the number of reduced decompositions of $\sigma\in(S_n)_{\smth}$ can be either
bigger or smaller than the number of compatible orders for $\tbl(\sigma)$.
\end{remark}

\subsection{}
The following lemma is clear from the definition of $A^\circ$ and the first part of Lemma \ref{lem: simpleiteratedderived}.

\begin{lemma} \label{lem: compatible order+iterated derived set}
Suppose that $\emptyset\ne A\subseteq\Special$ is {\adm} and $T_{i,j}$ is a wedge for $A$. Then,
\begin{enumerate}
\item Any compatible order for $A$ induces a compatible order for $A^\circ$.
\item Any compatible order $\prec^\circ$ for $A^\circ$ may be extended to a compatible order $\prec$ on $A$ by requiring that
\begin{equation} \label{eq: rightalinged}
T_{k,l}\prec T_{i,j}\prec T_{i,j-1}\prec\cdots\prec T_{i,i+1}\text{ for every }T_{k,l}\in A^\circ_\Trans
\end{equation}
\end{enumerate}
\end{lemma}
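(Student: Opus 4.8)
The plan is to prove Lemma \ref{lem: compatible order+iterated derived set} by directly checking the two assertions, using the explicit descriptions of $A^\circ$ and $A^\circ_\Trans$ from Definition \ref{def: iterated derived set} together with the structural facts about wedges in Lemma \ref{lem: simplewedge} and Observation \ref{obs:derived_wedges}.

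For the first part, I would start from a compatible order $\prec$ for $A$ and simply restrict it to $A^\circ_\Trans = A_\Trans\setminus\{T_{i,k}:i<k\le j\}$. The claim is that the restriction is still compatible, and this is essentially automatic: conditions \eqref{item: compat1}--\eqref{item: compat3} for a triple $i'<j'<k'$ only ever involve the transpositions $T_{i',j'},T_{j',k'},T_{i',k'}$ and the $3$-cycles $R_{i',j',k'},L_{i',j',k'}$, all of which lie in $A^\circ$ precisely when the relevant ones lie in $A$ — one needs to observe that for a triple contributing a constraint to the compatible order on $A^\circ$, the membership of the associated $3$-cycles is unchanged in passing from $A$ to $A^\circ$. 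Concretely, if $T_{i',j'},T_{j',k'}\in A^\circ_\Trans$ then $i'\ne i$ (since no $T_{i,m}$ with $m>i$ survives) and the only possibly-removed $3$-cycles are those with smallest index $i$; hence $R_{i',j',k'}\in A^\circ\iff R_{i',j',k'}\in A$ and likewise for $L$. So every constraint imposed by $A^\circ$ was already imposed by $A$, and $\prec|_{A^\circ_\Trans}$ is compatible.

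For the second part, given a compatible order $\prec^\circ$ for $A^\circ$, I extend it by \eqref{eq: rightalinged}: place the chain $T_{i,j}\prec T_{i,j-1}\prec\cdots\prec T_{i,i+1}$ above everything in $A^\circ_\Trans$, in that order. This is a well-defined total order on $A_\Trans = A^\circ_\Trans\sqcup\{T_{i,i+1},\dots,T_{i,j}\}$ because by Observation \ref{obs:derived_wedges} these are exactly the transpositions $T\in A_\Trans$ with $T(i)\ne i$. It remains to verify \eqref{item: compat1}--\eqref{item: compat3} for every triple $i'<j'<k'$. Triples entirely within $A^\circ_\Trans$ are fine by hypothesis. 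The remaining triples are those involving at least one of the new transpositions $T_{i,m}$, $i<m\le j$; since such a transposition has left index $i$, the only triples to consider have $i'=i$, i.e. they involve $T_{i,j'}$ and $T_{j',k'}$ and possibly $T_{i,k'}$ with $i<j'<k'$. I would split into cases according to how many of these three transpositions lie in $A$: if both $T_{i,j'}$ and $T_{i,k'}$ are new (i.e. $k'\le j$), then by Lemma \ref{lem: simplewedge}\eqref{item: RiklnotinA} we have $R_{i,j',k'}\notin A$, and since $T_{i,k'}\in A$ admissibility forces $L_{i,j',k'}\in A$ as well — wait, more carefully: $T_{i,k'}\in A$ with $i<j'<k'$ means both $R_{i,j',k'},L_{i,j',k'}\in A$, so \eqref{item: compat1}--\eqref{item: compat2} impose no constraint, and \eqref{item: compat3} requires $T_{i,j'}\prec T_{i,k'}\prec T_{j',k'}$ or the reverse; our chain gives $T_{i,k'}\prec T_{i,j'}$ (since $k'>j'$) and $T_{j',k'}\prec T_{i,k'}$ (if $T_{j',k'}\in A^\circ$) or $T_{j',k'}$ is also in the chain with $T_{j',k'}\prec T_{i,k'}\prec T_{i,j'}$ only if $j'=i$, impossible — so actually one must check the orientation works out to $T_{j',k'}\prec T_{i,k'}\prec T_{i,j'}$, the decreasing alternative, and this holds since $T_{j',k'}$, if in $A$, is either in $A^\circ_\Trans$ (below the whole chain, hence below $T_{i,k'}$) and $T_{i,k'}\prec T_{i,j'}$ by construction. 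The genuinely delicate case is when $T_{i,k'}\notin A$ while $T_{i,j'},T_{j',k'}\in A$: then exactly one of $R_{i,j',k'},L_{i,j',k'}$ is in $A$ and we must match its type against the sign of $T_{i,j'}$ versus $T_{j',k'}$. Here the point is that $T_{i,j'}$ is in the chain and $T_{j',k'}$ is either in $A^\circ_\Trans$ (hence $T_{j',k'}\prec T_{i,j'}$) — in which case we need $L_{i,j',k'}\in A$, i.e. $R_{i,j',k'}\notin A$; and this is exactly what Lemma \ref{lem: simplewedge}\eqref{item: RiklnotinA} type reasoning, or rather the structure forced by $T_{i,j}$ being a wedge, should give — or $T_{j',k'}$ is itself in the chain, forcing $j'=i$, impossible, so in fact $k'>j$ is needed for $T_{j',k'}\notin$ chain. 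I expect this orientation-matching in the mixed case to be the main obstacle: one has to argue, from $T_{i,j}$ being a wedge and $A$ being admissible, that whenever $T_{i,j'}\in A_\Trans$ (so $j'\le j$), $T_{j',k'}\in A_\Trans$, and $T_{i,k'}\notin A$, necessarily $L_{i,j',k'}\in A$ and $R_{i,j',k'}\notin A$ — which is consistent with the chain being decreasing in the second index. Once that compatibility between the wedge structure and the prescribed chain orientation is nailed down, the verification of \eqref{item: compat1}--\eqref{item: compat3} is routine case-checking, and the lemma follows.
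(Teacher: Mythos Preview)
Your proposal is correct and matches the paper's (implicit) approach: the paper simply declares the lemma ``clear from the definition of $A^\circ$ and the first part of Lemma \ref{lem: simpleiteratedderived}'' without further argument, and you have spelled out precisely the verification this entails. In particular you correctly isolate the one non-automatic ingredient, namely that in the mixed case $T_{i,j'}\in A$, $T_{j',k'}\in A$, $T_{i,k'}\notin A$ (forcing $j'\le j<k'$) one has $R_{i,j',k'}\notin A$ by Lemma \ref{lem: simplewedge}\eqref{item: RiklnotinA}, hence $L_{i,j',k'}\in A$, which is exactly what \eqref{item: compat2} requires given that $T_{j',k'}\in A^\circ_\Trans$ sits below the chain and thus $T_{j',k'}\prec T_{i,j'}$.
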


\begin{corollary} \label{cor: existance of compatible order}
For every {\adm} subset $A\subseteq\Special$ there is a compatible order.
\end{corollary}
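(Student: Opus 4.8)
The plan is to prove Corollary~\ref{cor: existance of compatible order} by induction on $\#A_\Trans$, using the wedge/iterated-derived-set machinery that has just been set up.

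\medskip

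\textbf{Base case.} If $A_\Trans=\emptyset$, the empty order on the empty set is vacuously compatible, so we may assume $A_\Trans\ne\emptyset$, and hence $A\ne\emptyset$.

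\medskip

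\textbf{Inductive step.} Suppose the claim holds for all {\adm} sets with strictly fewer transpositions than $A$. By Remark~\ref{rem: existwedge}, at least one of $A$ or $A^{-1}$ has a wedge. Since a compatible order for $A^{-1}$ yields one for $A$ by reversal (Observation~\ref{obs: reverse order}), we may assume without loss of generality that $A$ itself has a wedge $T_{i,j}$. Now pass to the iterated derived set $A^\circ$ with respect to $T_{i,j}$. By Lemma~\ref{lem: simpleiteratedderived}, $A^\circ$ is {\adm}, and by the description of $A^\circ_\Trans$ in Definition~\ref{def: iterated derived set} we have $A^\circ_\Trans=A_\Trans\setminus\{T_{i,k}:i<k\le j\}$, which is a proper subset of $A_\Trans$ since $T_{i,j}\in A_\Trans$. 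Hence the induction hypothesis applies to $A^\circ$: there is a compatible order $\prec^\circ$ for $A^\circ$. By the second part of Lemma~\ref{lem: compatible order+iterated derived set}, $\prec^\circ$ extends to a compatible order $\prec$ on $A$ (placing the chain $T_{i,j}\succ T_{i,j-1}\succ\cdots\succ T_{i,i+1}$ above everything in $A^\circ_\Trans$, as in \eqref{eq: rightalinged}). This completes the induction.

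\medskip

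\textbf{Where the work sits.} Essentially all the content has already been front-loaded into the lemmas: that wedges exist (Remark~\ref{rem: existwedge}), that iterating the derived-set operation keeps {\admis} and strictly shrinks $A_\Trans$ (Lemmas~\ref{lem: simplederived} and \ref{lem: simpleiteratedderived}), and — the genuinely nontrivial point — that a compatible order on $A^\circ$ can be built back up to one on $A$ (Lemma~\ref{lem: compatible order+iterated derived set}, whose proof must check conditions \eqref{item: compat1}--\eqref{item: compat3} for all triples $i<j<k$ involving the newly reinserted transpositions $T_{i,i+1},\dots,T_{i,j}$; this uses that $T_{i,j}$ is a wedge, via Lemma~\ref{lem: simplewedge}, to control which $R$'s and $L$'s lie in $A$). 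Given those, the corollary is a two-line induction, and the only ``obstacle'' is the bookkeeping of reducing to the case where $A$ rather than $A^{-1}$ has a wedge — handled cleanly by Observation~\ref{obs: reverse order}. So in the write-up I would simply present the induction above, citing Remark~\ref{rem: existwedge}, Observation~\ref{obs: reverse order}, Lemma~\ref{lem: simpleiteratedderived}, and Lemma~\ref{lem: compatible order+iterated derived set}.
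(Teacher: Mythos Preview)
Your proof is correct and follows essentially the same approach as the paper's: induction on $\#A_\Trans$ via Remark~\ref{rem: existwedge}, Observation~\ref{obs: reverse order}, and the second part of Lemma~\ref{lem: compatible order+iterated derived set}. One small slip: in your parenthetical description of \eqref{eq: rightalinged} you reversed the internal ordering of the chain (it should read $T_{i,j}\prec T_{i,j-1}\prec\cdots\prec T_{i,i+1}$), though since you defer to the cited equation this does not affect the argument.
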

\begin{proof}
The corollary follows by induction from the second part of Lemma \ref{lem: compatible order+iterated derived set} using Remark \ref{rem: existwedge} and Observation \ref{obs: reverse order}.
\end{proof}

In general, given a total order $\prec$ on a set, we will write $x\adj y$
if $y$ covers $x$, i.e., if $x\prec y$ and there is no $z$ such that $x\prec z\prec y$.

\begin{lemma}\label{lem:cover}
Let $\emptyset\ne A\subseteq\Special$ be an {\adm} set and let $\prec$ be a compatible order for $A$.
Suppose that $r_1<r_2<s_2$ and $s_1>r_1$ are such that $T_{r_1,s_1}, T_{r_1,s_2}\in A$
and $T_{r_1,s_1}\adj T_{r_1,s_2}\adj T_{r_2,s_2}$. Then $s_1=r_2$.
\end{lemma}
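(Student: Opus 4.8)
The plan is to argue entirely within the combinatorics of the compatible order $\prec$, using two inputs repeatedly. The first is the "between-ness" packaged in \eqref{item: compat3}: whenever $i<j<k$ and $T_{i,k}\in A$, downward closedness \eqref{item: adm1} together with \eqref{eq: east<=a} gives $T_{i,j},T_{j,k}\in A_\Trans$, and then \eqref{item: compat3} says $T_{i,k}$ lies strictly between $T_{i,j}$ and $T_{j,k}$ in $\prec$, so in particular $T_{i,j}\prec T_{j,k}\iff T_{i,j}\prec T_{i,k}\iff T_{i,k}\prec T_{j,k}$. The second is the cover hypothesis $T_{r_1,s_1}\adj T_{r_1,s_2}\adj T_{r_2,s_2}$, which upgrades weak inequalities to strict ones: any transposition that is $\succ T_{r_1,s_1}$ and distinct from $T_{r_1,s_2}$ is in fact $\succ T_{r_1,s_2}$, and similarly past $T_{r_1,s_2}$.

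First I would apply \eqref{item: compat3} to the triple $(r_1,r_2,s_2)$: since $T_{r_1,s_2}\in A$ and $T_{r_1,s_2}\prec T_{r_2,s_2}$, this forces $T_{r_1,r_2}\prec T_{r_1,s_2}\prec T_{r_2,s_2}$ (and in passing $T_{r_1,r_2}\in A_\Trans$). Assuming for contradiction that $s_1\neq r_2$, the transposition $T_{r_1,r_2}$ is distinct from $T_{r_1,s_1}$, so because $T_{r_1,s_1}\adj T_{r_1,s_2}$ we get $T_{r_1,r_2}\prec T_{r_1,s_1}$. Hence we have the chain $T_{r_1,r_2}\prec T_{r_1,s_1}\prec T_{r_1,s_2}\prec T_{r_2,s_2}$, with the last three terms consecutive; this chain will be used in every case to propagate inequalities.

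Next I would split on the position of $s_1$, which (since $s_1>r_1$ and $s_1\notin\{r_2,s_2\}$) lies in exactly one of the ranges $r_1<s_1<r_2$, $r_2<s_1<s_2$, $s_2<s_1$. In each case the strategy is the same: produce two "long" transpositions lying in $A$ — one rooted at $r_2$ and one having $s_2$ as an endpoint (for the last case, the transposition $T_{s_2,s_1}$) — by applying \eqref{item: compat3} to two of the triples built from $\{r_1,s_1,r_2\}$, $\{r_1,s_1,s_2\}$, $\{r_2,s_1,s_2\}$ and feeding in the chain above, and then apply \eqref{item: compat3} to the remaining triple to conclude that these two long transpositions must lie on opposite sides of $T_{r_2,s_2}$, a contradiction. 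For example, when $r_2<s_1<s_2$: the triple $(r_1,r_2,s_1)$ with the chain yields $T_{r_2,s_1}\succ T_{r_2,s_2}$; the triple $(r_1,s_1,s_2)$ with the chain yields $T_{s_1,s_2}\succ T_{r_2,s_2}$; and then the triple $(r_2,s_1,s_2)$ (via $T_{r_2,s_2}\in A$) forces $T_{r_2,s_1}$ and $T_{s_1,s_2}$ to straddle $T_{r_2,s_2}$. The range $r_1<s_1<r_2$ runs through $(r_1,s_1,s_2)$, $(r_1,s_1,r_2)$ and $(s_1,r_2,s_2)$, and the range $s_2<s_1$ through $(r_1,s_2,s_1)$, $(r_1,r_2,s_1)$ and $(r_2,s_2,s_1)$; in all of them the same "opposite sides of $T_{r_2,s_2}$" contradiction appears.

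The main obstacle is not any single computation but the bookkeeping: for every triple invoked I must verify that its "long" transposition really is in $A$ (it will always be $\leq T_{r_1,s_1}$ or $\leq T_{r_1,s_2}$ in Bruhat order, hence in $A$ by \eqref{item: adm1}), that $i<j<k$ holds in the case at hand, and — the delicate point — that all the transpositions being compared are pairwise distinct, since it is precisely the cover relations $\adj$ that convert the weak inequalities produced by \eqref{item: compat3} into the strict ones needed to reach a contradiction. Organizing the three cases so that they all reuse the chain $T_{r_1,r_2}\prec T_{r_1,s_1}\prec T_{r_1,s_2}\prec T_{r_2,s_2}$ and isolating the single recurring contradiction ("$T_{s_1,s_2}$ resp. $T_{s_2,s_1}$ is forced to be both $\succ$ and $\prec$ some explicit transposition") keeps the argument within a manageable length.
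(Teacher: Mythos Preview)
Your proposal is correct and follows essentially the same approach as the paper: derive the chain $T_{r_1,r_2}\prec T_{r_1,s_1}\prec T_{r_1,s_2}\prec T_{r_2,s_2}$ from \eqref{item: compat3} and the cover relations, then split into the three cases $s_1<r_2$, $r_2<s_1<s_2$, $s_2<s_1$ and in each derive a contradiction by repeated application of \eqref{item: compat3}. The paper organizes the casework slightly differently (it first rules out $s_1>s_2$, then records the auxiliary inequality $T_{r_2,s_2}\prec T_{s_1,s_2}$ once and reuses it) and reaches the contradiction in each case by exhibiting a transposition strictly between two consecutive elements of the chain rather than by your ``straddling'' argument, but the ingredients and the structure are the same.
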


\begin{proof}
Assume on the contrary that $s_1\ne r_2$.
By \eqref{item: compat3}, $T_{r_1,r_2}\prec T_{r_1,s_2}$ and therefore, since $T_{r_1,s_1}\adj T_{r_1,s_2}$,
\begin{equation}\label{eq:r1r2r1s1}
T_{r_1,r_2}\prec T_{r_1,s_1}.
\end{equation}

We first show that $s_1<s_2$. Otherwise, $r_1<r_2<s_2<s_1$.
Then $T_{s_2,s_1}\prec T_{r_1,s_1}$ by \eqref{item: compat3}, and hence $T_{s_2,s_1}\prec T_{r_2,s_2}$.
Therefore, $T_{r_2,s_1}\prec T_{r_2,s_2}$ by \eqref{item: compat3}.
In addition, $T_{r_1,s_1}\prec T_{r_2,s_1}$ by \eqref{item: compat3} and \eqref{eq:r1r2r1s1}.
Hence, we obtained that  $T_{r_1,s_1}\prec T_{r_2,s_1}\prec T_{r_2,s_2}$, in contradiction with $T_{r_1,s_1}\adj T_{r_1,s_2}\adj T_{r_2,s_2}$.

Thus, $s_1<s_2$. Therefore, $T_{r_1,s_2}\prec T_{s_1,s_2}$ by \eqref{item: compat3}, and hence, since $T_{r_1,s_2}\adj T_{r_2,s_2}$,
\begin{equation}\label{eq:r2s2s1s2}
T_{r_2,s_2}\prec T_{s_1,s_2}.
\end{equation}

Assume that $r_1<s_1<r_2<s_2$.
Then $T_{s_1,r_2}\prec T_{r_1,s_1}$ by \eqref{item: compat3} and  \eqref{eq:r1r2r1s1}.
On the other hand, $T_{r_2,s_2}\prec T_{s_1,r_2}$ by \eqref{item: compat3} and \eqref{eq:r2s2s1s2}.
Hence, we obtained that  $T_{r_2,s_2}\prec T_{s_1,r_2}\prec T_{r_1,s_1}$, in contradiction with $T_{r_1,s_1}\prec T_{r_2,s_2}$.

Finally, assume that $r_1<r_2<s_1<s_2$.
Then $T_{r_1,s_1}\prec T_{r_2,s_1}$ by \eqref{item: compat3} and \eqref{eq:r1r2r1s1}.
Additionally, $T_{r_2,s_1}\prec T_{r_2,s_2}$ by \eqref{item: compat3} and \eqref{eq:r2s2s1s2}.
Hence, we obtained that  $T_{r_1,s_1}\prec T_{r_2,s_1}\prec T_{r_2,s_2}$, in contradiction with $T_{r_1,s_1}\adj T_{r_1,s_2}\adj T_{r_2,s_2}$.

Thus, $s_1=r_2$ as required.
\end{proof}

\begin{lemma} \label{lem: another compat}
Let $\emptyset\ne A\subseteq\Special$ be an {\adm} set and $T_{i,j}$ a wedge for $A$.
Let $\prec$ be a compatible order for $A$.
Then, we cannot have $T_{i,j_1}\adj T_{j_1,j_2}$ for any $i<j_1<j_2$.
\end{lemma}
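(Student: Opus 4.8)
The plan is to argue by contradiction, using the structural constraints that a wedge imposes on $A$ together with the compatibility axioms. So assume $T_{i,j}$ is a wedge for $A$ but $T_{i,j_1} \adj T_{j_1,j_2}$ for some $i < j_1 < j_2$. The first thing I would extract is that $j_1 \le j$: indeed, by Observation~\ref{obs:derived_wedges}, the transpositions in $A_\Trans$ moving $i$ are exactly $T_{i,i+1},\dots,T_{i,j}$, so $T_{i,j_1}\in A$ forces $i < j_1 \le j$. I would then separate the case $j_1 = j$ from $j_1 < j$.

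When $j_1 < j$, the element $T_{i,j}$ itself lies in $A_\Trans$ and is comparable to both $T_{i,j_1}$ and $T_{j_1,j_2}$ via axiom~\eqref{item: compat3}. From $T_{i,j_1}, T_{i,j}, T_{j,j_2}$ (if $j < j_2$) or the triple $\{T_{i,j_1}, T_{i,j_2}, T_{j_1,j_2}\}$ with the intermediate $T_{i,j}$, axiom~\eqref{item: compat3} gives a chain forcing something strictly between $T_{i,j_1}$ and $T_{j_1,j_2}$, contradicting the covering relation $T_{i,j_1}\adj T_{j_1,j_2}$. Here the key point is that $T_{i,j}\succ T_{i,j_1}$ or $T_{i,j}\prec T_{i,j_1}$ by the monotonicity of~\eqref{item: compat3} applied to the triple $(i,j_1,j)$, while simultaneously $T_{i,j}$ is constrained relative to $T_{j_1,j_2}$, so $T_{i,j}$ must land in the open interval $(T_{i,j_1}, T_{j_1,j_2})$ — impossible.

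The remaining case $j_1 = j$, i.e.\ $T_{i,j}\adj T_{j,j_2}$, is the genuine heart of the matter. Here I would invoke part~\eqref{item: RiklnotinA} of Lemma~\ref{lem: simplewedge} and the first part of that lemma: for $k = j_2 > j$ we have $L_{i,j,j_2}\in A \iff T_{j,j_2}\in A$, and $T_{j,j_2}\in A$ here since $T_{j,j_2}\in A_\Trans$; also $R_{i,j,j_2}\notin A$ because $R_{i,j,j+1}\notin A$ and~\eqref{eq: easy<=}. So $L_{i,j,j_2}\in A$ but $R_{i,j,j_2}\notin A$, and~\eqref{item: compat2} forces $T_{i,j}\succ T_{j,j_2}$, contradicting $T_{i,j}\adj T_{j,j_2}$ (which demands $T_{i,j}\prec T_{j,j_2}$).

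**The main obstacle** I anticipate is the bookkeeping in the $j_1 < j$ case: one must be careful about whether $j_2 \le j$, $j_2 = j$ is excluded (it is, since $j_1 < j_2$ and $j_1 < j$ leave $j_2$ free), or $j_2 > j$, and in each sub-case pick the right auxiliary transposition ($T_{i,j}$, or $T_{j_1,j}$, or $T_{j,j_2}$) to witness an element strictly between $T_{i,j_1}$ and $T_{j_1,j_2}$. It may be cleanest to handle $j_1 < j$ by first reducing to the derived set: by Lemma~\ref{lem: simplederived}, $T_{i,j-1}$ is a wedge for $A'$ when $j > i+1$, and $\prec$ restricts to a compatible order on $A'_\Trans = A_\Trans\setminus\{T_{i,j}\}$, so an induction on $j - i$ peels off $T_{i,j}$ and reduces everything to the base case $j_1 = j$ already treated — provided the covering relation $T_{i,j_1}\adj T_{j_1,j_2}$ survives the removal of $T_{i,j}$, which it does unless $T_{i,j}$ sits between them, and that possibility is exactly what axiom~\eqref{item: compat3} rules out as sketched above.
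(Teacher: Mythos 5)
Your argument for the case $j_1=j$ is correct, and so is the sub-case $j_1<j$, $j_2\le j$ (there $T_{i,j_2}\in A$ and \eqref{item: compat3} applied to the triple $(i,j_1,j_2)$ either places $T_{i,j_2}$ strictly between the two transpositions or reverses their order). The genuine gap is the remaining case $i<j_1<j<j_2$. Your primary plan there --- chasing \eqref{item: compat3} through auxiliary transpositions $T_{i,j}$, $T_{j_1,j}$, $T_{j,j_2}$ to exhibit an element strictly between $T_{i,j_1}$ and $T_{j_1,j_2}$ --- cannot succeed: since $j_2>j$ we have $T_{i,j_2}\notin A$, so \eqref{item: compat3} says nothing about the triples $(i,j_1,j_2)$ and $(i,j,j_2)$, and the linear arrangement $T_{j,j_2}\prec T_{i,j_1}\prec T_{j_1,j_2}\prec T_{i,j}\prec T_{j_1,j}$ satisfies every instance of \eqref{item: compat3} involving only the indices $i,j_1,j,j_2$ while keeping $T_{i,j_1}$ and $T_{j_1,j_2}$ adjacent. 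What kills this scenario is not an intermediate element but a forced reversal, obtained exactly as in your $j_1=j$ argument: part~\ref{item: RiklnotinA} of Lemma~\ref{lem: simplewedge} gives $R_{i,j_1,j_2}\notin A$ (it only needs $i<j_1\le j<j_2$), so \eqref{item: adm3} gives $L_{i,j_1,j_2}\in A$, and \eqref{item: compat2} forces $T_{j_1,j_2}\prec T_{i,j_1}$, contradicting the assumed adjacency. In other words, your $j_1=j$ argument applies verbatim to every $j_1\le j$ once $j_2>j$; this is precisely the paper's proof, whose case split is on $j_2$ versus $j$ rather than on $j_1$ versus $j$.

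The fallback you propose, an induction through the derived set, does not repair this, because its key claim is false: a compatible order for $A$ need not restrict to a compatible order for $A'$. Already for $A=\Special_3$ with wedge $T_{1,3}$, the order $T_{1,2}\prec T_{1,3}\prec T_{2,3}$ is compatible for $A$, but $A'=\{T_{1,2},T_{2,3},L_{1,2,3}\}$ and the restricted order $T_{1,2}\prec T_{2,3}$ violates \eqref{item: compat2}, since $L_{1,2,3}\in A'$ while $R_{1,2,3}$ has been removed. This is why Lemma~\ref{lem: compatible order+iterated derived set} asserts the restriction property only for the iterated derived set $A^\circ$, in which all elements moving $i$ are deleted; and passing to $A^\circ$ would delete $T_{i,j_1}$ itself, so it cannot carry your adjacency hypothesis either. (A small additional remark: once you assume $T_{i,j_1}\adj T_{j_1,j_2}$, nothing at all lies strictly between them, so the worry that $T_{i,j}$ might ``sit between them'' is vacuous and needs no appeal to \eqref{item: compat3}.)
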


\begin{proof}
Suppose that $T_{i,j_1},T_{j_1,j_2}\in A$ for $i<j_1<j_2$. Then necessarily $j_1\leq j$.
If $j_2>j$ then $R_{i,j_1,j_2}\notin A$ by the second part of Lemma \ref{lem: simplewedge}, and hence $T_{i,j_1}\nprec T_{j_1,j_2}$ by \eqref{item: compat3}.
If $j_2\leq j$ then $T_{i,j_2}\in A$ and hence $T_{i,j_1}\prec T_{i,j_2}\prec T_{j_1,j_2}$ by \eqref{item: compat3}.
\end{proof}

\subsection{}
Suppose that $T_{i,j}\adj T_{k,l}$ and $\{i,j\}\cap\{k,l\}=\emptyset$.
Then, upon switching the order of $T_{i,j}$ and $T_{k,l}$ (but no other elements)
we get a new compatible order. Similarly, if $T_{i,j}\adj T_{i,k}\adj T_{j,k}$ or $T_{j,k}\adj T_{i,k}\adj T_{i,j}$,
then we get a new compatible order by reversing the order of $T_{i,j},T_{i,k},T_{j,k}$ (and otherwise keeping $\prec$).
We call these two operations on compatible orders \emph{elementary}.
We say that two compatible orders are equivalent if they can be obtained from
one another by a finite sequence of elementary operations.

\begin{lemma} \label{lem: admind}
If $A\subseteq\Special$ is {\adm}, then all compatible orders for $A$ are equivalent.
\end{lemma}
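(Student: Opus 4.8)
The plan is to argue by induction on $\# A_\Trans$, using the iterated derived set construction to peel off one ``fan'' of transpositions at a time. The base case $\# A_\Trans \le 1$ is trivial. For the inductive step, by Remark \ref{rem: existwedge} and Observation \ref{obs: reverse order} we may assume (replacing $A$ by $A^{-1}$ if necessary, which by Observation \ref{obs: reverse order} merely reverses all compatible orders and hence preserves equivalence classes) that $A$ has a wedge $T = T_{i,j}$. By Lemma \ref{lem: simpleiteratedderived}, $A^\circ$ is {\adm}, and by Definition \ref{def: iterated derived set}, $A^\circ_\Trans = A_\Trans \setminus \{T_{i,k} : i < k \le j\}$, so $\# A^\circ_\Trans < \# A_\Trans$ (as $j > i$) and the inductive hypothesis applies to $A^\circ$.

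The core of the argument is a \emph{normal form} claim: every compatible order $\prec$ for $A$ is equivalent to one in which the transpositions $T_{i,i+1},\dots,T_{i,j}$ occupy the top $j-i$ positions, in the order dictated by \eqref{eq: rightalinged}, i.e.\ $T_{k,l} \prec T_{i,j} \adj T_{i,j-1} \adj \cdots \adj T_{i,i+1}$ for all $T_{k,l} \in A^\circ_\Trans$. Granting this, the lemma follows: if $\prec_1, \prec_2$ are two compatible orders for $A$, move each to its normal form; each normal form restricts (by the first part of Lemma \ref{lem: compatible order+iterated derived set}) to a compatible order for $A^\circ$; by the inductive hypothesis these two restrictions are equivalent via elementary operations on $A^\circ$, and every such operation lifts verbatim to an elementary operation on $A$ that fixes the top $j-i$ elements (none of the involved transpositions touches the block $T_{i,\cdot}$, since those have been segregated to the top); hence $\prec_1 \sim \prec_2$.

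To prove the normal form claim I would first show one can bring $T_{i,i+1}$ to the very top. Let $T_{k,l}$ be the element covering $T_{i,i+1}$, if any. The cases are: (a) $\{i,i+1\}\cap\{k,l\}=\emptyset$ — swap them by an elementary operation; (b) the triple $\{i,i+1\}\cup\{k,l\}$ overlaps. By Observation \ref{obs:derived_wedges}, the only transpositions in $A_\Trans$ that move $i$ are $T_{i,i+1},\dots,T_{i,j}$; and since $T_{i-1,i}\notin A$, no transposition $T_{m,i+1}$ with $m<i$ lies in $A$. So the only overlapping possibilities with $T_{k,l}$ covering $T_{i,i+1}$ are $T_{k,l}=T_{i,j'}$ for some $j'>i+1$ (then $T_{i,i+1}\adj T_{i,j'}$ forces, via \eqref{item: compat3} applied to $i<i+1<j'$, that $T_{i+1,j'}$ sits between them — contradiction unless there is no room, handled by Lemma \ref{lem:cover}), or $T_{k,l}=T_{i+1,l}$ with $l>i+1$. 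For the latter, by Lemma \ref{lem: another compat} we cannot have $T_{i,i+1}\adj T_{i+1,l}$, so this case does not arise. Thus after finitely many swaps $T_{i,i+1}$ reaches the top. One then repeats with $T_{i,i+2}$ among the remaining elements — it is a wedge-like top element of $A \setminus \{T_{i,i+1}\}$ after the appropriate derived-set bookkeeping — and so on up to $T_{i,j}$, at each stage using Lemma \ref{lem:cover} (to handle a cover of the form $T_{i,s_1}\adj T_{i,s_2}\adj T_{r_2,s_2}$, which forces $s_1=r_2$ and hence, combined with \eqref{item: compat3}, lets us apply the three-element elementary reversal) and Lemma \ref{lem: another compat} to rule out obstructing covers $T_{i,s}\adj T_{s,l}$.

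The main obstacle I anticipate is the bookkeeping in this last paragraph: showing that the ``bubble up $T_{i,i+1}$, then $T_{i,i+2}$, \dots'' procedure terminates and that at each stage the only covers one must confront are exactly the ones controlled by Lemmas \ref{lem:cover} and \ref{lem: another compat} and by \eqref{item: compat3}. A clean way to organize this is to prove, by a sub-induction on $j-i$, that \emph{any} compatible order for $A$ is equivalent to one satisfying \eqref{eq: rightalinged}; the sub-inductive step applies the derived-set operation once (Lemma \ref{lem: simplederived} parts 3 and 4 give that $A'$ is {\adm} with wedge $T_{i,j-1}$) after first showing $T_{i,j}$ can be bubbled to the top using only case (a) swaps and the Lemma \ref{lem:cover}/Lemma \ref{lem: another compat} reversals — which is the genuinely delicate point, since it requires knowing that nothing of the form $T_{i,j'}$ with $j'<j$ or $T_{j,l}$ can permanently block $T_{i,j}$ from rising.
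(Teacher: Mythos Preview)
Your overall framework---induction on $\#A_\Trans$, reduction via Remark \ref{rem: existwedge} and Observation \ref{obs: reverse order} to the case where $A$ has a wedge $T_{i,j}$, and then the normal-form claim that every compatible order is equivalent to one satisfying \eqref{eq: rightalinged}---is exactly the paper's strategy, and your reduction of the lemma to the normal-form claim via Lemma \ref{lem: compatible order+iterated derived set} and the inductive hypothesis on $A^\circ$ is correct.

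The gap is in your argument for the normal form itself. In case (b), when $T_{i,i+1}\adj T_{i,j'}$ with $j'>i+1$, you write that \eqref{item: compat3} ``forces $T_{i+1,j'}$ to sit between them.'' This is a misreading: applied to $i<i+1<j'$, \eqref{item: compat3} says that either $T_{i,i+1}\prec T_{i,j'}\prec T_{i+1,j'}$ or the reverse chain holds; since $T_{i,i+1}\prec T_{i,j'}$, it places $T_{i+1,j'}$ \emph{above} $T_{i,j'}$, not between $T_{i,i+1}$ and $T_{i,j'}$. So there is no contradiction, and no single elementary operation moves $T_{i,i+1}$ past $T_{i,j'}$. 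Lemma \ref{lem:cover} does not rescue this: its hypothesis is a three-term chain $T_{r_1,s_1}\adj T_{r_1,s_2}\adj T_{r_2,s_2}$, and you have no control over what covers $T_{i,j'}$. The same obstruction recurs in your proposed sub-induction on $j-i$: nothing prevents the element you are bubbling from being covered by another member of the fan $\{T_{i,r}\}_{r}$, and again no elementary move applies directly.

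The paper replaces the constructive bubble-up by an extremal argument. Set $\mathcal W=\{T_{i,r}:i<r\le j\}$ and, within a fixed equivalence class, pick $\prec$ minimizing $w=\sum_{r}\#\{T\in A_\Trans\setminus\mathcal W:T_{i,r}\prec T\}$. If $w>0$, take the $\prec$-\emph{minimal} $T_{k,l}\in A_\Trans\setminus\mathcal W$ lying above some element of $\mathcal W$. Minimality of $T_{k,l}$ forces the element just below it to lie in $\mathcal W$, say $T_{i,r_1}\adj T_{k,l}$; after ruling out the disjoint case and the case $r_1=k$ (Lemma \ref{lem: another compat}), one gets $r_1=l$ and $i<k<l\le j$, whence \eqref{item: compat3} gives $T_{i,k}\prec T_{i,l}$. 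Crucially, minimality of $T_{k,l}$ is invoked a \emph{second} time to show that the element just below $T_{i,l}$ is again in $\mathcal W$, and now Lemma \ref{lem:cover} identifies it as $T_{i,k}$, producing the chain $T_{i,k}\adj T_{i,l}\adj T_{k,l}$ to which the three-term reversal applies, dropping $w$. The point you are missing is to argue from the \emph{bottom} of the obstruction (the minimal offending $T_{k,l}$) rather than trying to push a fixed $T_{i,r}$ upward.
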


\begin{proof}
We will argue by induction on $\#A_\Trans$.
The base of the induction (the case $A=\emptyset$) is trivial. Suppose that $A\ne\emptyset$.
By passing to $A^{-1}$ if necessary, we may assume that there is a wedge $T_{i,j}$ for $A$.
By the induction hypothesis and Lemma \ref{lem: compatible order+iterated derived set}, it suffices to show that any compatible order for $A$
is equivalent to one which satisfies \eqref{eq: rightalinged}.

Following Observation \ref{obs:derived_wedges}, denote ${\mathcal W}=\{T_{i,r}\}_{r=i+1}^j=\{T\in A_\Trans: T(i)\neq i\}$.
For every compatible order $\prec$ for $A$ and every $i<r\leq j$, denote
$$w_r({\prec})=\#\{T\in A_{\Trans}\setminus{\mathcal W}:T_{i,r}\prec T\}.$$
Let $\mathcal E$ be an equivalence class of compatible orders for $A$.
Let $\prec$ be an order in $\mathcal E$ for which the sum $w:=\sum_{r=i+1}^j w_r({\prec})$ is minimal.
We claim that $\prec$ satisfies \eqref{eq: rightalinged}. By \eqref{item: compat3} it is enough to show that $w=0$.

Assume on the contrary that $w>0$, i.e.,
$${\mathcal Z}:=\{T\in A_{\Trans}\setminus{\mathcal W}:T_{i,r}\prec T \text{ for some }i<r\leq j\}\neq\emptyset.$$
Let $T_{k,l}$ be the minimum of $\mathcal Z$ with respect to $\prec$.
Then there is $i<r_1\leq j$ such that $T_{i,r_1}\adj T_{k,l}$.
Note that $k\neq i$ since $T_{k,l}\notin{\mathcal W}$, $l\neq i$ since $T_{i,j}$ is a wedge for $A$,
and $r_1\neq k$ by Lemma \ref{lem: another compat}.
On the other hand, $\{i,r_1\}\cap\{k,l\}\neq\emptyset$, otherwise we could switch the order of $T_{i,r_1},T_{k,l}$
and reduce $w$ by $1$, contradicting the choice of $\prec$.
Therefore $r_1=l$. in particular, $i<l$ and hence $i<k$ since $T_{i,j}$ is a wedge for $A$ and $i\neq k$.
Then, by \eqref{item: compat3}, $T_{i,k}\prec T_{i,l}\adj T_{k,l}$.
By the minimality of $T_{k,l}$ in $\mathcal Z$, it follows that there is $i<r_0\leq j$ such that $T_{i,r_0}\adj T_{i,l}\adj T_{k,l}$.
By Lemma \ref{lem:cover}, $r_0=k$. Then we could switch the order of $T_{i,k},T_{k,l}$ and reduce $w$ by $2$, contradicting the choice of $\prec$.
\end{proof}

Lemma \ref{lem: admind} and the braid relations
\[
T_{i,j}T_{i,k}T_{j,k}=T_{j,k}T_{i,k}T_{i,j},\ i<j<k,
\]
immediately imply the following corollary.
\begin{corollary}\label{cor:def_pi}
Let $A\subseteq\Special$ be {\adm} and let $\prec$ be a compatible order for $A$.
Write $A_\Trans=\{\sigma_1,\ldots,\sigma_k\}$ with $\sigma_1\prec\cdots\prec\sigma_k$.
Then, the product $\pi(A):=\sigma_1\cdots\sigma_k\in S_n$ depends only on $A$ and not on the choice of $\prec$.
\end{corollary}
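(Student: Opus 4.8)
The plan is to deduce the statement directly from Lemma~\ref{lem: admind}, which asserts that any two compatible orders for $A$ are equivalent, i.e., obtained from one another by a finite sequence of elementary operations. Since the product $\pi(A)=\sigma_1\cdots\sigma_k$ is a priori attached to a choice of compatible order, it will suffice to check that this product is unchanged by each of the two elementary operations on compatible orders.

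For the first elementary operation I am given consecutive elements $T_{i,j}\adj T_{k,l}$ of $A_\Trans$ with $\{i,j\}\cap\{k,l\}=\emptyset$, and I swap them. Writing the word for the order as $u\,T_{i,j}\,T_{k,l}\,v$, where $u$ and $v$ denote the products of the elements preceding and following the pair, the new word is $u\,T_{k,l}\,T_{i,j}\,v$; since $T_{i,j}$ and $T_{k,l}$ have disjoint supports they commute in $S_n$, so the two products coincide. For the second elementary operation I am given consecutive elements $T_{i,j}\adj T_{i,k}\adj T_{j,k}$ with $i<j<k$, and I replace this block by $T_{j,k}\,T_{i,k}\,T_{i,j}$ (or perform the reverse replacement). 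Writing the word as $u\,T_{i,j}\,T_{i,k}\,T_{j,k}\,v$, the braid relation $T_{i,j}T_{i,k}T_{j,k}=T_{j,k}T_{i,k}T_{i,j}$ shows it equals $u\,T_{j,k}\,T_{i,k}\,T_{i,j}\,v$, so the product is again preserved.

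Hence every elementary operation leaves $\sigma_1\cdots\sigma_k$ unchanged, and by Lemma~\ref{lem: admind} any two compatible orders for $A$ produce the same element of $S_n$; therefore $\pi(A)$ is well defined. The only point meriting attention is that in each elementary operation the transpositions being rearranged form a consecutive block of the order, which is exactly what lets the unchanged parts $u$ and $v$ of the word factor out of the computation. Beyond this there is no real obstacle, since the substance of the argument has already been carried out in Lemma~\ref{lem: admind}.
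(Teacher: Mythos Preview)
Your proof is correct and follows exactly the same approach as the paper: the paper simply states that Lemma~\ref{lem: admind} together with the braid relation $T_{i,j}T_{i,k}T_{j,k}=T_{j,k}T_{i,k}T_{i,j}$ immediately yields the corollary, and you have spelled out precisely this argument.
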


\begin{observation}\label{obs:aut_pi}
For every {\adm} $A\subseteq\Special$ we have, in light of Observation \ref{obs: reverse order},
\begin{equation*}
\pi(A^{-1})=(\pi(A))^{-1},\quad \pi(w_0Aw_0)=w_0\pi(A)^{-1}w_0.
\end{equation*}
\end{observation}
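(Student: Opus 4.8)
The plan is to evaluate both sides directly from the definition of $\pi$ given in Corollary~\ref{cor:def_pi}, feeding it the explicit compatible orders supplied by Observation~\ref{obs: reverse order}. Fix a compatible order $\prec$ for $A$ and write $A_\Trans=\{\sigma_1,\dots,\sigma_k\}$ with $\sigma_1\prec\cdots\prec\sigma_k$, so that $\pi(A)=\sigma_1\cdots\sigma_k$. Each $\sigma_i$ is a transposition, hence an involution, so $\sigma_i^{-1}=\sigma_i$; therefore $(A^{-1})_\Trans=A_\Trans$ and $(w_0Aw_0)_\Trans=w_0A_\Trans w_0$, and moreover $\pi(A)^{-1}=(\sigma_1\cdots\sigma_k)^{-1}=\sigma_k\cdots\sigma_1$. (Recall also that $A^{-1}$ and $w_0Aw_0$ are {\adm} whenever $A$ is, as noted after Definition~\ref{def:adm}, so $\pi$ is defined on them.)

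For the first identity, Observation~\ref{obs: reverse order} tells us that the reverse order $\sigma_k\prec'\cdots\prec'\sigma_1$ is a compatible order for $A^{-1}$; since $\pi(A^{-1})$ does not depend on the chosen compatible order (Corollary~\ref{cor:def_pi}), evaluating it along $\prec'$ gives $\pi(A^{-1})=\sigma_k\cdots\sigma_1=\pi(A)^{-1}$. For the second identity, Observation~\ref{obs: reverse order} gives that the order $\prec''$ on $(w_0Aw_0)_\Trans=\{w_0\sigma_1w_0,\dots,w_0\sigma_kw_0\}$ determined by $w_0Tw_0\prec'' w_0T'w_0\iff T\succ T'$ is compatible for $w_0Aw_0$, and under $\prec''$ we have $w_0\sigma_kw_0\prec''\cdots\prec'' w_0\sigma_1w_0$. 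Evaluating $\pi(w_0Aw_0)$ along $\prec''$ then yields $(w_0\sigma_kw_0)\cdots(w_0\sigma_1w_0)=w_0(\sigma_k\cdots\sigma_1)w_0=w_0\pi(A)^{-1}w_0$.

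There is no substantial obstacle here; the only thing demanding care is the bookkeeping — namely that, because transpositions are involutions, the sets $(A^{-1})_\Trans$ and $(w_0Aw_0)_\Trans$ literally consist of the $\sigma_i$ (respectively their $w_0$-conjugates) in reversed $\prec$-order, and that reversing the order of a product of involutions produces the inverse of the product. Once this is set up, the independence of $\pi$ from the choice of compatible order (Corollary~\ref{cor:def_pi}) does the rest, since it licenses us to compute $\pi$ using precisely the order that Observation~\ref{obs: reverse order} hands us.
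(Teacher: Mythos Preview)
Your argument is correct and is exactly the approach the paper intends: the observation is stated without proof, merely pointing to Observation~\ref{obs: reverse order}, and you have simply spelled out the routine computation using the compatible orders from that observation together with the independence statement of Corollary~\ref{cor:def_pi}.
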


\begin{remark}
Suppose that $\prec$ is a total order on $A_\Trans$ and write $A_\Trans=\{\sigma_1,\ldots,\sigma_k\}$
with $\sigma_1\prec\cdots\prec\sigma_k$.
It is possible that the product $\sigma_1\cdots\sigma_k$ is equal to $\pi(A)$ even if
$\prec$ is not compatible with respect to $A$.
For instance, if $A=\Special_4$, then there are 64 total orders on $\Trans$ with this property
(i.e., 64 ways to write $w_0$ as the product of all transpositions)
and only 16 of them are compatible with respect to $\Special$.
\end{remark}

\section{The main bijection}\label{sec:bijection}

In this section we prove Theorem \ref{thm:bijection}.

\subsection{}

Recall that $\rshft ij\in S_n$, $i<j$ is the cycle permutation $i\to i+1\to\cdots\to j\to i$

\begin{lemma} \label{lem:pi}
Suppose that $A\subseteq\Special$ is {\adm} and $T_{i,j}$ is a wedge for $A$.
Let $\sigma=\pi(A)$. Then,
\begin{enumerate}
\item $\sigma=\pi(A^\circ)\rshft ij$.
\item $\sigma=\pi(A')T_{j-1,j}$.
\item $\sigma(j)=i$. \label{part: sji}
\item $\sigma([i-1])=[i-1]$. \label{part: si-1}
\item $\sigma(k)\geq i+j-k$ and $\sigma^{-1}(k)\geq i+j-k$ for every $i\leq k\leq j$. \label{part: i+j-k}
\item $\sigma(i)=j\implies\sigma(k)=i+j-k$ for all $i< k\leq j$. \label{part: sigmai=j}
\end{enumerate}
\end{lemma}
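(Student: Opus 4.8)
The plan is to prove the six parts roughly in the order stated, building each on the previous ones and on the structural description of the derived set $A'$ and the iterated derived set $A^\circ$ from Section \ref{sec:wedges}. For part (1), recall that $A^\circ$ is obtained from $A$ by successively deriving with respect to $T_{i,j}, T_{i,j-1},\ldots,T_{i,i+1}$; by Lemma \ref{lem: compatible order+iterated derived set} a compatible order $\prec^\circ$ for $A^\circ$ extends to a compatible order $\prec$ for $A$ with $T_{k,l}\prec T_{i,j}\prec T_{i,j-1}\prec\cdots\prec T_{i,i+1}$ for all $T_{k,l}\in A^\circ_\Trans$. Taking the product of the transpositions of $A_\Trans$ in this order and using $A_\Trans = A^\circ_\Trans \sqcup \{T_{i,i+1},\ldots,T_{i,j}\}$ (Observation \ref{obs:derived_wedges}, Definition \ref{def: iterated derived set}), we get $\pi(A) = \pi(A^\circ)\cdot T_{i,j}T_{i,j-1}\cdots T_{i,i+1}$, and a direct check shows $T_{i,j}T_{i,j-1}\cdots T_{i,i+1} = \rshft ij$ (the cycle $i\to i+1\to\cdots\to j\to i$). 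Part (2) is the same argument carried out one derivation at a time: using a compatible order for $A'$ extended by putting $T_{i,j}$ last (the case $j=i+1$ of Lemma \ref{lem: compatible order+iterated derived set}, noting $A' = A^\circ$ and $T_{i,j}=T_{i,i+1}$ when $j=i+1$, and iterating Lemma \ref{lem: simplederived}(4) in general), we obtain $\pi(A)=\pi(A')T_{j-1,j}$; alternatively one can iterate part (1) on the wedge $T_{i,j-1}$ for $A'$ and compare the resulting cycle identities.

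For parts (3)--(6) I would induct on $\#A_\Trans$ using part (1) to reduce from $A$ to $A^\circ$. The key point is that $\sigma = \pi(A^\circ)\rshft ij$ where $\pi(A^\circ) =: \sigma^\circ$ fixes $i$ pointwise and in fact fixes $[i]$: indeed $A^\circ$ contains no transposition $T_{i,k}$, and moreover—this needs a small argument—$\sigma^\circ([i-1])=[i-1]$ already (no transposition $T_{r,s}$ with $r<i\le s$ lies in $A^\circ$ because none lay in $A$, $T_{i,j}$ being a wedge so $T_{i-1,i}\notin A$, combined with downward closure; one also checks $\sigma^\circ$ does not move $i$ by the same token), so $\sigma^\circ([i])=[i]$. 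Then $\sigma = \sigma^\circ \rshft ij$ sends $j\mapsto i$ (since $\rshft ij$ sends $j\to i$ and $\sigma^\circ$ fixes $i$), giving (3); and $\sigma([i-1]) = \sigma^\circ\rshft ij([i-1]) = \sigma^\circ([i-1]) = [i-1]$ since $\rshft ij$ fixes $[i-1]$, giving (4). For (5), for $i\le k\le j$ we have $\rshft ij([k]) = [i-1]\cup\{i+1,\ldots,k,i\} \supseteq [i-1]\cup\{i,\ldots,k\}\setminus\{?\}$—more precisely $\rshft ij$ maps $\{i,\ldots,k\}$ to $\{i+1,\ldots,k\}\cup\{i\}$, so $\max\sigma([k]) \ge \sigma^\circ(i+j-k)$ when... here I would instead argue directly: $\rshft ij([k])$ contains $\{i,\ldots,k\}$ for $i\le k\le j$? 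No — it contains $\{i\}\cup\{i+1,\dots,k\}=\{i,\dots,k\}$, wait that is wrong since $i+1,\dots,k$ are the images of $i,\dots,k-1$ and $i$ is the image of $k$... actually $\rshft ij(\{i,\dots,k\})=\{i,\dots,k\}$ is false; it equals $\{i+1,\dots,k\}\cup\{i\}=\{i,\dots,k\}$ only if we are careful — let me just say: one computes $\rshft ij([k])$ explicitly and applies $\maxperm{\sigma^\circ}$, using the inductive hypothesis (5) for $A^\circ$ together with the wedge $T_{i+1,?}$ supplied by Lemma \ref{lem: simpleiteratedderived}(2), and symmetrically for $\sigma^{-1}$ via part (2) and Observation \ref{obs:aut_pi}. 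Part (6) follows similarly: if $\sigma(i)=j$ then tracing through $\sigma=\sigma^\circ\rshft ij$ forces $\sigma^\circ(i+1)=j$ (since $\rshft ij$ sends $i+1\mapsto i+2$... again computing $\rshft ij^{-1}$), whence by the inductive hypothesis (6) applied to $A^\circ$ and its wedge $T_{i+1,j}$ we get $\sigma^\circ(k)=i+1+j-k$ for $i+1<k\le j$, and then $\sigma(k)=\sigma^\circ(\rshft ij(k))=\sigma^\circ(k-1)=i+j-k$ for $i+1<k\le j$, with the remaining case $k=i+1$ handled directly; here I must double-check the base cases $j=i+1$ and the edge values $k=i,k=j$.

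The main obstacle I anticipate is part (5)—and the bookkeeping it shares with (6)—namely pinning down exactly what $\rshft ij([k])$ and $\rshft ij^{-1}([k])$ are and correctly composing with $\sigma^\circ$, which fixes $[i]$ but whose behaviour on $\{i+1,\ldots,j\}$ is only controlled through the inductive hypothesis and the new wedge $T_{i+1,k'}$ (for some $k'\ge j$) provided by Lemma \ref{lem: simpleiteratedderived}(2). In particular the inequality $\sigma^{-1}(k)\ge i+j-k$ is not symmetric to $\sigma(k)\ge i+j-k$ under the reduction via $A^\circ$ — it is more naturally obtained by running the same argument on $A^{-1}$ (whose wedge situation is governed by Remark \ref{rem: existwedge} and Observation \ref{obs:aut_pi}, giving $\pi(A^{-1})=\pi(A)^{-1}$), so I would set up the induction to prove (3)--(6) for $A$ and simultaneously invoke it for $A^{-1}$. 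I expect parts (1)--(4) to be short and essentially formal, with all the real work concentrated in the simultaneous induction for (5) and (6).
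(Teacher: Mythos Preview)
Your overall strategy matches the paper's proof: parts (1)--(4) are the short formal steps you describe, and parts (5)--(6) are proved by induction on $\#A$ via part (1), using the wedge $T_{i+1,r}$ for $A^\circ$ or $(A^\circ)^{-1}$ supplied by Lemma \ref{lem: simpleiteratedderived}\ref{part: i+1}. A few points will make your execution cleaner and fix the confusion you flag.

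First, you do not need $\rshft ij([k])$ at all. The paper works pointwise: for $i\le k<j$ one has $\rshft ij(k)=k+1$, so
\[
\sigma(k)=\pi(A^\circ)\bigl(\rshft ij(k)\bigr)=\pi(A^\circ)(k+1)\ge (i+1)+r-(k+1)\ge i+j-k,
\]
using the inductive bound $\pi(A^\circ)(m)\ge (i+1)+r-m$ for $i+1\le m\le r$ coming from part (5) applied to $A^\circ$ (or $(A^\circ)^{-1}$). Your computation ``$\sigma(k)=\sigma^\circ(k-1)$'' has the cycle going the wrong way; it should be $\sigma^\circ(k+1)$.

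Second, you do not need to run a separate argument on $A^{-1}$ for the $\sigma^{-1}$ inequality, and indeed $T_{i,j}$ need not be a wedge for $A^{-1}$. The statement of part (5) already bounds both $\pi(A^\circ)(k)$ and $\pi(A^\circ)^{-1}(k)$, so the induction hypothesis (applied once, to whichever of $A^\circ$, $(A^\circ)^{-1}$ has the wedge $T_{i+1,r}$, using $\pi((A^\circ)^{-1})=\pi(A^\circ)^{-1}$) gives both. Then
\[
\sigma^{-1}(k)=\rshft ij^{-1}\bigl(\pi(A^\circ)^{-1}(k)\bigr)\ge \pi(A^\circ)^{-1}(k)-1\ge i+r-k\ge i+j-k
\]
for $i\le k\le j$, since $\rshft ij^{-1}(m)\ge m-1$ for all $m$.

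Third, for part (6): from the displayed inequality at $k=i$ you get $\sigma(i)=\pi(A^\circ)(i+1)\ge r\ge j$, so $\sigma(i)=j$ forces $r=j$ and $\pi(A^\circ)(i+1)=r$; now the inductive instance of (6) for $A^\circ$ gives $\pi(A^\circ)(m)=(i+1)+r-m$ for $i+1<m\le r$, and $\sigma(k)=\pi(A^\circ)(k+1)=i+j-k$ follows for $i<k<j$, with $\sigma(j)=i$ already known.
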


\begin{proof}
For the first part, by considering a compatible order for $A$ satisfying \eqref{eq: rightalinged}, we have
$$\sigma=\pi(A)=\pi(A^\circ)T_{i,j}T_{i,j-1}\cdots T_{i,i+1}=\pi(A^\circ)\rshft ij.$$
The second part follows from the first part if $j=i+1$ (in which case $A^\circ=A'$ and $\rshft ij=T_{j-1,j}$).
On the other hand, if $j>i+1$, then $\sigma$ is equal to
\[
\pi(A^\circ)\rshft ij=\pi(A^\circ)\rshft i{j-1}T_{j-1,j}=\pi\left((A')^{\circ}\right)\rshft i{j-1}T_{j-1,j}=\pi(A')T_{j-1,j}
\]
by applying the first part to $A'$ and using the last part of Lemma \ref{lem: simplederived}.

Moreover, since $A^{\circ}$ does not contain any transposition of the form $T_{r,i}$ or $T_{i,l}$, we have
$$\sigma(j)=\pi(A^\circ)\left(\rshft ij(j)\right)=\pi(A^\circ)(i)=i,$$
proving part three.

Since $T_{r,s}\notin A$ if $r<i\leq s$, we have $\sigma([i-1])=[i-1]$, i.e., the fourth part.

For the last two parts we use induction on the size of $A$.
If $j=i+1$, then the claims of part \ref{part: i+j-k} and part \ref{part: sigmai=j} follow directly from
parts \ref{part: sji} and \ref{part: si-1}.
Therefore, assume that $j>i+1$. By Lemma \ref{lem: simpleiteratedderived} part \ref{part: i+1} there exists $r\geq j$ such that
$T_{i+1,r}$ is a wedge for $A^\circ$ or $(A^\circ)^{-1}$. Since $\pi((A^\circ)^{-1})=(\pi(A^\circ))^{-1}$, we have for every $i<k\leq r$,
by the induction hypothesis,
\[
(\pi(A^\circ))(k),(\pi(A^\circ))^{-1}(k)\geq i+1+r-k,
\]
and if $(\pi(A^\circ))(i+1)=r$ then $(\pi(A^\circ))(k)=i+1+r-k$ for every $i+1<k\leq r$.
Therefore, by the first part, for every $i\leq k<j$,
\begin{equation}\label{eq:}\sigma(k)=\pi(A^\circ)(\rshft ij(k))=(\pi(A^\circ))(k+1)\geq i+1+r-(k+1)\geq i+j-k,\end{equation}
$\sigma(j)=i=i+j-j$, and for every $i\leq k\leq j$
$$\sigma^{-1}(k)=\rshft ij^{-1}(\pi(A^\circ)^{-1}(k))\geq (\pi(A^\circ))^{-1}(k)-1\geq i+r-k\geq i+j-k.$$
Moreover, if $\sigma(i)=j$ then by \eqref{eq:}, $(\pi(A^\circ))(i+1)=r=j$ and hence
\begin{equation*}
\sigma(k)=(\pi(A^\circ))(k+1)= i+1+r-(k+1)= i+j-k
\end{equation*}
 for every $i < k\leq r-1=j-1$ (and obviously $\sigma(j)=i=i+j-j$).
\end{proof}

Recall that $\tbl(\sigma)$ is {\adm} for every smooth $\sigma$, by Lemma \ref{lem: covexisadm}.
\begin{proposition}\label{propos:order}
For every $\sigma\in(S_n)_\smth$ it holds that $\pi(\tbl(\sigma))=\sigma$.
\end{proposition}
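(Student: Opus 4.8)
The plan is to induct on $\ell(\sigma)$. The base case $\sigma=e$ is immediate, since $\tbl(e)=\emptyset$ and $\pi(\emptyset)=e$. For the inductive step I would fix $e\ne\sigma\in(S_n)_\smth$ and assume that $\pi(\tbl(\tau))=\tau$ for every smooth $\tau$ with $\ell(\tau)<\ell(\sigma)$; recall that $\tbl(\sigma)$ is {\adm} by Lemma~\ref{lem: covexisadm}, so the machinery of wedges and derived sets applies to it.

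First I would isolate the following assertion: if $\tbl(\sigma)$ admits a wedge, then $\pi(\tbl(\sigma))=\sigma$. Granting this, the proposition follows quickly. By Remark~\ref{rem: existwedge}, at least one of $\tbl(\sigma)$ and $\tbl(\sigma)^{-1}=\tbl(\sigma^{-1})$ admits a wedge. If $\tbl(\sigma)$ does, we are done. Otherwise $\tbl(\sigma^{-1})$ does, and since $\sigma^{-1}$ is smooth with $\ell(\sigma^{-1})=\ell(\sigma)$ — so the very same induction hypothesis is in force — the isolated assertion applied to $\sigma^{-1}$ gives $\pi(\tbl(\sigma^{-1}))=\sigma^{-1}$, and then $\pi(\tbl(\sigma))=\pi(\tbl(\sigma^{-1}))^{-1}=\sigma$ by Observation~\ref{obs:aut_pi}.

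To prove the isolated assertion, let $T_{i,j}$ be a wedge for $A:=\tbl(\sigma)$ and put $\sigma':=\sigma T_{j-1,j}$. By the first part of Lemma~\ref{lem: indsmooth_continuation} the wedge condition unwinds to $\sigma([i-1])=[i-1]$ and $\sigma(i)\ge j=\sigma^{-1}(i)>i$, so Lemma~\ref{lem: indsmooth} gives that $\sigma'$ is smooth with $\ell(\sigma')=\ell(\sigma)-1$, while the second part of Lemma~\ref{lem: indsmooth_continuation} gives $A'=\tbl(\sigma')$ (derived set taken with respect to $T_{i,j}$). Then the second part of Lemma~\ref{lem:pi} yields $\pi(A)=\pi(A')\,T_{j-1,j}=\pi(\tbl(\sigma'))\,T_{j-1,j}$, and the induction hypothesis (valid since $\ell(\sigma')<\ell(\sigma)$) gives $\pi(\tbl(\sigma'))=\sigma'$; hence $\pi(\tbl(\sigma))=\sigma'\,T_{j-1,j}=\sigma\,T_{j-1,j}\,T_{j-1,j}=\sigma$.

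Essentially all of the content is already packaged in Lemmas~\ref{lem: indsmooth_continuation} and~\ref{lem:pi}, so I do not expect a genuine obstacle. The one place that needs care is the reduction to the case where $\tbl(\sigma)$ itself (rather than $\tbl(\sigma)^{-1}$) admits a wedge: one must make sure that the detour through $\sigma^{-1}$, which has the same length as $\sigma$, invokes the induction hypothesis only at strictly smaller lengths and never the not-yet-proved statement at level $\ell(\sigma)$ — this is why it is cleanest to phrase the wedge case as a free-standing assertion and then deduce the general case from it.
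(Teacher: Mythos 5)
Your proof is correct and follows essentially the same route as the paper: induction on $\ell(\sigma)$, passing to $\sigma^{-1}$ if needed to obtain a wedge $T_{i,j}$ for $\tbl(\sigma)$, and then combining Lemmas~\ref{lem: indsmooth}, \ref{lem: indsmooth_continuation} and the second part of Lemma~\ref{lem:pi} with the induction hypothesis applied to $\sigma'=\sigma T_{j-1,j}$. The only (cosmetic) difference is that you locate the wedge abstractly via Remark~\ref{rem: existwedge} and formalize the detour through $\sigma^{-1}$ via Observation~\ref{obs:aut_pi}, whereas the paper takes $i$ to be the smallest non-fixed point and checks the wedge condition directly; your explicit remark that the inversion step only calls on the induction hypothesis at strictly smaller lengths is a welcome clarification of a point the paper leaves implicit.
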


\begin{proof}
We argue by induction on $\ell(\sigma)$.
The base of the induction (the case where $\sigma$ is the identity permutation) is trivial.
For the induction step, let $i$ be the smallest index such that $j:=\sigma^{-1}(i)\ne i$. Passing to $\sigma^{-1}$ if necessary
we may assume that $\sigma(i)\geq j$. Then, $\sigma'=\sigma T_{j-1,j}$ is smooth by Lemma \ref{lem: indsmooth}.
By Lemma \ref{lem: indsmooth_continuation}, the transposition $T_{i,j}$ is a wedge for $\tbl(\sigma)$ and the derived set is $\tbl(\sigma')$.
Hence, by the second part of Lemma \ref{lem:pi} and the  induction hypothesis,
\begin{equation*}
\pi(\tbl(\sigma))=\pi\left((\tbl(\sigma))'\right)T_{j-1,j}=\pi(\tbl(\sigma'))T_{j-1,j}=\sigma'T_{j-1,j}=\sigma.
\qedhere\end{equation*}
\end{proof}

\begin{proposition}\label{propos:bijection}
For every {\adm} $A\subseteq\Special$,
the permutation $\pi(A)$ is smooth and $\tbl(\pi(A))=A$
\end{proposition}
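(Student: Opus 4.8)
The plan is to prove both assertions — that $\pi(A)$ is smooth and that $\tbl(\pi(A))=A$ — simultaneously by induction on $\#A_\Trans$, using the wedge/derived-set machinery of Sections \ref{sec:wedges} and \ref{sec:orders}. The base case $A=\emptyset$ is immediate: $\pi(\emptyset)=e$ is smooth and $\tbl(e)=\emptyset$. For the inductive step, suppose $A\neq\emptyset$. By Remark \ref{rem: existwedge} and Observation \ref{obs: reverse order} (recalling that $\pi(A^{-1})=\pi(A)^{-1}$, $\tbl(\sigma^{-1})=\tbl(\sigma)^{-1}$, and smoothness is inversion-invariant), we may assume $A$ itself has a wedge $T_{i,j}$. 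Let $A'$ be the derived set with respect to $T_{i,j}$, which is {\adm} and has strictly fewer transpositions (Lemma \ref{lem: simplederived}), and set $\sigma=\pi(A)$, $\sigma'=\pi(A')$. By the second part of Lemma \ref{lem:pi}, $\sigma=\sigma' T_{j-1,j}$, and by the induction hypothesis $\sigma'$ is smooth with $\tbl(\sigma')=A'$.

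Next I would verify that $\sigma'$ and $i<j$ satisfy the hypotheses of Lemma \ref{lem: indsmooth}, in the form needed to run it backwards. From parts \ref{part: sji} and \ref{part: si-1} of Lemma \ref{lem:pi} applied to $\sigma$ we get $\sigma(j)=i$ and $\sigma([i-1])=[i-1]$; translating these to $\sigma'=\sigma T_{j-1,j}$ gives $\sigma'([i-1])=[i-1]$ and, provided $j>i+1$, $\sigma'(j-1)=i$ so that $(\sigma')^{-1}(i)=j-1$, while if $j=i+1$ one checks directly $\sigma'=e$ on the relevant block. The point is to exhibit $\sigma'$ and the index $i$ (with $j_{\text{new}}:=(\sigma')^{-1}(i)$) in the configuration of Lemma \ref{lem: indsmooth} so that $\sigma=\sigma' T_{j-1,j}$ is recognized as the permutation obtained by the ``inverse'' move; then Lemma \ref{lem: indsmooth} yields that $\sigma$ is smooth and $\tbl_\Trans(\sigma)=\tbl_\Trans(\sigma')\cup\{T_{i,j}\}=A'_\Trans\cup\{T_{i,j}\}=A_\Trans$, the last equality by Definition \ref{def: derived set}. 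I would double-check the case $j>i+1$ versus $j=i+1$ separately here, since the index bookkeeping for $(\sigma')^{-1}(i)$ differs; parts \ref{part: i+j-k} and \ref{part: sigmai=j} of Lemma \ref{lem:pi} give the extra control on $\sigma'$ on the block $[i,j]$ that is needed to place $\sigma'$ correctly in the hypotheses.

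Finally, to conclude $\tbl(\sigma)=A$: we now know $\tbl(\sigma)$ is {\adm} (Lemma \ref{lem: covexisadm}, since smooth permutations are covexillary), $A$ is {\adm} by assumption, and we have just shown $\tbl_\Trans(\sigma)=A_\Trans$. By part \ref{item:recover} of Lemma \ref{lem: simplederived}, an {\adm} set with a given wedge $T_{i,j}$ is completely determined by its derived set together with $T_{i,j}$; since $T_{i,j}$ is a wedge for both $A$ and (by Lemma \ref{lem: indsmooth_continuation}\eqref{part: wedget}, using $\sigma(j)=i=\sigma^{-1}(\cdot)$ appropriately) for $\tbl(\sigma)$, and since the derived set of $\tbl(\sigma)$ is $\tbl(\sigma')=A'$ by Lemma \ref{lem: indsmooth_continuation} and the induction hypothesis, we get $\tbl(\sigma)=A$. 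Alternatively, and perhaps more cleanly, one invokes Observation \ref{obs:adm_subset}: both $\tbl(\sigma)$ and $A$ are {\adm} with the same set of transpositions, and one inclusion $A\subseteq\tbl(\sigma)$ (or the reverse) suffices — the inclusion $A\subseteq\tbl(\sigma)$ follows because each generator of $A$ lies below $\sigma$, which can be read off from the explicit description of $\sigma$ on $[i,j]$ in Lemma \ref{lem:pi} combined with $\tbl(\sigma')=A'$. The main obstacle I anticipate is precisely this last step — cleanly matching the derived set $\tbl(\sigma)'$ to $A'$ (equivalently, verifying $T_{i,j}$ is a wedge for $\tbl(\sigma)$ and identifying its derived set), since Lemma \ref{lem: indsmooth_continuation} is stated for $\sigma$ already known to be smooth, so the logical order of ``$\sigma$ smooth'' $\Rightarrow$ ``apply Lemma \ref{lem: indsmooth_continuation}'' $\Rightarrow$ ``$\tbl(\sigma)'=\tbl(\sigma')$'' must be arranged carefully; using Observation \ref{obs:adm_subset} as above sidesteps most of the delicacy.
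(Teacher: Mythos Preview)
Your overall architecture matches the paper's: induct on $\#A_\Trans$, reduce to the case where $A$ has a wedge $T_{i,j}$, pass to the derived set $A'$, and use $\sigma=\sigma'T_{j-1,j}$ with $\sigma'=\pi(A')$ smooth by induction. The final step, recovering $\tbl(\sigma)=A$ via Lemma \ref{lem: indsmooth_continuation} and Lemma \ref{lem: simplederived}\ref{item:recover}, is also exactly what the paper does.

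However, there is a genuine gap at the heart of the argument. You write that ``Lemma \ref{lem: indsmooth} yields that $\sigma$ is smooth and $\tbl_\Trans(\sigma)=\tbl_\Trans(\sigma')\cup\{T_{i,j}\}$'', but that lemma says $\sigma$ is smooth \emph{if and only if} $\sigma'$ is smooth \emph{and} $\tbl_\Trans(\sigma)=\tbl_\Trans(\sigma')\cup\{T_{i,j}\}$. The table equality is a \emph{hypothesis} for concluding smoothness, not a consequence of it. (Also, the lemma should be applied with $\sigma$ itself in the role of the lemma's $\sigma$; its hypotheses $\sigma([i-1])=[i-1]$ and $\sigma(i)\geq j=\sigma^{-1}(i)$ come straight from Lemma \ref{lem:pi}. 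Your detour through ``$j_{\text{new}}:=(\sigma')^{-1}(i)$'' is unnecessary and does not produce $\sigma$ as the output of the lemma's construction.)

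The paper's proof spends most of its effort on precisely this missing verification: showing directly that every $T_{r,s}\in\tbl_\Trans(\sigma)$ lies in $\tbl_\Trans(\sigma')\cup\{T_{i,j}\}$. The inclusion \eqref{eq: tblll} handles $r\ne j-1$, $s\ne j$; the case $s=j$, $r\neq i$ uses the additional fact $\sigma^{-1}(i+1)\geq j-1$ from Lemma \ref{lem:pi}\ref{part: i+j-k}; and the case $r=j-1$, $s\neq j$ uses Lemma \ref{lem: simplederived}\ref{item: LkjlinA'} together with Observation \ref{obs:sub_nsub}. Your proposed alternative via Observation \ref{obs:adm_subset} does not sidestep this: that observation requires $\tbl_\Trans(\sigma)=A_\Trans$, and the nontrivial inclusion $\tbl_\Trans(\sigma)\subseteq A_\Trans$ is exactly the computation above. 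Once you fill this in, the rest of your outline goes through as in the paper.
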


\begin{proof}
We argue by induction on the size of $A$.
The base of the induction (the case $A=\emptyset$) is trivial.
For the induction step, passing to $A^{-1}$ if necessary, we may assume that there is a wedge $T_{i,j}$ for $A$.
For simplicity, denote $\sigma:=\pi(A)$ and $\sigma':=\sigma T_{j-1,j}$.
By Lemma \ref{lem:pi}, $\sigma'=\pi(A')$, $\sigma([i-1])=[i-1]$, $\sigma(i)\geq j=\sigma^{-1}(i)$ and $\sigma^{-1}(i+1)\geq j-1$.
By the induction hypothesis, the permutation $\sigma'=\pi(A')$ is smooth and $\tbl(\sigma')=A'$.

We show that $\tbl_\Trans(\sigma)=\tbl_\Trans(\sigma')\cup\{T_{i,j}\}$.
It is clear that $\tbl_\Trans(\sigma')\subseteq \tbl_\Trans(\sigma)$ and that $T_{i,j}\in\tbl_\Trans(\sigma)$.
Hence, $\tbl_\Trans(\sigma')\cup\{T_{i,j}\}\subseteq\tbl_\Trans(\sigma)$.
Conversely, suppose that $T_{r,s}\in\tbl_\Trans(\sigma)$.
By \eqref{eq: tblll}, if $r\ne j-1$ and $s\ne j$, then $T_{r,s}\in\tbl_\Trans(\sigma')$.
If $s=j$, then either $r=i$ or $T_{r,s}\in\tbl_\Trans(\sigma')$ since $\sigma^{-1}(i+1)\geq j-1$.
Suppose now that $r=j-1$ and $s\neq j$.
Then, $r>i$ and $T_{j,s}\in\tbl_\Trans(\sigma)$ and hence by \eqref{eq: tblll},
$T_{j,s}\in\tbl_\Trans(\sigma')$.
By Lemma \ref{lem: simplederived} part \ref{item: LkjlinA'}, $L_{r,j,s}\in A'=\tbl(\sigma')$.
In particular, $\maxperm{\sigma'}(r)\geq s$.
In light of  Observation \ref{obs:sub_nsub}, the condition $\maxperm{\sigma'^{-1}}(r)\geq s$ also holds,
since $\maxperm{\sigma^{-1}}(r)\geq s$ and $s\neq j$.
Hence $T_{r,s}\in\tbl_\Trans(\sigma')$. In conclusion, $\tbl_\Trans(\sigma)=\tbl_\Trans(\sigma')\cup\{T_{i,j}\}$ as claimed.

It follows from Lemma \ref{lem: indsmooth} that $\sigma$ is smooth.
In particular, $\tbl(\sigma)$ is {\adm}.
Finally, by Lemma \ref{lem: indsmooth_continuation}, $T_{i,j}$ is a wedge for $\tbl(\sigma)$ and $\tbl(\sigma)'=\tbl(\sigma')=A'$.
Hence, $\tbl(\sigma)=A$, by Lemma \ref{lem: simplederived} part \ref{item:recover}.
The proposition follows.
\end{proof}

Note that Proposition \ref{propos:order} and Proposition \ref{propos:bijection} do not yet finish the proof of
Theorem \ref{thm:bijection} since we still have to show the relation \eqref{eq: pialt}.

\subsection{}\label{sec:cycles}

For every non-empty subset $A=\{i_1,\ldots,i_k\}\subseteq [n]$ with $i_1<\cdots<i_k$ let $\cycr_A$ be the cycle permutation
$i_1\to i_2\to\cdots\to i_k\to i_1$ and let $\cycl_A:=\cycr_A^{-1}$.
Note that this is consistent with the notation $\cycr_{[i,j]}$ introduced before.
Denote by $\Cycles=\Cycles_n$ the set of permutations of the form $\cycr_A$ or $\cycl_A$
for some $\emptyset\neq A\subseteq [n]$, and let
\[
\Cycles_{\spcl}=(\Cycles_n)_{\spcl}:=\{\rshft ij\}_{1\leq i<j\leq n}\cup\{\lshft ij\}_{1\leq i<j\leq n}.
\]
Note that $\cycr_A=\cycl_A=T_{i,j}$ if $A=\{i,j\}$ and $\cycr_A=\cycl_A=e$ if $A$ is a singleton.
Thus,
\[
\#\Cycles_n = 2^{n+1}-\binom{n}2-2n-1,
\]
whereas
\begin{equation*}
\#\Trans_n=\binom n2,\ \#(\Cycles_n)_{\spcl}=2\binom n2-(n-1)=(n-1)^2,\ \#\Special_n=2\binom n3+\binom n2.
\end{equation*}
It is easy to see that for every $\sigma\in S_n$ and $\emptyset\ne A=\{i_1,\ldots,i_k\}\subseteq[n]$ with $i_1<\cdots<i_k$ we have
\begin{subequations} \label{eq:cyc1<=}
\begin{align}
\label{eq:cycr1<=}\cycr_A\leq\sigma&\iff\maxperm{\sigma}(i_j)\geq i_{j+1}\text{ for all }1\leq j<k \text{ and }\maxperm{\sigma^{-1}}(i_1)\geq i_k,\\
\label{eq:cycl1<=}\cycl_A\leq\sigma&\iff\maxperm{\sigma^{-1}}(i_j)\geq i_{j+1}\text{ for all }1\leq j<k \text{ and }\maxperm{\sigma}(i_1)\geq i_k.
\end{align}

In particular, for every $i<j$
\begin{equation} \label{eq: cycrs}
\rshft ij\leq\sigma\iff\maxperm{\sigma^{-1}}(i)\geq j.
\end{equation}
Indeed, if $\maxperm{\sigma^{-1}}(i)\geq j$, then for all $r\in[i,j-1]$,
$\maxperm{\sigma}(r)\geq r+1$, otherwise $\sigma([r])=[r]$ and in particular,
$\maxperm{\sigma^{-1}}(i)\leq\maxperm{\sigma^{-1}}(r)=r<j$.
Similarly,
\begin{equation} \label{eq: cycls}
\lshft ij\leq\sigma\iff\maxperm{\sigma}(i)\geq j.
\end{equation}

\begin{observation}\label{obs:order_cycles1} ~
\begin{enumerate}
\item Let $A\subseteq[n]$ be a set consisting of at least two elements. Then,
\begin{equation*}
\cycl_{A\setminus\{\min A\}}\leq\cycl_A,\quad \cycr_{A\setminus\{\min A\}}\leq\cycr_A,\quad \cycl_{A\setminus\{\max A\}}\leq\cycl_A, \quad \cycr_{A\setminus\{\max A\}}\leq\cycr_A.
\end{equation*}
\item Let $\emptyset\neq A\subset B\subseteq [n]$ be sets such that $\min A=\min B$ and $\max A=\max B$. Then,
\begin{equation*}
\cycl_B\leq\cycl_A,\quad \cycr_B\leq\cycr_A.
\end{equation*}
\end{enumerate}
\end{observation}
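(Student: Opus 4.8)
The plan is to deduce both parts from the Bruhat-order criterion for cycles, namely \eqref{eq:cyc1<=}, once explicit formulas for $\maxperm{\cycr_A}$ and $\maxperm{\cycl_A}$ are available. Write $A=\{a_1<\cdots<a_m\}$. Since $\cycr_A$ fixes every point outside $A$ and cyclically permutes $a_1,\ldots,a_m$, I would first record that $\maxperm{\cycr_A}(i)=i$ for $i<a_1$ or $i\ge a_m$, while for $a_1\le i<a_m$, writing $a_\ell=\max(A\cap[i])$ (so $1\le\ell<m$), the image $\cycr_A([i])$ consists of $\{a_2,\ldots,a_{\ell+1}\}$ together with the non-$A$ elements of $[i]$, all of which are $\le i<a_{\ell+1}$; hence $\maxperm{\cycr_A}(i)=a_{\ell+1}=\min\{a\in A:a>i\}$. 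Dually, since $\cycl_A=\cycr_A^{-1}$ sends $a_1$ to $a_m$ and fixes everything outside $A$, one gets $\maxperm{\cycl_A}(i)=i$ for $i<a_1$ or $i\ge a_m$, and $\maxperm{\cycl_A}(i)=a_m$ for $a_1\le i<a_m$.

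For part (2): write $B=\{b_1<\cdots<b_k\}$ with $b_1=\min A=\min B$ and $b_k=\max A=\max B$; note that $A\subsetneq B$ with matching extremes forces $m\ge 2$, so $a_1<a_m$. To obtain $\cycr_B\le\cycr_A$ I would verify the two conditions of \eqref{eq:cycr1<=} for $\sigma=\cycr_A$. For $1\le j<k$ we have $a_1=b_1\le b_j<b_k=a_m$, so $\maxperm{\cycr_A}(b_j)=\min\{a\in A:a>b_j\}$; since $A\subseteq B$, every element of $A$ exceeding $b_j$ is already $\ge b_{j+1}$, whence $\maxperm{\cycr_A}(b_j)\ge b_{j+1}$. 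Moreover $\maxperm{\cycr_A^{-1}}(b_1)=\maxperm{\cycl_A}(a_1)=a_m=b_k$. This is exactly the criterion \eqref{eq:cycr1<=}, so $\cycr_B\le\cycr_A$; applying inversion (which preserves $\le$ and turns $\cycr_S$ into $\cycl_S$) gives $\cycl_B\le\cycl_A$.

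For part (1): again by inversion it suffices to prove the two assertions involving $\cycr$. If $A$ has exactly two elements, then $A\setminus\{\min A\}$ and $A\setminus\{\max A\}$ are singletons and the corresponding cycles are the identity, so the claim is trivial; hence assume $m\ge 3$. For $\cycr_{A\setminus\{\min A\}}\le\cycr_A$, apply \eqref{eq:cycr1<=} to the listing $A\setminus\{a_1\}=\{a_2<\cdots<a_m\}$: for $2\le j\le m-1$ the elements $a_j,a_{j+1}$ are consecutive in $A$, so $\maxperm{\cycr_A}(a_j)=a_{j+1}$, and $\maxperm{\cycr_A^{-1}}(a_2)=\maxperm{\cycl_A}(a_2)=a_m$ since $a_1\le a_2<a_m$. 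Likewise, for $\cycr_{A\setminus\{\max A\}}\le\cycr_A$, apply \eqref{eq:cycr1<=} to $A\setminus\{a_m\}=\{a_1<\cdots<a_{m-1}\}$: for $1\le j\le m-2$ one has $\maxperm{\cycr_A}(a_j)=a_{j+1}$, and $\maxperm{\cycr_A^{-1}}(a_1)=\maxperm{\cycl_A}(a_1)=a_m\ge a_{m-1}$. Each case again matches \eqref{eq:cycr1<=} verbatim.

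I do not expect any genuine difficulty: the only care required is in pinning down the extremal values of $\maxperm{\cycr_A}$ and $\maxperm{\cycl_A}$ (at $a_1$ and just below $a_m$) and the routine boundary bookkeeping (singleton supports, and $m\ge 2$ in part (2)). As an alternative route one can bypass \eqref{eq:cyc1<=} entirely using the identities $\cycr_{A\cup\{c\}}=T_{c,a_{s+1}}\cycr_A$ for $a_s<c<a_{s+1}$, $\cycr_{A\setminus\{a_1\}}=T_{a_1,a_2}\cycr_A$, and $\cycr_{A\setminus\{a_m\}}=T_{a_1,a_m}\cycr_A$, together with the standard fact that $\tau<T_{p,q}\tau$ if and only if $\tau^{-1}(p)<\tau^{-1}(q)$ for $p<q$; but I would present the $\maxperm$ computation as the main argument and mention this only as a remark.
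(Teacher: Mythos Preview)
Your proposal is correct and follows exactly the route the paper intends: the observation is stated without proof, since it is meant to follow immediately from the criteria \eqref{eq:cycr1<=} and \eqref{eq:cycl1<=}, which is precisely what you verify in detail. The paper offers no further argument, so there is nothing substantive to compare.
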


\end{subequations}
For every $\sigma\in S_n$ let
\[
\tblc(\sigma)=\{\tau\in\Cycles:\tau\leq\sigma\},
\]
\[
\tblc_{\spcl}(\sigma)=\{\tau\in\Cycles_{\spcl}:\tau\leq\sigma\}=
\tblc(\sigma)\cap\Cycles_{\spcl}.
\]

We say that $\sigma\in S_n$ is {\dbi} if for every $\tau\in S_n$ we have
\[
\tau\leq\sigma\impliedby\maxperm{\tau}(i)\leq\maxperm{\sigma}(i)
\text{ and }\maxperm{\tau^{-1}}(i)\leq\maxperm{\sigma^{-1}}(i)\text{ for all }i.
\]
(The implication $\implies$ always holds, cf. \eqref{eq:Bruhat}.)
This terminology conforms with a similar notion for Schubert varieties \cite{MR1934291}.

\begin{lemma}\label{lem:tblc_dbi}
The following three conditions are equivalent for $\sigma\in S_n$.
\begin{enumerate}
\item \label{part: definc} $\sigma$ is {\dbi}.
\item \label{part: tblc} For every $\tau\in S_n$, $\tau\leq\sigma$ if and only if $\tblc(\tau)\subseteq\tblc(\sigma)$.
\item \label{part: tblcspcl} For every $\tau\in S_n$, $\tau\leq\sigma$ if and only if $\tblc_{\spcl}(\tau)\subseteq\tblc_{\spcl}(\sigma)$.
\end{enumerate}
\end{lemma}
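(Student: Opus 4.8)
The plan is to prove the two equivalences $(\ref{part: definc})\iff(\ref{part: tblcspcl})$ and $(\ref{part: tblc})\iff(\ref{part: tblcspcl})$ separately; the point underlying both is that $\tblc_{\spcl}(\tau)$ already encodes the maximal functions $\maxperm{\tau}$ and $\maxperm{\tau^{-1}}$, so all three conditions are statements about the same data. First I would record the dictionary: by \eqref{eq: cycrs} and \eqref{eq: cycls}, for $i<j$ we have $\rshft ij\leq\tau\iff\maxperm{\tau^{-1}}(i)\geq j$ and $\lshft ij\leq\tau\iff\maxperm{\tau}(i)\geq j$, while $\maxperm{\tau}(i)\geq i$ always. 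Examining one coordinate $i$ at a time (according to whether $\maxperm{\tau}(i)$ equals $i$ or exceeds it) this yields, for all $\tau,\sigma\in S_n$,
\[
\tblc_{\spcl}(\tau)\subseteq\tblc_{\spcl}(\sigma)\iff\maxperm{\tau}\leq\maxperm{\sigma}\text{ and }\maxperm{\tau^{-1}}\leq\maxperm{\sigma^{-1}}\text{ pointwise.}
\]

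Next, using \eqref{eq:cycr1<=} and \eqref{eq:cycl1<=}, I would upgrade this to the equivalence $\tblc(\tau)\subseteq\tblc(\sigma)\iff\tblc_{\spcl}(\tau)\subseteq\tblc_{\spcl}(\sigma)$. One direction is immediate since $\Cycles_{\spcl}\subseteq\Cycles$. For the other, fix $\emptyset\neq A=\{i_1<\cdots<i_k\}\subseteq[n]$; by \eqref{eq:cycr1<=} the relation $\cycr_A\leq\tau$ is the conjunction of the inequalities $\maxperm{\tau}(i_j)\geq i_{j+1}$ ($1\leq j<k$) and $\maxperm{\tau^{-1}}(i_1)\geq i_k$, and likewise for $\cycl_A$ with $\tau$ and $\tau^{-1}$ interchanged. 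If $\maxperm{\tau}\leq\maxperm{\sigma}$ and $\maxperm{\tau^{-1}}\leq\maxperm{\sigma^{-1}}$ pointwise, i.e. if $\tblc_{\spcl}(\tau)\subseteq\tblc_{\spcl}(\sigma)$ by the first step, these inequalities persist with $\sigma$ in place of $\tau$, so $\cycr_A\leq\tau\Rightarrow\cycr_A\leq\sigma$ and $\cycl_A\leq\tau\Rightarrow\cycl_A\leq\sigma$. Hence $\tblc(\tau)\subseteq\tblc(\sigma)$, and conditions $(\ref{part: tblc})$ and $(\ref{part: tblcspcl})$ become literally the same statement.

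Finally, for $(\ref{part: definc})\iff(\ref{part: tblcspcl})$ I would simply unwind the definition of {\dbi}: $\sigma$ is {\dbi} exactly when, for every $\tau\in S_n$, the pointwise inequalities $\maxperm{\tau}\leq\maxperm{\sigma}$ and $\maxperm{\tau^{-1}}\leq\maxperm{\sigma^{-1}}$ already imply $\tau\leq\sigma$; by the first step these inequalities are equivalent to $\tblc_{\spcl}(\tau)\subseteq\tblc_{\spcl}(\sigma)$, while the reverse implication $\tau\leq\sigma\Rightarrow\tblc_{\spcl}(\tau)\subseteq\tblc_{\spcl}(\sigma)$ holds for all $\sigma$, since $\tau\leq\sigma$ forces $\maxperm{\tau}\leq\maxperm{\sigma}$ and $\maxperm{\tau^{-1}}\leq\maxperm{\sigma^{-1}}$ pointwise (cf. \eqref{eq:Bruhat}). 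Thus $\sigma$ is {\dbi} if and only if $(\ref{part: tblcspcl})$ holds. I do not expect a real obstacle; the only care needed is in the degenerate cases — singleton $A$ (where $\cycr_A=\cycl_A=e$, so $\cycr_A\leq\tau\leq\sigma$ trivially) and coordinates with $\maxperm{\tau}(i)=i$ — and in keeping the directions of implication straight, all of which are covered by the conventions already fixed above.
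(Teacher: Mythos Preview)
Your proof is correct and follows essentially the same route as the paper: both hinge on the fact (from \eqref{eq: cycrs}, \eqref{eq: cycls}) that $\tblc_{\spcl}(\tau)\subseteq\tblc_{\spcl}(\sigma)$ is equivalent to the pointwise inequalities $\maxperm{\tau}\le\maxperm{\sigma}$ and $\maxperm{\tau^{-1}}\le\maxperm{\sigma^{-1}}$, from which (\ref{part: definc})$\iff$(\ref{part: tblcspcl}) is immediate and (\ref{part: tblc})$\iff$(\ref{part: tblcspcl}) follows via \eqref{eq:cyc1<=}. The only difference is that you make this intermediate dictionary explicit, whereas the paper leaves it implicit.
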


\begin{proof}
Clearly, if $\tau\leq\sigma$, then $\tblc(\tau)\subseteq\tblc(\sigma)$ and $\tblc_{\spcl}(\tau)\subseteq\tblc_{\spcl}(\sigma)$.
Also, if $\tblc(\tau)\subseteq\tblc(\sigma)$, then $\tblc_{\spcl}(\tau)\subseteq\tblc_{\spcl}(\sigma)$.
Thus, \ref{part: tblcspcl}$\implies$\ref{part: tblc}.

Conversely, if $\tblc_{\spcl}(\tau)\subseteq\tblc_{\spcl}(\sigma)$, then by \eqref{eq:cyc1<=} we have $\tblc(\tau)\subseteq\tblc(\sigma)$. Hence \ref{part: tblc}$\implies$\ref{part: tblcspcl}.

The equivalence of conditions \ref{part: definc} and \ref{part: tblcspcl} follows from \eqref{eq: cycrs} and \eqref{eq: cycls}.
\end{proof}

\begin{lemma}\label{lem:covex_cyc}
Let $\sigma$ be a covexillary permutation in $S_n$, let $r\geq 3$ and $1\leq i_1<\cdots<i_r\leq n$.

If $T_{i_1,i_{r-1}}\nleq\sigma$, $T_{i_2,i_r}\nleq\sigma$, $\cycl_{i_1,\ldots,i_{r-1}}\leq\sigma$ and $\cycl_{i_2,\ldots,i_r}\leq\sigma$, then $\cycl_{i_1,\ldots,i_r}\leq\sigma$.

Similarly, if $T_{i_1,i_{r-1}}\nleq\sigma$, $T_{i_2,i_r}\nleq\sigma$, $\cycr_{i_1,\ldots,i_{r-1}}\leq\sigma$ and $\cycr_{i_2,\ldots,i_r}\leq\sigma$, then $\cycr_{i_1,\ldots,i_r}\leq\sigma$.
\end{lemma}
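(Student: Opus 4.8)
By inversion-symmetry (Lemma~\ref{lem:tblc_dbi} applies equally to $\sigma$ and $\sigma^{-1}$, and $T_{i,j}^{-1}=T_{i,j}$, $\cycr_A^{-1}=\cycl_A$), it suffices to prove the statement for $\cycl$; the $\cycr$ case follows by replacing $\sigma$ with $\sigma^{-1}$, which is again covexillary. So assume $T_{i_1,i_{r-1}}\nleq\sigma$, $T_{i_2,i_r}\nleq\sigma$, $\cycl_{i_1,\ldots,i_{r-1}}\leq\sigma$ and $\cycl_{i_2,\ldots,i_r}\leq\sigma$; I want $\cycl_{i_1,\ldots,i_r}\leq\sigma$. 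Using \eqref{eq:cycl1<=}, the two hypotheses on the $(r-1)$-cycles unwind to: $\maxperm{\sigma^{-1}}(i_t)\geq i_{t+1}$ for $1\leq t\leq r-2$ (from the first cycle) and for $2\leq t\leq r-1$ (from the second), and $\maxperm{\sigma}(i_1)\geq i_{r-1}$, $\maxperm{\sigma}(i_2)\geq i_r$. Combining, $\maxperm{\sigma^{-1}}(i_t)\geq i_{t+1}$ for all $1\leq t\leq r-1$. Again by \eqref{eq:cycl1<=}, the only missing ingredient for $\cycl_{i_1,\ldots,i_r}\leq\sigma$ is $\maxperm{\sigma}(i_1)\geq i_r$, i.e.\ $\cycl_{i_1,i_r}=T_{i_1,i_r}\leq\sigma$ would more than suffice, but in fact I only need $\maxperm\sigma(i_1)\geq i_r$.

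\textbf{Key steps.}
First I would record what the failures $T_{i_1,i_{r-1}}\nleq\sigma$ and $T_{i_2,i_r}\nleq\sigma$ give me. By \eqref{eq: easy1<=a}, since $\maxperm{\sigma^{-1}}(i_1)\geq\maxperm{\sigma^{-1}}(i_1)\geq i_2>i_{r-1}$ is false in general --- wait, rather: $T_{i_1,i_{r-1}}\nleq\sigma$ together with $\maxperm{\sigma^{-1}}(i_1)\geq i_{r-1}$ (which we have, since $\maxperm{\sigma^{-1}}(i_1)\geq i_2$ and more, indeed $\maxperm{\sigma^{-1}}(i_1)\ge i_{r-1}$ follows by transitivity from the chain $\maxperm{\sigma^{-1}}(i_t)\ge i_{t+1}$... careful: that chain gives $\maxperm{\sigma^{-1}}(i_1)\ge i_2$ only, not $i_{r-1}$, since $\maxperm{\sigma^{-1}}$ need not be monotone). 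Let me instead use: $\maxperm\sigma(i_1)\geq i_{r-1}$ holds, so $T_{i_1,i_{r-1}}\nleq\sigma$ forces $\maxperm{\sigma^{-1}}(i_1)<i_{r-1}$; similarly $\maxperm{\sigma^{-1}}(i_2)\geq i_r$ holds, so $T_{i_2,i_r}\nleq\sigma$ forces $\maxperm{\sigma}(i_2)<i_r$. So I have four-point data: there exist $p\leq i_1$ with $\sigma(p)\geq i_{r-1}$ (from $\maxperm\sigma(i_1)\geq i_{r-1}$), $q\leq i_2$ with $\sigma^{-1}(q)\geq i_r$ i.e.\ some position $\geq i_r$ mapping to $\leq i_2$ (from $\maxperm{\sigma^{-1}}(i_2)\geq i_r$), plus the constraints $\maxperm{\sigma^{-1}}(i_1)<i_{r-1}$ and $\maxperm{\sigma}(i_2)<i_r$. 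Now suppose for contradiction that $\maxperm\sigma(i_1)<i_r$, i.e.\ $\sigma([i_1])\cap[i_r,n]=\emptyset$. I would then extract a forbidden $3412$ pattern: pick a position $a\leq i_1$ with $\sigma(a)\in[i_{r-1},i_r-1]$ (exists by $\maxperm\sigma(i_1)\geq i_{r-1}$ and the contradiction hypothesis); pick $b$ with $i_1<b\leq i_2$ --- here I need $\sigma(b)\geq i_r$, which should come from $\maxperm{\sigma^{-1}}(i_2)\geq i_r$ combined with $\maxperm{\sigma^{-1}}(i_1)<i_{r-1}$: the value-or-position argument says some position in $(i_1,i_2]$ carries a large value; pick $c$ with $i_{r-1}\leq c$ (a position) --- from $\maxperm{\sigma^{-1}}(i_1)<i_{r-1}$... and $\sigma(c)\leq i_1$; pick $d\geq i_r$ with $\sigma(d)\leq i_2$ from the second failure. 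Then $a<b\leq c<d$ (using $i_2\le i_{r-1}$ when $r\ge 3$, well $i_2<i_{r-1}$ needs $r\ge 4$; the case $r=3$ is degenerate and handled separately since then $i_2=i_{r-1}$) and $\sigma(c)\le i_1<\sigma(d)\le i_2<\sigma(a)<i_r\le\sigma(b)$, giving the pattern $3412$ --- contradiction with covexillarity.

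\textbf{Main obstacle.}
The delicate point is the simultaneous extraction of the four pattern positions with the right value-inequalities, in particular disentangling ``$\maxperm\sigma$ large'' (a value appears early) from ``$\maxperm{\sigma^{-1}}$ large'' (a small value appears late) and making sure the chosen positions are genuinely in the right order; Observation~\ref{obs:sub_nsub} and the translation lemmas \eqref{eq: easy1<=a} are the right tools, and the covexillary case $\tau=(3412)$ in the Remark after Lemma~\ref{lem: covexisadm} shows why the hypotheses $T_{i_1,i_{r-1}}\nleq\sigma$, $T_{i_2,i_r}\nleq\sigma$ are exactly what rules out that obstruction. A secondary nuisance is the boundary case $r=3$, where $i_2=i_{r-1}$ and the ``$b\leq c$'' in the pattern collapses; there one argues directly that $\cycl_{i_1,i_2}\leq\sigma$ and $\cycl_{i_2,i_3}\leq\sigma$ with $T_{i_1,i_2}\nleq\sigma$, $T_{i_2,i_3}\nleq\sigma$ --- i.e.\ $\maxperm{\sigma}(i_1)\geq i_2$, $\maxperm{\sigma^{-1}}(i_1)<i_2$, $\maxperm{\sigma}(i_2)\geq i_3$, $\maxperm{\sigma^{-1}}(i_2)<i_3$ and more --- together force $\maxperm\sigma(i_1)\geq i_3$, again by producing a $3412$ (or noting a $3$-cycle comparison directly), so that $\cycl_{i_1,i_2,i_3}\leq\sigma$ by \eqref{eq: easy1<=a} for $L$-cycles.
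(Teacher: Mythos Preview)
Your overall strategy is the same as the paper's: reduce to the $\cycl$ case by passing to $\sigma^{-1}$, unwind the cycle hypotheses via \eqref{eq:cycl1<=} to get $\maxperm{\sigma^{-1}}(i_t)\ge i_{t+1}$ for all $t$ and $\maxperm\sigma(i_1)\ge i_{r-1}$, $\maxperm\sigma(i_2)\ge i_r$, note that the only missing condition is $\maxperm\sigma(i_1)\ge i_r$, assume it fails, and extract a $3412$ pattern. However, the execution contains a genuine error that makes your pattern extraction collapse.

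The error is in your use of $T_{i_2,i_r}\nleq\sigma$. You assert that $\maxperm{\sigma^{-1}}(i_2)\ge i_r$ holds, and deduce $\maxperm\sigma(i_2)<i_r$. But you already established $\maxperm\sigma(i_2)\ge i_r$ from $\cycl_{i_2,\ldots,i_r}\le\sigma$, so this would be an immediate contradiction (making the lemma vacuous, which it is not). In fact the hypotheses do \emph{not} give $\maxperm{\sigma^{-1}}(i_2)\ge i_r$; the chain only yields $\maxperm{\sigma^{-1}}(i_2)\ge i_3$. The correct deduction is the reverse: since $\maxperm\sigma(i_2)\ge i_r$ holds, $T_{i_2,i_r}\nleq\sigma$ forces $\maxperm{\sigma^{-1}}(i_2)<i_r$. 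This swap propagates: your witness $c$ (a position $\ge i_{r-1}$ with $\sigma(c)\le i_1$) cannot exist, precisely because $\maxperm{\sigma^{-1}}(i_1)<i_{r-1}$; and your $d$ (a position $\ge i_r$ with $\sigma(d)\le i_2$) cannot exist, precisely because $\maxperm{\sigma^{-1}}(i_2)<i_r$. The paper instead takes $z\in[i_2,i_{r-1})$ with $\sigma(z)\le i_1$ (from $i_2\le\maxperm{\sigma^{-1}}(i_1)<i_{r-1}$) and $w\ge i_r$ with $i_2<\sigma(w)\le i_{r-1}$ (from $\maxperm{\sigma^{-1}}(i_2)<i_r\le\maxperm{\sigma^{-1}}(i_{r-1})$), yielding $x<y\le z<w$ with $\sigma(z)<\sigma(w)\le\sigma(x)<\sigma(y)$.

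A minor point: the case $r=3$ is not a boundary nuisance but vacuous, since then $\cycl_{i_1,i_2}=T_{i_1,i_2}=T_{i_1,i_{r-1}}$, so the hypotheses $\cycl_{i_1,i_2}\le\sigma$ and $T_{i_1,i_{r-1}}\nleq\sigma$ are contradictory. The paper simply notes this and assumes $r>3$.
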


\begin{proof}
First note that the statements are empty if $r=3$, so we may assume $r>3$.
We only need to prove the first statement, as the second one would then follow by passing to $\sigma^{-1}$.
Since $\cycl_{i_1,\ldots,i_{r-1}}\leq\sigma$ and $\cycl_{i_2,\ldots,i_r}\leq\sigma$, it follows from \eqref{eq:cycl1<=} that $\maxperm{\sigma}(i_1)\geq i_{r-1}$, $\maxperm{\sigma}(i_2)\geq i_r$ and $\maxperm{\sigma^{-1}}(i_t)\geq i_{t+1}$ for every $1\leq t\leq r-1$.
Since $T_{i_1,i_{r-1}}\nleq\sigma$, it follows that $\maxperm{\sigma^{-1}}(i_1)<i_{r-1}$.
Similarly, $\maxperm{\sigma^{-1}}(i_2)<i_r$, since $T_{i_2,i_r}\nleq\sigma$.

By way of contradiction assume now that $\cycl_{i_1,\ldots,i_r}\nleq\sigma$.
Since $\maxperm{\sigma^{-1}}(i_t)\geq i_{t+1}$ for every $1\leq t\leq r-1$, it follows that $\maxperm{\sigma}(i_1)<i_r$.

Since $i_{r-1}\leq\maxperm{\sigma}(i_1)<i_r$, there is $x\leq i_1$ such that $i_{r-1}\leq\sigma(x)<i_r$.
Similarly, since $i_2\leq\maxperm{\sigma^{-1}}(i_1)<i_{r-1}$, there is $i_2\leq z< i_{r-1}$ such that $\sigma(z)\leq i_1$.
Since $\maxperm{\sigma}(i_1)<i_r\leq\maxperm{\sigma}(i_2)$, there is $i_1<y\leq i_2$ such that $\sigma(y)\geq i_r$.
Finally, since $\maxperm{\sigma^{-1}}(i_2)<i_r\leq\maxperm{\sigma^{-1}}(i_{r-1})$, there is $ w\geq i_r$ such that $i_2<\sigma(w)\leq i_{r-1}$.

Therefore, $x<y\leq z<w$ and $\sigma(z)<\sigma(w)\leq\sigma(x)<\sigma(y)$, violating the covexillarity of $\sigma$.
\end{proof}

The following observation immediately follows from \eqref{eq:cycr1<=} and \eqref{eq:cycl1<=}.
\begin{observation}\label{obs:order_cycles2}
Suppose that $\tau\in S_n$, $r\geq 3$ and $1\leq i_1<\cdots<i_r\leq n$. Then,
\begin{enumerate}
\item $T_{i_1,i_{r-1}}\vee\cycl_{i_1,\ldots,i_r}=L_{i_1,i_{r-1},i_r}$.
\item $T_{i_1,i_{r-1}}\vee\cycr_{i_1,\ldots,i_r}=R_{i_1,i_{r-1},i_r}$.
\item $T_{i_2,i_r}\vee\cycl_{i_1,\ldots,i_r}=L_{i_1,i_2,i_r}$.
\item $T_{i_2,i_r}\vee\cycr_{i_1,\ldots,i_r}=R_{i_1,i_2,i_r}$.
\end{enumerate}
\end{observation}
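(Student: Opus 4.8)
The plan is to unfold everything in terms of the ``maximal functions'' $\maxperm{\sigma}$ and $\maxperm{\sigma^{-1}}$ and simply read off the four equivalences; this confirms that the observation follows ``immediately'' from \eqref{eq:cycr1<=} and \eqref{eq:cycl1<=}. Recall that the shorthand $x\vee y=z$ means that for every $\sigma\in S_n$, $\sigma\geq x$ and $\sigma\geq y$ if and only if $\sigma\geq z$. Since passing to $\sigma^{-1}$ is an automorphism of the Bruhat order and interchanges $\maxperm{\sigma}\leftrightarrow\maxperm{\sigma^{-1}}$, $\cycl_{i_1,\ldots,i_r}\leftrightarrow\cycr_{i_1,\ldots,i_r}$ and $L_\bullet\leftrightarrow R_\bullet$ while fixing transpositions, items (2) and (4) are obtained from (1) and (3) by replacing $\sigma$ with $\sigma^{-1}$. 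So I would only treat (1) and (3). Throughout I would use the elementary fact that $i\mapsto\maxperm{\sigma}(i)$ and $i\mapsto\maxperm{\sigma^{-1}}(i)$ are weakly increasing, immediate from $\maxperm{\sigma}(i)=\max\sigma([i])$.

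For (1): by \eqref{eq: easy1<=} and \eqref{eq:cyc1<=}, the relation $\sigma\geq T_{i_1,i_{r-1}}$ reads $\maxperm{\sigma}(i_1)\geq i_{r-1}$ and $\maxperm{\sigma^{-1}}(i_1)\geq i_{r-1}$; the relation $\sigma\geq\cycl_{i_1,\ldots,i_r}$ reads $\maxperm{\sigma}(i_1)\geq i_r$ together with $\maxperm{\sigma^{-1}}(i_j)\geq i_{j+1}$ for all $1\leq j\leq r-1$; and $\sigma\geq L_{i_1,i_{r-1},i_r}$ reads $\maxperm{\sigma^{-1}}(i_1)\geq i_{r-1}$, $\maxperm{\sigma^{-1}}(i_{r-1})\geq i_r$ and $\maxperm{\sigma}(i_1)\geq i_r$. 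For the forward implication one observes that the three inequalities defining $\sigma\geq L_{i_1,i_{r-1},i_r}$ all appear among those forced by $\sigma\geq T_{i_1,i_{r-1}}$ and $\sigma\geq\cycl_{i_1,\ldots,i_r}$ (the middle one being the $j=r-1$ instance of the cycle condition). Conversely, assuming $\sigma\geq L_{i_1,i_{r-1},i_r}$: then $\maxperm{\sigma}(i_1)\geq i_r>i_{r-1}$ and $\maxperm{\sigma^{-1}}(i_1)\geq i_{r-1}$ give $\sigma\geq T_{i_1,i_{r-1}}$, while for $\sigma\geq\cycl_{i_1,\ldots,i_r}$ the inequality $\maxperm{\sigma}(i_1)\geq i_r$ is given, the instances $j=1$ and $j=r-1$ read $\maxperm{\sigma^{-1}}(i_1)\geq i_{r-1}\geq i_2$ and $\maxperm{\sigma^{-1}}(i_{r-1})\geq i_r$ and are given, and for $2\leq j\leq r-2$ monotonicity yields $\maxperm{\sigma^{-1}}(i_j)\geq\maxperm{\sigma^{-1}}(i_1)\geq i_{r-1}\geq i_{j+1}$.

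For (3) the argument is the mirror image, with the transposition $T_{i_2,i_r}$ anchored at the top end: $\sigma\geq T_{i_2,i_r}$ reads $\maxperm{\sigma}(i_2)\geq i_r$ and $\maxperm{\sigma^{-1}}(i_2)\geq i_r$, and $\sigma\geq L_{i_1,i_2,i_r}$ reads $\maxperm{\sigma^{-1}}(i_1)\geq i_2$, $\maxperm{\sigma^{-1}}(i_2)\geq i_r$, $\maxperm{\sigma}(i_1)\geq i_r$. The forward direction again just reads off inequalities, the first one being the $j=1$ instance of the cycle condition. For the converse, $\maxperm{\sigma}(i_2)\geq\maxperm{\sigma}(i_1)\geq i_r$ supplies the missing half of $\sigma\geq T_{i_2,i_r}$, and the cycle condition $\maxperm{\sigma^{-1}}(i_j)\geq i_{j+1}$ holds for $j=1$ by hypothesis and for $2\leq j\leq r-1$ because $\maxperm{\sigma^{-1}}(i_j)\geq\maxperm{\sigma^{-1}}(i_2)\geq i_r\geq i_{j+1}$.

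There is essentially no obstacle here: the only point requiring a little care is the bookkeeping of which of the $r-1$ inequalities comprising a cycle condition are literally among the hypotheses and which need the weak monotonicity of $\maxperm{\sigma}$ and $\maxperm{\sigma^{-1}}$. Once that is set up, the statement is a direct substitution into \eqref{eq: easy1<=} and \eqref{eq:cyc1<=}.
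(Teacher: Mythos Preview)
Your proof is correct and is exactly the direct verification the paper has in mind: the observation is stated without proof, with only the remark that it ``immediately follows from \eqref{eq:cycr1<=} and \eqref{eq:cycl1<=}.'' You have simply written out that immediate verification, including the symmetry reduction via $\sigma\mapsto\sigma^{-1}$ and the monotonicity of $\maxperm{\sigma}$, $\maxperm{\sigma^{-1}}$.
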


\begin{corollary} \label{cor:order_cycles}
Let $\sigma$ be a covexillary permutation in $S_n$, $r\geq 3$ and $1\leq i_1<\cdots<i_r\leq n$.
Then, $\cycl_{i_1,\ldots,i_r}\leq\sigma$ if and only if at least one of the following three conditions holds.
\begin{itemize}
\item $L_{i_1,i_{r-1},i_r}\leq\sigma$,
\item $L_{i_1,i_2,i_r}\leq\sigma$,
\item $T_{i_1,i_{r-1}}\nleq\sigma$, $T_{i_2,i_r}\nleq\sigma$, $\cycl_{i_1,\ldots,i_{r-1}}\leq\sigma$ and $\cycl_{i_2,\ldots,i_r}\leq\sigma$.
\end{itemize}
Similarly, $\cycr_{i_1,\ldots,i_r}\leq\sigma$ if and only if at least one of the following three conditions holds.
\begin{itemize}
\item $R_{i_1,i_{r-1},i_r}\leq\sigma$.
\item $R_{i_1,i_2,i_r}\leq\sigma$.
\item $T_{i_1,i_{r-1}}\nleq\sigma$, $T_{i_2,i_r}\nleq\sigma$, $\cycr_{i_1,\ldots,i_{r-1}}\leq\sigma$ and $\cycr_{i_2,\ldots,i_r}\leq\sigma$.
\end{itemize}
\end{corollary}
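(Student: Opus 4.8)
The plan is to deduce the corollary directly from Lemma \ref{lem:covex_cyc} and Observations \ref{obs:order_cycles1} and \ref{obs:order_cycles2}, with essentially no new computation. It suffices to prove the statement about $\cycl_{i_1,\ldots,i_r}$: the statement about $\cycr_{i_1,\ldots,i_r}$ then follows by applying it to $\sigma^{-1}$, which is again covexillary because the pattern $3412$ is an involution, together with the identities $\cycr_A=\cycl_A^{-1}$, $R_{x,y,z}=L_{x,y,z}^{-1}$ and $T_{x,y}^{-1}=T_{x,y}$, which turn each of the three alternatives for $\cycl$ into the corresponding alternative for $\cycr$. Throughout I will use that ``$a\vee b=c$'' (in the sense of \eqref{eq: ijkl}) means both that $c\geq a$, $c\geq b$, and that any $\sigma\geq a,b$ satisfies $\sigma\geq c$.

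For the ``if'' direction I would check the three cases separately. If $L_{i_1,i_{r-1},i_r}\leq\sigma$, then by the first item of Observation \ref{obs:order_cycles2} we have $\cycl_{i_1,\ldots,i_r}\leq L_{i_1,i_{r-1},i_r}\leq\sigma$; the case $L_{i_1,i_2,i_r}\leq\sigma$ is identical using the third item of Observation \ref{obs:order_cycles2}. The third case is exactly the content of (the first statement of) Lemma \ref{lem:covex_cyc}.

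For the ``only if'' direction, assume $\cycl_{i_1,\ldots,i_r}\leq\sigma$. If $T_{i_1,i_{r-1}}\leq\sigma$, then $\sigma$ lies above both $T_{i_1,i_{r-1}}$ and $\cycl_{i_1,\ldots,i_r}$, hence above their join $L_{i_1,i_{r-1},i_r}$ by Observation \ref{obs:order_cycles2}, giving the first alternative; if instead $T_{i_2,i_r}\leq\sigma$, the same argument with the third item of Observation \ref{obs:order_cycles2} gives $L_{i_1,i_2,i_r}\leq\sigma$, the second alternative. Finally, if neither $T_{i_1,i_{r-1}}$ nor $T_{i_2,i_r}$ is $\leq\sigma$, then, since $\{i_1,\ldots,i_{r-1}\}$ and $\{i_2,\ldots,i_r\}$ are obtained from $\{i_1,\ldots,i_r\}$ by deleting the maximum and the minimum respectively, Observation \ref{obs:order_cycles1}(1) gives $\cycl_{i_1,\ldots,i_{r-1}}\leq\cycl_{i_1,\ldots,i_r}\leq\sigma$ and $\cycl_{i_2,\ldots,i_r}\leq\cycl_{i_1,\ldots,i_r}\leq\sigma$, so the third alternative holds. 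The only points requiring care are the direction of the join identities in Observation \ref{obs:order_cycles2} and the index bookkeeping when invoking Observation \ref{obs:order_cycles1}; the substantive input (the extraction of a $3412$-pattern) is entirely contained in Lemma \ref{lem:covex_cyc}, so there is no real obstacle here.
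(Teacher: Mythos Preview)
Your proposal is correct and matches the paper's own argument, which simply notes that the corollary follows from Observation~\ref{obs:order_cycles1}, Observation~\ref{obs:order_cycles2} and Lemma~\ref{lem:covex_cyc}. Your elaboration of the two directions and the reduction of the $\cycr$-statement to the $\cycl$-statement via $\sigma\mapsto\sigma^{-1}$ are exactly the intended details.
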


Indeed, this follows from Observation \ref{obs:order_cycles1}, Observation \ref{obs:order_cycles2} and Lemma \ref{lem:covex_cyc}.

The corollary provides for any covexillary permutation $\sigma$,
a simple recursive algorithm for constructing $\tblc(\sigma)$ out of its subset $\tbl(\sigma)$.

\begin{corollary}\label{cor:tbl_tblc}
Suppose that $\sigma\in S_n$ is covexillary, $\tau\in S_n$ and $\tbl_\Trans(\tau)=\tbl_\Trans(\sigma)$.
Then, $\tblc(\tau)\subseteq\tblc(\sigma)$ if and only if $\tbl(\tau)\subseteq\tbl(\sigma)$.
\end{corollary}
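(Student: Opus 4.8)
The plan is to prove the two directions separately, exploiting that $\tbl \subseteq \tblc$ trivially gives one implication. Suppose first that $\tblc(\tau) \subseteq \tblc(\sigma)$. Since $\tbl(\sigma) = \tblc(\sigma) \cap \Special$ and similarly for $\tau$, intersecting with $\Special$ immediately yields $\tbl(\tau) \subseteq \tbl(\sigma)$; this direction needs no covexillarity and no hypothesis on $\tbl_\Trans$.

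The substance is the converse: assuming $\sigma$ is covexillary, $\tbl_\Trans(\tau) = \tbl_\Trans(\sigma)$, and $\tbl(\tau) \subseteq \tbl(\sigma)$, I want to conclude $\tblc(\tau) \subseteq \tblc(\sigma)$. The idea is to show that every $\cycl_{i_1,\dots,i_r} \leq \tau$ (and symmetrically every $\cycr_{i_1,\dots,i_r} \leq \tau$) satisfies $\cycl_{i_1,\dots,i_r} \leq \sigma$, by induction on $r$. The base cases $r \leq 2$ are trivial ($\cycl$ is the identity or a transposition, covered by $\tbl_\Trans(\tau) = \tbl_\Trans(\sigma) \subseteq \tbl(\sigma)$) and $r = 3$ is covered by $\tbl(\tau) \subseteq \tbl(\sigma)$. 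For the inductive step with $r \geq 4$: since $\cycl_{i_1,\dots,i_r} \leq \tau$, Observation \ref{obs:order_cycles1} gives $\cycl_{i_1,\dots,i_{r-1}} \leq \tau$ and $\cycl_{i_2,\dots,i_r} \leq \tau$, so by the induction hypothesis both are $\leq \sigma$. Now apply the ``forward'' reading of Corollary \ref{cor:order_cycles} to $\tau$: one of the three conditions holds. If the first condition $L_{i_1,i_{r-1},i_r} \leq \tau$ holds, then this element lies in $\tbl(\tau) \subseteq \tbl(\sigma)$, so $L_{i_1,i_{r-1},i_r} \leq \sigma$, and the first bullet of Corollary \ref{cor:order_cycles} for $\sigma$ gives $\cycl_{i_1,\dots,i_r} \leq \sigma$. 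The second condition is handled identically via $L_{i_1,i_2,i_r}$. If only the third condition holds for $\tau$, then in particular $T_{i_1,i_{r-1}} \nleq \tau$ and $T_{i_2,i_r} \nleq \tau$; since $\tbl_\Trans(\tau) = \tbl_\Trans(\sigma)$ these give $T_{i_1,i_{r-1}} \nleq \sigma$ and $T_{i_2,i_r} \nleq \sigma$, and combined with $\cycl_{i_1,\dots,i_{r-1}} \leq \sigma$, $\cycl_{i_2,\dots,i_r} \leq \sigma$ (already established), the third bullet of Corollary \ref{cor:order_cycles} for $\sigma$ applies, again giving $\cycl_{i_1,\dots,i_r} \leq \sigma$. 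The $\cycr$ case is identical (or follows by passing to $\sigma^{-1}$ and $\tau^{-1}$, noting that the hypotheses are symmetric under inversion since $\tbl(\sigma^{-1}) = \tbl(\sigma)^{-1}$, $\tblc(\sigma^{-1}) = \tblc(\sigma)^{-1}$, and covexillarity is inversion-invariant).

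The one point requiring care is that Corollary \ref{cor:order_cycles} as stated is an ``if and only if'' for covexillary permutations; the ``only if'' direction for $\tau$ requires $\tau$ to be covexillary, but $\tau$ is not assumed covexillary. So I should instead use Observation \ref{obs:order_cycles1} (which holds for all permutations) to extract the sub-cycle inequalities from $\cycl_{i_1,\dots,i_r} \leq \tau$, and then handle the trichotomy by a direct case analysis on whether $T_{i_1,i_{r-1}} \leq \tau$ or $T_{i_2,i_r} \leq \tau$: if either holds, then together with $\cycl_{i_1,\dots,i_r} \leq \tau$ and Observation \ref{obs:order_cycles1} we get the corresponding $L$-element $\leq \tau$, hence in $\tbl(\tau) \subseteq \tbl(\sigma)$; if neither holds, transfer the negations to $\sigma$ via $\tbl_\Trans(\tau) = \tbl_\Trans(\sigma)$ and apply the third bullet of Corollary \ref{cor:order_cycles} for the covexillary $\sigma$. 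I expect this case split — verifying that the $\nleq$ information transfers correctly and that I never actually invoke covexillarity of $\tau$ — to be the only subtle bookkeeping; everything else is a direct appeal to the already-established Observation \ref{obs:order_cycles1}, Observation \ref{obs:order_cycles2}, and Corollary \ref{cor:order_cycles}.
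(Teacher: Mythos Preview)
Your proposal is correct and follows essentially the same route as the paper: the easy direction via intersecting with $\Special$, and the converse by induction on $r$, splitting into the cases $T_{i_1,i_{r-1}}\le\tau$, $T_{i_2,i_r}\le\tau$, or neither, and in the last case transferring the negations to $\sigma$ via $\tbl_\Trans(\tau)=\tbl_\Trans(\sigma)$ and invoking covexillarity of $\sigma$ (the paper cites Lemma~\ref{lem:covex_cyc} directly rather than Corollary~\ref{cor:order_cycles}, but these are equivalent here). One small slip: where you write ``together with $\cycl_{i_1,\dots,i_r}\le\tau$ and Observation~\ref{obs:order_cycles1} we get the corresponding $L$-element $\le\tau$'', you mean Observation~\ref{obs:order_cycles2} (the join statement); Observation~\ref{obs:order_cycles1} is what then gives $\cycl_{i_1,\dots,i_r}\le\sigma$ from $L_{i_1,i_{r-1},i_r}\le\sigma$.
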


\begin{proof}
If $\tblc(\tau)\subseteq\tblc(\sigma)$, then clearly $\tbl(\tau)=\tblc(\tau)\cap\Special\subseteq\tblc(\sigma)\cap\Special=\tbl(\sigma)$ .

To show the opposite implication, we prove by induction on $r$ that if
$\cycl_{i_1,\ldots,i_r}\leq \tau$ for some $i_1<\cdots<i_r$, then $\cycl_{i_1,\ldots,i_r}\leq \sigma$.
Similarly, by passing to $\sigma^{-1}$, if $\cycr_{i_1,\ldots,i_r}\leq \tau$, then $\cycr_{i_1,\ldots,i_r}\leq \sigma$.
The base cases $r=2$ and $r=3$ are given. For the induction step, let $r\geq 4$
and assume that the induction hypothesis is satisfied for $r-1$.

Suppose first that $T_{i_1,i_{r-1}}\leq\tau$.
Then, $L_{i_1,i_{r-1},i_r}\leq\tau$, by Observation \ref{obs:order_cycles2}.
Therefore, $L_{i_1,i_{r-1},i_r}\in\tbl(\tau)$ and hence $L_{i_1,i_{r-1},i_r}\in\tbl(\sigma)$, i.e., $L_{i_1,i_{r-1},i_r}\leq\sigma$.
Hence, $\cycl_{i_1,\ldots,i_r}\leq \sigma$, by Observation \ref{obs:order_cycles1}.
A similar argument shows that $\cycl_{i_1,\ldots,i_r}\leq \sigma$ if $T_{i_2,i_r}\leq\tau$.

Finally, suppose that $T_{i_1,i_{r-1}}\nleq\tau$ and $T_{i_2,i_r}\nleq\tau$.
Then, $T_{i_1,i_{r-1}},T_{i_2,i_r}\notin\tbl_\Trans(\tau)=\tbl_\Trans(\sigma)$
and hence $T_{i_1,i_{r-1}}\nleq\sigma$ and $T_{i_2,i_r}\nleq\sigma$.
On the other hand, by Observation \ref{obs:order_cycles1}, $\cycl_{i_1,\ldots,i_{r-1}}\leq\tau$ and $\cycl_{i_2,\ldots,i_r}\leq\tau$
and hence, by the induction hypothesis, $\cycl_{i_1,\ldots,i_{r-1}}\leq\sigma$ and $\cycl_{i_2,\ldots,i_r}\leq\sigma$.
It follows  from Lemma \ref{lem:covex_cyc} that $\cycl_{i_1,\ldots,i_r}\leq \sigma$.
\end{proof}

By \cite{MR1934291}, $\sigma$ is {\dbi} if and only if $\sigma$ is $4231$, $35142$, $42513$ and $351624$ avoiding.
(See \cite{MR3644818} and the references therein for other equivalent conditions.)
In particular, a permutation is smooth if and only if it is both covexillary and {\dbi}.

\begin{corollary}\label{cor:smoothorder}
Suppose that $\tau\in S_n$, $\sigma\in (S_n)_\smth$ and $\tbl_\Trans(\tau)=\tbl_\Trans(\sigma)$.
Then, $\tau\leq\sigma$ if and only if $\tbl(\tau)\subseteq\tbl(\sigma)$.
\end{corollary}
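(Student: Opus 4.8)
The plan is to reduce Corollary \ref{cor:smoothorder} to Corollary \ref{cor:tbl_tblc} and Lemma \ref{lem:tblc_dbi}, using the fact (just stated) that a smooth permutation is both covexillary and {\dbi}. One direction is trivial: if $\tau\leq\sigma$ then $\tbl(\tau)\subseteq\tbl(\sigma)$ because $\tbl$ is monotone. For the converse, assume $\tbl_\Trans(\tau)=\tbl_\Trans(\sigma)$ and $\tbl(\tau)\subseteq\tbl(\sigma)$; I want to conclude $\tau\leq\sigma$.

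First I would invoke Corollary \ref{cor:tbl_tblc} with this $\sigma$ (which is covexillary since it is smooth) and this $\tau$: the hypothesis $\tbl_\Trans(\tau)=\tbl_\Trans(\sigma)$ is exactly what that corollary needs, and $\tbl(\tau)\subseteq\tbl(\sigma)$ then gives $\tblc(\tau)\subseteq\tblc(\sigma)$. Next, since $\sigma$ is smooth it is {\dbi}, so condition \ref{part: tblc} of Lemma \ref{lem:tblc_dbi} holds for $\sigma$; applying it to $\tau$ yields $\tau\leq\sigma$. That closes the argument.

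The proof is therefore just a two-line chaining of earlier results, so there is no real obstacle — the only thing to be careful about is checking that the hypotheses line up exactly (covexillarity of $\sigma$, the equality $\tbl_\Trans(\tau)=\tbl_\Trans(\sigma)$ being passed verbatim into Corollary \ref{cor:tbl_tblc}, and the {\dbi} property of $\sigma$ feeding Lemma \ref{lem:tblc_dbi}). I would write it essentially as follows.

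\begin{proof}
If $\tau\leq\sigma$, then $\tbl(\tau)\subseteq\tbl(\sigma)$.
Conversely, suppose that $\tbl(\tau)\subseteq\tbl(\sigma)$.
Since $\sigma$ is smooth, it is covexillary, and hence, by Corollary \ref{cor:tbl_tblc} (which applies because $\tbl_\Trans(\tau)=\tbl_\Trans(\sigma)$),
we have $\tblc(\tau)\subseteq\tblc(\sigma)$.
On the other hand, $\sigma$ is smooth and hence {\dbi}, so by Lemma \ref{lem:tblc_dbi} (condition \ref{part: tblc}) it follows that $\tau\leq\sigma$.
\end{proof}
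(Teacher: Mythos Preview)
Your proof is correct and is essentially identical to the paper's own proof: both handle the trivial direction immediately, then for the converse use that smooth implies covexillary to invoke Corollary \ref{cor:tbl_tblc} (yielding $\tblc(\tau)\subseteq\tblc(\sigma)$), and finally use that smooth implies {\dbi} to conclude $\tau\leq\sigma$ via Lemma \ref{lem:tblc_dbi}.
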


\begin{proof}
Clearly, if  $\tau\leq\sigma$, then $\tbl(\tau)\subseteq\tbl(\sigma)$.
Conversely, suppose that $\sigma$ is smooth, $\tbl(\tau)\subseteq\tbl(\sigma)$ and $\tbl_\Trans(\tau)=\tbl_\Trans(\sigma)$.
By Corollary \ref{cor:tbl_tblc}, $\tblc(\tau)\subseteq\tblc(\sigma)$ since $\sigma$ is covexillary.
Hence, by Lemma \ref{lem:tblc_dbi}, $\tau\leq\sigma$, since $\sigma$ is {\dbi}, as required.
\end{proof}

\begin{remark}
It is not true in general that $\tbl(\tau)\subseteq\tbl(\sigma)$ implies that $\tau\leq\sigma$
even if $\sigma,\tau\in (S_n)_\smth$. For instance, if $\tau=\rshft 1n$ and $\sigma=w_0 T_{n-1,n}=T_{1,2}w_0$,
$n>1$, then $\tau\not\leq\sigma$ since $\tau(n)=1<2=\sigma(n)$. On the other hand,
\[
\tbl(\tau)=\{T_{i-1,i}:i\in[n-1]\}\cup\{R_{i-1,i,i+1}:i\in[2,n-1]\}
\]
and
\[
\tbl(\sigma)=\tbl(w_0)'=\Special\setminus\{T_{1,n},R_{1,i,n}:1<i<n\},
\]
so that $\tbl(\tau)\subseteq\tbl(\sigma)$ if $n>3$. Note that among all pairs of permutations in $S_n$ such that $\tau\not\leq\sigma$,
$\ell(\sigma)-\ell(\tau)=\binom n2-n$ is maximal in the example above.
\end{remark}

We can now finish the proof of Theorem \ref{thm:bijection}.

In view of Proposition \ref{propos:order} and Proposition \ref{propos:bijection}
it remains to prove  the relation \eqref{eq: pialt}.
Let $A\subset\Special$ be {\adm} and $\sigma=\pi(A)$.
By Proposition \ref{propos:bijection}, $\sigma$ is smooth and $\tbl(\sigma)=A$.
On the other hand, if $\tau$ is a permutation such that $\tbl_\Trans(\tau)=A_\Trans$ and $\tbl(\tau)\subseteq A$,
then $\tau\leq\sigma$ by Corollary \ref{cor:smoothorder}. \qed

\begin{question}
Given $\sigma_1,\sigma_2\in (S_n)_{\smth}$ with $\sigma_1\leq\sigma_2$, does there
exist a compatible order for $\tbl(\sigma_2)$ whose restriction to $\tbl_{\Trans}(\sigma_1)$ is a compatible order for $\tbl(\sigma_1)$?
\end{question}

\section{An application} \label{sec:misc}

Let $\sim$ be an equivalence relation on $[n]$ and let $\prtt$ be the set of equivalence classes of $\sim$
(i.e., the corresponding partition of $[n]$).
We denote by $S_{\prtt}$ the subgroup of $S_n$ of permutations that preserve every equivalence class.
In this section we prove Theorem \ref{thm: bruintH} (see Proposition \ref{prop: charsm} below).

\subsection{}

For each equivalence class $\prtset\in \prtt$ let $\eta_{\prtset}$ be the order preserving bijection $[\# \prtset]\to \prtset$
and let $\iota_{\prtset}:S_{\# \prtset}\rightarrow S_{\prtt}$ be the injective homomorphism given by
\[
\iota_{\prtset}(\sigma)(\eta_{\prtset}(i))=\eta_{\prtset}(\sigma(i)),\ \ i\in [\# \prtset],\ \ \iota_{\prtset}(\sigma)(i)=i,\ \ \forall i\notin \prtset.
\]
Let $\iota=(\iota_{\prtset})_{\prtset\in \prtt}$ be the isomorphism
\[
\iota:\prod_{\prtset\in \prtt}S_{\# \prtset}\rightarrow S_{\prtt}.
\]

Denote by $\leq_{\prtt}$ the partial order on $S_{\prtt}$ obtained from
the product order on $\prod_{\prtset\in \prtt}S_{\# \prtset}$ via $\iota$.
Note that $\leq_{\prtt}$ is stronger than the Bruhat order on $S_{\prtt}$ (induced from $S_n$).
It is strictly stronger if there exist indices $i<j<k<l$ such that $i\sim l\not\sim j\sim k$.

Also, note that in general $\iota$ does not preserve smoothness, i.e.,
\[
\iota\left(\prod_{\prtset\in \prtt}(S_{\# \prtset})_{\smth}\right)\nsubseteq (S_n)_{\smth}.
\]
(For instance, for $n=4$, the non-smooth permutation $(3412)$ is in the image of
$\iota:S_2\times S_2\rightarrow S_4$ with respect to the equivalence relation $i\sim j\iff i\equiv j\pmod2$.)

For any $\prtset\in \prtt$ and $A\subseteq S_n$ let $A_{\prtset}=\iota_{\prtset}^{-1}(A)\subset S_{\# \prtset}$.

\begin{observation}\label{obs:iota_adm}
For every {\adm} $A\subseteq\Special_n$ and $\prtset\in \prtt$, the set $A_{\prtset}\subseteq\Special_{\# \prtset}$ is {\adm}.
\end{observation}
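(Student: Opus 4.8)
The plan is to exploit that $\iota_{\prtset}$ is nothing but a relabelling: on its non-fixed points it acts as the strictly increasing bijection $\eta_{\prtset}$, and it is the identity elsewhere. Consequently it carries $\Special_{\#\prtset}$ into $\Special_{n}$ and preserves the Bruhat order, and once these two facts are in place the three conditions of Definition \ref{def:adm} transport to $A_{\prtset}$ almost formally.

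First I would fix $m=\#\prtset$ and write $\eta=\eta_{\prtset}\colon[m]\to\prtset$. Directly from the definition of $\iota_{\prtset}$ one checks $\iota_{\prtset}(T_{i,j})=T_{\eta(i),\eta(j)}$, $\iota_{\prtset}(R_{i,j,k})=R_{\eta(i),\eta(j),\eta(k)}$ and $\iota_{\prtset}(L_{i,j,k})=L_{\eta(i),\eta(j),\eta(k)}$ for indices in $[m]$ (the cycle $i\mapsto j\mapsto k\mapsto i$ becomes $\eta(i)\mapsto\eta(j)\mapsto\eta(k)\mapsto\eta(i)$, and since $\eta$ is increasing the roles of $R$ and $L$ are preserved). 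More generally $\iota_{\prtset}(\tau)$ has the same cycle type as $\tau$ together with extra fixed points, so $\iota_{\prtset}(\tau)\in\Special_{n}$ if and only if $\tau\in\Special_{m}$; hence $A_{\prtset}=\iota_{\prtset}^{-1}(A)\subseteq\iota_{\prtset}^{-1}(\Special_n)=\Special_{m}$, and it remains to verify \eqref{item: adm1}--\eqref{item: adm3} for $A_{\prtset}$.

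The one point that is not pure bookkeeping is that $\iota_{\prtset}$ is order-preserving for the Bruhat order. I would prove this from \eqref{eq:Bruhat}: for $a,b\in[n]$ set $i_{0}=\#(\prtset\cap[a])$ and $j_{0}=\#(\prtset\cap[b])$; then $\iota_{\prtset}(\sigma)([a])=\big([a]\setminus\prtset\big)\cup\eta(\sigma([i_{0}]))$, and intersecting with $[b]$ and using that $\eta$ is an order-preserving injection gives
\[
\#\big(\iota_{\prtset}(\sigma)([a])\cap[b]\big)=\#\big([\min(a,b)]\setminus\prtset\big)+\#\big(\sigma([i_{0}])\cap[j_{0}]\big),
\]
where the first summand does not depend on $\sigma$. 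Thus $\tau\le\sigma$ in $S_{m}$ forces $\iota_{\prtset}(\tau)\le\iota_{\prtset}(\sigma)$ in $S_{n}$. (Restricted to $\Special$, this is in any case immediate from \eqref{eq: easy1<=} and \eqref{eq: easy<=}, every clause of which is preserved under replacing each index $t$ by $\eta(t)$.) Condition \eqref{item: adm1} for $A_{\prtset}$ now follows: if $v\in A_{\prtset}$ and $u\in\Special_{m}$ with $u\le v$, then $\iota_{\prtset}(u)\le\iota_{\prtset}(v)\in A$, so $\iota_{\prtset}(u)\in A$ by \eqref{item: adm1} for $A$, i.e.\ $u\in A_{\prtset}$.

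Finally, \eqref{item: adm2} and \eqref{item: adm3} for $A_{\prtset}$ are translations of the corresponding properties of $A$ through $\eta$: if, say, $R_{i,j,l},L_{i,k,l}\in A_{\prtset}$ with $i<j$ and $k<l$ in $[m]$, then $R_{\eta(i),\eta(j),\eta(l)},L_{\eta(i),\eta(k),\eta(l)}\in A$ with $\eta(i)<\eta(j)$ and $\eta(k)<\eta(l)$, so $T_{\eta(i),\eta(l)}=\iota_{\prtset}(T_{i,l})\in A$ by \eqref{item: adm2} for $A$, whence $T_{i,l}\in A_{\prtset}$; and \eqref{item: adm3} is handled identically. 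This completes the argument. The only ingredient beyond relabelling is the order-preservation of $\iota_{\prtset}$ recorded above, and I do not expect any real obstacle there.
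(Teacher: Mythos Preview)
Your argument is correct and is precisely the natural one; the paper states this as an Observation with no proof, so there is nothing to compare against beyond noting that the relabelling via $\eta_{\prtset}$ and the order-preservation of $\iota_{\prtset}$ (which you verify cleanly from \eqref{eq:Bruhat}) are exactly what the authors are implicitly relying on.
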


\begin{lemma}\label{lem:tblc_r}
Let $\sigma\in S_n$, $\prtset\in \prtt$ and $\sigma'\in S_{\# \prtset}$.
Assume that $\sigma$ and $\sigma'$ are covexillary and $\tbl(\sigma')=(\tbl(\sigma))_{\prtset}$.
Then, $\tblc(\sigma')=(\tblc(\sigma))_{\prtset}$.
\end{lemma}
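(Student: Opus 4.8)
The plan is to reduce everything to Corollary \ref{cor:order_cycles}, which, for a covexillary permutation, expresses membership of an arbitrary long cycle in the cycle table recursively in terms of membership of shorter cycles together with elements of $\Special$ (transpositions and $3$-cycles). Since both $\sigma$ and $\sigma'$ are covexillary, this recursion is available for each of them, and the point is that the two recursions are identical once indices are matched through the order-preserving bijection $\eta_{\prtset}:[\#\prtset]\to\prtset$.

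First I would reformulate the claim. As $\eta_{\prtset}$ is order-preserving, for $1\le i_1<\cdots<i_r\le\#\prtset$ one has $\iota_{\prtset}(\cycl_{i_1,\ldots,i_r})=\cycl_{\eta_{\prtset}(i_1),\ldots,\eta_{\prtset}(i_r)}$ and $\iota_{\prtset}(\cycr_{i_1,\ldots,i_r})=\cycr_{\eta_{\prtset}(i_1),\ldots,\eta_{\prtset}(i_r)}$, and likewise $\iota_{\prtset}$ carries transpositions, $R$'s and $L$'s to transpositions, $R$'s and $L$'s on the images, preserving the relative order of indices (in particular, sending the second-smallest and second-largest indices to the second-smallest and second-largest ones). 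Since $(\tblc(\sigma))_{\prtset}=\iota_{\prtset}^{-1}(\tblc(\sigma))$, the asserted identity $\tblc(\sigma')=(\tblc(\sigma))_{\prtset}$ amounts to: for every $r\ge2$ and every $1\le i_1<\cdots<i_r\le\#\prtset$,
\[
\cycl_{i_1,\ldots,i_r}\le\sigma'\iff\cycl_{\eta_{\prtset}(i_1),\ldots,\eta_{\prtset}(i_r)}\le\sigma,
\]
and the same with $\cycr$ in place of $\cycl$. The hypothesis $\tbl(\sigma')=(\tbl(\sigma))_{\prtset}$ is exactly this statement for $r\in\{2,3\}$, where $\cycl$ and $\cycr$ are transpositions or $3$-cycles, i.e.\ elements of $\Special$.

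Then I would induct on $r$, the cases $r=2,3$ being the hypothesis. For $r\ge4$, apply Corollary \ref{cor:order_cycles} to $\sigma'$ with indices $i_1<\cdots<i_r$ and to $\sigma$ with indices $\eta_{\prtset}(i_1)<\cdots<\eta_{\prtset}(i_r)$. The three alternatives in the corollary for $\sigma'$ involve the $3$-cycles $L_{i_1,i_{r-1},i_r}$, $L_{i_1,i_2,i_r}$, the transpositions $T_{i_1,i_{r-1}}$, $T_{i_2,i_r}$, and the shorter cycles $\cycl_{i_1,\ldots,i_{r-1}}$, $\cycl_{i_2,\ldots,i_r}$; under $\eta_{\prtset}$ these match the corresponding ingredients of the corollary for $\sigma$, because $i_2$ and $i_{r-1}$ are the second-smallest and second-largest among $i_1,\ldots,i_r$, so their $\eta_{\prtset}$-images play the same roles among $\eta_{\prtset}(i_1),\ldots,\eta_{\prtset}(i_r)$. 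The $3$-cycle and transposition conditions have the same truth value on the two sides by the hypothesis $\tbl(\sigma')=(\tbl(\sigma))_{\prtset}$, and the shorter-cycle conditions by the induction hypothesis. Hence the full disjunction of Corollary \ref{cor:order_cycles} holds for $\sigma'$ iff it holds for $\sigma$, which yields the displayed equivalence for $r$. The $\cycr$-version is identical, or follows by passing to $\sigma^{-1}$, $(\sigma')^{-1}$ (using $\tbl(\sigma^{-1})=\tbl(\sigma)^{-1}$, $\tblc(\sigma^{-1})=\tblc(\sigma)^{-1}$, and covexillarity of $(\sigma')^{-1}$).

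I do not expect a serious obstacle here: the only substantial input is Corollary \ref{cor:order_cycles}, and the rest is bookkeeping. The one thing that needs care is verifying that the order-preserving property of $\eta_{\prtset}$ makes the distinguished indices ``$i_{r-1}$'' and ``$i_2$'' in the corollary correspond correctly on the two sides; note in particular that we need neither that $\iota_{\prtset}$ is an order-embedding for the Bruhat order in general, nor an independent fact that restrictions of covexillary permutations are covexillary, since covexillarity of both $\sigma$ and $\sigma'$ is assumed.
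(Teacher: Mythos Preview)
Your proposal is correct and follows essentially the same approach as the paper: both reduce to Corollary \ref{cor:order_cycles} and induct on the length $r$ of the cycle, using the hypothesis $\tbl(\sigma')=(\tbl(\sigma))_{\prtset}$ as the base case $r\le3$ and matching the three alternatives of the corollary through the order-preserving bijection $\eta_{\prtset}$. Your write-up is simply more explicit about why the distinguished indices $i_2$ and $i_{r-1}$ correspond correctly on the two sides.
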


\begin{proof}
For every $\tau\in \Special_{\# \prtset}$, $\iota_{\prtset}(\tau)\leq\sigma$, i.e., $\iota_{\prtset}(\tau)\in\tbl(\sigma)$ if and only if
$\tau\in (\tbl(\sigma))_{\prtset}=\tbl(\sigma')$, i.e., $\tau\leq\sigma'$.
Using Corollary \ref{cor:order_cycles}, it now follows by induction on $\#I$ that for every $\emptyset\neq I\subseteq[\# \prtset]$,
\begin{align*}
\iota_{\prtset}(\cycl_I)\leq\sigma &\iff \cycl_I\leq\sigma',\\
\iota_{\prtset}(\cycr_I)\leq\sigma &\iff \cycr_I\leq\sigma',
\end{align*}
and the lemma follows.
\end{proof}

\begin{lemma}\label{lem:1perm}
For every $\sigma\in (S_n)_\smth$ and $\prtset\in \prtt$ we have
\[
\sigma[\prtset]:=\max\{\tau\in S_{\# \prtset}:\iota_{\prtset}(\tau)\leq\sigma\}\in (S_{\# \prtset})_\smth.
\]
Moreover, $\sigma[\prtset]=e$ (i.e.,
\[
\{\tau\in S_{\# \prtset}:\iota_{\prtset}(\tau)\leq\sigma\}=\{e\}),
\]
if and only if there do not exist $i<j$ in $\prtset$ such that $T_{i,j}\leq\sigma$.
\end{lemma}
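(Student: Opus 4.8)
\textbf{Proof proposal for Lemma \ref{lem:1perm}.}

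The plan is to reduce the statement to the bijection of Theorem \ref{thm:bijection} applied to the smaller symmetric group $S_{\# \prtset}$. First I would observe that by Lemma \ref{lem: covexisadm} (or rather the stronger Theorem \ref{thm:bijection}), $\tbl(\sigma)$ is {\adm}, and hence by Observation \ref{obs:iota_adm} the set $A':=(\tbl(\sigma))_{\prtset}$ is an {\adm} subset of $\Special_{\# \prtset}$. Let $\sigma':=\pi(A')\in (S_{\# \prtset})_{\smth}$, which is well-defined and smooth by Proposition \ref{propos:bijection}, and satisfies $\tbl(\sigma')=A'$. The goal is then to prove that $\sigma'$ equals $\sigma[\prtset]$, the Bruhat-maximum of $\{\tau\in S_{\# \prtset}:\iota_{\prtset}(\tau)\leq\sigma\}$; this would give both the existence of the maximum and its smoothness in one stroke.

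The key identity is that for $\tau\in S_{\# \prtset}$ we have $\iota_{\prtset}(\tau)\leq\sigma$ if and only if $\tbl(\tau)\subseteq A'$. The forward direction is immediate from transitivity of the Bruhat order together with the fact that $\iota_{\prtset}$ sends $\Special_{\# \prtset}$ into $\Special_n$ and is order-preserving (so $\rho\le\tau$ in $S_{\# \prtset}$ with $\rho\in\Special_{\# \prtset}$ gives $\iota_{\prtset}(\rho)\le\iota_{\prtset}(\tau)\le\sigma$). For the reverse direction I would invoke Lemma \ref{lem:tblc_dbi} and Corollary \ref{cor:tbl_tblc}: since $\sigma$ is smooth it is {\dbi} and covexillary, so $\iota_{\prtset}(\tau)\leq\sigma$ is equivalent to $\tblc(\iota_{\prtset}(\tau))\subseteq\tblc(\sigma)$, and the cycle membership can be read off from $\tbl$-data on each block via Corollary \ref{cor:order_cycles}; passing through Lemma \ref{lem:tblc_r} (with $\sigma'$ in the role there, noting $\tbl(\sigma')=A'=(\tbl(\sigma))_{\prtset}$) converts $\tbl(\tau)\subseteq A'$ into $\tblc(\tau)\subseteq(\tblc(\sigma))_{\prtset}=\tblc(\sigma')$, whence $\tau\le\sigma'$ and therefore $\iota_{\prtset}(\tau)\le\iota_{\prtset}(\sigma')\le\sigma$ using the first characterization applied to $\sigma'$ itself. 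Once the equivalence is established, $\{\tau:\iota_{\prtset}(\tau)\le\sigma\}=\{\tau:\tbl(\tau)\subseteq A'\}$, and by Corollary \ref{cor:smoothorder} applied in $S_{\# \prtset}$ (with the smooth permutation $\sigma'$) this set, after intersecting the condition $\tbl_\Trans(\tau)=\tbl_\Trans(\sigma')$, has $\sigma'$ as its maximum — more precisely $\tau\le\sigma'$ holds for every such $\tau$, so $\sigma'$ is the greatest element, and it is smooth. Hence $\sigma[\prtset]=\sigma'\in(S_{\# \prtset})_{\smth}$.

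For the final ``moreover'' clause: $\sigma[\prtset]=e$ means $A'_\Trans=\tbl_\Trans(\sigma')=\emptyset$, i.e.\ $\tbl_\Trans(\sigma)$ contains no transposition $T_{i,j}$ with $i,j\in\prtset$; unwinding $A'_{\Trans}=\iota_{\prtset}^{-1}(\tbl_\Trans(\sigma))$ this says precisely that there do not exist $i<j$ in $\prtset$ with $T_{i,j}\le\sigma$. Conversely if no such $T_{i,j}$ exists then $A'_\Trans=\emptyset$, so $A'=\emptyset$ (an {\adm} set with no transpositions is empty, by Observation \ref{obs:adm_subset} comparing to $\emptyset$), and $\sigma'=\pi(\emptyset)=e$.

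\textbf{Main obstacle.} The delicate point is the reverse implication $\tbl(\tau)\subseteq A'\implies\iota_{\prtset}(\tau)\le\sigma$, which genuinely uses smoothness of $\sigma$ (both covexillarity, via Lemma \ref{lem:tblc_r} and Corollary \ref{cor:order_cycles} to control all the long cycles $\cycl_I,\cycr_I$ supported in $\prtset$, and the {\dbi} property, via Lemma \ref{lem:tblc_dbi}, to upgrade containment of cycle-tables to a Bruhat inequality). One must be careful that Lemma \ref{lem:tblc_r} is being applied correctly — it compares $\tblc$ of a permutation in the small group with the $\prtset$-restriction of $\tblc(\sigma)$, and requires $\sigma'$ to be covexillary, which holds since $\sigma'$ is smooth. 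Assembling these pieces so that the chain ``$\tbl(\tau)\subseteq A'$ $\Rightarrow$ $\tblc(\tau)\subseteq(\tblc(\sigma))_{\prtset}$ $\Rightarrow$ $\tau\le\sigma'$ $\Rightarrow$ $\iota_{\prtset}(\tau)\le\sigma$'' is valid is the crux of the argument; everything else is bookkeeping.
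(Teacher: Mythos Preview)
Your overall strategy---set $\sigma':=\pi((\tbl(\sigma))_{\prtset})$ and show it is the required maximum---is exactly the paper's. The execution, however, has a real gap.

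The ``key identity'' $\iota_{\prtset}(\tau)\le\sigma\iff\tbl(\tau)\subseteq A'$ is false. Take $\prtset=[n]$, so $\iota_{\prtset}=\mathrm{id}$ and $A'=\tbl(\sigma)$; your equivalence becomes $\tau\le\sigma\iff\tbl(\tau)\subseteq\tbl(\sigma)$. The remark after Corollary~\ref{cor:smoothorder} gives an explicit counterexample (for $n>3$, $\tau=\rshft1n$ and $\sigma=w_0T_{n-1,n}$ are both smooth with $\tbl(\tau)\subseteq\tbl(\sigma)$, yet $\tau\not\le\sigma$). The failing link in your chain is ``$\tbl(\tau)\subseteq A'\Rightarrow\tblc(\tau)\subseteq\tblc(\sigma')$'': Corollary~\ref{cor:tbl_tblc} requires the \emph{equality} $\tbl_\Trans(\tau)=\tbl_\Trans(\sigma')$, not just containment, and without that hypothesis the implication does not hold.

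As a consequence you never establish $\iota_{\prtset}(\sigma')\le\sigma$. The phrase ``using the first characterization applied to $\sigma'$ itself'' is circular---it invokes the very reverse implication you are in the process of proving. The paper fills this gap by a direct computation: it shows $\maxperm{\iota_{\prtset}(\sigma')}(i)\le\maxperm{\sigma}(i)$ and likewise for the inverses (using Lemma~\ref{lem:tblc_r} and \eqref{eq: cycls}), and then concludes $\iota_{\prtset}(\sigma')\le\sigma$ because $\sigma$ is {\dbi}. The other half, $\iota_{\prtset}(\tau)\le\sigma\Rightarrow\tau\le\sigma'$, is done essentially as you outline, via $\tblc(\tau)=(\tblc(\iota_{\prtset}(\tau)))_{\prtset}\subseteq(\tblc(\sigma))_{\prtset}=\tblc(\sigma')$ and Lemma~\ref{lem:tblc_dbi}; the point is that this chain starts from the hypothesis $\iota_{\prtset}(\tau)\le\sigma$, not from $\tbl(\tau)\subseteq A'$.
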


\begin{proof}
The set $A:=\tbl(\sigma)$ is an {\adm} subset of $\Special_n$ by Lemma \ref{lem: covexisadm}.
Hence, the set $A_{\prtset}\subseteq\Special_{\# \prtset}$ is {\adm}, by Observation \ref{obs:iota_adm}.
Define provisionally $\pi_{\prtset}=\pi(A_{\prtset})\in S_{\# \prtset}$.
By Proposition \ref{propos:bijection}, $\pi_{\prtset}$ is smooth and $\tbl(\pi_{\prtset})=A_{\prtset}$.
We need to show that $\pi_{\prtset}=\max\{\tau\in S_{\# \prtset}:\iota_{\prtset}(\tau)\leq\sigma\}$.

We first show that $\iota_{\prtset}(\pi_{\prtset})\leq\sigma$.
For every $i\in[n]$, let $i_{\prtset}:=\#([i]\cap \prtset)$ and $j_{\prtset}=\maxperm{\pi_{\prtset}}(i_{\prtset})$.
By \eqref{eq: cycls}, $\lshft {i_{\prtset}}{j_{\prtset}}\leq\pi_{\prtset}$,
i.e., $\lshft {i_{\prtset}}{j_{\prtset}}\in\tblc(\pi_{\prtset})$.
Hence, by Lemma \ref{lem:tblc_r},
$\lshft {i_{\prtset}}{j_{\prtset}}\in(\tblc(\sigma))_{\prtset}$, that is
$$\cycl_{\eta_{\prtset}([i_{\prtset},j_{\prtset}])}=
\iota_{\prtset}(\lshft {i_{\prtset}}{j_{\prtset}})\in \tblc(\sigma),$$
i.e., $\cycl_{\eta_{\prtset}([i_{\prtset},j_{\prtset}])}\leq\sigma$.
In particular, by \eqref{eq:cycl1<=}, $\maxperm{\sigma}(\eta_{\prtset}(i_{\prtset}))\geq \eta_{\prtset}(j_{\prtset})$ and hence
$\eta_{\prtset}(j_{\prtset})\leq\maxperm{\sigma}(i)$, since $\eta_{\prtset}(i_{\prtset})\leq i$.
Therefore,
$$\maxperm{\iota_{\prtset}(\pi_{\prtset})}(i)=\max\{\eta_{\prtset}(j_{\prtset}),\,i\}\leq\maxperm{\sigma}(i).$$
Similarly, $\maxperm{(\iota_{\prtset}(\pi_{\prtset}))^{-1}}(i)=\maxperm{\iota_{\prtset}(\pi_{\prtset}^{-1})}(i)\leq\maxperm{\sigma^{-1}}(i)$
and hence $\iota_{\prtset}(\pi_{\prtset})\leq\sigma$, since $\sigma$ is {\dbi}.

Conversely, let $\tau\in S_{\# \prtset}$ be such that $\iota_{\prtset}(\tau)\leq \sigma$.
Then, $\tblc(\iota_{\prtset}(\tau))\subseteq\tblc(\sigma)$ and hence $\left(\tblc(\iota_{\prtset}(\tau))\right)_{\prtset}\subseteq(\tblc(\sigma))_{\prtset}$.
Clearly,
\begin{equation*}
\left(\tblc(\iota_{\prtset}(\tau))\right)_{\prtset}=\{\rho\in S_{\# \prtset}:\iota_{\prtset}(\rho)\in\Cycles_n,\,\iota_{\prtset}(\rho)\leq\iota_{\prtset}(\tau)\}=
\{\rho\in \Cycles_{\# \prtset}:\rho\leq\tau)\}=\tblc(\tau),
\end{equation*}
and by Lemma \ref{lem:tblc_r}, $(\tblc(\sigma))_{\prtset}=\tblc(\pi_{\prtset})$.
Therefore, $\tblc(\tau)\subseteq\tblc(\pi_{\prtset})$ and hence, $\tau\leq\pi_{\prtset}$, by Lemma \ref{lem:tblc_dbi},
since $\pi_{\prtset}$ is {\dbi}.

Finally, it is clear that $\pi_{\prtset}=e$, i.e., the set $A_{\prtset}$ is empty, if and only if there do not exist $i<j$ in $\prtset$ such that
$T_{i,j}\leq\sigma$.
\end{proof}

\subsection{}
For $\tau\in S_{\prtt}$ and $\prtset\in \prtt$, define $\tau_{\prtset}\in S_{\prtt}$ by
\begin{equation}\label{def: taux}
\tau_{\prtset}(r)=\begin{cases}
\tau(r) &  r\in \prtset, \\
r & \text{otherwise.}
\end{cases}
\end{equation}
Thus, if $\tau=\iota((\sigma_{\prtset})_{\prtset\in \prtt})$ then $\tau_{\prtset}=\iota_{\prtset}(\sigma_{\prtset})$ for all $\prtset\in \prtt$.

We give a characterization of permutations \dbi.

\begin{proposition} \label{prop: chardbi}
The following two properties are equivalent for $\sigma\in S_n$.
\begin{enumerate}
\item $\sigma$ is \dbi.
\item For every partition $\prtt$ of $[n]$ and $\tau\in S_{\prtt}$ we have
\[
\tau\leq\sigma\iff \tau_{\prtset}\leq\sigma\ \forall \prtset\in \prtt.
\]
\end{enumerate}
\end{proposition}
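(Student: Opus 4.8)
I would prove the two implications separately; $(1)\Rightarrow(2)$ is routine and $(2)\Rightarrow(1)$ carries the content. For $(1)\Rightarrow(2)$, the implication $\tau\le\sigma\Rightarrow\tau_{\prtset}\le\sigma$ for all $\prtset\in\prtt$ holds for \emph{every} $\sigma$: indeed $\tau_{\prtset}\le_{\prtt}\tau$ in the product order, and $\le_{\prtt}$ is stronger than the Bruhat order. For the converse, suppose $\tau\in S_{\prtt}$ satisfies $\tau_{\prtset}\le\sigma$ for all $\prtset$. Since $\tau$ permutes each block, $[i]=\bigsqcup_{\prtset}([i]\cap\prtset)$ gives $\maxperm{\tau}(i)=\max_{\prtset}\max\tau([i]\cap\prtset)\le\max_{\prtset}\maxperm{\tau_{\prtset}}(i)\le\maxperm{\sigma}(i)$ for every $i$; applying the same to $\tau^{-1}\in S_{\prtt}$ (note $(\tau_{\prtset})^{-1}=(\tau^{-1})_{\prtset}\le\sigma^{-1}$) gives $\maxperm{\tau^{-1}}(i)\le\maxperm{\sigma^{-1}}(i)$, and $\tau\le\sigma$ follows since $\sigma$ is \dbi.

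For $(2)\Rightarrow(1)$ the natural first move is, for a given maxperm-dominated $\tau$ (i.e.\ $\maxperm{\tau}\le\maxperm{\sigma}$ and $\maxperm{\tau^{-1}}\le\maxperm{\sigma^{-1}}$ pointwise), to take the partition $\prtt$ into the cycles of $\tau$; then $(2)$ reduces the goal $\tau\le\sigma$ to showing $c\le\sigma$ for each cycle $c$ of $\tau$, and each such $c$ is again maxperm-dominated since $c\le\tau$. When $c=\cycr_A$ or $c=\cycl_A$ is a ``monotone'' cycle this is immediate from \eqref{eq:cyc1<=}, because $\maxperm{c}\le\maxperm{\sigma}$ and $\maxperm{c^{-1}}\le\maxperm{\sigma^{-1}}$ are \emph{exactly} the conditions $\cycr_A\le\sigma$ (resp.\ $\cycl_A\le\sigma$); this disposes of many cases but stalls on non-monotone single cycles. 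So instead I would argue the contrapositive using the recalled pattern characterization: if $\sigma$ is not \dbi\ it contains one of $4231$, $35142$, $42513$, $351624$, and it suffices to exhibit a partition $\prtt$ and $\tau\in S_{\prtt}$ with $\tau_{\prtset}\le\sigma$ for every $\prtset$ but $\tau\not\le\sigma$. Here $\tau$ is taken to be a product of disjoint cycles whose supports are attached to the occurrence of the pattern — for a $4231$ at $p_1<p_2<p_3<p_4$ one wants $\tau$ of the form $L_{p_1,p_3,a}\,T_{p_2,b}$ for suitable $a,b$ read off from $\sigma$ near the pattern, and analogously with more factors for the longer patterns; each factor is $\le\sigma$ by \eqref{eq: easy1<=} and \eqref{eq:cyc1<=}, whereas $\tau\not\le\sigma$, which is verified directly against the rank criterion \eqref{eq:Bruhat} and is where the pattern occurrence enters. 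Feeding $\tau$ and its orbit partition into $(2)$ gives the contradiction.

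The hard part is this last step: one must choose the right occurrence of the forbidden pattern and the right auxiliary indices $a,b$ so that, simultaneously, (a) every cyclic factor of $\tau$ lies below $\sigma$ — a local condition pinned down by \eqref{eq: easy1<=} and \eqref{eq:cyc1<=}, which in turn dictates the choice of $a,b$ — and (b) $\tau$ does \emph{not} lie below $\sigma$, a genuinely global check via \eqref{eq:Bruhat}. In particular the naive guess of pairing the four indices of a $4231$ as $\{p_1,p_4\}$ and $\{p_2,p_3\}$ does not work in general: for a ``stretched'' occurrence one can have $T_{p_1,p_4}\not\le\sigma$, so a more careful choice is needed, and organising the verification uniformly across the four patterns (or reducing the three longer ones to the $4231$ case via their common ``crossing'' structure) is where most of the bookkeeping lies.
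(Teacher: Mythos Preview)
Your $(1)\Rightarrow(2)$ is correct and is exactly the paper's argument (the identity $\maxperm{\tau}(i)=\max_{\prtset}\maxperm{\tau_{\prtset}}(i)$ is what the paper uses).

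For $(2)\Rightarrow(1)$ you have the right overall strategy --- contrapositive via the Gasharov--Reiner pattern list --- but the construction you sketch is not the paper's, and as written it has a gap. You propose to take $\tau$ as a short product of monotone cycles (e.g.\ $L_{p_1,p_3,a}\,T_{p_2,b}$ for a $4231$ occurrence) and use its orbit partition; then $\tau_{\prtset}\le\sigma$ reduces to each cyclic factor being $\le\sigma$. But you never specify $a,b$, and the conditions for $L_{p_1,p_3,a}\le\sigma$ in \eqref{eq: easy1<=} involve $\maxperm{\sigma}$ and $\maxperm{\sigma^{-1}}$ at the \emph{positions} $p_1,p_3$, which the mere existence of a $4231$ occurrence does not control. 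You yourself flag this as ``where most of the bookkeeping lies''; as it stands the proof is not complete.

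The paper's construction is quite different and avoids this obstacle. Rather than building $\tau$ from small cycles, it takes $\tau$ to agree with $\sigma$ outside the set $A=\{a_s,\dots,a_t\}$ of pattern positions (with $s\in\{0,1\}$, $t\in\{4,5\}$ encoding which of the four patterns occurs) and on $A$ sets $\{\tau(a_1),\tau(a_2)\}=\{b_3,b_4\}$, $\{\tau(a_3),\tau(a_4)\}=\{b_1,b_2\}$, choosing which value goes where so that $a_1,a_2$ lie in different $\tau$-cycles and likewise $a_3,a_4$. The partition is then any $\sim$ whose classes are unions of $\tau$-cycles with $a_1\not\sim a_2$ and $a_3\not\sim a_4$. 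The check $\tau\not\le\sigma$ is immediate at $(i,j)=(a_2,b_2)$. The real work is $\tau_{\prtset}\le\sigma$: the paper writes
\[
\#(\tau_{\prtset}([i])\cap[j])-\#(\sigma([i])\cap[j])=\alpha+\beta,
\]
where $\beta$ depends only on what happens on $A$ and satisfies $\beta\ge0$ except in the window $a_2\le i<a_3$, $b_2\le j<b_3$ (where $\beta=-1$), while $\alpha\ge0$ always and $\alpha>0$ in that window precisely because the non-equivalence $a_1\not\sim a_2$ (resp.\ $a_3\not\sim a_4$) forces one of $\tau(a_1),\tau(a_2)$ (resp.\ $\tau(a_3),\tau(a_4)$) out of $\prtset$. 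This $\alpha+\beta$ decomposition is the idea missing from your sketch; it replaces the ad hoc choice of auxiliary indices by a uniform rank computation and is what lets the paper treat all four patterns simultaneously.
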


\begin{observation} \label{obs: ijcyc}
Let $\sigma\in S_n$ and $i\ne j$. Assume that $i$ and $j$ are in the same cycle of $\sigma$.
Then, the cycles of $\sigma'=\sigma T_{i,j}$ are contained in the cycles of $\sigma$,
and $i$, $j$ are in different cycles of $\sigma'$.
\end{observation}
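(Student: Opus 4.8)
The plan is to describe explicitly how right-multiplication by $T_{i,j}$ acts on the cycle of $\sigma$ through $i$ and $j$. Recall that with the paper's convention $\sigma'=\sigma T_{i,j}$ satisfies $\sigma'(i)=\sigma(j)$, $\sigma'(j)=\sigma(i)$, and $\sigma'(k)=\sigma(k)$ for every $k\notin\{i,j\}$. In particular $\sigma'$ and $\sigma$ agree off $\{i,j\}$, so every cycle of $\sigma$ disjoint from $\{i,j\}$ is already a cycle of $\sigma'$, and it suffices to understand what happens to the single cycle $C$ of $\sigma$ containing both $i$ and $j$ (they lie on a common cycle by hypothesis).

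First I would write $C$ as $i=a_0\to a_1\to\cdots\to a_{m-1}\to a_0$ with $\sigma(a_t)=a_{t+1}$ (subscripts modulo $m$), and pick $s\in\{1,\dots,m-1\}$ with $j=a_s$. A direct computation then gives $\sigma'(a_0)=a_{s+1}$, $\sigma'(a_s)=a_1$, and $\sigma'(a_t)=a_{t+1}$ for the remaining indices $t$. Tracing the $\sigma'$-orbit of $a_0$ yields the set $\{a_0,a_{s+1},a_{s+2},\dots,a_{m-1}\}$ of size $m-s$, while the $\sigma'$-orbit of $a_s$ is the set $\{a_s,a_1,a_2,\dots,a_{s-1}\}$ of size $s$; these two sets are disjoint and their union is $C$. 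Hence $\sigma'$ splits $C$ into exactly two cycles, one through $i=a_0$ and the other through $j=a_s$ (with the usual convention that a size-one orbit is a fixed point, which covers the boundary cases $s=1$, $s=m-1$ and $m=2$).

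Putting this together, the cycles of $\sigma'$ are precisely the cycles of $\sigma$ other than $C$, together with the two subsets of $C$ just described; in particular every cycle of $\sigma'$ is contained in a cycle of $\sigma$, and $i$ and $j$ lie in the two different pieces of $C$, hence in different cycles of $\sigma'$. I do not anticipate any genuine obstacle here: this is the classical fact that multiplying a permutation by a transposition of two elements lying on the same cycle splits that cycle in two. The only care needed is to keep the composition convention for $\sigma T_{i,j}$ straight and to treat the degenerate cases in which one (or both) of the two resulting cycles is a single fixed point.
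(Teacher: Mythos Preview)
Your argument is correct and is the standard explicit verification of this classical fact. The paper itself offers no proof: it records the statement as an ``Observation'' and immediately uses it, so there is nothing to compare against beyond noting that your write-up simply spells out what the authors take as evident.
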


\begin{proof}[Proof of Proposition \ref{prop: chardbi}]
Clearly, for any $\tau\in S_{\prtt}$ and $i\in[n]$
$$\maxperm{\tau}(i)={\max}_{\prtset\in \prtt}\maxperm{\tau_{\prtset}}(i).$$
Hence, if $\sigma\in S_n$ is {\dbi}, then for every $\tau\in S_{\prtt}$ we have
$$
\tau\leq\sigma\iff\tau_{\prtset}\leq\sigma\ \forall \prtset\in \prtt.
$$

Conversely, Suppose that $\sigma$ is not {\dbi}.
Then $\sigma$ does not avoid at least one of the patterns $4231$, $35142$, $42513$, or $351624$.
Equivalently, there are indices $a_s<a_{s+1}<\cdots<a_t$ and
$b_s<b_{s+1}<\cdots<b_t$ in $[n]$ with $s\in\{0,1\}$ and $t\in\{4,5\}$, such that $\sigma(a_1)=b_4$, $\sigma(a_4)=b_1$ and
\begin{itemize}
\item if $s=1$, then $\sigma(a_2)=b_2$; if $s=0$, then $\sigma(a_0)=b_2$ and $\sigma(a_2)=b_0$.
\item if $t=4$, then $\sigma(a_3)=b_3$; if $t=5$, then $\sigma(a_3)=b_5$ and $\sigma(a_5)=b_3$.
\end{itemize}
Denote $A:=\{a_i: s\leq i\leq t\}$.
Let $\tau$ be a permutation such that
\[
\{\tau(a_1),\tau(a_2)\}=\{b_3,b_4\},\ \{\tau(a_3),\tau(a_4)\}=\{b_1,b_2\},\ \tau(a_i)=b_i
\]
for every $i\in [s,t]\setminus [1,4]=\{s,t\}\setminus\{1,4\}$  and $\tau(r)=\sigma(r)$ for every $r\notin A$.

Note that $\tau([a_2])=\sigma([a_2])\cup\{b_3\}\setminus\{b_2\}$ and hence $\#(\tau([a_2])\cap[b_2])=\#(\sigma([a_2])\cap[b_2])-1$.
Therefore, $\tau\nleq\sigma$.

By Observation \ref{obs: ijcyc}, upon replacing $\tau$ by $\tau T_{a_1,a_2}$, $\tau T_{a_3,a_4}$
or $\tau T_{a_1,a_2}T_{a_3,a_4}$ if necessary, we may assume that $a_1,a_2$ lie in different cycles of $\tau$,
and the same for $a_3,a_4$.
Let $\sim$ be an equivalence relation on $[n]$ such that $a_1\not\sim a_2$, $a_3\not\sim a_4$ and $\tau(r)\sim r$ for all $r$.
(In fact, we may choose such $\sim$ with precisely two equivalence classes.)
As usual, let $\prtt$ be the set of equivalence classes of $\sim$.
We claim that $\tau_{\prtset}\leq\sigma$ for all $\prtset\in \prtt$, i.e., $\#(\sigma([i])\cap[j])\leq \#(\tau_{\prtset}([i])\cap[j])$ for every $i,j$.

Since $\prtset$ is $\tau$-invariant,
\begin{align*}
\#(\tau_{\prtset}([i])\cap[j])&=\#(([i]\setminus \prtset)\cap[j])+\#(\tau([i]\cap \prtset)\cap[j])\\
&=\#([\min(i,j)]\setminus \prtset)+\#(\tau([i])\cap[j]\cap \prtset),
\end{align*}
and since $\tau\equiv\sigma$ outside the set $A$,
\begin{equation*}
\#(\sigma([i])\cap[j])=\#(\tau([i])\cap[j])-\#(\tau([i]\cap{A})\cap[j])+\#(\sigma([i]\cap{A})\cap[j]).
\end{equation*}
Hence,
$$\#(\tau_{\prtset}([i])\cap[j])-\#(\sigma([i])\cap[j])=\alpha+\beta$$
where
$$\alpha:=\#([\min(i,j)]\setminus \prtset)-\#(\tau([i])\cap[j]\setminus \prtset)$$
and
$$\beta:=\#(\tau([i]\cap{A})\cap[j])-\#(\sigma([i]\cap{A})\cap[j]).$$
It is easy to verify that $\beta\geq 0$ unless $a_2\leq i<a_3$ and $b_2\leq j<b_3$, in which case $\beta=-1$.
Therefore, it would follow that $\alpha+\beta\geq 0$, as required, provided that we show that $\alpha\geq 0$
and moreover, if $a_2\leq i<a_3$ and $b_2\leq j<b_3$, then $\alpha>0$.

If $i\leq j$, then since $\prtset$ is $\tau$-invariant,
$$\#([\min(i,j)]\setminus \prtset)=\#([i]\setminus \prtset)=\#(\tau([i]\setminus \prtset))=\#(\tau([i])\setminus \prtset)$$
and therefore,
\begin{equation*}
\alpha=\#(\tau([i])\setminus \prtset)-\#(\tau([i])\cap[j]\setminus \prtset)=\#((\tau([i])\setminus[j])\setminus \prtset)\geq0.
\end{equation*}
Moreover, if $a_2\leq i<a_3$ and $b_2\leq j<b_3$, then $\alpha=\#((\tau([i])\setminus[j])\setminus \prtset)>0$
since both $\tau(a_1),\tau(a_2)$ belong to $\tau([i])\setminus[j]$, but they cannot both belong to $\prtset$
since $\tau(a_1)\sim a_1\not\sim a_2\sim\tau(a_2)$.

Similarly, if $i\geq j$, then
\begin{equation*}
\alpha=\#([j]\setminus \prtset)-\#(\tau([i])\cap[j]\setminus \prtset)=\#(([j]\setminus\tau([i]))\setminus \prtset)\geq0.
\end{equation*}
Moreover, if $a_2\leq i<a_3$ and $b_2\leq j<b_3$, then $\alpha=\#(([j]\setminus\tau([i]))\setminus \prtset)>0$ since
$\{\tau(a_3),\tau(a_4)\}\subseteq [j]\setminus\tau([i])$ but $\{\tau(a_3),\tau(a_4)\}\nsubseteq \prtset$.
\end{proof}

\subsection{}
We now give another characterization of smooth permutations.
\begin{proposition} \label{prop: charsm}
The following conditions are equivalent for $\sigma\in S_n$.
\begin{enumerate}
\item $\sigma$ is smooth.
\item For every partition $\prtt$ of $[n]$, the maximum
\[
\sigma_{\prtt}:={\max}_{\leq_{\prtt}}(S_{\prtt}\cap (S_n)_{\leq\sigma})
\]
with respect to $\leq_{\prtt}$ exists.
\end{enumerate}
Moreover, in this case $\sigma_{\prtt}=\iota\left((\sigma[\prtset])_{\prtset\in \prtt}\right)$.
Finally, $\sigma_{\prtt}=e$ (i.e.,
\[
S_{\prtt}\cap(S_n)_{\leq\sigma}=\{e\}),
\]
if and only if there do not exist $i<j$ with $i\sim j$ such that $T_{i,j}\leq\sigma$.
\end{proposition}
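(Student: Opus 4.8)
The plan is to treat the two implications separately; the formula for $\sigma_\prtt$ and the triviality criterion will drop out along the way.

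\emph{The implication $(1)\Rightarrow(2)$.} Suppose $\sigma$ is smooth, hence \dbi, so that Proposition \ref{prop: chardbi} applies. Given a partition $\prtt$, I would set $\sigma_\prtt:=\iota\bigl((\sigma[\prtset])_{\prtset\in\prtt}\bigr)$ with $\sigma[\prtset]\in(S_{\#\prtset})_\smth$ as in Lemma \ref{lem:1perm}. Each coordinate $(\sigma_\prtt)_\prtset=\iota_\prtset(\sigma[\prtset])$ is $\leq\sigma$ by the definition of $\sigma[\prtset]$, so $\sigma_\prtt\leq\sigma$ by Proposition \ref{prop: chardbi}, placing $\sigma_\prtt$ in $S_\prtt\cap(S_n)_{\leq\sigma}$. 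For the upper bound property, if $\tau=\iota\bigl((\rho_\prtset)_{\prtset\in\prtt}\bigr)\in S_\prtt$ satisfies $\tau\leq\sigma$, then $\tau_\prtset=\iota_\prtset(\rho_\prtset)\leq\sigma$ for all $\prtset$ by Proposition \ref{prop: chardbi}, whence $\rho_\prtset\leq\sigma[\prtset]$ by the maximality in Lemma \ref{lem:1perm}, i.e.\ $\tau\leq_\prtt\sigma_\prtt$. So $\sigma_\prtt={\max}_{\leq_\prtt}\bigl(S_\prtt\cap(S_n)_{\leq\sigma}\bigr)$; it is $\prtt$-smooth since each $\sigma[\prtset]$ is, and the final assertion is precisely the corresponding clause of Lemma \ref{lem:1perm}.

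\emph{The implication $(2)\Rightarrow(1)$.} I would argue by contraposition: assuming $\sigma$ is not smooth, I want to produce a partition $\prtt$ for which $S_\prtt\cap(S_n)_{\leq\sigma}$ has no $\leq_\prtt$-maximum. If $\sigma$ is not \dbi, Proposition \ref{prop: chardbi} gives a partition $\prtt$ and $\tau\in S_\prtt$ with $\tau_\prtset\leq\sigma$ for all $\prtset$ but $\tau\not\leq\sigma$; a putative maximum $\mu$ would satisfy $\tau_\prtset\leq_\prtt\mu$ for each $\prtset$ (as $\tau_\prtset\in S_\prtt\cap(S_n)_{\leq\sigma}$), hence $\tau\leq_\prtt\mu\leq\sigma$ in the Bruhat order, a contradiction. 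The remaining case is that $\sigma$ is \dbi\ but not covexillary, i.e.\ contains the pattern $3412$. Since $\sigma$ is \dbi, Proposition \ref{prop: chardbi} lets me identify $S_\prtt\cap(S_n)_{\leq\sigma}$ (via $\iota$) with the direct product over $\prtset\in\prtt$ of the sets $M_\prtset:=\{\rho\in S_{\#\prtset}:\iota_\prtset(\rho)\leq\sigma\}$, each downward closed for the Bruhat order since $\iota_\prtset$ preserves it; so $S_\prtt\cap(S_n)_{\leq\sigma}$ has a $\leq_\prtt$-maximum exactly when every $M_\prtset$ does. It therefore suffices to find a single $\prtset\subseteq[n]$ for which $M_\prtset$ has no maximum — equivalently, two permutations $\rho_1,\rho_2\in S_\prtset$ with $\iota_\prtset(\rho_1),\iota_\prtset(\rho_2)\leq\sigma$ that have no common Bruhat upper bound $\rho\in S_\prtset$ with $\iota_\prtset(\rho)\leq\sigma$.

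Building this block $\prtset$ from a $3412$-pattern is, I expect, the main obstacle. One subcase is painless: if $\tbl(\sigma)$ is not \adm{} then — as $\tbl(\sigma)$ always satisfies \eqref{item: adm1} and \eqref{item: adm2} (Lemma \ref{lem: covexisadm}) — property \eqref{item: adm3} fails, so for some $i<j<k$ one has $T_{i,j},T_{j,k}\leq\sigma$ but $R_{i,j,k},L_{i,j,k}\not\leq\sigma$ (and then $T_{i,k}\not\leq\sigma$ as well, by \eqref{eq: easy<=}); the three-element block $\prtset=\{i,j,k\}$ then has $M_\prtset=\{e,T_{i,j},T_{j,k}\}$ in $S_\prtset\cong S_3$, which has two incomparable maximal elements and so no maximum. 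The genuinely hard part is that a \dbi\ permutation can fail to be smooth while $\tbl(\sigma)$ is \adm: already $\sigma=(345612)$ satisfies \eqref{item: adm3} in every instance, yet $\ell(\sigma)<\#\tbl_\Trans(\sigma)$. In that situation I would instead use a larger (four-element) block and test membership in $M_\prtset$ directly against $\maxperm\sigma$ and $\maxperm{\sigma^{-1}}$, choosing $\rho_1,\rho_2$ so that any $\rho\in S_\prtset$ dominating both would force, via \eqref{eq:Bruhat} and \eqref{eq: easy1<=a}, a value of $\maxperm\sigma$ or $\maxperm{\sigma^{-1}}$ to exceed its true value — the same mechanism as in the proof of Proposition \ref{prop: chardbi}. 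What makes this delicate is that $M_\prtset$ is in general strictly smaller than the Bruhat interval below the standardization of $\sigma$ on $\prtset$, so the obstruction is invisible from that standardization, and from $\tbl(\sigma)$, alone.
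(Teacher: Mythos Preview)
Your treatment of $(1)\Rightarrow(2)$, of the triviality criterion, and of the sub-case ``$\sigma$ not \dbi'' in the converse is correct and matches the paper's argument line for line.

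The gap is in the remaining case, $\sigma$ \dbi\ but not covexillary. Your split into ``$\tbl(\sigma)$ not \adm'' versus ``$\tbl(\sigma)$ \adm'' disposes of the first sub-sub-case neatly with a three-element block, but for the second you only offer a sketch: ``use a larger (four-element) block and test membership directly''. You neither construct such a block nor show one exists, and there is no obvious reason a four-element block should suffice in general; in your own example $\sigma=(345612)$ the natural four-element block coming from the $3412$ pattern, $\prtset=\{1,2,5,6\}$, gives $M_\prtset\cong\{e,(12),(34),(12)(34)\}\subset S_4$, which \emph{does} have a maximum. The paper avoids this case split entirely. For $\sigma$ \dbi\ (hence $4231$-avoiding) but not smooth, Lemma~\ref{lem:4} produces an index $i$ with $\maxperm{\sigma}(\maxperm{\sigma^{-1}}(i))>\maxperm{\sigma}(i)>i$ and $\maxperm{\sigma^{-1}}(\maxperm{\sigma}(i))>\maxperm{\sigma^{-1}}(i)>i$; setting $j=\maxperm{\sigma}(i)$, $k=\maxperm{\sigma^{-1}}(i)$ (with $j\le k$ after possibly passing to $\sigma^{-1}$) and $A=\{i\}\cup[j,k+1]$, one checks via \eqref{eq:cyc1<=} that $\cycr_{A\setminus\{i\}},\cycr_{A\setminus\{k+1\}}\le\sigma$ while $\cycr_A,\cycr_A^2\not\le\sigma$. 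Lemma~\ref{lem:coxeter} then shows that any element of $S_{\prtt}$ (for $\prtt=\{A\}\cup\{\{r\}:r\notin A\}$) dominating both $\cycr_{A\setminus\{i\}}$ and $\cycr_{A\setminus\{k+1\}}$ must dominate $\cycr_A$ or $\cycr_A^2$, so no $\le_\prtt$-maximum can lie below $\sigma$. Note that $\#A=k-j+3$ is not bounded a priori, and when $j=k$ this specializes exactly to your three-element construction; the point is that Lemmas~\ref{lem:4} and~\ref{lem:coxeter} supply the missing uniform argument.
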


We start with the following simple result.
\begin{lemma}\label{lem:coxeter}
Let $\sigma$ be the right cyclic shift in $S_r$, $r>1$.
Then
\begin{enumerate}
\item The interval $(S_r)_{\leq\sigma}=\{\tau\in S_r:\tau\leq\sigma\}$
is isomorphic as a poset to the Boolean lattice $\{0,1\}^{r-1}$.
In particular, the are precisely $r-1$ maximal elements in $(S_r)_{<\sigma}$, namely
$\sigma T_{i,r}$, $i\in [r-1]$.
\item $\tau\geq\sigma$ if and only if $\tau(r)=1$.
\item The following conditions are equivalent for $\sigma'\in S_r$.
\begin{enumerate}
\item \label{part: ge} $\sigma'\geq\tau$ for all $\tau\in(S_r)_{<\sigma}$.
\item \label{part: ge2} $\sigma'\geq\sigma T_{1,r}=(134\ldots r2)$ and $\sigma'\geq\sigma T_{r-1,r}=(23\ldots (r-1)1r)$.
\item \label{part: rr-1} Either $\sigma'(r)=1$ or $\sigma'(r)=2$ and $\sigma'(r-1)=1$.
\item \label{part: minv} $\sigma'\geq\sigma$ or $\sigma'\geq\sigma^2$.
\end{enumerate}
\end{enumerate}
\end{lemma}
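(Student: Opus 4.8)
The plan is to analyze the Bruhat interval below the right cyclic shift $\sigma=(23\ldots r 1)\in S_r$ directly, using the combinatorial description \eqref{eq:Bruhat}. Note that $\sigma(i)=i+1$ for $i<r$ and $\sigma(r)=1$. First I would establish part (1): I claim that $\tau\leq\sigma$ if and only if $\tau$ is obtained from the identity by choosing a subset $S\subseteq[r-1]$ and ``rotating'' $\sigma$ accordingly; concretely, the $\tau\leq\sigma$ are exactly the permutations $\tau$ with at most one descent, where $\tau(i)\in\{i,i+1\}$ in a controlled sense. The cleanest route: show directly from \eqref{eq:Bruhat} that $\tau\leq\sigma$ forces $\maxperm\tau(i)\leq i+1$ for all $i<r$ (since $\maxperm\sigma(i)=i+1$), and symmetrically $\maxperm{\tau^{-1}}(i)\leq i+1$; conversely these pointwise bounds suffice because $\sigma$ is smooth, hence {\dbi} by the discussion after Corollary \ref{cor:tbl_tblc} (a right cyclic shift avoids $4231$, $35142$, $42513$, $351624$). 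The permutations satisfying $\maxperm\tau(i)\leq i+1$ and $\maxperm{\tau^{-1}}(i)\leq i+1$ for all $i$ are precisely those of the form $\rshft{a_1}{b_1}\cdots$ with disjoint ``blocks''; counting them gives $2^{r-1}$, and the map to $\{0,1\}^{r-1}$ (recording, say, whether $\maxperm\tau(i)=i+1$ or $=i$ at each $i\in[r-1]$) is a poset isomorphism onto the Boolean lattice. The maximal elements below $\sigma$ correspond to subsets of size $r-2$; a direct check identifies them as $\sigma T_{i,r}$ for $i\in[r-1]$, since $\sigma T_{i,r}$ sends $i\mapsto 1$ and fixes the cyclic structure above $i$.

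Next I would prove part (2): $\tau\geq\sigma\iff\tau(r)=1$. The forward direction is immediate from \eqref{eq:Bruhat} applied with $(i,j)=(r-1,r-1)$: $\#(\sigma([r-1])\cap[r-1])=r-2$ while $\#(\tau([r-1])\cap[r-1])=r-1-[\tau(r)\leq r-1]$, forcing $\tau(r)\notin[r-1]$, i.e. $\tau(r)=r$, wait --- rather one uses $(i,j)=(r-1,1)$: $\#(\sigma([r-1])\cap[1])=0$ is vacuous, so instead use the dual condition $\#(\tau^{-1}([1])\cap[r])\leq\ldots$; cleaner is: $\sigma$ has the unique value $\sigma^{-1}(1)=r$, and $\tau\geq\sigma$ forces $\tau^{-1}(1)\geq r$ via \eqref{eq: easy1<=a}-type reasoning on $\rshft 1r\leq\tau$, which by \eqref{eq: cycrs} says $\maxperm{\tau^{-1}}(1)\geq r$, i.e. $\tau^{-1}(1)=r$. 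For the converse, if $\tau(r)=1$ then $\rshft 1r=\sigma\leq\tau$ again by \eqref{eq: cycrs} since $\maxperm{\tau^{-1}}(1)=r$. This is short.

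Finally, part (3) is a cycle of four equivalences. I would prove \ref{part: ge}$\Rightarrow$\ref{part: ge2} trivially (the two listed elements lie in $(S_r)_{<\sigma}$, being $\sigma T_{1,r}$ and $\sigma T_{r-1,r}$ from part (1)). For \ref{part: ge2}$\Rightarrow$\ref{part: rr-1}: from $\sigma'\geq\sigma T_{r-1,r}=(23\ldots(r-1)1r)$ one reads off, via \eqref{eq:Bruhat} or \eqref{eq: cycrs} applied to the cycle $\rshft 1{r-1}\leq\sigma'$, that $\maxperm{\sigma'^{-1}}(1)\geq r-1$, so $\sigma'^{-1}(1)\in\{r-1,r\}$, i.e. $\sigma'(r)=1$ or $\sigma'(r-1)=1$; and from $\sigma'\geq\sigma T_{1,r}=(134\ldots r2)$, whose two-line form has $2$ in the last position and is itself a cyclic shift on $\{2,\ldots,r\}$, one gets that if $\sigma'(r-1)=1$ then $\sigma'(r)=2$ --- the point being $\sigma T_{1,r}$ forces $\sigma'(r)\in\{1,2\}$ by part (2) applied to the shift $(2\,3\cdots r)$ on $\{2,\ldots,r\}$ fixed at $1$. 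For \ref{part: rr-1}$\Rightarrow$\ref{part: minv}: if $\sigma'(r)=1$ then $\sigma'\geq\sigma$ by part (2); if $\sigma'(r)=2,\sigma'(r-1)=1$, observe $\sigma^2$ is the permutation $i\mapsto i+2$ cyclically, so $(\sigma^2)(r-1)=1$, $(\sigma^2)(r)=2$, and $\sigma^2$ is again (conjugate to) a cyclic shift; applying part (2) in the appropriate coordinates shows $\sigma'\geq\sigma^2\iff(\sigma'(r),\sigma'(r-1))=(2,1)$ --- actually I would instead note $\maxperm{\sigma'^{-1}}(1)\geq r-1$ and $\maxperm{\sigma'^{-1}}(2)\geq r$, which via \eqref{eq:cycr1<=} gives $\rshft{}{}$-type comparison $\sigma^2=\cycr_{\{1,3,5,\ldots\}\cup\{2,4,\ldots\}}$... this is the step needing the most care, so let me keep it at the level of: $\sigma^2\leq\sigma'$ reduces to two $\maxperm{}$ inequalities that are exactly encoded by $\sigma'(r-1)=1,\sigma'(r)=2$, using that $\sigma^2$ is smooth/{\dbi}. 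For \ref{part: minv}$\Rightarrow$\ref{part: ge}: every $\tau\in(S_r)_{<\sigma}$ satisfies $\tau\leq\sigma$ and $\tau\leq\sigma^2$ is false in general, so instead I argue each maximal element $\sigma T_{i,r}$ of $(S_r)_{<\sigma}$ is $\leq\sigma$ trivially and is also $\leq\sigma^2$ --- indeed $\sigma T_{i,r}$ sends $i\mapsto 1$, and one checks $\sigma T_{i,r}\leq\sigma^2$ directly; hence $\sigma'\geq\sigma$ or $\sigma'\geq\sigma^2$ dominates every $\sigma T_{i,r}$, and by part (1) these are cofinal in $(S_r)_{<\sigma}$. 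The main obstacle I anticipate is the bookkeeping in \ref{part: rr-1}$\Leftrightarrow$\ref{part: minv}, i.e. pinning down exactly which Bruhat inequalities $\sigma^2\leq\sigma'$ unpacks to; I would handle it by writing $\sigma^2$ explicitly in one-line notation ($\sigma^2=(3\,4\cdots r\,1\,2)$) and invoking that $\sigma^2$ is {\dbi} so that $\sigma'\geq\sigma^2$ is equivalent to the two pointwise maximal-function inequalities $\maxperm{\sigma'^{-1}}(1)\ge r-1$ and $\maxperm{\sigma'^{-1}}(2)\ge r$ together with their transposes, which collapse precisely to $\sigma'(r-1)=1$ and $\sigma'(r)=2$.
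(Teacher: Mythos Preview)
Your overall architecture matches the paper's (part~(1) gives the Boolean lattice and its coatoms, part~(2) is immediate, and part~(3) is proved as the cycle \ref{part: ge}$\Rightarrow$\ref{part: ge2}$\Rightarrow$\ref{part: rr-1}$\Rightarrow$\ref{part: minv}$\Rightarrow$\ref{part: ge}), but two of the steps contain genuine errors.

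\medskip
\textbf{Part (1).} You write that $\tau\le\sigma$ forces $\maxperm{\tau^{-1}}(i)\le i+1$ ``symmetrically''. This is false: since $\sigma^{-1}=(r\,1\,2\ldots r{-}1)$ we have $\maxperm{\sigma^{-1}}(i)=r$ for every $i\ge 1$, so the constraint on $\tau^{-1}$ is vacuous. With \emph{both} constraints $\maxperm\tau(i)\le i+1$ and $\maxperm{\tau^{-1}}(i)\le i+1$ you would be describing permutations with $|\tau(j)-j|\le 1$ for all $j$, i.e.\ products of disjoint simple transpositions, and there are only a Fibonacci number of those, not $2^{r-1}$. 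The correct characterization (using only that $\sigma$ is {\dbi}) is simply $\tau\le\sigma\iff\tau(i)\le i+1$ for all $i<r$; then $\tau\mapsto\{i:\tau(i)=i+1\}\subseteq[r-1]$ gives the Boolean isomorphism, but you still need to check that this map respects the Bruhat order, which is not automatic from what you wrote. The paper sidesteps all of this by observing that $\sigma=T_{1,2}\cdots T_{r-1,r}$ is a Coxeter element and $\sigma T_{i,r}=T_{1,2}\cdots\widehat{T_{i,i+1}}\cdots T_{r-1,r}$, invoking the standard fact that the interval below a reduced product of distinct simple reflections is Boolean.

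\medskip
\textbf{Part (3), step \ref{part: rr-1}$\Rightarrow$\ref{part: minv}.} You propose to deduce $\sigma'\ge\sigma^2$ from the $\maxperm{}$--inequalities by ``invoking that $\sigma^2$ is smooth/{\dbi}''. Two problems: first, $\sigma^2=(3\,4\ldots r\,1\,2)$ contains the pattern $3412$ (already for $r=4$), so it is not smooth; second, and more seriously, the {\dbi} property of a permutation $\pi$ governs comparisons of the form $\tau\le\pi$, not $\pi\le\sigma'$ for an arbitrary $\sigma'$. So even granting that $\sigma^2$ is {\dbi}, this tells you nothing about whether $\sigma^2\le\sigma'$. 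The step is in fact as easy as the paper's ``straightforward'' suggests: if $\sigma'(r-1)=1$ and $\sigma'(r)=2$, then $\sigma'([i])\subseteq\{3,\ldots,r\}$ for every $i\le r-2$, hence $\#(\sigma'([i])\cap[j])\le\max(0,j-2)=\#(\sigma^2([i])\cap[j])$, and the cases $i\ge r-1$ are immediate. Your remaining implications in part~(3) are fine and agree with the paper's argument; in particular \ref{part: minv}$\Rightarrow$\ref{part: ge} is exactly the paper's ``$\sigma T_{i,r}\le\sigma^2$ for all $i$'' together with part~(1).
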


\begin{proof}
The first part follows from the fact that $\sigma=T_{1,2}\cdots T_{r-1,r}$ is a Coxeter element
and $\sigma T_{i,r}=T_{1,2}\cdots\hat T_{i,i+1}\cdots T_{r-1,r}$, $i\in [r-1]$.

The second part is clear.

Evidently, \eqref{part: ge}$\implies$\eqref{part: ge2}.

If $\sigma'\geq\sigma T_{1,r}$ and $\sigma'\geq\sigma T_{r-1,r}$, then $\sigma'(r)\in\{1,2\}$ and $\sigma'^{-1}(1)\in\{r-1,r\}$ respectively.
Thus, \eqref{part: ge2}$\implies$\eqref{part: rr-1}.

The implication \eqref{part: rr-1}$\implies$\eqref{part: minv} is straightforward.

It is immediate to verify that $\sigma T_{i,r}\leq\sigma^2$ for all $i\in [r-1]$.
Hence, using the first part, \eqref{part: minv}$\implies$\eqref{part: ge}.
\end{proof}

\begin{lemma}\label{lem:4}
Let $\sigma$ be $4231$ avoiding but not smooth. Then, there exists an index $i$ such that
\[
\maxperm{\sigma}(\maxperm{\sigma^{-1}}(i))>\maxperm{\sigma}(i)>i\text{ and }
\maxperm{\sigma^{-1}}(\maxperm{\sigma}(i))>\maxperm{\sigma^{-1}}(i)>i.
\]
\end{lemma}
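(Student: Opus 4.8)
The plan is to use the pattern characterization of smoothness (a permutation is smooth iff it avoids both $4231$ and $3412$): since $\sigma$ is $4231$-avoiding and not smooth, it contains $3412$, so there are positions $a<b<c<d$ with $\sigma(c)<\sigma(d)<\sigma(a)<\sigma(b)$. I will produce the index $i$ of the lemma from a well-chosen such occurrence, aiming for $i=a$.

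First I would record what the three asserted inequalities say for $i=a$ (writing $M=\maxperm{\sigma}$ and $N=\maxperm{\sigma^{-1}}$), under the running assumptions $\sigma(c)\le a$ and $a<\sigma(d)$. The inequality $M(a)>a$ is equivalent to $\sigma([a])\ne[a]$, which holds because the value $\sigma(c)\le a$ sits at the position $c>a$; this also gives $N(a)\ge c>b$. For $M(N(a))>M(a)$ it then suffices that $M(a)<\sigma(b)$, since $\sigma(b)\in\sigma([N(a)])$; and $M(a)<\sigma(b)$ follows from $4231$-avoidance, for a position $x\le a$ with $\sigma(x)\ge\sigma(b)$ would (as $\sigma(b)$ lies at $b>x$) satisfy $\sigma(x)>\sigma(b)$, whence $x<a<b<c$ would carry values in the relative order $4,2,3,1$. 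For $N(M(a))>N(a)$: from $M(a)\ge\sigma(a)>\sigma(d)$, position $d$ is $\le M(a)$-valued, so it suffices that $N(a)<d$, i.e.\ that no position $>d$ carries a value $\le a$; choosing the occurrence with $d$ maximal, such a position $x$ is ruled out by examining where $\sigma(x)$ falls relative to $\sigma(c)$ and $\sigma(a)$: one is forced into either $(a,b,c,x)$ (a $3412$-occurrence wider than the chosen one) or $(b,c,d,x)$ (values in the order $4,2,3,1$).

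The delicate part — which I expect to be the main obstacle — is to secure a $3412$-occurrence that really does satisfy $\sigma(c)\le a$ and $a<\sigma(d)$ (note $\sigma(c)\le a$ also yields $\sigma(c)<b$, used above). Passing to $\sigma^{-1}$ is harmless here: it preserves $4231$-avoidance and non-smoothness, carries $3412$-occurrences to $3412$-occurrences, and interchanges the two halves of the asserted inequality, so it is enough to cope with one of the failure modes $\sigma(c)>a$ and $\sigma(d)\le a$. I would then normalize the occurrence by extremal choices (e.g.\ $d$ maximal, then $a$ maximal) together with repeated use of the same ``$(x,a,b,c)$ is a $4231$'' / ``$(b,c,d,x)$ is a $4231$'' moves: if $\sigma(c)>a$ then $a<\sigma(c)<\sigma(a)$ forces $\sigma(a)>a$ and the left end of the pattern can be slid inward; and if $\sigma(c)<\sigma(d)\le a$ then $\sigma([a])$ omits the two values $\sigma(c),\sigma(d)$, hence contains a value $>a$ at some position $u\le a$, and feeding $u$ into the $4231$-argument either forbids the configuration outright or replaces the occurrence by a strictly ``better'' one, so that the process terminates at a good occurrence. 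Getting this bookkeeping to close — i.e.\ confirming that after normalization a usable occurrence always remains — is the substantive step; once it is available, the inequalities for $i=a$ follow from the elementary $\maxperm$ observations above.
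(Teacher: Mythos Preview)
Your plan is sound up to the point you flag as ``delicate'', and that point is a genuine gap rather than routine bookkeeping. Insisting on $i=a$ forces the extra conditions $\sigma(c)\le a$ and $\sigma(d)>a$ on the $3412$-occurrence, but there are $4231$-avoiding non-smooth permutations in which \emph{no} occurrence satisfies $\sigma(c)\le a$. Take $\sigma=(4,1,5,2,3)$: its unique $3412$-occurrence is $(a,b,c,d)=(1,3,4,5)$ with $\sigma(c)=2>1=a$, and one checks directly that $i=1$ fails the conclusion of the lemma (since $\maxperm{\sigma}(\maxperm{\sigma^{-1}}(1))=\maxperm{\sigma}(2)=4=\maxperm{\sigma}(1)$), so your ``slide the left end inward'' move has nowhere to go. Passing to $\sigma^{-1}$ does rescue this particular example, but your proposal does not explain why one of $\sigma,\sigma^{-1}$ must always admit a good occurrence. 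Unwinding the correspondence between $3412$-occurrences in $\sigma$ and in $\sigma^{-1}$, an occurrence $(a,b,c,d)$ is good for $\sigma$ or for $\sigma^{-1}$ precisely when $\sigma(c)<b$ \emph{and} $\sigma(d)>a$, and establishing this pair of inequalities for a suitably chosen occurrence is exactly the substantive step that remains.

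The paper's proof sidesteps the requirement $\sigma(c)\le a$ entirely. It selects the occurrence with $(a,b,\sigma(c),\sigma(d))$ lexicographically minimal and sets $i=\max\bigl(a,\sigma(c)\bigr)$ rather than $i=a$. With this extremal choice, $4231$-avoidance plus minimality yield directly $a<\sigma(d)$, $\sigma(c)<b$, $\maxperm{\sigma}(b-1)=\sigma(a)$ and $\maxperm{\sigma^{-1}}(\sigma(d)-1)=c$; from these one reads off $\maxperm{\sigma}(i)=\sigma(a)$ and $\maxperm{\sigma^{-1}}(i)=c$, and the conclusion drops out. Your extremal choice (``$d$ maximal, then $a$ maximal'') is different and does not obviously deliver these identities; if you want to salvage the outline, the cleanest fix is to abandon $i=a$ in favour of $i=\max\bigl(a,\sigma(c)\bigr)$ and switch to the lex-minimal occurrence in $(a,b,\sigma(c),\sigma(d))$.
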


\begin{proof}
We first remark that the inequality $\maxperm{\sigma}(\maxperm{\sigma^{-1}}(i))>\maxperm{\sigma}(i)$
implies that $\maxperm{\sigma}(i)>i$ (or equivalently, $\maxperm{\sigma^{-1}}(i)>i$).

Since $\sigma$ is $4231$ avoiding but not smooth, it is not covexillary, i.e., the set
$$P:=\{(a,b,c,d): a<b<c<d\text{ and }\sigma(c)<\sigma(d)<\sigma(a)<\sigma(b)\}$$
is non-empty.
Choose $(a,b,c,d)\in P$ such that $(a,b,\sigma(c),\sigma(d))$ is minimal with respect to the lexicographic order
(from left to right) on $[n]^4$.
Since $\sigma$ is $4231$ avoiding, either $a=1$ or $\maxperm{\sigma}(a-1)<\sigma(b)$.
Moreover, if there is $\tilde{a}<a$ such that $\sigma(d)<\sigma(\tilde{a})<\sigma(b)$ then $(\tilde{a},b,c,d)\in P$,
contradicting the minimality of $(a,b,\sigma(c),\sigma(d))$.
It follows that either $a=1$ or $\maxperm{\sigma}(a-1)<\sigma(d)$.
In particular, $\sigma([a-1]\cup\{c\})\subseteq[\sigma(d)-1]$ and hence,
\begin{equation}\label{eq:ad}
a<\sigma(d).
\end{equation}
If there is $a<\tilde{b}<b$ such that $\sigma(\tilde{b})>\sigma(a)$, then $(a,\tilde{b},c,d)\in P$,
gainsaying the minimality of $(a,b,\sigma(c),\sigma(d))$.
It follows, since $\maxperm{\sigma}(a-1)<\sigma(d)<\sigma(a)$ (or $a=1$), that
\begin{equation}\label{eq:btoa}
\maxperm{\sigma}(b-1)=\sigma(a).
\end{equation}
Similarly,
\begin{equation}\label{eq:cb}
\sigma(c)<b
\end{equation}
and
\begin{equation}\label{eq:dtoc}
\maxperm{\sigma^{-1}}(\sigma(d)-1)=c.
\end{equation}
Let $i=\max\left(a,\sigma(c)\right)$. It follows from \eqref{eq:ad} and \eqref{eq:cb} that $i<\min\left(b,\sigma(d)\right)$.
Therefore, by \eqref{eq:btoa}, $\maxperm{\sigma}(i)\leq\maxperm{\sigma}(b-1)=\sigma(a)$.
Hence, since $a\leq i$, it follows that
\begin{equation*}\label{eq:itoa}
\maxperm{\sigma}(i)=\sigma(a).
\end{equation*}
Similarly,
\begin{equation*}\label{eq:itoc}
\maxperm{\sigma^{-1}}(i)=c.
\end{equation*}
The lemma now follows by noting that
$\maxperm{\sigma}(c)\geq \sigma(b)>\sigma(a)$
and
$\maxperm{\sigma^{-1}}(\sigma(a))\geq \sigma^{-1}(\sigma(d))=d>c$.
\end{proof}

\begin{proof}[Proof of Proposition \ref{prop: charsm}]

Suppose that $\sigma$ is smooth. Then, it follows immediately from Proposition \ref{prop: chardbi}
and Lemma \ref{lem:1perm} that for any partition $\prtt$ of $[n]$ and any $\tau\in S_{\prtt}$ we have
$\tau\leq\sigma$ if and only if $\tau_{\prtset}\leq\iota_{\prtset}(\sigma[\prtset])$ for all $\prtset$.
Thus,
\[
\iota((\sigma[\prtset])_{\prtset\in \prtt})={\max}_{\leq_{\prtt}}(S_{\prtt}\cap (S_n)_{\leq\sigma})
\]
as required.

Suppose now that $\sigma$ is not \dbi.
Then, by Proposition \ref{prop: chardbi}, there exists a partition $\prtt$ of $[n]$
and $\tau\in S_{\prtt}$ such that $\tau\not\leq\sigma$ but $\tau_{\prtset}\leq\sigma$ for all $\prtset\in \prtt$.
Since $\tau=\vee_{\prtset\in \prtt}\tau_{\prtset}$ in $S_{\prtt}$ with respect to $\leq_{\prtt}$, it follows that
the set $(S_n)_{\leq\sigma}\cap S_{\prtt}$ does not admit a maximum with respect to $\leq_{\prtt}$.

It remains to consider the case where $\sigma$ is \dbi, and in particular $4231$ avoiding, but not smooth.
Let $i$ be as in Lemma \ref{lem:4} and let $j=\maxperm{\sigma}(i)$ and $k=\maxperm{\sigma^{-1}}(i)$. Then,
$\maxperm{\sigma}(k)>j>i$ and $\maxperm{\sigma^{-1}}(j)>k>i$.

Upon passing to $\sigma^{-1}$ if necessary, we may assume without loss of generality that $j\leq k$.
Let $A=\{i,j,j+1,\ldots,k+1\}$.
Then, $\cycr_{A\setminus\{i\}}, \cycr_{A\setminus\{k+1\}}\leq\sigma$ (by \eqref{eq:cyc1<=})
but $\cycr_{A},\cycr_{A}^2\not\leq\sigma$ since $\maxperm{\cycr_A^{-1}}(i)=k+1>\maxperm{\sigma^{-1}}(i)$
and $\maxperm{\cycr_A^2}(i)=j+1>\maxperm{\sigma}(i)$.
Let $\prtt$ be the partition of $[n]$ consisting of $A$ and the singletons $\{r\}$, $r\notin A$.
Note that $S_{\prtt}\subseteq S_n$ is isomorphic to $S_{k-j+3}$.
(The order on $S_{\prtt}$ induced from the Bruhat order on $S_n$ coincides with $\leq_{\prtt}$.)
By Lemma \ref{lem:coxeter}, there is no $\sigma'\in S_{\prtt}$ such that $\cycr_{A\setminus\{i\}},
\cycr_{A\setminus\{k+1\}}\leq\sigma'\leq\sigma$. It follows that ${\max}_{\leq_{\prtt}}(S_{\prtt}\cap (S_n)_{\leq\sigma})$ does not exist.
\end{proof}

\begin{example}
Let $\sim$ be the equivalence relation
\[
i\sim j\iff i\equiv j\pmod2
\]
on $[n]$ and let $\iota:S_{n_1}\times S_{n_2}\rightarrow S_n$ be the corresponding embedding where $n_1=\lfloor\frac n2\rfloor$
and $n_2=\lceil\frac n2\rceil$.
Then, for every smooth $\sigma\in S_n$
\[
S_{\prtt}\cap(S_n)_{\leq\sigma}=\{e\}\iff \sigma\text{ is $321$ avoiding}.
\]
Indeed, $\sigma$ is $321$ avoiding if and only if $\sigma$ is a product of distinct simple reflections if and only if
$\tbl_{\Trans}(\sigma)$ consists of simple reflections.
\end{example}

\section{Relation to Dyck paths}\label{sec:sigma_fg_bijection}

In this section we prove Theorem \ref{thm:sigma_fg_bijection}.

\subsection{}
For any $n\geq 1$, let
\begin{equation*}
\Dyck_n=\{f:[n]\to[n]: f \text{ is weakly increasing, }f(i)\geq i \text{ for all } i\in [n]\}.
\end{equation*}

We can view elements of $\Dyck_n$ as Dyck paths from $(0,0)$ to $(n,n)$ by taking
$f(i)$ to be the minimal $x$ such that the lattice point $(x,i)$ lies in the path.

We can give an alternative interpretation of $\Dyck_n$ as follows.
For any subset $\Gamma\subseteq\Trans$ define $f^{\ast}_\Gamma:[n]\rightarrow[n]$ by
\begin{equation*}
f^{\ast}_\Gamma(i)=\max\Big(\{i\}\cup\{j>i:T_{i,j}\in \Gamma\}\Big).
\end{equation*}
For any $f\in\Dyck_n$, let $\Lambda_f:=\{(i,j)\in[n]\times[n]:i<j\leq f(i)\}$.

\begin{observation}\label{obs:hat1}
The map $\Gamma\mapsto f^{\ast}_\Gamma$ is a bijection between the downward closed subsets of $\Trans$ and $\Dyck_n$.
The inverse map is given by
\[
f\mapsto\{T_{i,j}:(i,j)\in\Lambda_f\}.
\]
\end{observation}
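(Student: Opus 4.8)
The plan is to verify directly that $\Gamma\mapsto f^{\ast}_\Gamma$ and $f\mapsto\Gamma_f:=\{T_{i,j}:(i,j)\in\Lambda_f\}$ are well-defined maps which are mutually inverse, using only the description \eqref{eq: east<=a} of the Bruhat order on $\Trans$, namely $T_{i,j}\leq T_{x,y}\iff x\leq i<j\leq y$.

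First I would check that $f^{\ast}_\Gamma\in\Dyck_n$ whenever $\Gamma$ is downward closed. The inequality $f^{\ast}_\Gamma(i)\geq i$ is immediate from the definition. For monotonicity, fix $i<n$ and put $j:=f^{\ast}_\Gamma(i)$. If $j=i$, then $f^{\ast}_\Gamma(i)=i<i+1\leq f^{\ast}_\Gamma(i+1)$. If $j>i$, then $T_{i,j}\in\Gamma$; if moreover $j=i+1$ we are done since $f^{\ast}_\Gamma(i)=i+1\leq f^{\ast}_\Gamma(i+1)$, and if $j>i+1$ then $T_{i+1,j}\leq T_{i,j}$, so $T_{i+1,j}\in\Gamma$ by downward closedness, whence $f^{\ast}_\Gamma(i+1)\geq j=f^{\ast}_\Gamma(i)$.

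Next I would check that $\Gamma_f$ is downward closed for $f\in\Dyck_n$: if $T_{x,y}\in\Gamma_f$ (so $x<y\leq f(x)$) and $T_{i,j}\leq T_{x,y}$, then $x\leq i<j\leq y$, and since $f$ is weakly increasing $j\leq y\leq f(x)\leq f(i)$, hence $T_{i,j}\in\Gamma_f$. Then I would verify the two compositions. For $\Gamma_{f^{\ast}_\Gamma}=\Gamma$: if $T_{i,j}\in\Gamma$ then $f^{\ast}_\Gamma(i)\geq j$ by definition, so $T_{i,j}\in\Gamma_{f^{\ast}_\Gamma}$; conversely, if $i<j\leq f^{\ast}_\Gamma(i)$ then $f^{\ast}_\Gamma(i)>i$, hence $f^{\ast}_\Gamma(i)=m$ for some $m>i$ with $T_{i,m}\in\Gamma$, and $T_{i,j}\leq T_{i,m}$ yields $T_{i,j}\in\Gamma$. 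For $f^{\ast}_{\Gamma_f}=f$: by definition $f^{\ast}_{\Gamma_f}(i)=\max\big(\{i\}\cup\{j>i:j\leq f(i)\}\big)$, which equals $f(i)$ in both cases $f(i)=i$ and $f(i)>i$.

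None of these steps is difficult; the only place demanding a bit of care is the monotonicity verification in the first step, where one must get the direction of the implication right and treat the boundary case $j=i+1$ separately. Everything else is routine bookkeeping with \eqref{eq: east<=a}.
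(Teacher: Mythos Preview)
Your proof is correct; the paper states this as an Observation without proof, and your direct verification via \eqref{eq: east<=a} is exactly the routine argument one would expect.
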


\begin{observation}\label{obs:hat2}
Let $\Gamma$ be a downward closed subset of $\Trans$ and let $1\leq i\leq l\leq i+1$.
Then, $f^{\ast}_\Gamma(f^{\ast}_\Gamma(i))>f^{\ast}_\Gamma(l)$ if and only if there are $i<j<k$ such that $T_{i,j}, T_{j,k}\in \Gamma$ but $T_{l,k}\notin \Gamma$.
\end{observation}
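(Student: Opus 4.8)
The plan is to reduce everything to the elementary dictionary between downward closed subsets of $\Trans$ and Dyck paths furnished by Observation \ref{obs:hat1}. Writing $f:=f^{\ast}_\Gamma$ for brevity, the first step is to record the two facts to be used repeatedly. By \eqref{eq: east<=a} and the downward closedness of $\Gamma$ (equivalently, directly from Observation \ref{obs:hat1}), for every $1\le a<b\le n$ we have $T_{a,b}\in\Gamma\iff b\le f(a)$, and $f$ is weakly increasing with $f(a)\ge a$. In particular $T_{a,b}\notin\Gamma\iff f(a)<b$ whenever $a<b$. Also, the hypothesis $1\le i\le l\le i+1$ simply says $l\in\{i,i+1\}$, so $f(l)\ge f(i)$ by monotonicity; and any index $k$ with $k>j>i$ for some $j$ satisfies $k\ge i+2>i+1\ge l$, hence $l<k$, so $T_{l,k}\notin\Gamma\iff f(l)<k$ in all the situations we will encounter.

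For the implication $\Longleftarrow$, suppose $i<j<k$ are such that $T_{i,j},T_{j,k}\in\Gamma$ but $T_{l,k}\notin\Gamma$. Translating via step one, $j\le f(i)$, $k\le f(j)$ and $f(l)<k$. Since $f$ is weakly increasing and $j\le f(i)$, we get $f(j)\le f(f(i))$, hence $f(f(i))\ge f(j)\ge k>f(l)$, which is precisely $f(f(i))>f(l)$.

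For the implication $\Longrightarrow$, assume $f(f(i))>f(l)$. Since $f(l)\ge f(i)$, this yields $f(f(i))>f(i)$, which forces $f(i)>i$ (otherwise $f(f(i))=f(i)$, as $f(a)\ge a$ makes $f(i)=i$ the only alternative). Put $j:=f(i)$ and $k:=f(j)=f(f(i))$. Then $i<j$ and, as $f(f(i))>f(i)=j$, also $j<k$, so $i<j<k$; moreover $T_{i,j}\in\Gamma$ since $i<j=f(i)$, and $T_{j,k}\in\Gamma$ since $j<k=f(j)$. Finally $l\le i+1\le j<k$, so $l<k$, and $f(l)<f(f(i))=k$, whence $T_{l,k}\notin\Gamma$. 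Thus the triple $(i,j,k)$ witnesses the right-hand side.

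The argument is entirely routine, so there is no real obstacle; the only bookkeeping point is the passage between ``$T_{l,k}\notin\Gamma$'' and ``$f(l)<k$'', which needs $l<k$ — but this is automatic here because any such $k$ exceeds some $j>i$ while $l\le i+1$.
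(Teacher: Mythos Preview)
Your proof is correct. The paper states this as an ``Observation'' and gives no proof, treating it as an immediate consequence of the dictionary in Observation~\ref{obs:hat1}; your argument is exactly that routine unpacking of definitions, so it matches the paper's (implicit) approach.
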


\subsection{}
Given a Dyck path $f\in\Dyck_n$ we define a \emph{\dcr}\ of $f$ to be
a function $g:[n]\rightarrow\{0,1\}$ such that
\begin{enumerate}
\item $g(i)=0$ whenever $f(f(i))=f(i)$.
\item $g(i)=g(i+1)$ whenever $i<n$ and $f(i+1)<f(f(i))$.
\end{enumerate}
In particular,
\begin{equation}\label{eq:gn}
g(n)=0 \text{ and if } n>1 \text{ then } g(n-1)=0 \text{ as well}.
\end{equation}
Note that the number of \dcr s of $f$ is
\begin{equation}\label{eq:g_counting}
2^{\#\{i\in [n-1]:f(i)<f(f(i))=f(i+1)\}}.
\end{equation}

We say that a vertex $(p,q)$ of a Dyck path $\mathcal{D}$ is distinguished if it is the top left corner
of a (non-degenerate) rectangle $R$ such that
\begin{enumerate}
\item The left side of $R$ is the intersection of $\mathcal{D}$ with the vertical line $x=p$.
\item The top side of $R$ is contained in $\mathcal{D}$.
\item The bottom right corner of $R$ lies on the main diagonal $x=y$.
\end{enumerate}
If $f\in\Dyck_n$ corresponds to $\mathcal{D}$ then the exponent in \eqref{eq:g_counting}
is precisely the number of distinguished vertices of $\mathcal{D}$.
Thus, \eqref{eq:g_counting} counts the number of (unrestricted) $2$-colorings of the set of distinguished vertices.

Denote by $\Pairs_n$ the set of pairs $(f,g)$ consisting of a function $f\in\Dyck_n$
and a \dcr\ $g$ of $f$.
Informally, $\Pairs_n$ is the set of \dcrt\ Dyck paths.

For any $A\subseteq\Special$ define $f_A:[n]\rightarrow[n]$ and $g_A:[n]\rightarrow\{0,1\}$ by
\begin{align*}
f_A&=f^{\ast}_{A_\Trans},\\
g_A(i)&=\begin{cases}1&\text{if $i<f_A(i)<n$ and }R_{i,f_A(i),f_A(i)+1}\in A,\\0&\text{otherwise.}\end{cases}
\end{align*}

\begin{lemma}\label{lem:gLR}
Let $A\subseteq\Special$ be {\adm}, let $f=f_A$ and let $i,j\in[n]$ be such that $i<j\leq f(i)<f(j)$. Then,
\begin{equation}\label{eq:ijk_gen}
\text{for any } f(i)<k\leq f(j) \text{ exactly one of the permutations } R_{i,j,k},L_{i,j,k}\text{ belongs to } A.
\end{equation}
More precisely,
\begin{equation*}
g_A(i)=1\iff L_{i,j,k}\notin A\text{ and }R_{i,j,k}\in A;\quad
g_A(i)=0\iff L_{i,j,k}\in A\text{ and }R_{i,j,k}\notin A.
\end{equation*}
\end{lemma}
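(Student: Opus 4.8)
The plan is to fix $i<j\leq f(i)<f(j)$ and $k$ with $f(i)<k\leq f(j)$, and show that exactly one of $R_{i,j,k},L_{i,j,k}$ lies in $A$, with the dichotomy governed by $g_A(i)$. First I would record what the hypotheses give in terms of transpositions via Observation \ref{obs:hat1}: since $f=f_A=f^{\ast}_{A_\Trans}$, we have $T_{i,j}\in A$ (because $j\leq f(i)$), $T_{i,k}\notin A$ (because $k>f(i)$), and $T_{j,k}\in A$ (because $k\leq f(j)$). Since $T_{i,j},T_{j,k}\in A$ but $T_{i,k}\notin A$, admissibility \eqref{item: adm3} together with the consequence of \eqref{item: adm1}--\eqref{item: adm2} recorded after Definition \ref{def:adm} (namely $T_{i,k}\in A\iff R_{i,j,k},L_{i,j,k}\in A$) shows that \emph{exactly one} of $R_{i,j,k}$, $L_{i,j,k}$ belongs to $A$. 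This already proves \eqref{eq:ijk_gen}; the content of the lemma is to identify which one, uniformly in $k$, in terms of $g_A(i)$.

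The key step is to reduce the general $k$ to the single value $k=f(i)+1$, which is exactly the index appearing in the definition of $g_A$. Note $f(i)+1\leq f(j)$ since $k\leq f(j)$ and $k\geq f(i)+1$, and $f(i)<n$ since $f(i)<k\leq n$; also $i<f(i)$ since $j\leq f(i)$ and $i<j$. So $g_A(i)=1$ iff $R_{i,f(i),f(i)+1}\in A$. Now I would argue: $R_{i,f(i),f(i)+1}\in A$ is equivalent to $R_{i,j,f(i)+1}\in A$ (both $R$'s, and likewise both $L$'s, are equivalent here because $T_{i,f(i)}\in A$, $T_{f(i),f(i)+1}$ may or may not be in $A$, but the relevant comparisons in \eqref{eq: easy<=} tie $R_{i,j,m}$ to $R_{i,f(i),m}$ when $j\leq f(i)$ via downward closure $R_{i,f(i),m}\leq R$-something — more precisely $R_{i,j,m}\le R_{i,f(i),m}$ always holds by \eqref{eq: east<=a}-type comparison, and for the reverse direction one uses that $T_{i,f(i)}\in A$ forces $R_{i,f(i),m}\in A$ once $R_{i,j,m}\in A$, or one derives it from \eqref{eq: ijkl} and $T_{i,m}\notin A$). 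Then I would show $R_{i,j,f(i)+1}\in A\iff R_{i,j,k}\in A$ for all $k$ in the range, by monotonicity in the third index: downward closure \eqref{item: adm1} gives $R_{i,j,f(i)+1}\leq R_{i,j,k}$-type relations in one direction, while in the other direction I use the exactly-one-of dichotomy already established together with the observation that $L_{i,j,k}\leq L_{i,j,f(i)+1}$ or an analogous comparison from \eqref{eq: easy<=}, so that $L$ being present at the larger index forces it at the smaller, hence $R$ being present at the smaller forces $R$ at the larger. Combining, $g_A(i)=1\iff R_{i,j,k}\in A$ for every admissible $k$, and then by the dichotomy $g_A(i)=0\iff L_{i,j,k}\in A\iff R_{i,j,k}\notin A$.

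The main obstacle I anticipate is the bookkeeping in the reduction to $k=f(i)+1$: one must be careful that the equivalences $R_{i,j,k}\in A\iff R_{i,j,f(i)+1}\in A$ and the replacement of $j$ by $f(i)$ genuinely follow from \eqref{item: adm1}, \eqref{item: adm2}, the comparison table \eqref{eq: easy<=}, and the fact that $T_{i,k}\notin A$ while $T_{i,j},T_{j,k},T_{i,f(i)}\in A$ — the direction that does \emph{not} come for free from downward closure is where \eqref{eq: ijkl} and the exactly-one-of statement must be invoked. I expect this to be a short but slightly fiddly case analysis on the positions of the third index relative to $f(i)+1$ and $f(j)$, with no genuinely hard point once \eqref{item: adm3} has been applied to produce the initial dichotomy.
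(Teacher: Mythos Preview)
Your plan is essentially the paper's proof: first get the exactly-one-of statement from \eqref{item: adm2} and \eqref{item: adm3}, then use \eqref{item: adm2} (with $T_{i,f(i)+1}\notin A$) to link the $3$-cycle with middle entry $f(i)$ to the one with middle entry $j$, and use downward closure in the third index together with the dichotomy to propagate between $k=f(i)+1$ and general $k$. The paper routes this through $k=f(j)$ (showing $L_{i,j,f(j)}\notin A$ and then descending via $R_{i,j,k}\le R_{i,j,f(j)}$), but the logic is the same.

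Two small inaccuracies to fix when you write it out. First, the claim ``$R_{i,j,m}\le R_{i,f(i),m}$ always holds'' is false unless $j=f(i)$; neither $R_{i,j,f(i)+1}$ nor $R_{i,f(i),f(i)+1}$ dominates the other in Bruhat order, and the equivalence you want really does require \eqref{item: adm2} plus the dichotomy in both directions (as you indicate in your hedge). Second, the comparison you wrote as ``$L_{i,j,k}\leq L_{i,j,f(i)+1}$'' goes the other way: from the last case of \eqref{eq: easy<=} one has $L_{i,j,f(i)+1}\leq L_{i,j,k}$ for $k\ge f(i)+1$, which is exactly what your verbal argument (``$L$ at the larger index forces $L$ at the smaller'') needs.
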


\begin{proof}
If $i<j\leq f(i)<k\leq f(j)$, then $T_{i,j},T_{j,k}\in A$ but $T_{i,k}\notin A$ and therefore \eqref{eq:ijk_gen} follows from \eqref{item: adm2} and \eqref{item: adm3}.

Suppose that $g_A(i)=1$, i.e., $R_{i,f(i),f(i)+1}\in A$.
Then $L_{i,j,f(i)+1}\notin A$ by \eqref{item: adm2}, since $T_{i,f(i)+1}\notin A$.
Therefore, $L_{i,j,f(j)}\notin A$ by \eqref{item: adm1} and hence $R_{i,j,f(j)}\in A$, by \eqref{eq:ijk_gen}.
Therefore,  by \eqref{item: adm1}, $R_{i,j,k}\in A$ for every $j<k\leq f(j)$.
In particular, for every $f(i)<k\leq f(j)$, $R_{i,j,k}\in A$ and hence $L_{i,j,k}\notin A$, by \eqref{eq:ijk_gen}.

Similarly, if $g_A(i)=0$, i.e., $R_{i,f(i),f(i)+1}\notin A$ then $L_{i,f(i),f(i)+1}\in A$ by \eqref{eq:ijk_gen}. Hence $R_{i,j,f(i)+1}\notin A$ by \eqref{item: adm2}, since $T_{i,f(i)+1}\notin A$. Therefore $R_{i,j,f(j)}\notin A$ by \eqref{item: adm1} and hence $L_{i,j,f(j)}\in A$ by \eqref{eq:ijk_gen}.
Therefore,  by \eqref{item: adm1}, $L_{i,j,k}\in A$ for every $j<k\leq f(j)$.
In particular, for every $f(i)<k\leq f(j)$, $L_{i,j,k}\in A$ and hence $R_{i,j,k}\notin A$, by \eqref{eq:ijk_gen}.
\end{proof}

Conversely, for every pair of functions $f:[n]\rightarrow[n]$ and $g:[n]\rightarrow\{0,1\}$ define
\begin{align*}
A_{f,g}=&\{T_{i,j}:(i,j)\in\Lambda_f\}\cup\\&\{R_{i,j,k}:(i,j)\in\Lambda_f,\, (j,k)\in\Lambda_f \text{ and if } (i,k)\notin\Lambda_f \text{ then }g(i)=1  \}\cup\\
&\{L_{i,j,k}:(i,j)\in\Lambda_f,\, (j,k)\in\Lambda_f  \text{ and if } (i,k)\notin\Lambda_f \text{ then }g(i)=0 \}.
\end{align*}

Denote by $\Adm=\Adm_n$ the set of {\adm} subsets of $\Special$.

\begin{proposition}\label{prop:Afg_bijection}
The map $A\mapsto (f_A,g_A)$ is a bijection $\Adm\rightarrow\Pairs$ whose inverse is $(f,g)\mapsto A_{f,g}$.
\end{proposition}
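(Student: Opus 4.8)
The strategy is to show that the two constructions $A \mapsto (f_A, g_A)$ and $(f,g) \mapsto A_{f,g}$ are well-defined maps between $\Adm$ and $\Pairs$, and that they are mutually inverse. I would organize this into four steps. First, I would verify that for $A \in \Adm$ the pair $(f_A, g_A)$ really is a \dcrt\ Dyck path, i.e. that $f_A \in \Dyck_n$ and that $g_A$ satisfies conditions (1) and (2) in the definition of a \dcr. The statement $f_A \in \Dyck_n$ is immediate from Observation \ref{obs:hat1} applied to the downward closed set $A_\Trans$. For condition (1): if $f_A(f_A(i)) = f_A(i)$, there are no $j, k$ with $i < j \le f_A(i) < k$ and $T_{i,j}, T_{j,k} \in A$, so in particular $g_A(i) = 0$ follows directly from the definition of $g_A$ (taking $j = f_A(i)$). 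For condition (2): if $f_A(i+1) < f_A(f_A(i))$, then by Observation \ref{obs:hat2} (with $l = i+1$) there are $i < j < k$ with $T_{i,j}, T_{j,k} \in A$ but $T_{i+1,k} \notin A$; I would use this together with Lemma \ref{lem:gLR} to force $g_A(i) = g_A(i+1)$, the key point being that the same $R/L$ alternative governs both $g_A(i)$ and $g_A(i+1)$ via the $3$-cycles built on the overlapping pairs.

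**Second and third steps.** Next I would check the reverse direction: for $(f,g) \in \Pairs$, the set $A_{f,g}$ is {\adm}. Condition \eqref{item: adm1} (downward closure) follows from the table \eqref{eq: easy<=} together with the defining property $f(i) \ge i$ and weak monotonicity of $f$: one checks case by case that any element of $\Special$ below a member of $A_{f,g}$ again satisfies the defining inclusions. Condition \eqref{item: adm2} is built into the definition, since $R_{i,j,l}, L_{i,k,l} \in A_{f,g}$ forces $(i,l) \in \Lambda_f$ (otherwise $g(i)$ would have to be both $1$ and $0$), hence $T_{i,l} \in A_{f,g}$. Condition \eqref{item: adm3} holds because whenever $T_{i,j}, T_{j,k} \in A_{f,g}$, i.e. $(i,j), (j,k) \in \Lambda_f$, at least one of $R_{i,j,k}, L_{i,j,k}$ lies in $A_{f,g}$ by construction (if $(i,k) \in \Lambda_f$ both do; otherwise exactly the one selected by $g(i)$). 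The subtle point here is that $A_{f,g}$ is built from an \emph{arbitrary} pair $(f,g)$, but admissibility of the output does not actually need the \dcr\ conditions on $g$ — those conditions are exactly what is needed for the round-trip identities, which is the content of the fourth step.

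**Fourth step: the round trips.** I would then prove $A_{f_A, g_A} = A$ for {\adm} $A$ and $(f_{A_{f,g}}, g_{A_{f,g}}) = (f,g)$ for $(f,g) \in \Pairs$. For the first: the transposition parts agree by Observation \ref{obs:hat1}. For the $3$-cycles $R_{i,j,k}$ with $(i,j),(j,k) \in \Lambda_{f_A}$ and $(i,k) \in \Lambda_{f_A}$, both $R_{i,j,k}$ and $L_{i,j,k}$ lie in $A$ (by the remark after Definition \ref{def:adm}) and in $A_{f_A,g_A}$; for $(i,k) \notin \Lambda_{f_A}$, Lemma \ref{lem:gLR} says $R_{i,j,k} \in A \iff g_A(i) = 1 \iff R_{i,j,k} \in A_{f_A,g_A}$, and similarly for $L$. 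This requires observing that $(i,j),(j,k) \in \Lambda_{f_A}$, $(i,k)\notin\Lambda_{f_A}$ forces $j \le f_A(i) < f_A(j)$ and $f_A(i) < k \le f_A(j)$, putting us exactly in the hypothesis of Lemma \ref{lem:gLR}. For the reverse round trip: $f_{A_{f,g}} = f$ again by Observation \ref{obs:hat1}, and $g_{A_{f,g}}(i) = 1 \iff i < f(i) < n$ and $R_{i, f(i), f(i)+1} \in A_{f,g} \iff i < f(i) < n$, $(f(i), f(i)+1) \in \Lambda_f$ and $g(i) = 1$ (using that $(i, f(i)+1) \notin \Lambda_f$ by definition of $f$). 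The condition $(f(i), f(i)+1) \in \Lambda_f$, i.e. $f(f(i)) > f(i)$, is precisely where the \dcr\ conditions on $g$ enter: when $f(f(i)) = f(i)$, condition (1) gives $g(i) = 0 = g_{A_{f,g}}(i)$; when $f(f(i)) > f(i)$, the condition $f(i) + 1 \le f(f(i))$ holds and we get $g_{A_{f,g}}(i) = g(i)$ directly.

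**Main obstacle.** I expect the most delicate part to be the verification of downward closure of $A_{f,g}$ — condition \eqref{item: adm1} — since it requires unwinding the Bruhat comparisons in \eqref{eq: easy<=} against the combinatorics of $\Lambda_f$ in every case (the comparisons $R \le R$, $R \le L$, $T \le R$, etc.), and one has to be careful that the value of $g$ at the relevant index is correctly tracked when, say, $R_{i,j,k} \le R_{x,y,z}$ via the clause $x \le i < j = y < k \le z$. The bookkeeping there, while routine, is where errors are most likely to creep in; everything else reduces cleanly to Observations \ref{obs:hat1}, \ref{obs:hat2} and Lemma \ref{lem:gLR}.
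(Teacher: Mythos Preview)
Your proposal is essentially the paper's own proof, organized the same way and resting on the same tools (Observations \ref{obs:hat1} and \ref{obs:hat2}, Lemma \ref{lem:gLR}). One correction, however: contrary to your assertion in the second/third step, the \dcr\ condition (2) on $g$ \emph{is} needed for the admissibility of $A_{f,g}$, and precisely for downward closure in the case you yourself flag as the main obstacle. If $R_{i,j,k}\in A_{f,g}$ with $(i,k)\notin\Lambda_f$ and $i+1<j$, then $R_{i+1,j,k}\le R_{i,j,k}$ via the clause $x\le i<j=y<k\le z$ of \eqref{eq: easy<=}; to conclude $R_{i+1,j,k}\in A_{f,g}$ when $(i+1,k)\notin\Lambda_f$ you need $g(i+1)=g(i)=1$, and this comes from condition (2), since $f(i+1)<k\le f(j)\le f(f(i))$. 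The paper singles out exactly this step as ``the only non-trivial observation'' in verifying that $A_{f,g}$ is admissible. Condition (1), on the other hand, is what you use in the round trip $(f,g)\mapsto A_{f,g}\mapsto(f,g)$, as you correctly describe. So the roles of the two \dcr\ conditions are swapped relative to your summary, but once that is straightened out your plan goes through and coincides with the paper's argument.
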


\begin{proof}
Suppose that $A$ is \adm.

Let $f=f_A$ and $g=g_A$. We show that $A_{f,g}=A$.
Clearly, $T_{i,j}\in A$ if and only if $j\leq f(i)$, and moreover, $R_{i,j,k},L_{i,j,k}\in A$ if $k\leq f(i)$.
Suppose that $k>f(i)$. Then, by \admis,
$\{R_{i,j,k},L_{i,j,k}\}\cap A\ne\emptyset$ if and only if $j\leq f(i)$ and $k\leq f(j)$.
Moreover, in this case, by the last part of Lemma \ref{lem:gLR}, if $g_A(i)=1$, then $L_{i,j,k}\notin A, R_{i,j,k}\in A$ and if $g_A(i)=0$, then
$L_{i,j,k}\in A, R_{i,j,k}\notin A$.

We now show that $(f,g)\in\Pairs$. By Observation \ref{obs:hat1}, $f\in\Dyck_n$.
It is also clear that $g(i)=0$ if $f(f(i))=f(i)$ since $i<f(i)<n$ and $R_{i,f(i),f(i)+1}\in A$
would imply that $T_{f(i),f(i)+1}\in A$ and hence $f(f(i))>f(i)$.
Suppose that $f(i+1)<f(f(i))$ and let $j=f(i)$ and $k=f(j)$.
Then, $i<i+1<j=f(i)\leq f(i+1)<k<f(j)$.
Therefore, if $g(i)=1$, then $R_{i,j,k}\in A$ by Lemma \ref{lem:gLR}, hence $R_{i+1,j,k}\in A$ by \eqref{item: adm1},
and therefore $g(i+1)=1$ again by Lemma \ref{lem:gLR}.
Similarly,  if $g(i)=0$, then $L_{i,j,k}\in A$, hence $L_{i+1,j,k}\in A$ and therefore $g(i+1)=0$.
Thus, $(f,g)\in\Pairs$.

On the other hand, let $(f,g)\in\Pairs$ and $A=A_{f,g}$. It is easy to check that $A$ is {\adm} and $(f_A,g_A)=(f,g)$.
The only non-trivial observation to make is that if $i+1<j\leq f(i)\leq f(i+1)<k\leq f(j)$, then $f(i+1)<f(j)\leq f(f(i))$,
hence $g(i)=g(i+1)$ and therefore, by Lemma \ref{lem:gLR}, if $L_{i,j,k}\in A$, then $L_{i+1,j,k}\in A$ and if $R_{i,j,k}\in A$, then $R_{i+1,j,k}\in A$.
\end{proof}

\begin{corollary}
Let $\Gamma$ be a (possibly empty) downward closed subset of $\Trans$.
Then,
$$\#\{A\in{\Adm}_n: A_\Trans=\Gamma\}=2^r$$
where $r$ is the number of indices $i<n$ satisfying
the following two properties.
\begin{enumerate}
\item There exists $i<j<k$ such that $T_{i,j}, T_{j,k}\in \Gamma$ but $T_{i,k}\notin \Gamma$.
\item For every $i<j<k$ such that $T_{i,j}, T_{j,k}\in \Gamma$ we have $T_{i+1,k}\in \Gamma$.
\end{enumerate}
\end{corollary}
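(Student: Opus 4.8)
The plan is to reduce the count to the enumeration of \dcr s of a fixed Dyck path. By Observation~\ref{obs:hat1}, the map $\Gamma\mapsto f^{\ast}_\Gamma$ is a bijection between the downward closed subsets of $\Trans$ and $\Dyck_n$; hence, for an {\adm} set $A$, the condition $A_\Trans=\Gamma$ is equivalent to $f_A=f^{\ast}_\Gamma$. By Proposition~\ref{prop:Afg_bijection}, $A\mapsto(f_A,g_A)$ is a bijection $\Adm\to\Pairs$, so $\#\{A\in\Adm_n:A_\Trans=\Gamma\}$ equals the number of \dcr s $g$ of the Dyck path $f:=f^{\ast}_\Gamma$. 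By \eqref{eq:g_counting}, this number is $2^{\#\{i\in[n-1]:f(i)<f(f(i))=f(i+1)\}}$, so it remains only to identify the exponent with the number $r$ of the statement.

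For that, I would translate conditions~(1) and~(2) into statements about $f=f^{\ast}_\Gamma$ by means of Observation~\ref{obs:hat2}. Taking $l=i$ there shows that condition~(1) holds for a given $i$ if and only if $f(f(i))>f(i)$. Taking $l=i+1$ (legitimate for $i\in[n-1]$) shows that the negation of condition~(2) holds if and only if $f(f(i))>f(i+1)$; equivalently, condition~(2) holds if and only if $f(f(i))\le f(i+1)$. Therefore $i\in[n-1]$ satisfies both~(1) and~(2) exactly when $f(i)<f(f(i))\le f(i+1)$.

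It remains to observe that $f(i)<f(f(i))$ forces $f(i)>i$ (otherwise $f(f(i))=f(i)$), so $f(i)\ge i+1$ and, since $f$ is weakly increasing, $f(f(i))\ge f(i+1)$; combined with $f(f(i))\le f(i+1)$ this yields $f(f(i))=f(i+1)$. Conversely $f(i)<f(f(i))=f(i+1)$ clearly implies both of the preceding inequalities. Hence the set of $i\in[n-1]$ satisfying~(1) and~(2) coincides with $\{i\in[n-1]:f(i)<f(f(i))=f(i+1)\}$, whose cardinality is $r$, completing the proof.

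The argument is essentially bookkeeping on top of Proposition~\ref{prop:Afg_bijection} and Observation~\ref{obs:hat2}; the only point requiring a little care is matching the quantifier structure --- noting that the negation of condition~(2) is precisely an existential statement of the form appearing in Observation~\ref{obs:hat2} --- together with the short monotonicity step that upgrades the inequality $f(f(i))\le f(i+1)$ to an equality.
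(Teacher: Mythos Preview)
Your proof is correct and follows essentially the same approach as the paper's: reduce via Proposition~\ref{prop:Afg_bijection} and Observation~\ref{obs:hat1} to counting \dcr s of $f^{\ast}_\Gamma$, then invoke \eqref{eq:g_counting} and Observation~\ref{obs:hat2}. The paper's proof simply cites these ingredients and leaves the final identification of the exponent to the reader, whereas you spell out the translation of conditions~(1) and~(2) via the two cases $l=i$ and $l=i+1$ of Observation~\ref{obs:hat2} and the short monotonicity argument upgrading $f(f(i))\le f(i+1)$ to equality.
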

\begin{proof}
By Observation \ref{obs:hat1},
\begin{align*}
\{(f,g)\in\Pairs_n: (A_{f,g})_\Trans=\Gamma\}&=\{(f,g)\in\Pairs_n: \{T_{i,j}:i<j\leq f(i)\}=\Gamma\}\\
&=\{(f,g)\in\Pairs_n: f=f^{\ast}_\Gamma\}.
\end{align*}
Therefore, by Proposition \ref{prop:Afg_bijection}
\begin{align*}
\#\{A\in{\Adm}_n: A_\Trans=\Gamma\}&=\#\{(f,g)\in\Pairs_n: (A_{f,g})_\Trans=\Gamma\}\\
&=\#\{g:[n]\to\{0,1\}: (f^{\ast}_\Gamma,g)\in\Pairs_n\},
\end{align*}
and the result follows from \eqref{eq:g_counting} and Observation \ref{obs:hat2}.
\end{proof}

\subsection{}
Given $f\in\Dyck_n$ and $g:[n]\rightarrow\{0,1\}$, write $g^{-1}(0)=\{i_1,\ldots,i_k\}$ and  $g^{-1}(1)=\{j_1,\ldots,j_l\}$
with $i_1<\cdots<i_k$ and $j_1<\cdots<j_l$, and define
\begin{equation} \label{eq: redsmooth}
\sigma(f,g)=\lshft{j_1}{f(j_1)}\cdots\lshft{j_l}{f(j_l)}\rshft{i_k}{f(i_k)}\cdots\rshft{i_1}{f(i_1)}.
\end{equation}

\begin{observation}\label{obs:sigma_fg}
Suppose that $g$ is a \dcr\ of $f$ and $i\in[n]$ is such that $f(i)>i$ and $f(i-1)=i-1$ (or $i=1$).
Then, $\sigma(f,g)([i-1])=[i-1]$ and if $g(i)=0$, then $\sigma(f,g)(f(i))=i$.

In particular, if $i$ is the minimal index such that $j:=f(i)>i$, then $\sigma(r)=r$ for every $r<i$ and if $g(i)=0$, then $\sigma(f,g)(j)=i<\sigma(f,g)(j-1)$.
\end{observation}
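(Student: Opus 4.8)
The plan is to read the result off directly from the product formula \eqref{eq: redsmooth}, using only two elementary facts: each factor $\rshft a{f(a)}$ (and likewise $\lshft a{f(a)}$) permutes the interval $[a,f(a)]$ and fixes every point outside it, and equals $e$ when $f(a)=a$; and $g^{-1}(0)$ and $g^{-1}(1)$ partition $[n]$, so $\sigma(f,g)$ has exactly one factor indexed by each $a\in[n]$. The conditions defining a \dcr\ will play no role beyond this last remark.

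First I would set up the interval bookkeeping forced by the hypothesis $f(i-1)=i-1$ (or $i=1$). Since $f$ is weakly increasing, every $a\le i-1$ satisfies $f(a)\le f(i-1)=i-1$, so $\rshft a{f(a)}$ and $\lshft a{f(a)}$ map $[i-1]$ bijectively onto itself; and every $a\ge i$ satisfies $[a,f(a)]\subseteq[i,n]$, so $\rshft a{f(a)}$ and $\lshft a{f(a)}$ fix $[i-1]$ pointwise. Consequently each factor of $\sigma(f,g)$ maps $[i-1]$ onto $[i-1]$, hence so does $\sigma(f,g)$, which is the first assertion $\sigma(f,g)([i-1])=[i-1]$.

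Next, assuming $g(i)=0$, I would trace the image of $j:=f(i)$ by applying the factors of $\sigma(f,g)$ from right to left. Write $g^{-1}(0)=\{i_1<\cdots<i_k\}$ and $i=i_m$. The factors $\rshft{i_t}{f(i_t)}$ with $t<m$ fix $j$, since $f(i_t)\le i-1<j$; the factor $\rshft{i_m}{f(i_m)}=\rshft ij$ then sends $j\mapsto i$; and every later factor fixes $i$, because $\rshft{i_t}{f(i_t)}$ with $t>m$ has $i_t>i$, while $\lshft{j_p}{f(j_p)}$ has $j_p\ne i$ (as $i\notin g^{-1}(1)$) with $[j_p,f(j_p)]\subseteq[i+1,n]$ when $j_p>i$ and $[j_p,f(j_p)]\subseteq[i-1]$ when $j_p<i$ (again using $f(j_p)\le f(i-1)=i-1$). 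Hence $\sigma(f,g)(j)=i$.

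For the ``in particular'' clause, when $i$ is the least index with $f(i)>i$, minimality together with $f(r)\ge r$ forces $f(r)=r$ for all $r<i$, so the factors indexed by those $r$ are trivial, and together with the bookkeeping above $\sigma(f,g)$ fixes $[i-1]$ pointwise; in particular $\sigma(f,g)(r)=r$ for $r<i$. Finally, if moreover $g(i)=0$, then $\sigma(f,g)(j)=i$ by the previous paragraph, while $j-1\ge i$ gives $\sigma(f,g)(j-1)\ge i$ (as $\sigma(f,g)$ fixes $[i-1]$ pointwise and $j-1\notin[i-1]$) and $\sigma(f,g)(j-1)\ne i$ (since $i=\sigma(f,g)(j)$ and $j\ne j-1$), whence $\sigma(f,g)(j)=i<\sigma(f,g)(j-1)$. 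I do not expect any genuine obstacle here; the argument is simply ``each cyclic factor is supported on a short interval, so track one point,'' and the only place needing a little care is that factors $\lshft{j_p}{f(j_p)}$ with $j_p<i$ genuinely occur in general (whenever $f$ has a nontrivial ascent below $i$ decorated by $1$) and must be checked to fix $i$.
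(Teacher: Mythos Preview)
Your argument is correct and is exactly the direct verification the paper intends: this is stated as an Observation without proof, and the justification is precisely the interval bookkeeping you carry out---each cyclic factor is supported in $[a,f(a)]$, the hypothesis $f(i-1)=i-1$ splits the factors into those acting on $[i-1]$ and those fixing it, and tracking $j$ rightward through the product yields $\sigma(f,g)(j)=i$. Your closing remark about factors $\lshft{j_p}{f(j_p)}$ with $j_p<i$ is a nice point of care, though note that in practice such factors cannot have $g(j_p)=1$ in the minimal-$i$ case (since then $f(j_p)=j_p$ forces $g(j_p)=0$), so it is only genuinely relevant in the general hypothesis $f(i-1)=i-1$.
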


\begin{lemma}\label{lem:sigma_fg}
Suppose that $f\in\Dyck_n$ and $g$ is a \dcr\ of $f$.
Let $i\in[n]$ be such that $j:=f(i)>i$, $g(i)=0$ and $f(i-1)=i-1$ (or $i=1$).
Define a function $f'\in\Dyck_n$ by $f'(i)=f(i)-1$ and $f'(r)=f(r)$ for every $r\neq i$. Then,
\begin{enumerate}
\item $g$ is a \dcr\ of $f'$.
\item $T_{i,j}$ is a wedge for $A_{f,g}$ and the derived set is $A_{f',g}$. (See \S\ref{sec:wedges}.)
\item $\sigma(f',g)=\sigma(f,g)T_{j-1,j}$.
\end{enumerate}
\end{lemma}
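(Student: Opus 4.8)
The plan is to verify the three claims in turn; parts~(1) and~(2) are direct checks against the relevant definitions, while part~(3) reduces to a short identity for cycle permutations together with a disjoint-support commutation.

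For part~(1), I would first note that $f'$ is again a Dyck path: $f'(r)=f(r)$ for $r\ne i$, $f'(i)=j-1\ge i$, and $f(i-1)=i-1\le j-1=f'(i)\le f(i)\le f(i+1)$. Since $f$ is weakly increasing and $f(i-1)=i-1$, every $r\le i-1$ satisfies $f'(r)=f(r)\le i-1$ and every $r\ge i+1$ satisfies $f'(r)=f(r)\ge f(i)=j$. With these facts, checking the two conditions that make $g$ a {\dcr} of $f'$ splits into the cases $r=i$, $r+1=i$, and $r,r+1\ne i$, and in each case one is reduced to the corresponding property of the {\dcr} $g$ of $f$ together with $g(i)=0$. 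The only case requiring a remark is condition~(2) with $r=i$: since $f'(i+1)=f(i+1)$ and $f'(f'(i))\le f(f(i))$ hold in all situations, $f'(i+1)<f'(f'(i))$ forces $f(i+1)<f(f(i))$, whence $g(i+1)=g(i)$ by condition~(2) for $(f,g)$.

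For part~(2), I would first check that $T_{i,j}$ is a wedge for $A_{f,g}$: it lies in $A_{f,g}$ as $(i,j)\in\Lambda_f$; $T_{i-1,i}\notin A_{f,g}$ (or $i=1$) as $(i-1,i)\notin\Lambda_f$; and $R_{i,j,j+1}\notin A_{f,g}$ (or $j=n$) because $(i,j+1)\notin\Lambda_f$ while $g(i)=0$. Then $(A_{f,g})'$ is {\adm} by Lemma~\ref{lem: simplederived}, and $A_{f',g}$ is {\adm} by part~(1) and Proposition~\ref{prop:Afg_bijection}. Since $\Lambda_f\setminus\Lambda_{f'}=\{(i,j)\}$, these two {\adm} sets have the same intersection with $\Trans$, so by Observation~\ref{obs:adm_subset} it suffices to prove $A_{f',g}\subseteq(A_{f,g})'$. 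The inclusion $A_{f',g}\subseteq A_{f,g}$ is immediate from $\Lambda_{f'}\subseteq\Lambda_f$ and the definition of $A_{f,g}$, and $A_{f',g}$ meets none of $\{T_{i,j}\}$, $\{L_{i,j,k}:k>j\}$, $\{R_{i,k,j}:i<k<j\}$, since membership of any of these in $A_{f',g}$ would force $(i,j)\in\Lambda_{f'}$ (for $L_{i,j,k}$ this uses that the edge $(i,j)$ is required) or $g(i)=1$, and neither holds. Hence $(A_{f,g})'=A_{f',g}$.

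For part~(3), since $g(i)=0$ the index $i$ occurs among $g^{-1}(0)=\{i_1<\cdots<i_k\}$, say $i=i_m$, and the sets $g^{-1}(0)$, $g^{-1}(1)$ are unchanged when $f$ is replaced by $f'$. Comparing \eqref{eq: redsmooth} for $(f',g)$ and $(f,g)$, the only factor that changes is $\rshft{i}{f(i)}=\rshft ij$, which becomes $\rshft i{j-1}$; write $\sigma(f,g)=P\,\rshft ij\,Q$ and $\sigma(f',g)=P\,\rshft i{j-1}\,Q$ where $Q=\rshft{i_{m-1}}{f(i_{m-1})}\cdots\rshft{i_1}{f(i_1)}$. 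Using the easily-verified identity $\rshft ij=\rshft i{j-1}\,T_{j-1,j}$ (e.g.\ from $\rshft ij=T_{i,j}T_{i,j-1}\cdots T_{i,i+1}$), it remains to show that $T_{j-1,j}$ commutes with $Q$, for then $\sigma(f,g)\,T_{j-1,j}=P\,\rshft i{j-1}\,T_{j-1,j}\,Q\,T_{j-1,j}=P\,\rshft i{j-1}\,Q=\sigma(f',g)$. But each factor $\rshft{i_r}{f(i_r)}$ of $Q$ has $i_r<i$, so its support $\{i_r,\ldots,f(i_r)\}\subseteq\{i_r,\ldots,i-1\}$ (using $f(i_r)\le f(i-1)=i-1$) is disjoint from $\{j-1,j\}$ because $j-1\ge i$; hence $Q$ and $T_{j-1,j}$ commute.

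I expect the main obstacle to be the bookkeeping in part~(1)---making sure the {\dcr} conditions for $f'$ are correctly handled in all the boundary cases ($i=1$, $j=i+1$, $r=i-1$)---and, to a lesser extent, in part~(3), being careful that $T_{j-1,j}$ is produced in a position from which it can legitimately be slid past $Q$ to the far right.
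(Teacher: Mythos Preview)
Your proof is correct and follows essentially the same approach as the paper, which is quite terse (``it is clear'', ``elementary to check'', ``follows readily from the definition''); you have simply filled in the details that the paper omits. The one slight organizational difference is that for part~(2) you invoke Observation~\ref{obs:adm_subset} to reduce the equality $(A_{f,g})'=A_{f',g}$ to an inclusion plus a check on transpositions, whereas the paper leaves the whole verification as ``elementary''---your route is a clean way to package that check.
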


\begin{proof}
It is clear that $(f',g)\in\Pairs$ since $f'(i)>i-1=f'(f'(i-1))$ and the condition $f'(i+1)<f'(f'(i))$ implies $f(i+1)<f(f(i))$.
Note that $(i,j)\in\Lambda_f$ , $(i-1,i)\notin\Lambda_f$ (or $i=1$) and $(i,k)\notin\Lambda_f$ for every $k>j$.
Therefore, $T_{i,j}\in A_{f,g}$, $T_{i-1,i}\notin A_{f,g}$ (or $i=1$) and since $g(i)=0$ also $R_{i,j,j+1}\notin A_{f,g}$ (or $j=n$).
Hence, $T_{i,j}$ is a wedge for $A_{f,g}$.
Noting that $\Lambda_{f`}=\Lambda_f\setminus\{(i,j)\}$, it is elementary to check that the derived set of $A_{f,g}$ with respect to $T_{i,j}$
is $A_{f',g}$.
Finally, the last assertion follows readily from the definition \eqref{eq: redsmooth}.
\end{proof}

Let $f\in\Dyck_n$.
Note that by monotonicity, if $f(i+1)<f(f(i))$, then $f(i)<f(f(i))$ and $f(i+1)<f(f(i+1))$.
Therefore, for any \dcr\ $g$ of $f$ we can define the inverse \dcr\ $\tilde g$ by
\[
\tilde g(i)=\begin{cases}0&\text{if }f(f(i))=f(i),\\1-g(i)&\text{otherwise.}\end{cases}
\]
\begin{lemma}\label{lem:tilde_g}
For every $(f,g)\in\Pairs$,
$$\sigma(f,g)^{-1}=\sigma(f,\tilde g).$$
\end{lemma}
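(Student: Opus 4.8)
I would prove the identity by strong induction on the \emph{area} $N(f):=\sum_{i=1}^{n}\bigl(f(i)-i\bigr)$ of the Dyck path, using Lemma~\ref{lem:sigma_fg} as the basic reduction step. First note that the assertion is self-transposing: since $\widetilde{\tilde g}=g$, the identity $\sigma(f,g)^{-1}=\sigma(f,\tilde g)$ for $(f,g)$ is the same as the identity for $(f,\tilde g)$. If $N(f)=0$ then $f$ is the identity and both sides are $e$, which is the base case. Otherwise let $i$ be the minimal index with $f(i)>i$ (so $f(i-1)=i-1$, or $i=1$), and put $j:=f(i)$. Replacing $(f,g)$ by $(f,\tilde g)$ if necessary, we may assume $g(i)=0$; note that then $\tilde g(i)=1$ if $f(f(i))>f(i)$, while $\tilde g(i)=0$ (and $g(i)=0$ is automatic) if $f(f(i))=f(i)$.

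\textbf{Non-plateau case $f(f(i))>f(i)$.} Let $f'$ be as in Lemma~\ref{lem:sigma_fg}: $f'(i)=j-1$ and $f'=f$ elsewhere. Then $N(f')=N(f)-1$, the function $g$ is a decoration of $f'$, and $\sigma(f',g)=\sigma(f,g)T_{j-1,j}$, hence $\sigma(f,g)^{-1}=T_{j-1,j}\,\sigma(f',g)^{-1}$. A direct check from the definition of $\tilde{\;\cdot\;}$ shows that the inverse decoration of $g$ with respect to $f$ and the inverse decoration of $g$ with respect to $f'$ agree at every index which is active for $f'$ (they can differ only at $i$, where for $f'$ one has $f'(f'(i))=f(j-1)>j-1$, forcing value $1-g(i)=1$, matching $\tilde g(i)=1$). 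Comparing the products \eqref{eq: redsmooth} for $(f,\tilde g)$ and for $(f',\cdot)$ term by term, the only change is that the leftmost nontrivial factor $\lshft{i}{j}$ of $\sigma(f,\tilde g)$ gets replaced by $\lshft{i}{j-1}$; since $\lshft{i}{j}=T_{j-1,j}\lshft{i}{j-1}$ this gives $\sigma(f,\tilde g)=T_{j-1,j}\,\sigma(f',\tilde g)$. Now the induction hypothesis for $(f',g)$ gives $\sigma(f',g)^{-1}=\sigma(f',\tilde g)$, and combining the three equalities yields $\sigma(f,g)^{-1}=T_{j-1,j}\,\sigma(f',\tilde g)=\sigma(f,\tilde g)$.

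\textbf{Plateau case $f(f(i))=f(i)$.} Since $i$ is the minimal active index, one gets $f\equiv j$ on $[i,j-1]$ and $f(j)=j$, and every index of $[i,j-1]$ is active with $g$-value and $\tilde g$-value forced to $0$. Using the elementary identity $\rshft{j-1}{j}\rshft{j-2}{j}\cdots\rshft{i}{j}=w$, where $w$ is the permutation $r\mapsto i+j-r$ on $[i,j]$ and the identity elsewhere (a short induction on $j-i$), I would factor $\sigma(f,g)=\mu\,w$ and $\sigma(f,\tilde g)=\tilde\mu\,w$, where $\mu$ and $\tilde\mu$ are supported on $[j+1,n]$ and therefore commute with $w$. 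Concretely $\mu=\sigma\bigl(f|_{[j+1,n]},g|_{[j+1,n]}\bigr)$ and $\tilde\mu=\sigma\bigl(f|_{[j+1,n]},\tilde g|_{[j+1,n]}\bigr)$, where $f|_{[j+1,n]}$ is a Dyck path on $[j+1,n]$, $g|_{[j+1,n]}$ a decoration of it, and crucially the inverse-decoration operation commutes with this restriction, being defined pointwise through $f\circ f$. Since $N\bigl(f|_{[j+1,n]}\bigr)<N(f)$, the induction hypothesis gives $\mu^{-1}=\tilde\mu$, whence $\sigma(f,g)^{-1}=w^{-1}\mu^{-1}=w\,\tilde\mu=\tilde\mu\,w=\sigma(f,\tilde g)$, completing the induction.

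\textbf{Main obstacle.} The delicate step is the plateau case: one must recognize that a maximal constant stretch of the Dyck path contributes exactly the block permutation $w$ to \eqref{eq: redsmooth}, and that after removing the block $[i,j]$ what remains is an honest decorated Dyck path on $[j+1,n]$ compatible with $g\mapsto\tilde g$, so that a genuine (area-decreasing) induction applies. The non-plateau case is routine once one observes $\lshft{i}{j}=T_{j-1,j}\lshft{i}{j-1}$ and that Lemma~\ref{lem:sigma_fg} can be invoked on the $g$-side; everything else is bookkeeping with the definition \eqref{eq: redsmooth}. (If one were willing to use $\sigma(f,g)=\pi(A_{f,g})$ and $A_{f,\tilde g}=(A_{f,g})^{-1}$ together with Observation~\ref{obs:aut_pi}, the lemma would be immediate, but those facts are established only later, so the direct inductive argument is the appropriate route here.)
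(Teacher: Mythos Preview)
Your argument is correct (with one harmless edge case: in the non-plateau step, if $j=i+1$ then $f'(i)=i$, so $f'(f'(i))=f'(i)$ and the inverse decoration with respect to $f'$ takes the value $0$ at $i$, not $1$ as your parenthetical asserts; but then the corresponding factor is $\lshft{i}{i}=e$, so the discrepancy is immaterial and the identity $\sigma(f,\tilde g)=T_{j-1,j}\sigma(f',\tilde g)$ still holds).

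The paper's proof is considerably shorter and takes a different route. It avoids induction entirely by first observing directly from \eqref{eq: redsmooth} that
\[
\sigma(f,g)^{-1}=\sigma(f,1-g),
\]
since inverting the product reverses the order of the factors and turns each $\rshft{r}{s}$ into $\lshft{r}{s}$ and vice versa (note that $(f,1-g)$ need not lie in $\Pairs$, but the formula \eqref{eq: redsmooth} still makes sense). It then argues that $\sigma(f,1-g)=\sigma(f,\tilde g)$: the two colorings $1-g$ and $\tilde g$ differ only on the set $\bigcup_{j:f(j)=j}f^{-1}(\{j\})$, and on each such plateau block $[i,j]=f^{-1}(\{j\})$ the product $\rshft{j}{j}\cdots\rshft{i}{j}$ is the involution $r\mapsto i+j-r$ on $[i,j]$, which commutes with every factor indexed by $r>j$; hence flipping the color on these indices does not change the product. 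This is exactly your plateau observation, applied once globally rather than inside an induction. Your approach trades this single global step for an inductive machine driven by Lemma~\ref{lem:sigma_fg}; both work, but the paper's version is more economical.
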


\begin{proof}
Clearly, $\sigma(f,g)^{-1}=\sigma(f,1-g)$. Note that $(f,1-g)$ is generally not in $\Pairs$.
However, we claim that $\sigma(f,1-g)=\sigma(f,\tilde g)$.
This follows from the definition of $\sigma(f,g)$ in \eqref{eq: redsmooth}.
The point is that if $f^{-1}(\{j\})=[i,j]$, then
$\rshft jj\cdots\rshft {i+1}j\rshft ij$ is an involution (namely, $r\to i+j-r$ for $i\leq r\leq j$ and $r\to r$ otherwise)
and it commutes with $\rshft rs$ (and with its inverse $\lshft rs$) for every $j<r\leq s$.
Hence, we can flip $1-g$ in the set $\cup_{j:f(j)=j}f^{-1}(\{j\})$ (where it differs from $\tilde g$)
without changing $\sigma(f,1-g)$.
\end{proof}

For every $f\in \Dyck_n$,  let $\ell(f)=\sum_{i=1}^n(f(i)-i)$.

\begin{proposition}\label{propos:sigma_fg}
For every $(f,g)\in\Pairs$,
$$\sigma(f,g)=\pi(A_{f,g}).$$
Moreover, for every $(f,g)\in\Pairs$,
$$\ell(\sigma(f,g))=\ell(f),$$
and hence, since evidently $\ell(f)=\sum_{i=1}^n\ell(\rshft{i}{f(i)})$, the expression \eqref{eq: redsmooth} is reduced.
\end{proposition}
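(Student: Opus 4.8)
The plan is to prove the identity $\sigma(f,g)=\pi(A_{f,g})$ by induction on $\ell(f)$ — equivalently, on $\#(A_{f,g})_\Trans=\#\Lambda_f$ — and then to read off the length formula and the reducedness from it together with Proposition~\ref{propos:bijection}. The base case $\ell(f)=0$ is immediate, since then $f$ is the identity, $A_{f,g}=\emptyset$, and both $\sigma(f,g)$ and $\pi(A_{f,g})$ equal $e$.

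For the inductive step I would let $i$ be the smallest index with $f(i)>i$ and put $j=f(i)$, so that $f(i-1)=i-1$ (or $i=1$). If $g(i)=0$, I would invoke Lemma~\ref{lem:sigma_fg}: it yields $f'\in\Dyck_n$ with $(f',g)\in\Pairs$ and $\ell(f')=\ell(f)-1$ such that $T_{i,j}$ is a wedge for $A_{f,g}$ with derived set $A_{f',g}$, and $\sigma(f,g)=\sigma(f',g)T_{j-1,j}$. The inductive hypothesis gives $\sigma(f',g)=\pi(A_{f',g})$, while the second part of Lemma~\ref{lem:pi}, applied to the wedge $T_{i,j}$ of $A_{f,g}$, gives $\pi(A_{f,g})=\pi\big((A_{f,g})'\big)T_{j-1,j}=\pi(A_{f',g})T_{j-1,j}$; combining these yields $\pi(A_{f,g})=\sigma(f',g)T_{j-1,j}=\sigma(f,g)$.

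If instead $g(i)=1$, the first decoration axiom forces $f(f(i))\neq f(i)$, so the inverse decoration $\tilde g$ satisfies $\tilde g(i)=0$. I would run the previous paragraph on the pair $(f,\tilde g)\in\Pairs$ — whose smallest non-fixed index is still $i$ and whose derived pair has $\ell$ one smaller, so the inductive hypothesis legitimately applies to the latter — to obtain $\sigma(f,\tilde g)=\pi(A_{f,\tilde g})$. Next I would verify, directly from the definitions of $A_{f,g}$ and of $g_A$, that $(A_{f,g})^{-1}=A_{f,\tilde g}$: inversion fixes the transposition part and hence the underlying Dyck path by Observation~\ref{obs:hat1}, while $R_{i,f(i),f(i)+1}\in(A_{f,g})^{-1}$ if and only if $L_{i,f(i),f(i)+1}\in A_{f,g}$, i.e.\ if and only if $f(f(i))>f(i)$ and $g(i)=0$, i.e.\ if and only if $\tilde g(i)=1$; thus the two admissible sets have the same image under the bijection of Proposition~\ref{prop:Afg_bijection} and therefore coincide. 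Then Lemma~\ref{lem:tilde_g} and Observation~\ref{obs:aut_pi} give
\[
\sigma(f,g)=\sigma(f,\tilde g)^{-1}=\pi(A_{f,\tilde g})^{-1}=\pi\big((A_{f,\tilde g})^{-1}\big)=\pi(A_{f,g}),
\]
completing the induction.

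Finally, having $\sigma(f,g)=\pi(A_{f,g})$, Proposition~\ref{propos:bijection} tells us that $\sigma(f,g)$ is smooth with $\tbl(\sigma(f,g))=A_{f,g}$; since smoothness means $\ell(\sigma)=\#\tbl_\Trans(\sigma)$, this gives $\ell(\sigma(f,g))=\#(A_{f,g})_\Trans=\#\Lambda_f=\sum_{i=1}^n(f(i)-i)=\ell(f)$. As $\ell(\rshft ij)=\ell(\lshft ij)=j-i$, the cyclic-shift factors on the right-hand side of~\eqref{eq: redsmooth} have lengths summing to $\sum_{i=1}^n(f(i)-i)=\ell(f)=\ell(\sigma(f,g))$, so concatenating reduced words for these factors produces a reduced word for $\sigma(f,g)$, which is exactly the assertion that \eqref{eq: redsmooth} is reduced. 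I expect the only delicate point to be the organisation of the induction in the case $g(i)=1$: one must run it on $\ell(f)$ over all decorated Dyck paths at once, using the passage to $\tilde g$ solely to transport the already-established $g(i)=0$ reduction rather than as an appeal to the inductive hypothesis at the same value of $\ell(f)$, and one must check the small bookkeeping identity $(A_{f,g})^{-1}=A_{f,\tilde g}$. Everything else is a direct unwinding of Lemmas~\ref{lem:pi}, \ref{lem:sigma_fg}, and~\ref{lem:tilde_g}.
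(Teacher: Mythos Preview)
Your argument is correct and follows the same inductive route as the paper, via Lemmas~\ref{lem:sigma_fg}, \ref{lem:pi} and~\ref{lem:tilde_g}. Two small organizational differences are worth noting. First, you spell out the identity $(A_{f,g})^{-1}=A_{f,\tilde g}$ that underlies the paper's one-line ``by Lemma~\ref{lem:tilde_g} we may assume without loss of generality that $g(i)=0$''; this is the same reduction, made explicit. Second, for the length formula the paper keeps it inside the induction, using Observation~\ref{obs:sigma_fg} to get $\sigma(f,g)(j)<\sigma(f,g)(j-1)$ and hence $\ell(\sigma(f,g))=\ell(\sigma(f',g))+1=\ell(f')+1=\ell(f)$, whereas you deduce it a posteriori from Proposition~\ref{propos:bijection} via $\ell(\sigma(f,g))=\#\tbl_\Trans(\sigma(f,g))=\#\Lambda_f$. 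Both derivations are immediate; yours trades the appeal to Observation~\ref{obs:sigma_fg} for an appeal to the smoothness characterization already established.
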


\begin{proof}
We prove it by induction on $\ell(f)$. The case $\ell(f)=0$ is obvious. For the induction step,
let $i$ be the minimal index such that $f(i)>i$. By Lemma \ref{lem:tilde_g}, we may assume without loss of generality
that $g(i)=0$. Let $j=f(i)$ and let $f'$ be defined as in Lemma \ref{lem:sigma_fg}.
By the induction hypothesis, $\pi(A_{f',g})=\sigma(f',g)$ and $\ell(\sigma(f',g))=\ell(f)$ and by the third part of Lemma \ref{lem:sigma_fg}, $\sigma(f',g)=\sigma(f,g)T_{j-1,j}$.
Therefore, by the second part of Lemma \ref{lem:pi} and the second part of Lemma \ref{lem:sigma_fg},
$$
\pi(A_{f,g})=\pi\left((A_{f,g})'\right)T_{j-1,j}=\pi(A_{f',g})T_{j-1,j}=\sigma(f',g)T_{j-1,j}=\sigma(f,g).
$$
Moreover, $\sigma(f,g)(j)<\sigma(f,g)(j-1)$ by Observation \ref{obs:sigma_fg} and hence,
\begin{equation*}
\ell(\sigma(f,g))=\ell(\sigma(f,g)T_{j-1,j})+1=\ell(\sigma(f',g))+1=\ell(f')+1=\ell(f).
\qedhere\end{equation*}
\end{proof}

Theorem \ref{thm:sigma_fg_bijection} follows directly by combining Theorem \ref{thm:bijection} and Propositions \ref{prop:Afg_bijection} and \ref{propos:sigma_fg}.

\begin{question}
Can we describe explicitly the partial order on $\Pairs$ induced
from the Bruhat order on $(S_n)_{\smth}$ by the map $\sigma$ ?
\end{question}

\section{Enumerative consequences}\label{sec:enumeration}

In this section we interpret combinatorial properties of smooth permutations in terms
of the bijection of the previous section, and recover some known enumerative results.

\begin{proposition}\label{propos:sigma_fg_bijection}
Let $(f,g)\in\Pairs_n$. Then,
\begin{enumerate}
\item $\sigma(f,g)$ is $231$ avoiding (also known as stack-sortable in Knuth's terminology)
if and only if $g\equiv0$.
\item $\sigma(f,g)$ is $321$ avoiding if and only if $f(i)\leq i+1$ for all $i\in [n-1]$.
\item $\sigma(f,g)$ is indecomposable\footnote{Recall that a permutation $\sigma$ in $S_n$ is called indecomposable
if there does not exist $1\leq k<n$ such that $\sigma([k])=[k]$.} if and only if $f(i)>i$ for all $i\in [n-1]$.
\end{enumerate}
\end{proposition}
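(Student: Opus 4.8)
The plan is to carry everything through the dictionary of this section. Writing $A=A_{f,g}$, Propositions~\ref{prop:Afg_bijection}, \ref{propos:sigma_fg} and \ref{propos:bijection} together give $\tbl(\sigma(f,g))=A$; in particular $\tbl_\Trans(\sigma(f,g))=A_\Trans=\{T_{i,j}:i<j\le f(i)\}$, and $g$ is recovered from $A$ as $g_A$. These identifications will be used throughout. Part~(3) is then immediate and I would do it first: $\sigma(f,g)$ is decomposable iff $\sigma(f,g)([k])=[k]$ for some $k\in[n-1]$, and by \eqref{eq: easy1<=a} the condition $\sigma([k])\ne[k]$ is equivalent to $T_{k,k+1}\le\sigma$, i.e.\ to $f(k)\ge k+1$. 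Hence indecomposability of $\sigma(f,g)$ is exactly the condition $f(k)>k$ for all $k\in[n-1]$.

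For part~(2) I would invoke the standard characterization (stated in the Example in Section~\ref{sec:misc}, following Proposition~\ref{prop: charsm}) that a smooth permutation is $321$-avoiding precisely when its $2$-table consists only of simple reflections; combined with $\tbl_\Trans(\sigma(f,g))=\{T_{i,j}:i<j\le f(i)\}$ this is the condition $f(i)\le i+1$ for all $i\in[n-1]$. The ``if'' direction can alternatively be read off directly from \eqref{eq: redsmooth}: when $f(i)\le i+1$ every factor is the identity or a simple reflection, and no simple reflection occurs twice, so \eqref{eq: redsmooth} is a reduced word with pairwise distinct letters; since the set of letters appearing in a reduced word is an invariant, every reduced word of $\sigma(f,g)$ then has distinct letters, so $\sigma(f,g)$ is fully commutative, hence $321$-avoiding. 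The forward implication of part~(2) is where I lean on the cited characterization rather than re-proving it; a self-contained proof is a somewhat fussy pattern-chase using $3412$-avoidance, which I would supply if the citation is deemed insufficient.

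Part~(1) is the heart of the matter. First I would establish the elementary fact: if $R_{i,j,k}\le\tau$ but $T_{i,k}\nleq\tau$ (with $i<j<k$), then $\tau$ contains $231$. Indeed, \eqref{eq: easy1<=a} gives $\maxperm\tau(i)\ge j$, $\maxperm\tau(j)\ge k$, $\maxperm{\tau^{-1}}(i)\ge k$, while $T_{i,k}\nleq\tau$ together with $\maxperm{\tau^{-1}}(i)\ge k$ forces $\maxperm\tau(i)<k$; picking $q\le i$ with $\tau(q)\ge j$, then $p$ with $i<p\le j$ and $\tau(p)\ge k$, and $r\ge k$ with $\tau(r)\le i$, the positions $q<p<r$ carry values $\tau(r)\le i<j\le\tau(q)<k\le\tau(p)$, a $231$ occurrence. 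Applying this with $j=f(i)$ and $k=f(i)+1$ to any $i$ with $g(i)=1$ — for which $R_{i,f(i),f(i)+1}\in A$ by definition of $g_A$, while $T_{i,f(i)+1}\notin A$ since $f(i)=\max\{m:T_{i,m}\in A\}$ — shows that $g\not\equiv0$ implies $\sigma(f,g)$ contains $231$.

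It then remains to see that $g\equiv0$ implies $\sigma(f,0)$ avoids $231$, and for this I would argue by counting. By the implication just proved and the uniqueness in Theorem~\ref{thm:sigma_fg_bijection}, every smooth $231$-avoiding permutation has the form $\sigma(f,0)$ for some $f\in\Dyck_n$. On the other hand, both $4231$ and $3412$ contain $231$, so every $231$-avoiding permutation is smooth; hence the $C_n$ permutations of $S_n$ avoiding $231$ all lie in $\{\sigma(f,0):f\in\Dyck_n\}$, a set of cardinality at most $\#\Dyck_n=C_n$. Equality is forced, so every $\sigma(f,0)$ avoids $231$. Combining the two implications gives part~(1). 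The main obstacle, as indicated, is the forward direction of part~(2); parts~(1) and (3) are then clean given the identification $\tbl(\sigma(f,g))=A_{f,g}$ and the standard count $\#\{231\text{-avoiding permutations in }S_n\}=C_n$.
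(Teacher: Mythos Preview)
Your proof is correct, and for parts~(1) and~(2) it takes a different route from the paper.

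For part~(3) your argument is essentially the paper's. For the forward direction of part~(1) (``$g\not\equiv0$ implies a $231$'') your pattern argument matches the paper's. The difference is in the converse: the paper shows directly, by induction on $\ell(f)$ via Lemma~\ref{lem:sigma_fg} and Observation~\ref{obs:sigma_fg}, that $\sigma(f,0)$ avoids $231$, with a short case analysis using \eqref{eq:sigma-1i+1}. Your counting argument is slicker and avoids that case analysis, at the cost of importing the classical fact $\#\{231\text{-avoiders in }S_n\}=C_n$; note that the paper actually presents that count as a \emph{consequence} of this proposition, so while there is no logical circularity (the Catalan enumeration is much older), you are reversing the paper's intended flow of deductions. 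For part~(2) the paper again runs a direct induction on $\ell(f)$, whereas you invoke the characterization stated in the Example following Proposition~\ref{prop: charsm}; that Example asserts the equivalence ``$321$-avoiding $\iff$ $\tbl_\Trans(\sigma)$ consists of simple reflections'' as standard without proof, so your citation is legitimate within the paper but somewhat less self-contained than the paper's argument. In short: your approach trades the paper's hands-on inductions for appeals to two external facts (the Catalan count and the $321$-avoidance characterization), yielding a shorter proof but one that depends on results the paper either re-derives or merely quotes.
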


\begin{proof}~
\begin{enumerate}
\item
We first show that if $\sigma\in S_n$ and $i<j<k$ are such that $R_{i,j,k}\leq\sigma$ but $T_{i,k}\not\leq\sigma$, then
$\sigma$ is not $231$ avoiding.
Indeed, by our conditions, $\maxperm{\sigma^{-1}}(i)\geq k$, $\maxperm{\sigma}(j)\geq k$ and $j\leq \maxperm{\sigma}(i)<k$.
Therefore, there are $a\leq i$, $i<b\leq j$ and $c\geq k$ such that
$j\leq\sigma(a)<k$, $\sigma(b)\geq k$ and $\sigma(c)\leq i$. Then, $a<b<c$ and $\sigma(c)<\sigma(a)<\sigma(b)$ as claimed.

It follows that if $g\not\equiv0$, then $\sigma(f,g)$ is not $231$ avoiding.
Indeed, suppose that $g(i)=1$.
Then clearly $i<f(i)<f(f(i))$ and then $R_{i,f(i),f(f(i))}\in A_{f,g}$ but $T_{i,f(f(i))}\notin A_{f,g}$,
i.e., $R_{i,f(i),f(f(i))}\leq\sigma(f,g)$ but $T_{i,f(f(i))}\nleq\sigma(f,g)$.

Conversely, we show by induction on $\ell(f)$ that for every $f\in\Dyck_n$, $\sigma:=\sigma(f,0)$ is $231$ avoiding.
This is clear if $\ell(f)=0$. Otherwise, let $i$ be the minimal index such that $j:=f(i)>i$,
let $f'$ be defined as in Lemma \ref{lem:sigma_fg} and let $\sigma'=\sigma(f',0)$.
Then, $\sigma(j)=i$ by Observation \ref{obs:sigma_fg}, $\sigma=\sigma' T_{j-1,j}$, by Lemma \ref{lem:sigma_fg}
and $\sigma'$ is $231$ avoiding by the induction hypothesis. In addition, it is easy to see from the definition \eqref{eq: redsmooth} that
\begin{equation}\label{eq:sigma-1i+1}k:=\sigma^{-1}(i+1)=\begin{cases}
j-1 & f(i+1)=j \\
f(i+1) & f(i+1)>j.
\end{cases}\end{equation}
Assume on the contrary that $\sigma$ is not $231$ avoiding, i.e., there are $a<b<c$ such that $\sigma(c)<\sigma(a)<\sigma(b)$. Then $\sigma'(T_{j-1,j}(c))<\sigma'(T_{j-1,j}(a))<\sigma'(T_{j-1,j}(b))$ and hence, since $\sigma'$ is $231$ avoiding, necessarily $b=j-1$ and $c=j$.
Hence, $a<j-1$ and $i<\sigma(a)<\sigma(j-1)$.
In particular, $\sigma(j-1)>i+1$ and hence $k=f(i+1)>j$, by \eqref{eq:sigma-1i+1}.
In particular, $a\neq k$, i.e., $\sigma(a)\neq i+1$ and hence $\sigma(a)>i+1=\sigma(k)$.
Therefore, $a<j<k$ and $\sigma(k)<\sigma(a)<\sigma(j-1)$, i.e., $\sigma'(k)<\sigma'(a)<\sigma'(j)$, in contradiction to the fact
that $\sigma'$ is $231$ avoiding.

\item Suppose that $f(i)\leq i+1$ for all $i$. We show that $\sigma(f,g)$ is $321$ avoiding by induction on $\ell(f)$.
This is certainly true if $\ell(f)=0$. For the induction step, let $i$ be the smallest index such that $f(i)>i$ and let
$f'$ be defined as in Lemma \ref{lem:sigma_fg}.
By passing to $\tilde g$ if necessary we may assume that $g(i)=0$. Let $\sigma=\sigma(f,g)$ and $\sigma'=\sigma(f',g)$. Then $\sigma=\sigma'T_{i,i+1}$
by Lemma \ref{lem:sigma_fg}. By the induction hypothesis $\sigma'$ is $321$ avoiding, and since $\sigma(r)=r$ for all $r<i$
and $\sigma(i+1)=i$ by Observation \ref{obs:sigma_fg}, it is easy to check that $\sigma$ is $321$ avoiding as well.

Conversely, suppose that $\sigma=\sigma(f,g)$ is $321$ avoiding. We show that induction on $\ell(f)$ that $f(i)\leq i+1$ for all $i$.
Again, the case $\ell(f)=0$ is trivial.
For the induction step, let $i$ be the minimal index such that $j:=f(i)>i$,
let $f'$ be defined as in Lemma \ref{lem:sigma_fg}, let $\sigma'=\sigma(f',g)$
and assume, as we may, that $g(i)=0$.
Then, $\sigma([i-1])=[i-1]$ and $\sigma(j)=i<\sigma(j-1)$ by Observation \ref{obs:sigma_fg} and $\sigma=\sigma' T_{j-1,j}$ by Lemma \ref{lem:sigma_fg}.
It is clear that $\sigma'$ is $321$ avoiding since $\sigma$ is, and $\sigma'(j)>\sigma'(j-1)$.
Thus, by the induction hypothesis $f'(r)\leq r+1$ for all $r$.
Hence, $f(r)\leq r+1$ for every $r\neq i$ and $f(i)\leq i+2$.
If $f(i)=i+2$ then $\sigma(i+2)=i$ and it is easy to see from the definition \eqref{eq: redsmooth} that $\sigma(i+1)=i+1$ and $\sigma(i)\geq i+2$,
in contradiction to the fact that $\sigma'$ is $321$ avoiding.
Thus, $f(r)\leq r+1$ for all $r$.

\item Clearly, $\sigma\in S_n$ is indecomposable if and only if $T_{i,i+1}\leq\sigma$ for all $i<n$, i.e.,
if and only if $f_{\tbl(\sigma)}(i)=f^{\ast}_{\tbl_\Trans(\sigma)}(i)>i$ for all $i<n$.
\qedhere\end{enumerate}
\end{proof}

It is well known that $\#\Dyck_n$, $n\geq1$ is the $n$-th Catalan number $C_n=\frac{1}{n+1}\binom{2n}{n}$.
Since clearly $\#\{(f,g)\in\Pairs_n: g\equiv0\}=\#\Dyck_n$,
combining Theorem \ref{thm:sigma_fg_bijection} and the first part of Proposition \ref{propos:sigma_fg_bijection}
we recover the standard fact that the number of $231$ avoiding permutations in $S_n$ (which are automatically smooth) is $C_n$.

Similarly, Theorem \ref{thm:sigma_fg_bijection} and Proposition \ref{propos:sigma_fg_bijection} enable us to recover several additional enumerative results concerning smooth permutations, as we show next.

\begin{proposition}\label{propos:fibonacci}
for every $n\geq 1$,
\begin{equation}\label{eq:fibonacci}
\#\{(f,g)\in\Pairs_n:f(i)\leq i+1 \text{ for every } i\in [n-1]\}=F_{2n-1},
\end{equation}
where $F_k$, $k\geq1$ is the Fibonacci sequence $F_1=F_2=1$, $F_k=F_{k-1}+F_{k-2}$, $k>2$.
\end{proposition}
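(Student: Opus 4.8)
The plan is to evaluate the left-hand side of \eqref{eq:fibonacci} directly, via Observation \ref{obs:hat1} and the counting formula \eqref{eq:g_counting}.

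First I would describe the functions $f\in\Dyck_n$ with $f(i)\le i+1$ for all $i\in[n-1]$. Since $f(i)\ge i$, such an $f$ has $f(i)\in\{i,i+1\}$ for $i\in[n-1]$ and $f(n)=n$; conversely, any such choice yields a weakly increasing function (because $f(i)\le i+1\le f(i+1)$), hence an element of $\Dyck_n$. So these paths correspond bijectively to binary strings $b=(b_1,\dots,b_{n-1})\in\{0,1\}^{n-1}$ via $f(i)=i+b_i$ (set $b_n=0$ for convenience). Next I would evaluate the exponent in \eqref{eq:g_counting} for such an $f$, i.e.\ $\#\{i\in[n-1]:f(i)<f(f(i))=f(i+1)\}$: if $b_i=0$ then $f(f(i))=f(i)$ and $i$ does not contribute; if $b_i=1$ then $f(i)=i+1$, so $f(f(i))=f(i+1)$ automatically, and $f(i)<f(i+1)$ holds iff $b_{i+1}=1$. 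Thus the exponent equals $a(b):=\#\{i:b_i=b_{i+1}=1\}$, the number of adjacent pairs of $1$'s in $b$, and by \eqref{eq:g_counting} and Observation \ref{obs:hat1},
\[
\#\{(f,g)\in\Pairs_n:f(i)\le i+1\ \forall i\in[n-1]\}=\sum_{b\in\{0,1\}^{n-1}}2^{a(b)}=:P_n.
\]

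It then remains to prove $P_n=F_{2n-1}$. Writing $Q_m=\sum_{b\in\{0,1\}^m}2^{a(b)}$ (so $P_n=Q_{n-1}$, with $Q_0=1$ and $Q_1=2$), I would split $Q_m=Q_m^{(0)}+Q_m^{(1)}$ according to the last bit of $b$. Appending a $0$ leaves $a$ unchanged, so $Q_m^{(0)}=Q_{m-1}$; appending a $1$ changes $a$ by $1$ exactly when the previous last bit was also $1$, so $Q_m^{(1)}=Q_{m-1}^{(0)}+2Q_{m-1}^{(1)}$. Combining these with $Q_{m-1}^{(0)}=Q_{m-2}$ gives $Q_m=3Q_{m-1}-Q_{m-2}$ for $m\ge2$. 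Since $F_{2m+1}=3F_{2m-1}-F_{2m-3}$ (an elementary consequence of the Fibonacci recursion) and $Q_0=F_1$, $Q_1=F_3$, induction yields $Q_m=F_{2m+1}$, hence $P_n=F_{2n-1}$.

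I do not anticipate a real obstacle; the only delicate point is the bookkeeping that turns \eqref{eq:g_counting} into the count $a(b)$ — in particular, checking that the index $i=n-1$ never contributes (since $f(n)=n$), so that $a(b)$ genuinely counts adjacent $1$'s within $b_1\cdots b_{n-1}$. (Alternatively, by part (2) of Proposition \ref{propos:sigma_fg_bijection}, \eqref{eq:fibonacci} amounts to the known fact that $S_n$ has $F_{2n-1}$ smooth $321$-avoiding permutations, but the computation above is self-contained.)
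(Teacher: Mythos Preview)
Your argument is correct. The approach, however, differs from the paper's. The paper works directly with the decorated pairs $(f,g)$: splitting $\mathcal{O}_n$ according to whether $f(n-1)=n$ or $f(n-1)=n-1$, and then further splitting $\mathcal{E}_n=\{(f,g)\in\mathcal{O}_n:f(n-1)=n\}$ according to the value of $f(n-2)$, it exhibits explicit bijections yielding the coupled recurrences $\#\mathcal{O}_n=\#\mathcal{E}_n+\#\mathcal{O}_{n-1}$ and $\#\mathcal{E}_n=\#\mathcal{O}_{n-1}+\#\mathcal{E}_{n-1}$, so that the interleaved sequence $\#\mathcal{O}_1,\#\mathcal{E}_2,\#\mathcal{O}_2,\#\mathcal{E}_3,\dots$ is literally the Fibonacci sequence. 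You instead invoke the counting formula \eqref{eq:g_counting} at the outset to collapse the decoration and reduce to the weighted sum $\sum_{b\in\{0,1\}^{n-1}}2^{a(b)}$, and then derive the single second-order recurrence $Q_m=3Q_{m-1}-Q_{m-2}$ for odd-indexed Fibonacci numbers. Your route is shorter and more computational; the paper's route is more bijective and yields the finer information that $\#\mathcal{E}_n=F_{2n-2}$ as well.
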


\begin{proof}
For every $n\geq 1$, denote
\begin{equation*}
{\OPairs}_n:=\{(f,g)\in\Pairs_n:f(i)\leq i+1 \text{ for every } i\in [n-1]\},
\end{equation*}
and for every $n>1$, let
\begin{equation*}
{\EPairs}_n:=\{(f,g)\in{\OPairs}_n : f(n-1)=n\}.
\end{equation*}
For every $n>1$, the map $(f,g)\mapsto(f|_{[n-1]},g|_{[n-1]})$ is clearly a bijection
${\OPairs}_n\setminus{\EPairs}_n\to{\OPairs}_{n-1}$ and hence
\begin{equation}\label{eq:fibonacci_odd}
\#{\OPairs}_n=\#{\EPairs}_n+\#{\OPairs}_{n-1}.
\end{equation}
Let now $n>2$. For any $f\in\Dyck_n$ such that $f(n-2)=n-1$, let $f^{\dagger}\in\Dyck_{n-1}$ be defined by $f^{\dagger}\equiv f$
on $[n-2]$ and $f^{\dagger}(n-1)=n-1$.
For any $g:[n]\to\{0,1\}$  let $g^{\ddagger}:[n-1]\to\{0,1\}$ be defined by $g^{\ddagger}\equiv g$ on $[n-3]$
and $g^{\ddagger}(n-2)=g^{\ddagger}(n-1)=0$.
It is easy to verify, using \eqref{eq:gn}, that for any $n>2$, the map
\[
(f,g)\mapsto (g(n-2),(f^{\dagger},g^{\ddagger}))
\]
is a bijection $\{(f,g)\in{\EPairs}_n:f(n-2)=n-1\}\to\{0,1\}\times{\EPairs}_{n-1}$.
Also, it is clear that the map $(f,g)\mapsto (f|_{[n-2]},g|_{[n-2]})$ is a bijection $\{(f,g)\in{\EPairs}_n:f(n-2)=n-2\}\to{\OPairs}_{n-2}$.
Therefore, for any $n>2$,
\begin{equation*}
\#{\EPairs}_n= 2\#{\EPairs}_{n-1}+\#{\OPairs}_{n-2}
\end{equation*}
and hence, using \eqref{eq:fibonacci_odd}
\begin{equation}\label{eq:fibonacci_even}
\#{\EPairs}_n=2\#{\EPairs}_{n-1}+(\#{\OPairs}_{n-1}-\#{\EPairs}_{n-1})=\#{\OPairs}_{n-1}+\#{\EPairs}_{n-1}
\end{equation}
Since obviously $\#{\OPairs}_1=\#{\EPairs}_2=1$, it follows from \eqref{eq:fibonacci_odd} and \eqref{eq:fibonacci_even} that
$$
\#{\OPairs}_1,\#{\EPairs}_2,\#{\OPairs}_2,\#{\EPairs}_3,\ldots
$$
is the Fibonacci sequence $(F_k)_{k=1}^{\infty}$.
In particular $\#{\OPairs}_n=F_{2n-1}$ for every $n\geq 1$.
\end{proof}

\begin{corollary}[\cite{MR1417303, MR1603806}]
The number of $321$ avoiding smooth permutations in $S_n$ is $F_{2n-1}$.
\end{corollary}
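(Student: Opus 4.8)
The plan is to read off the corollary directly from the bijection of Theorem \ref{thm:sigma_fg_bijection}, the characterisation of $321$ avoidance in Proposition \ref{propos:sigma_fg_bijection}, and the enumeration in Proposition \ref{propos:fibonacci}; there is essentially no further work to do.

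First I would invoke Theorem \ref{thm:sigma_fg_bijection}: the map $(f,g)\mapsto\sigma(f,g)$ is a bijection from $\Pairs_n$ onto $(S_n)_\smth$. By the second part of Proposition \ref{propos:sigma_fg_bijection}, for $(f,g)\in\Pairs_n$ the permutation $\sigma(f,g)$ is $321$ avoiding precisely when $f(i)\le i+1$ for every $i\in[n-1]$. Consequently this bijection restricts to a bijection from
\[
\{(f,g)\in\Pairs_n: f(i)\le i+1 \text{ for every } i\in[n-1]\}
\]
onto the set of smooth $321$-avoiding permutations in $S_n$. (The hypothesis ``smooth'' genuinely cannot be dropped: for instance $(3412)\in S_4$ is $321$ avoiding but not smooth, so the number of \emph{all} $321$-avoiding permutations in $S_n$ is strictly larger.)

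Then I would simply quote Proposition \ref{propos:fibonacci}, according to which the displayed set has cardinality $F_{2n-1}$, and conclude that this is also the number of smooth $321$-avoiding permutations in $S_n$, recovering the result of \cite{MR1417303, MR1603806}. The only genuinely non-trivial ingredient here is Proposition \ref{propos:fibonacci} itself --- whose proof sets up the auxiliary families $\OPairs_n$ and $\EPairs_n$ and extracts the Fibonacci recursion from the restriction maps $(f,g)\mapsto(f|_{[n-1]},g|_{[n-1]})$ and its variants --- so for the corollary I expect no real obstacle beyond noting that restricting the bijection $\sigma$ to the relevant subfamily of decorated Dyck paths again yields a bijection, which is immediate from Proposition \ref{propos:sigma_fg_bijection}.
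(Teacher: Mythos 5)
Your proposal is correct and coincides with the paper's own argument: the corollary is obtained exactly by combining Theorem \ref{thm:sigma_fg_bijection} with the second part of Proposition \ref{propos:sigma_fg_bijection} to identify the smooth $321$-avoiding permutations with the decorated Dyck paths satisfying $f(i)\le i+1$, and then quoting the count $F_{2n-1}$ from Proposition \ref{propos:fibonacci}. No gap; nothing further is needed.
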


For every $n\geq 1$, let
\begin{equation*}
\tilde\Dyck_n=\{f\in\Dyck_n: f(i)> i \text{ for all } i\in [n-1]\}.
\end{equation*}
It is well known that for every $n\geq 1$,
\begin{equation}\label{eq:catalan}
\#\tilde\Dyck_{n}=C_{n-1}.
\end{equation}

\begin{proposition}\label{propos:smooth_recursion}
For every $n\geq 1$, let
\[
\tilde{\Pairs}_n:=\{(f,g)\in\Pairs:f\in\tilde\Dyck_n\},\ \ \tilde p_n=\#\tilde{\Pairs}_n.
\]
Then, for every $n\geq 2$,
\begin{equation}\label{eq:recursion_a}
\tilde p_n=\tilde p_{n-1}+2\sum_{i=1}^{n-2}C_{i-1}\tilde p_{n-i}.
\end{equation}
Consequently, the generating function $\sum_{n=1}^\infty\tilde p_n x^n$  is
\begin{equation}
\label{eq:generating_a}
\left(\frac{1}{x}-\frac{1}{\sqrt{1-4x}}\right)^{-1},
\end{equation}
and the generating function $\sum_{n=1}^\infty p_n x^n$, where $p_n=\#\Pairs_n$, is
\begin{equation}\label{eq:generating_s}
\left(\frac{1}{x}-\frac{1}{\sqrt{1-4x}}-1\right)^{-1}.
\end{equation}
\end{proposition}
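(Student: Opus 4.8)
The plan is to establish the recursion \eqref{eq:recursion_a} by a bijective decomposition of the set $\tilde{\Pairs}_n$ of indecomposable decorated Dyck paths of size $n$, then to convert it into a functional equation for the generating function $\tilde P(x):=\sum_{n\ge1}\tilde p_nx^n$ and solve it, and finally to deduce the generating function of $(p_n)$ from the factorization of a general decorated Dyck path into indecomposable pieces.

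For the recursion, fix $n\ge2$ and recall that by the third part of Proposition \ref{propos:sigma_fg_bijection} a pair $(f,g)\in\Pairs_n$ lies in $\tilde{\Pairs}_n$ exactly when $f(i)>i$ for all $i\in[n-1]$; in particular $f(1)\ge2$. I would split $\tilde{\Pairs}_n$ according to whether $f(1)=f(2)$ or $f(1)<f(2)$ (these are the only options since $f$ is weakly increasing). The first family, which will account for the summand $\tilde p_{n-1}$, consists of the pairs with $f(1)=f(2)$; deleting the first column, i.e.\ passing to $(f',g')$ with $f'(i)=f(i+1)-1$ and $g'(i)=g(i+1)$, is a bijection of this family onto $\tilde{\Pairs}_{n-1}$: one checks from the two conditions defining a decoration that $f'\in\tilde\Dyck_{n-1}$, that $(f',g')$ is again a decorated path, and that the value $g(1)$ is in every case forced to equal $g'(1)$ (so the map is injective and the inverse, which prepends a doubled column, is well defined). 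The complementary family (those with $f(1)<f(2)$) should be put in bijection with $\bigsqcup_{i=1}^{n-2}\tilde\Dyck_i\times\tilde{\Pairs}_{n-i}\times\{0,1\}$: one isolates a leading indecomposable sub-path of some semilength $i$, $1\le i\le n-2$ (an element of $\tilde\Dyck_i$, counted by $C_{i-1}$ via \eqref{eq:catalan}), the remainder being an indecomposable decorated path of semilength $n-i\ge2$, and the factor $2$ coming from one decoration value that becomes unconstrained once the sub-path is detached; this contributes $2\sum_{i=1}^{n-2}C_{i-1}\tilde p_{n-i}$. Observation \ref{obs:sigma_fg}, Lemma \ref{lem:sigma_fg} and Lemma \ref{lem:tilde_g}, together with the product formula \eqref{eq: redsmooth}, are the natural tools for carrying out and bookkeeping this surgery. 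I expect the main obstacle to be precisely the verification that the two decoration conditions match up exactly across the cut — in particular that the purported free bit is genuinely unconstrained and that no decoration is created or lost — which comes down to a careful case analysis of the inequalities $f(i+1)<f(f(i))$ near the splitting position, and it is here that the hypothesis $f(1)<f(2)$ is used.

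Granting \eqref{eq:recursion_a} (and the trivial value $\tilde p_1=1$), set $C(x)=\sum_{k\ge0}C_kx^k=\tfrac{1-\sqrt{1-4x}}{2x}$, so that $2xC(x)=1-\sqrt{1-4x}$. Multiplying \eqref{eq:recursion_a} by $x^n$ and summing over $n\ge2$ turns the recursion into the identity
\[
\tilde P(x)=x+x\,\tilde P(x)+2xC(x)\bigl(\tilde P(x)-x\bigr),
\]
a linear equation for $\tilde P(x)$. Substituting $2xC(x)=1-\sqrt{1-4x}$ and simplifying yields $\tilde P(x)\bigl(\sqrt{1-4x}-x\bigr)=x\sqrt{1-4x}$, hence
\[
\tilde P(x)=\frac{x\sqrt{1-4x}}{\sqrt{1-4x}-x}=\left(\frac1x-\frac1{\sqrt{1-4x}}\right)^{-1},
\]
which is \eqref{eq:generating_a}.

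For the last assertion, observe that every decorated Dyck path factors uniquely as a concatenation of indecomposable ones: the underlying Dyck path has its standard first-return (block) factorization, and since a decoration vanishes at every block endpoint by \eqref{eq:gn} while conditions (1)--(2) impose no constraint across a block boundary, the decoration factors correspondingly; conversely any concatenation of decorated Dyck paths is again one. Hence, writing $P(x)=\sum_{n\ge0}p_nx^n$ with $p_0=1$, we get $P(x)=\bigl(1-\tilde P(x)\bigr)^{-1}$, so
\[
\sum_{n\ge1}p_nx^n=P(x)-1=\frac{\tilde P(x)}{1-\tilde P(x)}.
\]
Finally, writing $u=\tfrac1x-\tfrac1{\sqrt{1-4x}}$, so that $\tilde P(x)=u^{-1}$ by \eqref{eq:generating_a}, one has $\dfrac{\tilde P(x)}{1-\tilde P(x)}=\dfrac{1}{u-1}=\left(\dfrac1x-\dfrac1{\sqrt{1-4x}}-1\right)^{-1}$, which is \eqref{eq:generating_s}.
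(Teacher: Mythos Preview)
Your derivations of the generating functions \eqref{eq:generating_a} and \eqref{eq:generating_s} from the recursion, and of \eqref{eq:generating_s} from \eqref{eq:generating_a} via the block factorization $P(x)=\tilde P(x)+\tilde P(x)P(x)$, are correct and essentially identical to the paper's.

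For the recursion itself, your treatment of the first family ($f(1)=f(2)$) is fine: the shift $(f',g')=(f(\cdot+1)-1,\,g(\cdot+1))$ is a bijection onto $\tilde{\Pairs}_{n-1}$, and your argument that $g(1)$ is forced to equal $g(2)$ is correct.

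The second family ($f(1)<f(2)$), however, has a genuine gap. You never say what the ``leading indecomposable sub-path of semilength $i$'' is, nor how the index $i$ is determined from $(f,g)$; without this the bijection is simply undefined. More seriously, your heuristic that ``one decoration value becomes unconstrained'' is not supported by the hypothesis $f(1)<f(2)$ alone. For example, with $n=5$ and $f=(3,4,5,5,5)$ one has $f(1)<f(2)$, yet condition~(2) at $i=1$ reads $f(2)=4<5=f(f(1))$ and forces $g(1)=g(2)$; the single free bit here lives at $i=2$, not at $i=1$. So the ``free bit'' you need cannot in general be $g(1)$, and locating it (and the cut point $i$) requires a nontrivial definition that you have not supplied. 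The references you cite (Observation~\ref{obs:sigma_fg}, Lemmas~\ref{lem:sigma_fg} and~\ref{lem:tilde_g}) concern the permutation $\sigma(f,g)$ rather than surgery on $(f,g)$ and do not obviously furnish the missing construction.

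For comparison, the paper does not attempt a bottom-up cut. It parametrizes $\tilde\Dyck_n$ by pairs of increasing sequences $\Pi_n$ (jump positions and values of $f$), and splits according to whether $n-1$ lies in the image of $f$ (equivalently, $f(n-2)=n$ versus $f(n-2)=n-1$). In the latter case it locates the \emph{penultimate} distinguished index $l_\pi-1$ and splits $\pi$ into a left piece $L(\pi)\in\Pi_{i+1}$ (which carries all distinguished indices except the last, hence the factor $\tilde p_{i+1}$) and a right piece $R(\pi)$ with no interior distinguished indices (hence an undecorated Catalan factor). The always-present last distinguished index supplies the factor~$2$. This top-down decomposition makes the provenance of the free bit transparent; if you wish to keep your bottom-up viewpoint, you would need an analogous explicit identification of the first relevant distinguished index and of the undecorated piece it bounds.
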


For the proof of Proposition \ref{propos:smooth_recursion} we introduce additional notation. Let
$$
\Pi_n:=\left\{((i_0,i_1,\ldots,i_k),(j_1,\ldots,j_k)):\, 
\begin{aligned}
& k\geq 0,\,i_l\leq j_l \text{ for every } 1\leq l\leq k,\\
& 1=i_0<i_1<\cdots<i_k \text{ and } j_1<\cdots<j_k<n 
\end{aligned}
\right\}.
$$
For every $\pi:=((i_0,\ldots,i_k),(j_1,\ldots,j_k))\in \Pi_n$, define an endofunction $f_{\pi}$ on $[n]$ by
\[
f_\pi(i)=\begin{cases}j_l&\text{if }i_{l-1}\leq i<i_l\text{ for }l\in [k],\\
n&\text{if }i\geq i_k.\end{cases}
\]
The map $\pi\mapsto f_{\pi}$ is a bijection $\Pi_n\to\tilde\Dyck_n$.
In particular, for every $n\geq 1$,
\begin{equation}\label{eq:Pi}
\#\Pi_n=C_{n-1}.
\end{equation}
Moreover, for every $\pi:=((i_0,\ldots,i_k),(j_1,\ldots,j_k))\in \Pi_n$, let
$$I_{\pi}:=\{l\in [k]: \#(\{i_1,\ldots,i_k\}\cap[j_l])=l\}.$$
Note that if $k\geq 1$, then $k$ necessarily belongs to $I_{\pi}$.
Observe that $f_{\pi}(i)<f_{\pi}(f_{\pi}(i))=f_{\pi}(i+1)$ if and only if $i=i_l-1$ for $ l\in I_{\pi}$.
Hence, by \eqref{eq:g_counting},
$$\#\{g:[n]\to \{0,1\}: (f_{\pi},g)\in \Pairs_n\}=2^{\#I_{\pi}}.$$
It follows that for every positive integer $n$,
\begin{equation}\label{eq:brackets}
\tilde p_n=\sum_{\pi\in\Pi_n}2^{\# I_{\pi}}.
\end{equation}

\begin{proof}[Proof of Proposition \ref{propos:smooth_recursion}]
First note that
$$\{((i_0,\ldots,i_k),(j_1,\ldots,j_k))\in\Pi_n: k=0 \text{ or } j_k<n-1\}=\Pi_{n-1}.$$
For every $\pi:=((i_0,\ldots,i_k),(j_1,\ldots,j_k))\in\Pi_n\setminus\Pi_{n-1}$, let
\begin{align*}
l_{\pi}&:=\max\left\{l\in [k]: \#(\{j_1,\ldots,j_k\}\cap[i_l-1])=l-1\right\},\\
L(\pi)&:=((i_0,\ldots,i_{l_{\pi}-1}),(j_1,\ldots,j_{l_{\pi}-1})),\\
R(\pi)&:=((i_{l_{\pi}}-i,\ldots i_k-i),(j_{l_{\pi}}-i,\ldots, j_{k-1}-i)).
\end{align*}
For every $i\in [n-2]$ let
$$\Pi_{n,i}:=\{\pi\in\Pi_n\setminus\Pi_{n-1}: i_{l_{\pi}}=i\}.$$
The map $\pi\mapsto (L(\pi),R(\pi))$
is a bijection of $\Pi_{n,i}$ to $\Pi_{i+1}\times \Pi_{n-i-1}$ and for every
$\pi=((i_0,\ldots,i_k),(j_1,\ldots,j_k))\in \Pi_{n,i}$ we have
$$I_{L(\pi)}=I_{\pi}\setminus\{k\}.$$
It follows that
\begin{align*}
\tilde p_n&=\sum_{\pi\in\Pi_n}2^{\#I_{\pi}}=\sum_{\pi\in\Pi_{n-1}}2^{\#I_{\pi}}+
\sum_{i=1}^{n-2}\Big(\sum_{\pi\in\Pi_{n,i}}2^{\#I_{\pi}}\Big)\\
&=\tilde p_{n-1}+\sum_{i=1}^{n-2}\Big(\sum_{(\pi_1,\pi_2)\in\Pi_{i+1}\times\Pi_{n-i-1}}2^{\#I_{\pi_1}+1}\Big)\\
&=\tilde p_{n-1}+2\sum_{i=1}^{n-2}\#\Pi_{n-i-1}\Big(\sum_{\pi_1\in\Pi_{i+1}}2^{\#I_{\pi_1}}\Big)=\tilde p_{n-1}+
2\sum_{i=1}^{n-2}C_{n-i-2}\tilde p_{i+1},
\end{align*}
which proves \eqref{eq:recursion_a}.
Let us denote the generating functions
\[
C(x)=\sum_{n=0}^{\infty}C_nx^n, \quad P(x)=\sum_{n=1}^{\infty}p_nx^n,\quad \tilde{P}(x)=\sum_{n=1}^{\infty}\tilde p_nx^n.
\]
The recurrence relation \eqref{eq:recursion_a} yields that
\[
\tilde{P}(x)-x=x\tilde{P}(x)+2x\,C(x)(\tilde{P}(x)-x)=\left(x+2x\,C(x)\right)(\tilde{P}(x)-x)+x^2.
\]
Therefore, since $C(x)=\frac{1-\sqrt{1-4x}}{2x}$, $\tilde{P}(x)$ is equal to
\[
x+\frac{x^2}{1-x-2x\,C(x)}=x+\frac{x^2}{\sqrt{1-4x}-x}=
\frac{x\sqrt{1-4x}}{\sqrt{1-4x}-x}=\frac{1}{\frac{1}{x}-\frac{1}{\sqrt{1-4x}}},
\]
which proves \eqref{eq:generating_a}.
Finally, for every $ n\geq 2$, obviously
\[
p_n=\tilde p_n+\sum_{i=1}^{n-1}\tilde p_i p_{n-i}.
\]
Therefore,
\[
P(x)=\tilde{P}(x)+\tilde{P}(x)P(x),
\]
hence, by \eqref{eq:generating_a},
\[
\frac{1}{P(x)}=\frac{1}{\tilde{P}(x)}-1=\frac{1}{x}-\frac{1}{\sqrt{1-4x}}-1,
\]
and \eqref{eq:generating_s} follows.
\end{proof}

\begin{corollary}[cf.~\cite{MR1626487, MR2376109, MR3645580} and the references therein]
Let $a_n$, $n\geq1$ be the number of smooth indecomposable permutations in $S_n$. Then,
\[
\sum_{n=1}^{\infty}a_n x^n=\left(\frac{1}{x}-\frac{1}{\sqrt{1-4x}}\right)^{-1}
\]
and
\[
\sum_{n=1}^{\infty}\#(S_n)_\smth x^n=\left(\frac1x-\frac1{\sqrt{1-4x}}-1\right)^{-1}.
\]
\end{corollary}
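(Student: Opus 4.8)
The plan is to read off both identities from the machinery already assembled, with essentially no fresh computation. Throughout, write $p_n=\#\Pairs_n$ and $\tilde p_n=\#\tilde{\Pairs}_n$ exactly as in Proposition \ref{propos:smooth_recursion}.

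First I would settle the count of all smooth permutations. By Theorem \ref{thm:sigma_fg_bijection} the map $(f,g)\mapsto\sigma(f,g)$ is a bijection between $\Pairs_n$ and $(S_n)_\smth$, so $\#(S_n)_\smth=p_n$ for every $n\geq 1$. Hence $\sum_{n\geq1}\#(S_n)_\smth x^n=\sum_{n\geq1}p_n x^n$, which is precisely the series computed in \eqref{eq:generating_s}; this gives the second identity.

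For the indecomposable count I would invoke the third part of Proposition \ref{propos:sigma_fg_bijection}: for $(f,g)\in\Pairs_n$, the permutation $\sigma(f,g)$ is indecomposable if and only if $f(i)>i$ for all $i\in[n-1]$, i.e. if and only if $f\in\tilde\Dyck_n$, i.e. if and only if $(f,g)\in\tilde{\Pairs}_n$. (For $n=1$ the condition is vacuous, so $\tilde{\Pairs}_1=\Pairs_1$, consistently with $a_1=\tilde p_1=1$.) Restricting the bijection of Theorem \ref{thm:sigma_fg_bijection} to this subset therefore yields a bijection between $\tilde{\Pairs}_n$ and the set of smooth indecomposable permutations in $S_n$, so $a_n=\tilde p_n$ for all $n\geq1$. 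Consequently $\sum_{n\geq1}a_n x^n=\tilde{P}(x)$, which is exactly \eqref{eq:generating_a}, proving the first identity.

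There is no genuine obstacle here: the only points to verify are that the earlier statements apply verbatim (they do, since $\tilde p_n$ and $p_n$ are defined as precisely the cardinalities appearing above) and that the small-$n$ edge case is consistent. The substantive content — the recursion \eqref{eq:recursion_a} for $\tilde p_n$, its resolution into the closed form \eqref{eq:generating_a}, and the passage from $\tilde p_n$ to $p_n$ via the ``first indecomposable block'' identity $p_n=\tilde p_n+\sum_{i=1}^{n-1}\tilde p_i\,p_{n-i}$ — has already been carried out in the proof of Proposition \ref{propos:smooth_recursion}, so this corollary is a pure translation of those enumerations into the language of smooth permutations.
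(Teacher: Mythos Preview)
Your argument is correct and is precisely the intended derivation: the corollary is stated immediately after Proposition \ref{propos:smooth_recursion} with no separate proof because it follows by combining Theorem \ref{thm:sigma_fg_bijection} and the third part of Proposition \ref{propos:sigma_fg_bijection} with the generating functions \eqref{eq:generating_a} and \eqref{eq:generating_s}, exactly as you lay out.
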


\section{From covexillary to smooth}\label{sec:retract}

In this section we prove Theorem \ref{thm: idemp}.

\subsection{}
The following observation follows directly from Observation \ref{obs:sub_nsub}.
\begin{observation}\label{obs:312}
Let $\tau\in S_n$ and $i<j$ be such that $\tau(i)<\tau(j)$ and let $\tau'=\tau T_{i,j}$. Then,
\begin{enumerate}
\item If there is $k>j$ for which $\tau(k)<\tau(i)$, then $\maxperm{{\tau'}^{-1}}\equiv\maxperm{\tau^{-1}}$.
\item If there is $k<i$ for which $\tau(k)>\tau(j)$, then $\maxperm{\tau'}\equiv\maxperm{\tau}$.
\end{enumerate}
\end{observation}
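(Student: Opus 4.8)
The plan is to argue by contradiction in each case, using that the hypothesis $\tau(i)<\tau(j)$ forces $\tau<\tau'$ in the Bruhat order, so by \eqref{eq:Bruhat} we have $\maxperm{\tau^{-1}}\le\maxperm{{\tau'}^{-1}}$ and $\maxperm{\tau}\le\maxperm{{\tau'}}$ pointwise. Thus in the first case it suffices to rule out $\maxperm{\tau^{-1}}(x)<\maxperm{{\tau'}^{-1}}(x)$ for all $x$, and in the second case $\maxperm{\tau}(x)<\maxperm{{\tau'}}(x)$ for all $x$; Observation \ref{obs:sub_nsub} will pin down exactly where such a discrepancy could occur, and the extra index supplied by the hypothesis will contradict it.

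For the first assertion, suppose $\maxperm{\tau^{-1}}(x)<\maxperm{{\tau'}^{-1}}(x)$ for some $x$ and put $y:=\maxperm{{\tau'}^{-1}}(x)$. Then $\maxperm{\tau^{-1}}(x)<y\le\maxperm{(\tau T_{i,j})^{-1}}(x)$, so Observation \ref{obs:sub_nsub}(2) gives $i<y\le j$ and $\tau(i)\le x<\tau(j)$. Since $(\tau')^{-1}=T_{i,j}\tau^{-1}$ and, among $\tau(i),\tau(j)$, only $\tau(i)$ lies in $[x]$, one gets $(\tau')^{-1}([x])=\bigl(\tau^{-1}([x])\setminus\{i\}\bigr)\cup\{j\}$, hence $y=\max\bigl(\bigl(\tau^{-1}([x])\setminus\{i\}\bigr)\cup\{j\}\bigr)$. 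As $y\le j$, this forces $y=j$ and hence $\maxperm{\tau^{-1}}(x)<j$, i.e.\ every element of $\tau^{-1}([x])$ is smaller than $j$. But the index $k>j$ from the hypothesis satisfies $\tau(k)<\tau(i)\le x$, so $k\in\tau^{-1}([x])$ while $k>j$ — a contradiction. Therefore $\maxperm{{\tau'}^{-1}}\equiv\maxperm{\tau^{-1}}$.

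The second assertion is the mirror image: assuming $\maxperm{\tau}(x)<\maxperm{{\tau'}}(x)=:y$ for some $x$, Observation \ref{obs:sub_nsub}(1) gives $i\le x<j$ and $\tau(i)<y\le\tau(j)$; now $\tau'([x])=\bigl(\tau([x])\setminus\{\tau(i)\}\bigr)\cup\{\tau(j)\}$, so $y=\tau(j)$ and every element of $\tau([x])$ is smaller than $\tau(j)$, whereas the hypothesis index $k<i\le x$ gives $\tau(k)\in\tau([x])$ with $\tau(k)>\tau(j)$, a contradiction. There is no real obstacle beyond the bookkeeping of how $[x]$ meets $\{\tau(i),\tau(j)\}$ (resp.\ how $\tau^{-1}([x])$ meets $\{i,j\}$); all the content is carried by Observation \ref{obs:sub_nsub}, which localizes the only places a discrepancy between the maximal functions of $\tau$ and $\tau'$ could arise.
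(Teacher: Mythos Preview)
Your proof is correct and follows the paper's approach, which simply notes that the observation follows directly from Observation~\ref{obs:sub_nsub}. As a minor streamlining, once Observation~\ref{obs:sub_nsub}(2) yields $y\le j$ and $\tau(i)\le x$, the hypothesis index $k$ already gives $k\in\tau^{-1}([x])$ with $k>j\ge y>\maxperm{\tau^{-1}}(x)$, so the intermediate computation of $(\tau')^{-1}([x])$ and the deduction $y=j$ are not needed (and analogously in part~(2)).
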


\begin{corollary} \label{cor: taujk}
Let $\tau\in S_n$ and suppose that $i<j<k<l$ and $\tau(l)<\tau(j)<\tau(k)<\tau(i)$.
Then, $\tbl(\tau T_{j,k})=\tbl(\tau)$.
\end{corollary}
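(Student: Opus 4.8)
The plan is to deduce this from the fact, recorded in \eqref{eq: easy1<=a} and its analogues for $R_{x,y,z}$ and $L_{x,y,z}$, that whether a given element of $\Special$ lies below a permutation $\sigma$ depends only on the two functions $\maxperm{\sigma}$ and $\maxperm{\sigma^{-1}}$. Thus it suffices to show that $\tau':=\tau T_{j,k}$ satisfies $\maxperm{\tau'}\equiv\maxperm{\tau}$ and $\maxperm{\tau'^{-1}}\equiv\maxperm{\tau^{-1}}$; once both of these hold, $\tau$ and $\tau'$ lie above exactly the same elements of $\Special$, which is precisely the assertion $\tbl(\tau T_{j,k})=\tbl(\tau)$.

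To establish the equality of maximal functions I would apply Observation \ref{obs:312} to the transposition $T_{j,k}$, which is legitimate since $\tau(j)<\tau(k)$. The hypothesis of part (1) of that observation is met by the index $l$: indeed $l>k$ and $\tau(l)<\tau(j)$, so $\maxperm{\tau'^{-1}}\equiv\maxperm{\tau^{-1}}$. The hypothesis of part (2) is met by the index $i$: indeed $i<j$ and $\tau(i)>\tau(k)$, so $\maxperm{\tau'}\equiv\maxperm{\tau}$. Combining these two conclusions with the reduction of the first paragraph finishes the proof. (Note that the inclusion $\tbl(\tau)\subseteq\tbl(\tau')$ is in any case automatic from $\tau<\tau'$, so strictly speaking only one direction requires this argument, but it is cleanest to obtain both maximal functions and conclude equality directly.)

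The argument is essentially immediate once Observation \ref{obs:312} is available; the only point requiring care is the correct matching of indices, namely that the transposition being applied is $T_{j,k}$ (not one involving the outer pair $i,l$), with $i$ and $l$ serving as the auxiliary witnesses in parts (2) and (1) of the observation respectively. I do not foresee any genuine obstacle beyond this bookkeeping.
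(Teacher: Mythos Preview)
Your proposal is correct and matches the paper's approach exactly: the corollary is stated without proof immediately after Observation \ref{obs:312}, and the intended argument is precisely to apply both parts of that observation to the transposition $T_{j,k}$ using $l$ and $i$ as the auxiliary witnesses, then invoke \eqref{eq: easy1<=} to conclude that $\tbl$ depends only on the maximal functions. You have reproduced this reasoning verbatim.
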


\begin{lemma} \label{lem: tauTjk}
A permutation $\sigma$ is {\dbi} if and only if it satisfies the following property
\begin{equation}\label{eq:property}
\begin{aligned}
&\text{for any }\tau\leq\sigma \text{ and } i<j<k<l \text{ such that } \tau(l)<\tau(j)<\tau(k)<\tau(i)\\
&\text{we have } \tau T_{j,k}\leq\sigma.
\end{aligned}
\end{equation}
\end{lemma}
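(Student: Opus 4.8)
The plan is to establish the two implications separately; the ``only if'' direction is immediate from Observation~\ref{obs:312}, while the converse needs an explicit construction. For the ``only if'' direction, suppose $\sigma$ is \dbi, let $\tau\le\sigma$, and let $i<j<k<l$ satisfy $\tau(l)<\tau(j)<\tau(k)<\tau(i)$. Since $\tau(j)<\tau(k)$, Observation~\ref{obs:312} applies to $\tau'=\tau T_{j,k}$ (with the ``$i,j$'' of that statement played by $j,k$): part~(2), using the index $i<j$ with $\tau(i)>\tau(k)$, gives $\maxperm{\tau T_{j,k}}=\maxperm{\tau}$, and part~(1), using the index $l>k$ with $\tau(l)<\tau(j)$, gives $\maxperm{(\tau T_{j,k})^{-1}}=\maxperm{\tau^{-1}}$. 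Since $\tau\le\sigma$ forces $\maxperm{\tau}\le\maxperm{\sigma}$ and $\maxperm{\tau^{-1}}\le\maxperm{\sigma^{-1}}$ pointwise, the same inequalities hold for $\tau T_{j,k}$, and the defining property of being \dbi\ yields $\tau T_{j,k}\le\sigma$.

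For the converse I argue by contraposition: assuming $\sigma$ is not \dbi, I must exhibit $\tau\le\sigma$ and indices $i<j<k<l$ with $\tau(l)<\tau(j)<\tau(k)<\tau(i)$ but $\tau T_{j,k}\not\le\sigma$. If $\sigma$ itself contains the pattern $4231$, say $\sigma(l)<\sigma(j)<\sigma(k)<\sigma(i)$ for some $i<j<k<l$, then $\tau=\sigma$ already works, since $\sigma(j)<\sigma(k)$ gives $\sigma T_{j,k}>\sigma$, hence $\sigma T_{j,k}\not\le\sigma$. So the only real case is when $\sigma$ avoids $4231$. Then $\sigma$ is not smooth (smoothness being covexillarity together with being \dbi), so Lemma~\ref{lem:4} supplies an index $i$ for which, writing $j=\maxperm{\sigma}(i)$ and $k=\maxperm{\sigma^{-1}}(i)$, one has $\maxperm{\sigma}(k)>j>i$ and $\maxperm{\sigma^{-1}}(j)>k>i$; after passing to $\sigma^{-1}$ if necessary (both \eqref{eq:property} and being \dbi\ are preserved by inversion) we may assume $j\le k$.

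The idea is then to turn this numerical data into a permutation $\rho$ obtained from $\sigma$ by cyclically rearranging a bounded number of coordinates so that $\rho$ acquires a \emph{tight} $4321$-configuration $i'<j'<k'<l'$ (i.e.\ $\rho(i')>\rho(j')>\rho(k')>\rho(l')$ and no index strictly between $j'$ and $k'$ takes a value strictly between $\rho(k')$ and $\rho(j')$), after which $\tau:=\rho T_{j',k'}<\rho$ carries the $4231$-configuration $(i',j',k',l')$ and one only has to check $\tau\le\sigma$ and $\rho=\tau T_{j',k'}\not\le\sigma$. Concretely, unravelling the equalities from Lemma~\ref{lem:4} produces explicit coordinates $a\le i<b<k<c$ realizing a $3412$-pattern of $\sigma$, plus the ``slack'' encoded by the strict inequalities $\maxperm{\sigma}(k)>j$ and $\maxperm{\sigma^{-1}}(j)>k$, and $\rho$ is built from these; both membership checks are made directly from the rank criterion \eqref{eq:Bruhat}. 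Here it is useful that, by Observation~\ref{obs:312}, the $4231$-fix preserves $\maxperm{(\cdot)}$ and $\maxperm{(\cdot)^{-1}}$, so $\tau$ and $\rho$ have the same such data and only one rank inequality needs to be violated to conclude $\rho\not\le\sigma$.

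The routine ingredients are thus the first implication and the subcase in which $\sigma$ contains $4231$. The crux — and where I expect the main obstacle — is the $4231$-avoiding subcase of the converse: converting the output of Lemma~\ref{lem:4} into the explicit cyclic rearrangement $\rho$ with a tight $4321$-configuration, and then verifying (via \eqref{eq:Bruhat}) that its $4231$-fix lies in $(S_n)_{\le\sigma}$ while $\rho$ itself does not.
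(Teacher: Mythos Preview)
Your ``only if'' direction is correct and is exactly what the paper does: Corollary~\ref{cor: taujk} (which is Observation~\ref{obs:312} applied to a $4231$-configuration) gives $\maxperm{\tau T_{j,k}}=\maxperm{\tau}$ and $\maxperm{(\tau T_{j,k})^{-1}}=\maxperm{\tau^{-1}}$, and then the definition of \dbi\ finishes it. Likewise, your treatment of the subcase where $\sigma$ itself contains a $4231$-pattern is fine: taking $\tau=\sigma$ gives $\tau T_{j,k}>\sigma$ immediately.

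The gap is the $4231$-avoiding subcase of the converse. You invoke Lemma~\ref{lem:4} and then announce a construction of a permutation $\rho$ (by ``cyclically rearranging a bounded number of coordinates'') possessing a tight $4321$-configuration, with $\tau:=\rho T_{j',k'}\le\sigma$ and $\rho\not\le\sigma$ to be checked via \eqref{eq:Bruhat}. But the construction of $\rho$ is never specified, and the two membership checks are never carried out; you yourself flag this as ``the crux'' and ``the main obstacle''. Note also that the observation that $\tau$ and $\rho$ share the same $\maxperm{}$-data does not help here: since $\sigma$ is \emph{not} \dbi, equality of $\maxperm{}$-data does not control membership in $(S_n)_{\le\sigma}$, so both checks genuinely require the full rank criterion and hence a concrete $\rho$. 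As written, this direction is a plan, not a proof.

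The paper avoids this construction entirely by a different, shorter argument. It first proves that property~\eqref{eq:property} is inherited by patterns: if $\pi$ occurs in $\sigma$ via order-preserving injections $\lambda,\eta$ with $\sigma\circ\lambda=\eta\circ\pi$, one lifts any $\tau\le\pi$ to $\hat\tau\le\sigma$ (agreeing with $\sigma$ off $\lambda([m])$), observes that $\widehat{\tau T_{j,k}}=\hat\tau\,T_{\lambda(j),\lambda(k)}$, applies \eqref{eq:property} for $\sigma$, and pulls back. Then, since a non-\dbi\ $\sigma$ contains one of the patterns $4231$, $35142$, $42513$, $351624$, it suffices to exhibit, for each of these four small permutations, a single $\tau$ witnessing the failure of \eqref{eq:property}; the paper lists these explicitly. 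This replaces your open-ended construction by a pattern-heredity lemma plus four one-line checks.
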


\begin{proof}
We first show that if $\sigma\in S_n$ satisfies the property \eqref{eq:property} and $\pi\in S_m$ appears as a pattern in $\sigma$, i.e., there are strictly increasing functions $\lambda,\eta:[m]\to[n]$ such that $\sigma\circ\lambda=\eta\circ\pi$, then $\pi$ also satisfies the property \eqref{eq:property}.

For every $\tau\in S_m$, define $\hat{\tau}\in S_n$ by $\hat{\tau}\circ\lambda=\eta\circ\tau$ on $[m]$
and $\hat{\tau}\equiv\sigma$ outside $\lambda([m])$.
It is easy to verify that for every $\tau\in S_m$ we have $\hat{\tau}\leq\sigma$ if and only if $\tau\leq\pi$.
Suppose now that $\tau\leq\pi$, $1\leq i<j<k<l\leq m$ and $ \tau(l)<\tau(j)<\tau(k)<\tau(i)$.
Let $\tau'=\tau T_{j,k}$.
Then, $\hat{\tau}\leq\sigma$, $\lambda(i)<\lambda(j)<\lambda(k)<\lambda(l)$ and
$\eta(\tau(i))<\eta(\tau(j))<\eta(\tau(k))<\eta(\tau(l))$, i.e.,
$\hat{\tau}(\lambda(i))<\hat{\tau}(\lambda(j))<\hat{\tau}(\lambda(k))<\hat{\tau}(\lambda(l))$.
Therefore, since $\sigma$ satisfies the property \eqref{eq:property},
$\widehat{\tau'}=\hat{\tau}T_{\lambda(j),\lambda(k)}\leq\sigma$ and hence $\tau'\leq\pi$ as required.

Thus, in order to show that every permutation that satisfies the property \eqref{eq:property} is {\dbi},
it is enough to check that this property is not satisfied for the four permutations
$(4231)$, $(35142)$, $(42513)$ and $(351624)$, for which we can take
$\tau=(\underline{4231})$, $(1\underline{5342})$, $(\underline{4231}5)$ and $(1\underline{5342}6)$ respectively
where we underlined the entries with indices $i<j<k<l$.

Conversely, suppose that $\sigma$ is {\dbi} and let $\tau$ and $i<j<k<l$ be as in \eqref{eq:property}.
Since $\tau\leq\sigma$, $\maxperm{\tau}\leq\maxperm{\sigma}$ and $\maxperm{\tau^{-1}}\leq\maxperm{\sigma^{-1}}$ pointwise.
Let $\tau'=\tau T_{j,k}$. By Observation \ref{obs:312}, $\maxperm{\tau'}\equiv\maxperm{\tau}$ and
$\maxperm{{\tau'}^{-1}}\equiv\maxperm{\tau^{-1}}$ and hence, $\maxperm{\tau'}\leq\maxperm{\sigma}$ and
$\maxperm{{\tau'}^{-1}}\leq\maxperm{\sigma^{-1}}$.
Therefore, since $\sigma$ is {\dbi}, $\tau'\leq\sigma$.
\end{proof}

\subsection{}
We need another result.
\begin{lemma} \label{lem: covexup}
Suppose that $\tau\in S_n$ is covexillary but not smooth. Then, there exist $i<j<k<l$
such that $\tau(l)<\tau(j)<\tau(k)<\tau(i)$ and $\tau T_{j,k}$ is covexillary.
More precisely, suppose that $i<l$ is such that the set
$$P:=\{(j,k): i<j<k<l\text{ and }\tau(l)<\tau(j)<\tau(k)<\tau(i)\}$$
is non-empty. Then, $\exists (j,k)\in P$ such that $\tau T_{j,k}$ is covexillary.
\end{lemma}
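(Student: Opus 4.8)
I would prove the ``more precisely'' assertion; it implies the first one, because a covexillary $\tau$ that is not smooth must contain the pattern $4231$ (by the pattern characterisation of smoothness), so there are positions $i<j<k<l$ with $\tau(l)<\tau(j)<\tau(k)<\tau(i)$, and then $P\neq\emptyset$ for that pair $i<l$. So fix $i<l$ with $P\neq\emptyset$, abbreviate $A=\tau(i)$ and $D=\tau(l)$, and for $(j,k)\in P$ write $B=\tau(j)$, $C=\tau(k)$, so that $D<B<C<A$. Note that $\tau<\tau T_{j,k}$ for every $(j,k)\in P$ (as $j<k$ and $\tau(j)<\tau(k)$) and, by Corollary \ref{cor: taujk}, $\tbl(\tau T_{j,k})=\tbl(\tau)$; the only point in question is covexillarity of $\tau T_{j,k}$.

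The choice I would make is to pick $(j,k)\in P$ for which the value gap $\tau(k)-\tau(j)$ is minimal over $P$ (refining, if necessary, by a secondary rule that is invariant under $\tau\mapsto w_0\tau w_0$, such as making $k-j$ minimal). Assume for contradiction that $\tau':=\tau T_{j,k}$ contains $3412$, realised at positions $p_1<p_2<p_3<p_4$ with $\tau'(p_3)<\tau'(p_4)<\tau'(p_1)<\tau'(p_2)$. Since $\tau$ is $3412$--avoiding and $\tau'$ agrees with $\tau$ outside $\{j,k\}$, at least one of $j,k$ occurs among the $p_m$; and since $\tau'(j)=C>B=\tau'(k)$, comparing with the entries $3,4,1,2$ of the pattern $3412$ (listed in position order) sharply restricts which slots $j$ and $k$ may occupy, leaving a short list of configurations. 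I would then roughly halve this list using the fact that the whole configuration --- covexillarity, the pattern $4231$, the pattern $3412$, the role of the inner transposition, and the value gap --- is invariant under $\tau\mapsto w_0\tau w_0$.

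In each surviving configuration I anticipate the following. The $p_m$ that lies in $\{j,k\}$ carries in $\tau'$ a value different from its $\tau$--value, so one of the defining inequalities of the $3412$--pattern, re-read in $\tau$ rather than $\tau'$, forces the existence of a position $q\notin\{i,j,k,l\}$ with $B<\tau(q)<C$. If $q$ lies in $(i,l)$, then one of $(q,k)$, $(j,q)$ belongs to $P$ and has value gap $C-\tau(q)$ or $\tau(q)-B$, which is strictly smaller than $C-B$ --- contradicting the minimality in the choice of $(j,k)$. If $q$ lies outside $(i,l)$ --- for example $q=p_m<i$ --- then I expect the quadruple of positions $p_1<p_2<j<k$, whose $\tau$--values come in the order required by $3412$, to already realise $3412$ inside $\tau$, contradicting covexillarity. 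Either way the supposed $3412$ in $\tau'$ cannot exist, so $\tau T_{j,k}$ is covexillary.

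The main obstacle is exactly the bookkeeping in this last step. There are on the order of half a dozen essentially distinct placements of $j$ (and of $k$) inside the hypothetical $3412$ of $\tau'$, several of which split further according to where $i$, $l$, and the other element of $\{j,k\}$ sit relative to $p_1,\dots,p_4$; in every branch one must check that the value strictly between $B$ and $C$ that turns up really does produce either a smaller--gap member of $P$ or a $3412$ inside $\tau$. Choosing the secondary tie-breaking rule so that the few borderline branches also close is the delicate point; the rest is elementary inequality chasing.
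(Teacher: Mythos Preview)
Your strategy of choosing $(j,k)\in P$ with minimal value gap $\tau(k)-\tau(j)$ does not work. Take $\tau=(6,3,5,2,4,1)\in S_6$. This is covexillary (one checks all $\binom{6}{4}=15$ quadruples) and has a $4231$ at positions $i=1$, $j=2$, $k=5$, $l=6$. For this $(i,l)$ one finds $P=\{(2,3),(2,5),(4,5)\}$ with value gaps $2,1,2$ respectively, so your rule forces $(j,k)=(2,5)$. But $\tau T_{2,5}=(6,4,5,2,3,1)$ contains $3412$ at positions $2<3<4<5$. Here both swapped positions $j=2=p_1$ and $k=5=p_4$ lie in the new $3412$, and the ``position $q$ with $B<\tau(q)<C$'' that your argument requires cannot exist: $B=3$ and $C=4$ leave no integer in between. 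The pairs $(2,3)$ and $(4,5)$, which do yield covexillary results, have \emph{larger} value gap, so no tie-breaking can save this choice.

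The paper's proof avoids this by a different selection rule: it introduces auxiliary sets $A_0,A_1,B_0,B_1,P_0,P_1$ (encoding which values near the $4231$ witness lie to the left of $i$ or right of $l$), and chooses $(j,k)$ lexicographically extremal within $P_0$ or $P_1$, or by maximising $j$ and then minimising $k$ in a residual case. The point is that the paper's Claim~\ref{claim1} produces, from a hypothetical $3412$ in $\tau'$, a pair such as $(j,b)\in P$ with $b<k$ (not with smaller value gap), and it is the lexicographic minimality that is contradicted. In the example above the paper's rule selects $(2,3)$, which works. If you want to repair your approach you will need a selection criterion that is contradicted by the new pairs $(j,b)$ or $(c,k)$ that actually arise in Claim~\ref{claim1}; value gap is not such a criterion.
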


The (rather technical) proof will be given in several steps.
For the rest of the subsection, fix a covexillary $\tau$ in $S_n$.

Denote
\begin{align*}
A_0:=&\{a<i: \tau(l)<\tau(a)<\tau(i)\},\\
A_1:=&\{a>l : \tau(l)<\tau(a)<\tau(i)\},\\
B_0:=&\{b<\tau(l) : i<\tau^{-1}(b)<l\},\\
B_1:=&\{b>\tau(i) : i<\tau^{-1}(b)<l\},\\
P_0:=&\{(j,k)\in P:\tau(j)>\tau(a) \text{ for every }a\in A_0\},\\
P_1:=&\{(j,k)\in P:\tau(k)<\tau(a) \text{ for every }a\in A_1\}.
\end{align*}
Note that since $\tau$ is covexillary, at most one of the sets $A_0,B_0$ is non-empty and at most one of the sets $A_1,B_1$ is non-empty.

Lemma \ref{lem: covexup} will easily follow from the following claim which will be proved below.

\begin{claim}\label{claim3}~
\begin{enumerate}
\item Suppose that $B_1=\emptyset\ne P_0$.
Let $(j,k)$ be the minimal element of $P_0$ with respect to the lexicographic order from left to right.
Then, $\tau':=\tau T_{j,k}$ is covexillary.
\item Similarly, if $B_0=\emptyset\ne P_1$, let $(j,k)$ be the maximal element of $P_1$ with respect to the lexicographic order
from right to left.
Then, $\tau':=\tau T_{j,k}$ is covexillary.
\item Suppose that $B_0=B_1=P_0=P_1=\emptyset$.
Let $(j,k)\in P$ be such that $j$ is maximal and $k$ is minimal (for that $j$).
Then, $\tau':=\tau T_{j,k}$ is covexillary.
\end{enumerate}
\end{claim}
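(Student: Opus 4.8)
The plan is to prove all three parts by the same contradiction argument. Suppose $\tau':=\tau T_{j,k}$ is not covexillary and fix a $3412$-pattern of $\tau'$, at positions $q_1<q_2<q_3<q_4$ with $\tau'(q_3)<\tau'(q_4)<\tau'(q_1)<\tau'(q_2)$. Since $\tau$ and $\tau'$ differ only at $j,k$, with $\tau'(j)=\tau(k)>\tau(j)=\tau'(k)$, the permutation $\tau'$ has a $4321$-pattern at $i<j<k<l$, and, $\tau$ being covexillary, the set $\{q_1,q_2,q_3,q_4\}$ must meet $\{j,k\}$. Moreover $\{q_1,q_2\}$ carries the two largest and $\{q_3,q_4\}$ the two smallest of the four pattern values, so $\tau'(j)>\tau'(k)$ forces: if both $j$ and $k$ occur then $j\in\{q_1,q_2\}$ and $k\in\{q_3,q_4\}$. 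In every resulting case I would substitute $\tau'(j)=\tau(k)$, $\tau'(k)=\tau(j)$ into the pattern inequalities to obtain a four-term configuration expressed purely in $\tau$, then combine it with the fixed configuration $i<j<k<l$ (where $\tau(l)<\tau(j)<\tau(k)<\tau(i)$) and with the definitions of $A_0,A_1,B_0,B_1,P_0,P_1$ to reach a contradiction. In each branch the contradiction is of one of three kinds: a $3412$-pattern inside $\tau$ (impossible); an element of $P$, or of $P_0$, resp.\ $P_1$, that is strictly more extremal than $(j,k)$ in the relevant sense (further left, further right, or with a smaller second coordinate), contradicting the choice of $(j,k)$; or a point belonging to one of $B_1$ (part 1), $B_0$ (part 2), or $P_0\cup P_1$ (part 3), which was assumed empty.

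To see the mechanism, take the sub-case $j=q_2$, $k=q_3$ of part (1): then $q_1<j<k<q_4$ and the pattern condition is $\tau(j)<\tau(q_4)<\tau(q_1)<\tau(k)$. From $\tau(q_1)<\tau(k)<\tau(i)$ we get $q_1\neq i$; if $q_1<i$ then $q_1\in A_0$ with $\tau(q_1)>\tau(j)$, contradicting $(j,k)\in P_0$; if $i<q_1<j$ then $(q_1,k)\in P$, and in fact $(q_1,k)\in P_0$ since $\tau(q_1)>\tau(j)$ and $(j,k)\in P_0$, contradicting the leftmost-minimality of $(j,k)$. The remaining sub-cases of part (1) run similarly, also invoking the symmetric witness on the $q_4$-side; the hypothesis $B_1=\emptyset$ is used precisely to exclude the branches in which the configuration would otherwise only land a point inside $B_1$. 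Part (2) is obtained from part (1) by the upending involution $\sigma\mapsto w_0\sigma w_0$, which preserves covexillarity and interchanges $i\leftrightarrow l$, $j\leftrightarrow k$, $A_0\leftrightarrow A_1$, $B_0\leftrightarrow B_1$, $P_0\leftrightarrow P_1$, and the leftmost-minimal choice in $P_0$ with the rightmost-maximal choice in $P_1$; so it is enough to do parts (1) and (3). For part (3) one further notes that $P_0=P_1=\emptyset$ together with $P\neq\emptyset$ forces both $A_0$ and $A_1$ non-empty, so there are witnesses $a_0\in A_0$, $a_1\in A_1$ available; the joint maximality of $j$ and minimality of $k$ (for that $j$) then play the role of the one-sided extremality of part (1). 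For instance, in the sub-case $j=q_2$, $k=q_3$ with $q_4>l$ one reads off an honest $3412$-pattern of $\tau$ at $q_1<k<l<q_4$, while the residual branch $k<q_4<l$ is closed by bringing in $a_0$ and $a_1$.

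The main obstacle is the sheer bulk and delicacy of the case analysis. Besides the four placements of $\{j,k\}$ among $q_1,\dots,q_4$ there are the cases where only one of $j,k$ occurs (four placements each), and every case splits again according to where the auxiliary indices lie relative to $i,j,k,l$; in each branch one must recognise which of the three forbidden objects appears and, correspondingly, which hypothesis ($B_0=\emptyset$, $B_1=\emptyset$, $P_0=\emptyset$, $P_1=\emptyset$, or an extremality property of $(j,k)$) is being used. Getting this dictionary exactly right is the crux; everything else is routine. Claim~\ref{claim3} then provides the essential input for Lemma~\ref{lem: covexup}, which together with Lemma~\ref{lem: tauTjk} yields Theorem~\ref{thm: idemp}.
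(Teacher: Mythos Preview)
Your plan is correct and follows the same approach as the paper: assume a $3412$-pattern $a<b<c<d$ in $\tau'$, analyze which of these positions equal $j$ or $k$, and in each branch derive either a $3412$-pattern in $\tau$, a more extremal element of $P_0$/$P_1$/$P$, or a point in an assumed-empty set. The sample sub-case you work through is right, and your symmetry argument for part~(2) via $w_0\tau w_0$ matches the paper exactly.

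The only difference is organizational. The paper extracts the common core of the case analysis into a separate preliminary claim (Claim~\ref{claim1}), which shows in general that (i) necessarily $j\in\{a,b\}$ or $k\in\{c,d\}$, (ii) each of these placements forces specific inequalities (e.g.\ $a=j\Rightarrow b<k$ and $\tau(d)>\tau(j)$), and (iii) under $B_1=\emptyset$ the cases $a=j$ or $c=k\ne j=b$ give $(j,b)\in P$ with $b<k$ (and dually under $B_0=\emptyset$). With this in hand, the proof of Claim~\ref{claim3} collapses to two short paragraphs per part rather than the dozen-or-so branches you anticipate. Your direct approach would work, but factoring out Claim~\ref{claim1} is what tames the ``sheer bulk and delicacy'' you flag; it also makes transparent exactly where $B_0=\emptyset$ and $B_1=\emptyset$ enter (namely, only in part~(iii) of Claim~\ref{claim1}).
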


\begin{proof}[Proof of Lemma \ref{lem: covexup}]
Passing to $\tau^{-1}$ if necessary, we may assume that $B_1=\emptyset$.
If $P_0\neq\emptyset$, then we can invoke the first part of Claim \ref{claim3}.
Therefore we may assume that $P_0=\emptyset$.
In particular, $A_0\neq\emptyset$ and hence $B_0=\emptyset$.
If $P_1\neq\emptyset$, then we are done by the second part of Claim \ref{claim3}.
Otherwise $P_1=\emptyset$ as well, and we apply third part of Claim \ref{claim3}.
\end{proof}

Before proving Claim \ref{claim3} we need another fact.

\begin{claim}\label{claim1}
Suppose that $(j,k)\in P$, $a<b<c<d$ and
\begin{equation}\label{eq:tau'}
\tau'(c)<\tau'(d)<\tau'(a)<\tau'(b)
\end{equation}
where $\tau'=\tau T_{j,k}$. Then,
\begin{enumerate}
\item $j\in\{a,b\}$ or $k\in \{c,d\}$.
\item
\begin{itemize}
\item If $a=j$, then $b<k$ and $\tau(d)>\tau(j)$.
\item If $b=j$, then $c\leq k$ and $\tau(a)>\tau(j)$.
\item If $c=k$, then $b\geq j$ and $\tau(d)<\tau(k)$.
\item If $d=k$, then $c>j$ and $\tau(a)<\tau(k)$.
\end{itemize}
\item
Suppose that $B_1=\emptyset$ and that $a=j$ or $c=k>j\neq b$. Then, $(j,b)\in P$ and $b<k$.
Similarly, suppose that $B_0=\emptyset$ and that $b=j<k\neq c$ or $d=k$. Then, $(c,k)\in P$ and $c>j$.
\end{enumerate}
\end{claim}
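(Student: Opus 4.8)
The plan is to establish the three parts in order, with (2) and (3) relying on (1), and to use the symmetry $\rho\mapsto w_0\rho w_0$ to halve the case analysis. Throughout I write $\tau'=\tau T_{j,k}$, so $\tau'(j)=\tau(k)$, $\tau'(k)=\tau(j)$ and $\tau'(m)=\tau(m)$ otherwise, and I will repeatedly use that $\tau$ is covexillary, i.e.\ realizes no $3412$ pattern, together with the defining inequalities of $(j,k)\in P$, namely $i<j<k<l$ and $\tau(l)<\tau(j)<\tau(k)<\tau(i)$.

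For part (1) I would argue by contradiction: assuming $j\notin\{a,b\}$ and $k\notin\{c,d\}$, I would exhibit four indices among $\{i,j,k,l,a,b,c,d\}$ on which $\tau$ realizes $3412$. If in addition $j\notin\{c,d\}$ and $k\notin\{a,b\}$, then $\tau$ agrees with $\tau'$ at $a<b<c<d$ and \eqref{eq:tau'} already gives the pattern in $\tau$. Otherwise, since $j<k$ while $a<b<c<d$, exactly one of $j=c$, $j=d$, $k=a$, $k=b$ holds; in each leaf I would substitute the formula for $\tau'$ into \eqref{eq:tau'}, combine with $\tau(l)<\tau(j)<\tau(k)<\tau(i)$, and read off the pattern: for $j=c$ the quadruple $a<b<k<d$ works if $k<d$, and $a<b<c<d$ works if $k>d$ (noting $\tau(c)=\tau(j)<\tau(k)<\tau(d)$); for $j=d$ the quadruple $a<b<k<l$ works (using $\tau(l)<\tau(d)=\tau(j)<\tau(k)$); for $k=a$ the quadruple $j<b<c<d$ works; and for $k=b$ the quadruple $a<b<c<d$ works (here $a\ne j$, so $\tau'(a)=\tau(a)$).

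For part (2), by the symmetry $\rho\mapsto w_0\rho w_0$ — which preserves covexillarity, sends $T_{j,k}$ to $T_{n+1-k,n+1-j}$, carries any $P$-configuration and any $3412$-configuration to one of the same type, and interchanges the cases $a=j\leftrightarrow d=k$ and $b=j\leftrightarrow c=k$ (and $B_0\leftrightarrow B_1$) — it suffices to treat $a=j$ and $b=j$. When $a=j$: the possibility $k<b$ forces $3412$ on $k<b<c<d$, and $k=b$ contradicts $\tau(j)<\tau(k)$ via \eqref{eq:tau'}, so $b<k$; and if $k\in\{c,d\}$ the inequality $\tau(d)>\tau(j)$ falls out of \eqref{eq:tau'} directly, while if $k\notin\{c,d\}$, assuming $\tau(d)<\tau(j)$ yields $\tau(c)<\tau(d)<\tau(j)<\tau(b)$, i.e.\ $3412$ on $j<b<c<d$. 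When $b=j$: the possibility $k<c$ forces $3412$ on $a<k<c<d$, so $c\le k$; and $\tau(a)\le\tau(j)$ would give $3412$ on $a<b<c<d$ (or the inequality appears directly when $k\in\{c,d\}$). The cases $d=k$ and $c=k$ then follow by symmetry.

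For part (3), again by the $w_0\cdot w_0$ symmetry it suffices to prove the first assertion, that $B_1=\emptyset$ and ($a=j$ or $c=k>j\ne b$) imply $(j,b)\in P$ and $b<k$. Here $b<k$ is immediate from part (2) when $a=j$, and trivial when $c=k$. For $(j,b)\in P$: the inequalities $i<j$ and $\tau(l)<\tau(j)$ come from $(j,k)\in P$; $b<l$ follows from $b<k<l$; $j<b$ holds since either $j=a<b$ or (when $c=k$) part (2) gives $b\ge j$ and the hypothesis gives $b\ne j$; and $\tau(j)<\tau(b)$ is an intermediate inequality from the proof of part (2) (or direct from \eqref{eq:tau'} when $c=k$). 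The one genuinely new point is $\tau(b)<\tau(i)$: since $i<j<b<l$, if instead $\tau(b)>\tau(i)$ then $\tau(b)$ is a value larger than $\tau(i)$ sitting at a position strictly between $i$ and $l$, i.e.\ $\tau(b)\in B_1$, contradicting $B_1=\emptyset$. This is exactly where the hypothesis $B_1=\emptyset$ enters.

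The main obstacle is organizational rather than conceptual: keeping the fairly bushy case split under control and, in each leaf, correctly pinpointing the quadruple of indices witnessing $3412$ in $\tau$. The two spots needing genuine care are the case $j=d$ in part (1), where one must bring in the index $l$ (its value $\tau(l)$ being forced below $\tau(j)$), and the last inequality of part (3), which is the only place the standing hypothesis on $B_1$ (respectively $B_0$) is actually used.
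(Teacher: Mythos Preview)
Your overall strategy matches the paper's: a direct case analysis exploiting the covexillarity of $\tau$, with the $w_0\cdot w_0$ symmetry halving the work in parts (2) and (3). Parts (2) and (3) are fine, and in particular your identification of the single place where $B_1=\emptyset$ (resp.\ $B_0=\emptyset$) is actually used is correct.

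There is, however, a genuine error in your treatment of part (1), precisely in the case $j=d$ that you single out as delicate. With $j=d$ you correctly derive $\tau(c)<\tau(k)<\tau(a)<\tau(b)$ (since $a,b,c\notin\{j,k\}$ and $\tau'(d)=\tau(k)$). But on the quadruple $a<b<k<l$ the values satisfy $\tau(l)<\tau(k)<\tau(a)<\tau(b)$, which is the pattern $3421$, not $3412$; a covexillary permutation may well contain $3421$. So bringing in $l$ does not help, contrary to your closing remark. The fix is immediate: the quadruple $a<b<c<k$ (with $a<b<c<d=j<k$) already realizes $3412$ in $\tau$ by the very inequality you wrote down. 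In fact this case is the one where $T_{j,k}(a)<T_{j,k}(b)<T_{j,k}(c)<T_{j,k}(d)$ still holds and covexillarity applies to the four images $a,b,c,k$; the paper handles it this way without splitting off a separate $j=d$ branch. Once you replace the quadruple, the argument is complete and essentially the same as the paper's.
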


\begin{proof}
Observe first that if $x<y$ and $T_{j,k}(x)>T_{j,k}(y)$ then $x=j$ or $y=k$.

Since $\tau(T_{j,k}(c))<\tau(T_{j,k}(d))<\tau(T_{j,k}(a))<\tau(T_{j,k}(b))$ by \eqref{eq:tau'} and $\tau$ is covexillary,
we cannot have $ T_{j,k}(a)< T_{j,k}(b)< T_{j,k}(c)< T_{j,k}(d)$.
Therefore, $T_{j,k}(a)> T_{j,k}(b)$, $ T_{j,k}(b)> T_{j,k}(c)$ or $ T_{j,k}(c)> T_{j,k}(d)$.

If $ T_{j,k}(b)> T_{j,k}(c)$ then $b=j$ or $c=k$, by the observation above.
Suppose that $T_{j,k}(a)> T_{j,k}(b)$.
If $a\neq j$ then necessarily $b=k$, by the observation above.
It follows that $\tau(a)=\tau'(a)$, $\tau(b)=\tau(k)>\tau(j)=\tau'(b)$, $\tau(c)=\tau'(c)$ and $\tau(d)=\tau'(d)$.
Hence, by \eqref{eq:tau'}, $\tau(c)<\tau(d)<\tau(a)<\tau(b)$, contradicting the covexillarity of $\tau$, since $a<b<c<d$.
Therefore, $a=j$.
Similarly, by applying the same argument to $w_0\tau w_0$, we get that if $ T_{j,k}(c)> T_{j,k}(d)$, then $d=k$.
This completes the proof of the first part.

Suppose that $a=j$.
Then $\tau'(b)>\tau'(a)=\tau(k)>\tau(j)=\tau'(k)$ and hence $b\neq k$. In particular, $\tau(b)=\tau'(b)$.
Assume that $b>k$. Then, it follows that $\tau(c)=\tau'(c)$ and $\tau(d)=\tau'(d)$.
Therefore, by \eqref{eq:tau'} we get that $\tau(c)<\tau(d)<\tau(k)<\tau(b)$, refuting the covexillarity of $\tau$, since $k<b<c<d$.
Hence, $b<k$.
Assume that $\tau(d)<\tau(j)$.
In particular, $d\neq k$ and hence $\tau(d)=\tau'(d)$.
Therefore, $\tau'(c)<\tau'(d)=\tau(d)<\tau(j)=\tau'(k)$, thus $c\neq k$ and hence $\tau(c)=\tau'(c)$.
Therefore, by \eqref{eq:tau'}, $\tau(c)<\tau(d)<\tau(j)<\tau(b)$, rebutting the covexillarity of $\tau$, since $j=a<b<c<d$.
Hence, $\tau(d)>\tau(j)$.

Similarly, by applying the same argument to $w_0\tau w_0$, we get that if $d=k$ then $c>j$ and $\tau(a)<\tau(k)$.

Suppose that $b=j$.
Then $\tau(a)=\tau'(a)$ and $\tau(k)=\tau'(b)$.
If $c>k$, it follows that $\tau(c)=\tau'(c)$ and $\tau(d)=\tau'(d)$ and we get by \eqref{eq:tau'} that $\tau(c)<\tau(d)<\tau(a)<\tau(k)$,
denying the covexillarity of $\tau$, since $a<k<c<d$.
Therefore, $c\leq k$.
Assume that $\tau(a)<\tau(j)$.
Then, $\tau'(c)<\tau'(d)<\tau'(a)=\tau(a)<\tau(j)<\tau(k)$, and hence $\tau(c)=\tau'(c)$ and $\tau(d)=\tau'(d)$.
Therefore, we get by \eqref{eq:tau'} that $\tau(c)<\tau(d)<\tau(a)<\tau(j)$, gainsaying the covexillarity of $\tau$, since $a<j=b<c<d$.
Hence, $\tau(a)>\tau(j)$.

Similarly, by applying the same argument to $w_0\tau w_0$, we get that if $c=k$, then $j\leq b$ and $\tau(d)<\tau(k)$.

Finally, we prove the last part.
Suppose that $B_1=\emptyset$ and that $a=j$ or $c=k>j\neq b$.
Using the second part we get that $j<b<k$, and hence $\tau(b)<\tau(i)$, since $B_1=\emptyset$, and $\tau(b)=\tau'(b)$.
Hence, $\tau(b)>\tau(j)$ since $\tau'(b)>\tau'(a)=\tau(k)>\tau(j)$ if $a=j$ and $\tau'(b)>\tau'(c)=\tau(j)$ if $c=k$.
It follows that $(j,b)\in P$.
Similarly, by applying the same argument to $w_0\tau w_0$, we get the last assertion.
\end{proof}

\begin{proof}[Proof of Claim \ref{claim3}]
To prove the first part, suppose on the contrary that there are $a<b<c<d$ such that
$\tau'(c)<\tau'(d)<\tau'(a)<\tau'(b)$.

If $a=j$ or $c=k>j\neq b$, then by the third part of Claim \ref{claim1}, $(j,b)\in P$ and hence $(j,b)\in P_0$, and $b<k$,
contradicting the minimality of $k$.

Therefore, according to the first part of Claim \ref{claim1}, we may assume that $b=j$ or $d=k>j\neq a$.
If $b=j$, then $\tau(a)>\tau(j)$ by the second part of Claim \ref{claim1}, and $\tau(a)=\tau'(a)<\tau'(b)=\tau(k)$.
If $d=k$, then $\tau(a)<\tau(k)$ by the second part of Claim \ref{claim1}.
If additionally $j\neq a$, then $\tau(a)=\tau'(a)>\tau'(d)=\tau(j)$.
Moreover, $a<j$, otherwise $(j,a)\in P$ and hence $(j,a)\in P_0$, violating the minimality of $k$.
In any case, $a<j$ and $\tau(j)<\tau(a)<\tau(k)$.
On the other hand, since $(j,k)\in P_0$, we have $\tau(j)>\tau(a_0)$ for every $a_0\in A_0$.
It follows that $a\notin A_0$ and hence necessarily $i<a$.
It also follows that $\tau(a)>\tau(a_0)$ for every $a_0\in A_0$. Therefore, $(a,k)\in P_0$, refuting the minimality of $j$.

The second part follows from the first part by considering $w_0\tau w_0$.

We turn to the third part.
The set $A_0$ is non-empty, since $P_0=\emptyset\neq P$.
Let $a_0\in A_0$ be such that $\tau(a_0)$ is maximal.
Similarly, let $a_1\in A_1$ be such that $\tau(a_1)$ is minimal.
Note that $\tau(a_0)<\tau(a_1)$, otherwise $a_0<i<l<a_1$ and $\tau(l)<\tau(a_1)<\tau(a_0)<\tau(i)$, contradicting the covexillarity of $\tau$.
Also, $\tau(j)<\tau(a_0)$, since $(j,k)\notin P_0$. Similarly, $\tau(a_1)<\tau(k)$.
In a way of contradiction, suppose that there are $a<b<c<d$ such that $\tau'(c)<\tau'(d)<\tau'(a)<\tau'(b)$.

If $a=j$ or $c=k>j\neq b$, then $(j,b)\in P$ and $b<k$, by the third part of Claim \ref{claim1},
disproving the minimality of $k$.
Similarly, if $b=j<k\neq c$ or $d=k$, then $(c,k)\in P$ and $c>j$, by the third part of Claim \ref{claim1},
invalidating the maximality of $j$.

Therefore, according to the first part of Claim \ref{claim1}, we may assume that $b=j$ and $c=k$.
Then, $\tau(a)=\tau'(a)<\tau'(b)=\tau(k)$ and $\tau(j)=\tau'(c)<\tau'(d)=\tau(d)$.
Therefore $i<a$, otherwise $a<i<j<d$ and $\tau(j)<\tau(d)<\tau(a)<\tau(i)$, rebuffing the covexillarity of $\tau$.
Moreover, $\tau(a_0)<\tau(d)$ and hence $\tau(a_0)<\tau(a)$, otherwise $a_0<i<j<d$ and $\tau(j)<\tau(d)<\tau(a_0)<\tau(i)$, contradicting the covexillarity of $\tau$.
Therefore $(a,k)\in P_0$, denying the emptiness of $P_0$.
\end{proof}

\subsection{}
We can now prove Theorem \ref{thm: idemp}.

If $\tau$ is covexillary, then the set $\tbl(\tau)$ is {\adm} by Lemma \ref{lem: covexisadm}
and hence the permutation $\pi(\tbl(\tau))$ is smooth by Proposition \ref{propos:bijection}.

The map $\tau\mapsto\pi(\tbl(\tau))$ from the set of covexillary permutations to the set of smooth permutations is an idempotent function, since $\pi(\tbl(\sigma))=\sigma$ for any smooth $\sigma$, by Proposition \ref{propos:order}.
Next we show that this map is order preserving.
First note that if $\tau$ is covexillary then $\sigma:=\pi(\tbl(\tau))$ is covexillary as well (since it is smooth)
and $\tbl(\sigma)=\tbl(\tau)$ by Proposition \ref{propos:bijection}, hence $\tblc(\sigma)=\tblc(\tau)$ by Corollary \ref{cor:tbl_tblc}.
Suppose now that $\tau_1\leq\tau_2$ are covexillary permutations, and let $\sigma_1:=\pi(\tbl(\tau_1))$,  $\sigma_2:=\pi(\tbl(\tau_2))$.
Then, $\tblc(\sigma_1)=\tblc(\tau_1)\subseteq\tblc(\tau_2)=\tblc(\sigma_2)$ and hence $\sigma_1\leq \sigma_2$ by Lemma \ref{lem:tblc_dbi},
as $\sigma_2$ is {\dbi} (since it is smooth).

It follows that if $\tau$ is covexillary, then $\sigma=\pi(\tbl(\sigma))\geq\pi(\tbl(\tau))$ for every smooth $\sigma\geq\tau$. 
Hence, $\pi(\tbl(\tau))=\min\{\sigma\in S_n\text{ smooth}:\sigma\geq\tau\}$, since $\pi(\tbl(\tau))\geq\tau$ by \eqref{eq: pialt}.
\qed

\begin{remark}
For a general $\tau\in S_n$ there does not exist a smooth permutation $\sigma\geq\tau$ such that
$\tbl_\Trans(\sigma)=\tbl_\Trans(\tau)$, let alone $\tbl(\sigma)=\tbl(\tau)$. For instance for $\tau=(462513)\in S_6$,
the only smooth permutations $\sigma$ such that $\tbl_\Trans(\sigma)=\tbl_\Trans(\tau)$ are
$(654123)$ and $(456321)$ and none of them is $\geq\tau$.
\end{remark}

\begin{remark}
For $\tau=(3412)\in S_4$, the set $\{\sigma\in (S_n)_\smth:\sigma\geq\tau\}$ contains two minimal elements (namely $(4312)$ and $(3421)$).
Thus, the assumption on $\tau$ in Theorem \ref{thm: idemp} is essential.

In fact, it is not enough to require that $\tbl(\tau)$ is \adm.
Indeed, if $\tau=(345612)\in S_6$ and $\sigma=(654312)\in(S_6)_{\smth}$, then $\tbl(\tau)$ is {\adm} and $\tau\leq\sigma$ but
$\pi(\tbl(\tau))=(345621)\not\leq\sigma$.
\end{remark}

\section{Relation to coessential set} \label{sec: ess}

In \cite{MR1154177} Fulton introduced the notion of the \emph{essential set} of a permutation $\sigma\in S_n$.
For our purpose it is more convenient to use a slight variant, namely
$$
\Ess(\sigma)=\\\{(i,j)\in [n-1]^2:\sigma(i)\le j<\sigma(i+1)\text{ and }\sigma^{-1}(j)\le i<\sigma^{-1}(j+1)\}.
$$
In the notation of [ibid.,(3.8)] we have
\[
\Ess(\sigma)=\{(n-i,j):(i,j)\in Ess(\sigma w_0)\}=\{(i,n-j):(i,j)\in Ess(w_0\sigma)\}.
\]
For any $\sigma\in S_n$ we have
\begin{equation} \label{eq: essetprop}
\begin{aligned}
&\text{for any }\tau\in S_n,\\
&\tau\le\sigma\iff\#(\tau([i])\cap[j])\geq\#(\sigma([i])\cap[j])\text{ for all }(i,j)\in\Ess(\sigma).
\end{aligned}
\end{equation}
In particular, $\sigma$ is defined by the set $\Ess(\sigma)$ and the restriction of the function $\#(\sigma([i])\cap[j])$
to $\Ess(\sigma)$. The image of the injective map
\[
\sigma\in S_n\mapsto (\Ess(\sigma),\#(\sigma([i])\cap[j])\|_{(i,j)\in\Ess(\sigma)})
\]
was described in \cite{MR1412437}.

The set $\Ess(\sigma)$ is minimal with respect to the property \eqref{eq: essetprop}.
In other words, if we replace $\Ess(\sigma)$ by a proper subset then \eqref{eq: essetprop} will not hold.
In particular, $\sigma$ is defined by inclusion if and only if
$\#(\sigma([i])\cap[j])=\min(i,j)$ for all $(i,j)\in\Ess(\sigma)$.
In general, consider the subset
\[
\Ess^\circ(\sigma)=\{(i,j)\in\Ess(\sigma):\sigma([i])\subseteq[j]\text{ or }\sigma^{-1}([j])\subseteq[i]\}.
\]
Thus, $\sigma$ is defined by inclusion if and only if $\Ess(\sigma)=\Ess^\circ(\sigma)$, in which case $\sigma$ is determined by
the set $\Ess(\sigma)$. In particular, this is the case if $\sigma$ is smooth.

\begin{observation}\label{obs: essprop1}
For any $\sigma\in S_n$, if $(i_1,j_1)$ and $(i_2,j_2)$ are two distinct points
in $\Ess^\circ(\sigma)$ such that $j_2\ge j_1$ and $i_2\ge i_1$, then
$\max(i_2,j_2)>\max(i_1,j_1)$ and $\min(i_2,j_2)>\min(i_1,j_1)$.
\end{observation}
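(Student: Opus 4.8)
The plan is to reduce the claim to the non-existence of a few "forbidden configurations" inside $\Ess^\circ(\sigma)$ — namely two distinct points sharing one coordinate — and to rule these out using the cardinality information packed into the definition of $\Ess^\circ$.

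First I would record the elementary consequences of $(i,j)\in\Ess(\sigma)$: $j\in\sigma([i])$ and $j+1\notin\sigma([i])$, and dually $i\in\sigma^{-1}([j])$ and $i+1\notin\sigma^{-1}([j])$; these are immediate from the defining inequalities. The crucial observation is then: if $(i,j)\in\Ess^\circ(\sigma)$ and $i\le j$, then $\sigma([i])\subseteq[j]$, and hence (combined with $j\in\sigma([i])$) in fact $\maxperm{\sigma}(i)=j$; dually, if $(i,j)\in\Ess^\circ(\sigma)$ and $j\le i$, then $\sigma^{-1}([j])\subseteq[i]$ and $\maxperm{\sigma^{-1}}(j)=i$. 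This follows by counting: $\sigma([i])$ has $i$ elements and $\sigma^{-1}([j])$ has $j$ elements, so when $i<j$ the alternative $\sigma^{-1}([j])\subseteq[i]$ is impossible, while the borderline case $i=j$ forces $\sigma([i])=[i]$ under either alternative. I would also use the inversion symmetry $\Ess^\circ(\sigma^{-1})=\{(j,i):(i,j)\in\Ess^\circ(\sigma)\}$, which is immediate from the manifest $\sigma\leftrightarrow\sigma^{-1}$ symmetry of the two conditions defining $\Ess^\circ$.

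With these in hand I would prove that $\Ess^\circ(\sigma)$ contains no two distinct points of either of the shapes (i) $(i,j_1)$, $(i,j_2)$ with $i\le j_1<j_2$, or (ii) $(i,j_1)$, $(i,j_2)$ with $j_1<j_2\le i$. For (i): by the crucial observation both points yield $\maxperm{\sigma}(i)=j_1$ and $\maxperm{\sigma}(i)=j_2$, a contradiction. For (ii): the observation applied to $(i,j_2)$ gives $[j_2]\subseteq\sigma([i])$, whence $j_1+1\in\sigma([i])$ since $j_1+1\le j_2$; but $(i,j_1)\in\Ess(\sigma)$ forces $j_1+1\notin\sigma([i])$. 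By the inversion symmetry, the mirror configurations — two distinct points of $\Ess^\circ(\sigma)$ sharing a \emph{second} coordinate, lying either above or below the diagonal — are also impossible.

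Finally I would close with a short case analysis. Since $i_2\ge i_1$ and $j_2\ge j_1$ we trivially have $\max(i_2,j_2)\ge\max(i_1,j_1)$ and $\min(i_2,j_2)\ge\min(i_1,j_1)$, so it only remains to exclude equality. If $\max(i_2,j_2)=\max(i_1,j_1)$, then distinguishing which of $i_1$, $j_1$ attains this maximum pins $i_2=i_1$ or $j_2=j_1$ and collapses the pair into a configuration of shape (ii) for $\sigma$ or for $\sigma^{-1}$; likewise, if $\min(i_2,j_2)=\min(i_1,j_1)$, distinguishing which coordinate of $(i_2,j_2)$ attains the minimum collapses the pair into a configuration of shape (i) for $\sigma$ or for $\sigma^{-1}$. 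Each possibility has just been excluded, so both inequalities are strict. I expect the only real friction to be the bookkeeping in this last step — making sure that, once one knows which coordinate realizes the common max or min, the two points really do land on a common coordinate with the inequalities oriented as in (i) or (ii) — rather than any individual step being subtle.
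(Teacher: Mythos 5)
Your proof is correct. The paper records this only as an unproved Observation, and your argument fills it in completely and naturally: the counting step showing that a point $(i,j)\in\Ess^\circ(\sigma)$ with $i\le j$ forces $\sigma([i])\subseteq[j]$ and hence $\maxperm{\sigma}(i)=j$ (dually $\maxperm{\sigma^{-1}}(j)=i$ when $j\le i$) is right, the exclusion of the two shared-coordinate shapes is right, and in the final reduction the only configuration not covered by those shapes — a shared coordinate with the two remaining coordinates on opposite sides of it — genuinely never occurs, since when the maxima (resp.\ minima) coincide the common value bounds both remaining coordinates from above (resp.\ below), so equality always collapses to shape (ii) (resp.\ shape (i)) for $\sigma$ or $\sigma^{-1}$.
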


Let $\SEss$ be the set of subsets $E$ of $[n-1]^2$ such that for every two distinct points
$(i_1,j_1)$ and $(i_2,j_2)$ in $E$ such that $\min(i_2,j_2)\ge\min(i_1,j_1)$ we have
\[
i_2\ge i_1,\ \ j_2\ge j_1,\ \ \max(i_2,j_2)>\max(i_1,j_1)\text{ and }\min(i_2,j_2)>\min(i_1,j_1).
\]

\begin{lemma}
For every covexillary $\sigma\in S_n$ we have $\Ess^\circ(\sigma)\in\SEss$.
\end{lemma}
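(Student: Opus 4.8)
The plan is to first replace $\Ess^\circ(\sigma)$ by an explicit description in terms of the maximal functions. I claim that for \emph{every} $\sigma\in S_n$,
\[
\Ess^\circ(\sigma)=\{(i,j):\maxperm{\sigma}(i)=j<\maxperm{\sigma}(i+1)\}\ \cup\ \{(i,j):\maxperm{\sigma^{-1}}(j)=i<\maxperm{\sigma^{-1}}(j+1)\}.
\]
This is a direct unravelling of the definitions: if $(i,j)\in\Ess(\sigma)$ then $\sigma^{-1}(j)\le i$ gives $j\in\sigma([i])$, so the condition $\sigma([i])\subseteq[j]$ (i.e.\ $\maxperm{\sigma}(i)\le j$) upgrades to $\maxperm{\sigma}(i)=j$, while $\sigma(i+1)>j$ gives $\maxperm{\sigma}(i+1)>j$; conversely these two (in)equalities recover all four inequalities defining $\Ess(\sigma)$ together with $\maxperm{\sigma}(i)\le j$, and symmetrically for $\sigma^{-1}$. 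Two consequences I will use repeatedly: a point $(i,j)\in\Ess^\circ(\sigma)$ with $i<j$ lies only in the first set (a ``row corner''), and one with $i>j$ lies only in the second (a ``column corner''); also, since coordinates of $\Ess^\circ$ are swapped by $\sigma\mapsto\sigma^{-1}$, we have $\Ess^\circ(\sigma^{-1})=\{(j,i):(i,j)\in\Ess^\circ(\sigma)\}$.

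Because $\min$ is symmetric and $\sigma^{-1}$ is again covexillary ($3412$ being an involution), it suffices to prove: for distinct $(i_1,j_1),(i_2,j_2)\in\Ess^\circ(\sigma)$ with $\min(i_2,j_2)\ge\min(i_1,j_1)$ we have $i_2\ge i_1$. Indeed, applying this also to $\sigma^{-1}$ yields $j_2\ge j_1$, and then Observation \ref{obs: essprop1} furnishes $\max(i_2,j_2)>\max(i_1,j_1)$ and $\min(i_2,j_2)>\min(i_1,j_1)$. So assume $i_2<i_1$, aiming for a contradiction. If $i_1\le j_1$ then $i_1=\min(i_1,j_1)\le\min(i_2,j_2)\le i_2$, already absurd. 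Hence $i_1>j_1$, so $(i_1,j_1)$ is a column corner; from $(i_1,j_1)\in\Ess(\sigma)$ I record $\sigma(i_1)\le j_1$, the position $d:=\sigma^{-1}(j_1+1)>i_1$, and $\maxperm{\sigma^{-1}}(j_1)=i_1$. Also $j_1=\min(i_1,j_1)\le\min(i_2,j_2)$, so $j_1\le i_2$ and $j_1\le j_2$. If $i_2\ge j_2$ then $(i_2,j_2)$ is a column corner (or, when $i_2=j_2$, $\sigma([i_2])=[i_2]$), so $\maxperm{\sigma^{-1}}(j_2)\le i_2<i_1=\maxperm{\sigma^{-1}}(j_1)$, contradicting monotonicity of $\maxperm{\sigma^{-1}}$ since $j_1\le j_2$.

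The remaining case $i_2<j_2$ is where covexillarity enters. Now $(i_2,j_2)$ is a row corner; from $(i_2,j_2)\in\Ess(\sigma)$ I record the position $b:=\sigma^{-1}(j_2)\le i_2$ and $\sigma(i_2+1)>j_2$, and from $j_1\le i_2<j_2$ that $j_1<j_2$. I would first check $i_1\ge i_2+2$ (otherwise $i_1=i_2+1$ and $\sigma(i_1)=\sigma(i_2+1)>j_2>j_1\ge\sigma(i_1)$) and $j_2\ge j_1+2$ (otherwise $\sigma(b)=j_2=j_1+1=\sigma(d)$ forces $b=d$, impossible since $b\le i_2<i_1<d$). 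Then $b<i_2+1<i_1<d$ are four increasing indices with
\[
\sigma(i_1)\le j_1<j_1+1=\sigma(d)<j_2=\sigma(b)<\sigma(i_2+1),
\]
so $\sigma$ restricted to $(b,i_2+1,i_1,d)$ realizes the pattern $3412$, contradicting that $\sigma$ is covexillary. This exhausts the cases and proves $\Ess^\circ(\sigma)\in\SEss$. I expect the only real obstacle to be the bookkeeping in this last step: verifying that $b,\,i_2+1,\,i_1,\,d$ are genuinely distinct and increasing (which is exactly what the ``incompatibility'' inequality $i_1\ge i_2+2$ is for) and that the four values fall in the precise relative order $(3,4,1,2)$ rather than some other, non-$3412$, pattern. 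Everything else reduces to monotonicity of $\maxperm{\sigma},\maxperm{\sigma^{-1}}$ or elementary manipulation of the minima.
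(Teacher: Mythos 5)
Your proof is correct, and its decisive step is the same as the paper's: from two points of the (co)essential set whose coordinates are ordered oppositely you extract the four positions $\sigma^{-1}(j_2)$, $i_2+1$, $i_1$, $\sigma^{-1}(j_1+1)$ carrying the values $j_2$, $\sigma(i_2+1)$, $\sigma(i_1)$, $j_1+1$, which realize a $3412$ pattern; this is exactly the configuration used in the paper (with the two points' labels interchanged). The packaging differs. The paper simply shows that $\Ess(\sigma)$ itself (not only $\Ess^\circ(\sigma)$) contains no pair with $i_1<i_2$ and $j_1>j_2$, after which Observation \ref{obs: essprop1} alone yields membership in $\SEss$; this makes the whole argument three lines and needs no information about which of the two ``corner'' conditions a point of $\Ess^\circ(\sigma)$ satisfies. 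You instead first establish the explicit description $\Ess^\circ(\sigma)=\{(i,j):\maxperm{\sigma}(i)=j<\maxperm{\sigma}(i+1)\}\cup\{(i,j):\maxperm{\sigma^{-1}}(j)=i<\maxperm{\sigma^{-1}}(j+1)\}$ (which is correct, and implicitly underlies the paper's later bijection with $\SEss$), reduce via the symmetry $\sigma\mapsto\sigma^{-1}$ to the single inequality $i_2\ge i_1$, and dispose of the case $i_2\ge j_2$ by monotonicity of $\maxperm{\sigma^{-1}}$ before invoking covexillarity in the remaining case. All of these steps check out (including the auxiliary inequalities $i_1\ge i_2+2$ and $j_2\ge j_1+2$ guaranteeing that the four positions and four values are distinct and correctly interleaved), so the only cost of your route is length: the corner dichotomy and the monotonicity case are extra work that the paper's stronger statement about all of $\Ess(\sigma)$ renders unnecessary, while your gain is an explicit characterization of $\Ess^\circ(\sigma)$ that the paper only obtains later.
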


\begin{proof}
By Observation \ref{obs: essprop1} it is enough to show that there are no pairs $(i_1,j_1),(i_2,j_2)\in\Ess(\sigma)$
such that $i_1<i_2$ and $j_1>j_2$.
Assume on the contrary that this is not the case.
Then $\sigma^{-1}(j_1)<i_1+1\le i_2<\sigma^{-1}(j_2)$ and $\sigma(i_2)<j_2<j_1<\sigma(i_1+1)$
in contradiction to the assumption that $\sigma$ is covexillary.
\end{proof}

For any $(f,g)\in\Pairs_n$ let
\[
\begin{aligned}
E(f,g)=&\{(i,f(i)):i\in[n-1],\ f(i+1)>f(i)\text{ and }g(i)=1\}\cup\\
&\{(f(i),i):i\in[n-1],\ f(i+1)>f(i)\text{ and }g(i)=0\}.
\end{aligned}
\]

For $E\in\SEss$ define $\hat{f}_E:[n]\rightarrow[n]$ by
$$
\hat{f}_E(k)=\min\Big(\{n\}\cup\left\{\max(i,j):(i,j)\in E\cap [k,n)^2\right\}\Big).
$$
\begin{observation}\label{obs:hat}
Suppose that $E\in\SEss$.
\begin{enumerate}
\item If $i<j$, then $|E\cap\{(i,j),(j,i)\}|\leq 1$.
\item If $i<j$ and $E\cap\{(i,j),(j,i)\}\neq\emptyset$, then $\hat{f}_E(i)=j<\hat{f}_E(i+1)$
\item If $\hat{f}_E(i)<\hat{f}_E(i+1)$, then $\{(i,\hat{f}_E(i)),(\hat{f}_E(i),i)\}\cap E\neq\emptyset$.
\end{enumerate}
\end{observation}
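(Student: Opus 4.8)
The plan is to derive all three assertions directly from the defining inequalities of $\SEss$ together with the definition of $\hat{f}_E$, applying the $\SEss$ property to suitably chosen pairs of points; no induction or auxiliary construction is needed. For the first assertion I would argue by contradiction: if $i<j$ and both $(i,j)$ and $(j,i)$ belonged to $E$, they would be two distinct points of $E$ with the same minimal coordinate $\min(i,j)=\min(j,i)=i$, so the defining property of $\SEss$ (with either point in the role of $(i_1,j_1)$) would force the strict inequality $i>i$, which is absurd.

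For the second assertion, take $(a,b)\in E$ with $\{a,b\}=\{i,j\}$, $i<j$. Since $(a,b)\in E\subseteq[n-1]^2$ and $a,b\ge i$, we have $(a,b)\in E\cap[i,n)^2$ with $\max(a,b)=j$, whence $\hat{f}_E(i)\le j$. For the reverse inequality I would observe that any other pair $(c,d)\in E\cap[i,n)^2$ has $\min(c,d)\ge i=\min(a,b)$, hence $\max(c,d)>j$ by the $\SEss$ property; as also $n>j$, every term in the minimum defining $\hat{f}_E(i)$ is $\ge j$, so $\hat{f}_E(i)=j$. Running the same computation over $[i+1,n)^2$ gives $\hat{f}_E(i+1)>j$: a point of $E$ in $[i+1,n)^2$ with maximal coordinate $\le j$ would be distinct from $(a,b)$ (whose minimal coordinate is $i<i+1$) and would violate the $\SEss$ property.

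The third assertion is essentially the converse, and I would prove it by unwinding the minimum. Set $m=\hat{f}_E(i)$; since $m<\hat{f}_E(i+1)\le n$ we have $m<n$, so the minimum defining $\hat{f}_E(i)$ is attained by an actual point of $E$, i.e., there is $(a,b)\in E\cap[i,n)^2$ with $\max(a,b)=m$. If $\min(a,b)\ge i+1$ then $(a,b)\in E\cap[i+1,n)^2$, forcing $\hat{f}_E(i+1)\le m$, a contradiction; hence $\min(a,b)=i$, so $\{a,b\}=\{i,m\}$, i.e., $(a,b)\in\{(i,m),(m,i)\}$.

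The whole argument is elementary and I do not expect a genuine obstacle; the only points requiring care are bookkeeping ones --- keeping track of whether the minimum defining $\hat{f}_E$ is realized by the default value $n$ or by an actual point of $E$, and allowing the degenerate case $a=b$ (equivalently $m=i$) in the third assertion, in which $(i,m)=(m,i)=(i,i)$ and the asserted membership still reads correctly.
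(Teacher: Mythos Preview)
Your argument is correct and is precisely the kind of straightforward unpacking of the definitions that the paper has in mind; in the paper this statement is labeled an Observation and is asserted without proof, so there is no alternative argument to compare against.
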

Let $\Delta:=\{(i,j):1\leq i<j<n\}$ and let
$$
\hat{g}_E(i)=\begin{cases}1&\text{if }(j,\hat{f}_E(j))\in E\cap\Delta \text{, where } j:=\max \hat{f}_E^{-1}(\hat{f}_E(i)),\\
0&\text{otherwise.}\end{cases}
$$

\begin{lemma}\label{lem:PEbijection}
The map $(f,g)\mapsto E(f,g)$ is a bijection $\Pairs_n\to \SEss_n$. The inverse map is
$E\mapsto (\hat{f}_E,\hat{g}_E)$.
\end{lemma}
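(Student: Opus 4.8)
The plan is to exhibit both claimed maps, $(f,g)\mapsto E(f,g)$ and $E\mapsto(\hat f_E,\hat g_E)$, as well‑defined maps $\Pairs_n\to\SEss_n$ and $\SEss_n\to\Pairs_n$, and to check that they are mutually inverse; a bijection then follows formally. The one structural fact about $\SEss_n$ I would use throughout is that for $E=\{P_1,\ldots,P_m\}\in\SEss_n$, ordering the points so that $\min P_1<\cdots<\min P_m$ automatically orders their maxima, $\max P_1<\cdots<\max P_m$, and makes each coordinate separately weakly increasing along the list; in particular $\min$ and $\max$ are injective on $E$. Write $a_t=\min P_t$, $c_t=\max P_t$.

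First I would unwind $\hat f_E$ from its definition together with this chain property (this is essentially Observation \ref{obs:hat}): $\hat f_E\in\Dyck_n$; its strict‑ascent positions (the $k<n$ with $\hat f_E(k)<\hat f_E(k+1)$) are exactly $a_1<\cdots<a_m$, with $\hat f_E(a_t)=c_t$; and $\hat f_E$ is constant equal to $c_t$ on $[a_{t-1}+1,a_t]$ (with $a_0:=0$) and equal to $n$ on $[a_m+1,n]$. Hence $\max\hat f_E^{-1}(\hat f_E(a_t))=a_t$, so $\hat g_E(a_t)=1$ precisely when $(a_t,c_t)\in E\cap\Delta$, i.e.\ (by injectivity of $\min$ and $\max$) precisely when $P_t$ is the point $(a_t,c_t)$ strictly above the diagonal. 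Plugging this into the definition of $E(\cdot)$ at the ascent $a_t$ of $\hat f_E$ recovers $P_t$ in all three cases (above, below, or on the diagonal), giving $E(\hat f_E,\hat g_E)=E$.

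Next I would check $(\hat f_E,\hat g_E)\in\Pairs_n$, i.e.\ that $\hat g_E$ is a decoration of $\hat f_E$. The first decoration condition is immediate: if $\hat f_E(\hat f_E(i))=\hat f_E(i)$ then $\hat f_E$ is constant on $[i,\hat f_E(i)]$, so $j:=\max\hat f_E^{-1}(\hat f_E(i))\ge\hat f_E(i)=\hat f_E(j)$ and the vertex $(j,\hat f_E(j))$ has first coordinate $\ge$ second, hence is not in $\Delta$ and $\hat g_E(i)=0$. For the second condition, suppose $i<n$ and $\hat f_E(i+1)<\hat f_E(\hat f_E(i))$; if $\hat f_E(i+1)=\hat f_E(i)$ then $i,i+1$ share a fibre of $\hat f_E$ and $\hat g_E(i)=\hat g_E(i+1)$ trivially, and otherwise $i=a_t$ is an ascent, the hypothesis forces $a_t<a_{t+1}<c_t$ (and $t<m$), so $P_{t+1}$ lies strictly above the diagonal, whence the $\SEss_n$ inequality applied to $P_t,P_{t+1}$ forces $P_t$ above the diagonal too; the structural description of $\hat f_E^{-1}$ then yields $\hat g_E(i)=\hat g_E(i+1)=1$.

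Finally I would handle the forward map. At an ascent $i$ of $f$ the point of $E(f,g)$ has $\min=i$, $\max=f(i)$ regardless of $g(i)$ (with $f(i)>i$ when $g(i)=1$, forced by the first decoration condition, and $f(i)<n$ since $i$ is an ascent), so $E(f,g)\subseteq[n-1]^2$ and its points have strictly increasing $\min$'s and $\max$'s. The one nonobvious requirement for $E(f,g)\in\SEss_n$ is that both coordinates increase weakly, and this reduces to the claim: if $i<i'$ are ascents of $f$ with $i'<f(i)$, then $g(i)=g(i')$. I expect this to be the main obstacle. Its proof is to iterate the second decoration condition: for every $k\in[i,i'-1]$ one has $k+1\le f(i)\le f(k)$ and $f(k+1)<f(f(k))$ — equality would make $f$ constant on an interval containing both $i'$ and $i'+1$, contradicting that $i'$ is an ascent — so $g(k)=g(k+1)$ along the whole stretch. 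The same fibrewise constancy of a decoration gives $\hat g_{E(f,g)}(i)=g(\max f^{-1}(f(i)))=g(i)$, while $\hat f_{E(f,g)}=f$ by the structural description (same ascent set, same values, same top value $n$), so $(\hat f_{E(f,g)},\hat g_{E(f,g)})=(f,g)$ and the two maps are mutually inverse, completing the proof.
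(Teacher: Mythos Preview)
Your overall strategy mirrors the paper's proof: verify that each map lands in the right target and that the two composites are identities. Most of your steps are sound, including the chain description of $E\in\SEss_n$, the identification of the ascents of $\hat f_E$, the computation $E(\hat f_E,\hat g_E)=E$, the argument that $E(f,g)\in\SEss_n$ via the iterated decoration condition, and the recovery $(\hat f_{E(f,g)},\hat g_{E(f,g)})=(f,g)$ from fibrewise constancy of decorations.

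There is, however, a genuine gap in your verification of the second decoration condition for $\hat g_E$. After correctly deducing $a_t<a_{t+1}<c_t$ (and $t<m$), you assert that ``$P_{t+1}$ lies strictly above the diagonal'' and from this conclude $\hat g_E(i)=\hat g_E(i+1)=1$. But $a_{t+1}<c_t$ does not force $P_{t+1}$ above the diagonal: take $n=5$ and $E=\{(3,1),(4,2)\}\in\SEss_5$, where $a_1=1$, $c_1=3$, $a_2=2$, $c_2=4$ satisfy $a_2<c_1$, yet both points lie \emph{below} the diagonal and one computes $\hat g_E(1)=\hat g_E(2)=0$. What the $\SEss_n$ inequalities actually give you, once $a_{t+1}<c_t$, is that $P_t$ and $P_{t+1}$ lie on the \emph{same} side of the diagonal: if one were $(a,c)$ and the other $(c',a')$ in some order, the coordinatewise monotonicity would force $c_t\le a_{t+1}$, contradicting the hypothesis. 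Being on the same side yields $\hat g_E(a_t)=\hat g_E(a_{t+1})$, which is all the decoration condition requires. (The paper argues this by the contrapositive: assuming $\hat g_E(i)\ne\hat g_E(i+1)$ and reading off from $\SEss_n$ that $c_t\le a_{t+1}$, hence $\hat f_E(\hat f_E(i))\le\hat f_E(i+1)$.) With this correction your argument goes through and matches the paper's.
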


\begin{proof}
Suppose that $(f,g)\in\Pairs$. We first show that $E(f,g)\in\SEss$.
Let $(i_1,j_1)$ and $(i_2,j_2)$ be two distinct points in $E(f,g)$ and assume that
\[
k_2:=\min(i_2,j_2)\ge k_1:=\min(i_1,j_1).
\]
Then $k_r\in[n-1]$, $f(k_r+1)>f(k_r)$ and $f(k_r)=\max(i_r,j_r)$, $r=1,2$.
Also, $k_1\ne k_2$. Therefore, $k_1<k_2$ and hence $f(k_1)<f(k_1+1)\le f(k_2)$.
If $f(k_1)\le k_2$ then clearly $i_1\le i_2$ and $j_1\le j_2$.
On the other hand, if $f(k_1)>k_2$ then $g(k_1)=g(k_2)$ (since $(f,g)\in\Pairs$)
and therefore either $i_r=k_r$ and $j_r=f(k_r)$, $r=1,2$ or $i_r=f(k_r)$ and $j_r=k_r$, $r=1,2$.
In both cases $i_1\le i_2$ and $j_1\le j_2$.

For every $k$, clearly,
$$
\hat{f}_{E(f,g)}(k)=\min\Big(\{n\}\cup\left\{f(i): k\leq i<n\text{ and }f(i+1)>f(i)\right\}\Big)=f(k).
$$

Moreover, if $j=\max f^{-1}(f(i))$, then $f(j)=f(i)$, $g(j)=g(i)$ and if $j<n$ then $f(j+1)>f(j)$.
Therefore, $\hat{g}_{E(f,g)}(i)=1\iff (j,f(j))\in E(f,g)\cap\Delta \iff g(j)=1 \iff g(i)=1$. Hence, $(\hat{f}_{E(f,g)},\hat{g}_{E(f,g)})=(f,g)$.

Suppose now that $E\in\SEss$. We show that $(\hat{f}_E,\hat{g}_E)\in\Pairs$. It is clear that $\hat{f}_E\in\Dyck_n$.
If $\hat{f}_E(\hat{f}_E(i))=\hat{f}_E(i)$ then clearly ${\hat{f}_E}^{-1}(\hat{f}_E(i))=\hat{f}_E(i)$ and hence $\hat{g}_E(i)=0$, since\\ $(\hat{f}_E(i),\hat{f}_E(\hat{f}_E(i)))=(\hat{f}_E(i),\hat{f}_E(i))\notin\Delta$ .

Suppose that $\hat{g}_E(i)\neq\hat{g}_E(i+1)$.
Then necessarily $\max{\hat{f}_E}^{-1}(\hat{f}_E(i))=i$, and let $j:=\max{\hat{f}_E}^{-1}(\hat{f}_E(i+1))$.
If $\hat{g}_E(i)=1$ and $\hat{g}_E(i+1)=0$ then $(i,\hat{f}_E(i))\in E$ and $(\hat{f}_E(j),j)\in E$.
Since $\in\SEss$ it follows that $\hat{f}_E(i)<j$ and hence $\hat{f}_E(\hat{f}_E(i))\leq\hat{f}_E(j)=\hat{f}_E(i+1)$.
Similarly if $\hat{g}_E(i)=0$ and $\hat{g}_E(i+1)=1$.

Finally we show that $E(\hat{f}_E,\hat{g}_E)=E$.
Suppose that $\hat{f}_E(i+1)>\hat{f}_E(i)$. Then, \\$\max{\hat{f}_E}^{-1}(\hat{f}_E(i))=i$ and hence, if $\hat{g}_E(i)=1$ then $(i,\hat{f}_E(i))\in E$ and if $\hat{g}_E(i)=0$ then $(i,\hat{f}_E(i))\notin E$ or $i=\hat{f}_E(i)$ and in any case, necessarily $(\hat{f}_E(i),i)\in E$ by Observation \ref{obs:hat}.
Conversely, suppose first that $(i,r)\in E\cap\Delta$.
Then $\hat{f}_E(i)=r<\hat{f}_E(i+1)$  by Observation \ref{obs:hat}.
Hence, ${\hat{f}_E}^{-1}(\hat{f}_E(i))={\hat{f}_E}^{-1}(r)=i$ and therefore $\hat{g}(i)=1$, since $(i,\hat{f}_E(i))=(i,r)\in E\cap\Delta$.
Hence, $(i,r)\in E(\hat{f}_E,\hat{g}_E)$.
Similarly, if $(r,i)\in E\setminus\Delta$ then $\hat{f}_E(i)=r<\hat{f}_E(i+1)$ by Observation \ref{obs:hat}, hence ${\hat{f}_E}^{-1}(\hat{f}_E(i))={\hat{f}_E}^{-1}(r)=i$ and therefore $\hat{g}(i)=0$, since $(i,\hat{f}_E(i))=(i,r)\notin\Delta$. Hence, $(r,i)\in E(\hat{f}_E,\hat{g}_E)$.
\end{proof}

\begin{observation}\label{obs: 23tabfromess}
For any $\sigma\in S_n$, the table $\tbl(\sigma)$ is determined by the set $\Ess^\circ(\sigma)$.
More precisely, we have
\begin{align*}
T_{i,j}\in\tbl(\sigma)&\iff \Ess^\circ(\sigma)\cap [i,j)^2=\emptyset\\
R_{i,j,k}\in\tbl(\sigma)&\iff\Ess^\circ(\sigma)\cap [i,j)^2=\Ess^\circ(\sigma)\cap [j,k)\times [i,k)=\emptyset\\
L_{i,j,k}\in\tbl(\sigma)&\iff\Ess^\circ(\sigma)\cap [i,j)\times [i,k)=\Ess^\circ(\sigma)\cap [j,k)^2=\emptyset.
\end{align*}
\end{observation}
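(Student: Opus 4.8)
The plan is to translate everything into conditions on the functions $\maxperm{\sigma}$ and $\maxperm{\sigma^{-1}}$ and then read those conditions off from $\Ess^\circ(\sigma)$. By \eqref{eq: easy1<=}, each of $T_{i,j}$, $R_{i,j,k}$, $L_{i,j,k}$ lies in $\tbl(\sigma)$ precisely when a conjunction of inequalities of the shape $\maxperm{\sigma}(a)\geq b$ or $\maxperm{\sigma^{-1}}(a)\geq b$ holds; complementing, $\rho\notin\tbl(\sigma)$ iff one of the corresponding conditions $\sigma([a])\subseteq[b-1]$ or $\sigma^{-1}([a])\subseteq[b-1]$ holds, since $\maxperm{\sigma}(a)\geq b\iff\sigma([a])\not\subseteq[b-1]$ and likewise for $\sigma^{-1}$. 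The structural fact I would use is that every point of $\Ess^\circ(\sigma)$ is of one of two types: a point $(i,j)$ with $\sigma([i])\subseteq[j]$, which forces $i\leq j$ (a ``$U$-point''), or a point $(i,j)$ with $\sigma^{-1}([j])\subseteq[i]$, which forces $j\leq i$ (a ``$V$-point''); the two types coincide only on the diagonal.

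For the implication ``$\Ess^\circ(\sigma)\cap(\text{box})=\emptyset\Rightarrow\rho\in\tbl(\sigma)$'' I would use only monotonicity of $m\mapsto[m]$: a $U$-point $(i,j)$ forces $\sigma([a])\subseteq[b]$ for all $a\leq i$, $b\geq j$, and a $V$-point $(i,j)$ forces $\sigma^{-1}([a])\subseteq[b]$ for all $a\leq j$, $b\geq i$. One then checks that for each of the three permutations the indicated box is chosen so that a $U$-point lying in it makes one of the defining $\maxperm$-inequalities fail, while a $V$-point lying in it makes another of them fail; in either case $\rho\notin\tbl(\sigma)$. This direction is routine once the boxes are written down explicitly.

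The substance is in the reverse direction. Given, say, $\sigma([a])\subseteq[c]$ with $a\leq c\leq n-1$, I would set $a^{*}:=\max\{a':\sigma([a'])\subseteq[c]\}$ and $b^{*}:=\maxperm{\sigma}(a^{*})$, so that $a\leq a^{*}\leq b^{*}\leq c$ and $a^{*}\leq n-1$, and verify directly from the definition that $(a^{*},b^{*})\in\Ess(\sigma)$ (the four defining inequalities hold because $\maxperm{\sigma}$ strictly increases from $a^{*}$ to $a^{*}+1$); since $\sigma([a^{*}])\subseteq[b^{*}]$ it is a $U$-point, hence in $\Ess^\circ(\sigma)$. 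Dually, from $\sigma^{-1}([a])\subseteq[c]$ one obtains a $V$-point $(i',j')\in\Ess^\circ(\sigma)$ with $a\leq j'\leq i'\leq c$. Feeding each of the failing conditions in the $\maxperm$-descriptions of $T_{i,j}$ and $R_{i,j,k}$ into this construction produces an essential point, and one checks it lands in the prescribed box. The one place needing a little care is, for $R_{i,j,k}$, the condition $\sigma^{-1}([i])\subseteq[k-1]$: the resulting $V$-point $(i',j')$ satisfies $i\leq j'\leq i'\leq k-1$, so it lies in $[i,j)^2$ when $i'<j$ and in $[j,k)\times[i,k)$ when $i'\geq j$. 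This bookkeeping over which sub-box the produced corner falls into is the main (though mild) obstacle.

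Finally, the formula for $L_{i,j,k}$ I would deduce from that for $R_{i,j,k}$ by applying $\sigma\mapsto\sigma^{-1}$, using $\tbl(\sigma^{-1})=\tbl(\sigma)^{-1}$, $L_{i,j,k}=R_{i,j,k}^{-1}$ and the fact that $\Ess^\circ(\sigma^{-1})$ is the transpose of $\Ess^\circ(\sigma)$. This yields the forbidden region $[i,j)^2\cup\big([i,k)\times[j,k)\big)$; an elementary set computation shows this equals $[i,k)^2\setminus\big([j,k)\times[i,j)\big)$, which in turn equals $\big([i,j)\times[i,k)\big)\cup[j,k)^2$, the form in which the statement of the Observation is phrased.
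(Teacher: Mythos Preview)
Your argument is correct and supplies precisely the verification the paper omits: the statement is presented there as an \emph{Observation} with no proof. Your route---translating membership in $\tbl(\sigma)$ via \eqref{eq: easy1<=} into inclusions $\sigma([a])\subseteq[b]$ or $\sigma^{-1}([a])\subseteq[b]$, and then producing or reading off a $U$- or $V$-point of $\Ess^\circ(\sigma)$ in the appropriate rectangle---is the natural unpacking of the definitions and is exactly what the authors are implicitly relying on. The one nontrivial step you flag (for $R_{i,j,k}$, the $V$-point coming from $\sigma^{-1}([i])\subseteq[k-1]$ may land in either sub-box depending on whether $i'<j$ or $i'\ge j$) is handled correctly, and the symmetry argument for $L_{i,j,k}$ together with the set identity $[i,j)^2\cup\big([i,k)\times[j,k)\big)=\big([i,j)\times[i,k)\big)\cup[j,k)^2$ is valid.
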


For $E\subseteq [n-1]^2$, let
\begin{align*}
A_E=&\{T_{i,j}:E\cap [i,j)^2=\emptyset\}\cup\\
&\{R_{i,j,k}:E\cap [i,j)^2=E\cap [j,k)\times [i,k)=\emptyset\}\cup\\
&\{L_{i,j,k}:E\cap [i,j)\times [i,k)=E\cap [j,k)^2=\emptyset\}.
\end{align*}

\begin{proposition}
We have a commutative diagram of bijections
\[
\begin{tikzcd}
\Pairs_n\arrow[r, leftrightarrow] \arrow[rd, leftrightarrow] & \SEss_n \arrow[d, leftrightarrow]\\
(S_n)_{\smth} \arrow[r, leftrightarrow] \arrow[ru, leftrightarrow] \arrow[u, leftrightarrow] & \Adm_n
\end{tikzcd}
\]
The upper horizontal maps are $(f,g)\mapsto E(f,g)$ and $E\mapsto (\hat{f}_E,\hat{g}_E)$.
The right vertical maps are $E\mapsto A_E$ and $A\mapsto E(f_A,g_A)$.
The principal diagonal maps are $(f,g)\mapsto A_{f,g}$ and $A\mapsto (f_A,g_A)$.
The non-principal diagonal maps are $\sigma\mapsto\Ess^\circ(\sigma)$ and $E\mapsto \sigma(\hat{f}_E,\hat{g}_E)$.
The left vertical maps are $(f,g)\mapsto\sigma(f,g)$ and $\sigma\mapsto (f_{\tbl(\sigma)},g_{\tbl(\sigma)})$.
The lower horizontal maps are $\sigma\mapsto\tbl(\sigma)$ and $A\mapsto \pi(A)$.
\end{proposition}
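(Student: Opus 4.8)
The diagram has six edges, four of which are already pairs of mutually inverse bijections: the lower horizontal maps are Theorem~\ref{thm:bijection}; the principal diagonal maps are Proposition~\ref{prop:Afg_bijection}; the upper horizontal maps are Lemma~\ref{lem:PEbijection}; and on the left, $(f,g)\mapsto\sigma(f,g)$ is a bijection by Theorem~\ref{thm:sigma_fg_bijection} whose inverse is $\sigma\mapsto(f_{\tbl(\sigma)},g_{\tbl(\sigma)})$, since $\sigma(f,g)=\pi(A_{f,g})$ is smooth with $\tbl(\sigma(f,g))=A_{f,g}$ (Propositions~\ref{propos:sigma_fg} and~\ref{propos:bijection}) and $\pi(\tbl(\sigma))=\sigma$ for smooth $\sigma$ (Proposition~\ref{propos:order}). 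Those same facts say that the face $\{\Pairs_n,\Adm_n,(S_n)_{\smth}\}$ commutes. So the plan is to (i) reduce the whole statement to one identity, (ii) deduce everything else formally, and (iii) prove the identity. The identity is
\[
A_E=A_{\hat f_E,\hat g_E}\qquad\text{for all }E\in\SEss_n,
\]
which, via Lemma~\ref{lem:PEbijection}, is equivalent to $A_{f,g}=A_{E(f,g)}$ for all $(f,g)\in\Pairs_n$.

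Granting it, $E\mapsto A_E$ is the composite of the bijections $E\mapsto(\hat f_E,\hat g_E)$ and $(f,g)\mapsto A_{f,g}$, hence a bijection $\SEss_n\to\Adm_n$ with inverse $A\mapsto E(f_A,g_A)$ — exactly the claimed right vertical maps — and the faces $\{\Pairs_n,\SEss_n,\Adm_n\}$ and $\{\SEss_n,\Adm_n,(S_n)_{\smth}\}$ commute, using the identity together with the commutativity of $\{\Pairs_n,\Adm_n,(S_n)_{\smth}\}$. For the non-principal diagonal I would use Observation~\ref{obs: 23tabfromess}, which says precisely that $\tbl(\sigma)=A_{\Ess^\circ(\sigma)}$ for every $\sigma\in S_n$; since a smooth permutation is covexillary, $\Ess^\circ(\sigma)\in\SEss_n$ by the lemma that $\Ess^\circ$ of a covexillary permutation lies in $\SEss_n$. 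Applying the inverse of the bijection $E\mapsto A_E$ to $A_{\Ess^\circ(\sigma)}=\tbl(\sigma)$ yields $\Ess^\circ(\sigma)=E(f_{\tbl(\sigma)},g_{\tbl(\sigma)})$; hence $\sigma\mapsto\Ess^\circ(\sigma)$ is the composite of the left vertical and upper horizontal bijections, so it is a bijection $(S_n)_{\smth}\to\SEss_n$ with inverse $E\mapsto\sigma(\hat f_E,\hat g_E)$, and the last face $\{\Pairs_n,\SEss_n,(S_n)_{\smth}\}$ commutes since $(\hat f_{E(f,g)},\hat g_{E(f,g)})=(f,g)$. This settles everything except the identity.

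To prove $A_{f,g}=A_{E(f,g)}$ for $(f,g)\in\Pairs_n$, set $E=E(f,g)$ and compare membership element by element. First I would check $(i,j)\in\Lambda_f\iff E\cap[i,j)^2=\emptyset$: if $f(i)<j$ then $p:=\max f^{-1}(f(i))$ satisfies $p\ge i$, $f(p)<j\le n$ and $f(p+1)>f(p)$, so whichever of $(p,f(p)),(f(p),p)$ belongs to $E$ lies in $[i,j)^2$; conversely monotonicity of $f$ forbids a point of $E$ in $[i,j)^2$ when $f(i)\ge j$. This already identifies the transpositions in $A_{f,g}$ and $A_E$. For $R_{i,j,k}$ with $i<j<k$, using $[i,k)=[i,j)\sqcup[j,k)$ one sees that $R_{i,j,k}\in A_E$ iff $(i,j),(j,k)\in\Lambda_f$ and $E\cap([j,k)\times[i,j))=\emptyset$, whereas $R_{i,j,k}\in A_{f,g}$ iff $(i,j),(j,k)\in\Lambda_f$ and ($(i,k)\in\Lambda_f$ or $g(i)=1$); so, assuming $f(i)\ge j$ and $f(j)\ge k$, everything comes down to
\[
\big(f(i)\ge k\ \text{ or }\ g(i)=1\big)\iff E\cap\big([j,k)\times[i,j)\big)=\emptyset .
\]
The points of $E$ in $[j,k)\times[i,j)$ are exactly the $(f(p),p)$ with $i\le p<j$, $g(p)=0$, $f(p+1)>f(p)$, $j\le f(p)<k$. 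When $f(i)\ge k$ there are none. When $f(i)<k$ and $g(i)=1$, such a $p$ would produce an index $p'$ with $i\le p'<p$, $g(p')=1$, $g(p'+1)=0$, and then condition~(2) in the definition of a decoration forces $f(p'+1)\ge f(f(p'))\ge f(j)\ge k$, contradicting $f(p'+1)\le f(p)<k$. When $f(i)<k$ and $g(i)=0$, the index $p^{*}:=\max\{p\in[i,j-1]:f(p)<k\}$ supplies such a point: $f(p^{*}+1)>f(p^{*})$ and $j\le f(p^{*})<k$ are immediate, and $g(p^{*})=0$ again by condition~(2) (an index $p'$ with $g(p')=0$, $g(p'+1)=1$ below $p^{*}$ would force $f(p'+1)\ge f(j)\ge k$). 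The case of $L_{i,j,k}$ follows from the $R$-case by the symmetry $g\leftrightarrow\tilde g$ (Lemma~\ref{lem:tilde_g} and Observation~\ref{obs:aut_pi}), which transposes $E(f,g)$ and inverts $A_{f,g}$, exchanging the roles of $R$ and $L$.

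The hard part will be the verification of the identity: although every step is elementary, matching the decoration value $g(i)$ with whether $E(f,g)$ meets the relevant rectangle requires careful use of condition~(2) in the definition of a decoration, and the $R$/$L$ bookkeeping with the half-open intervals has to be done attentively; everything else is formal assembly of bijections and triangles that are already in place.
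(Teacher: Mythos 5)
Your proposal is correct and follows essentially the same route as the paper: the proposition is assembled from Theorem \ref{thm:bijection}, Proposition \ref{prop:Afg_bijection}, Proposition \ref{propos:sigma_fg} and Lemma \ref{lem:PEbijection}, together with Observation \ref{obs: 23tabfromess} (giving $A_{\Ess^\circ(\sigma)}=\tbl(\sigma)$) and the key identity $A_{E(f,g)}=A_{f,g}$, which the paper merely declares easy to verify and which you check in detail, correctly. One minor point: in the $L$-case your citation of Lemma \ref{lem:tilde_g} and Observation \ref{obs:aut_pi} is not literally what is used (those concern $\sigma(f,g)$ and $\pi(A)$); the facts actually needed are that passing to $\tilde g$ replaces $E(f,g)$ by its transpose, replaces $A_{f,g}$ by $(A_{f,g})^{-1}$, and that transposing $E$ replaces $A_E$ by $(A_E)^{-1}$ --- all immediate from the definitions, so this is not a gap.
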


\begin{proof}
Observation \ref{obs: 23tabfromess} readily yields that $A_{\Ess^\circ(\sigma)}=\tbl(\sigma)$ for every $\sigma$, and it is easy to verify that $A_{E(f,g)}=A_{f,g}$ for every $(f,g)\in\Pairs$.
Therefore, the proposition follows from Theorem \ref{thm:bijection}, Proposition \ref{prop:Afg_bijection}, Proposition \ref{propos:sigma_fg} and Lemma \ref{lem:PEbijection}.
\end{proof}

\begin{corollary}
\[
\SEss=\{\Ess^\circ(\sigma):\sigma\in (S_n)_{\smth}\}=
\{\Ess^\circ(\sigma):\sigma\in S_n\text{ covexillary}\}.
\]
\end{corollary}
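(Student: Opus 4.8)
The plan is to deduce both equalities from the commutative diagram of bijections in the preceding Proposition, together with the idempotent map $\tau\mapsto\pi(\tbl(\tau))$ of Theorem~\ref{thm: idemp}.

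First I would dispose of the equality $\SEss=\{\Ess^\circ(\sigma):\sigma\in(S_n)_{\smth}\}$: in that Proposition one of the non-principal diagonal maps is precisely $\sigma\mapsto\Ess^\circ(\sigma)$, with inverse $E\mapsto\sigma(\hat f_E,\hat g_E)$, so it is a bijection $(S_n)_{\smth}\to\SEss$, and its image is all of $\SEss$. Next, the inclusion $\{\Ess^\circ(\sigma):\sigma\text{ smooth}\}\subseteq\{\Ess^\circ(\tau):\tau\text{ covexillary}\}$ is trivial, since every smooth permutation is $4231$- and $3412$-avoiding, hence in particular $3412$-avoiding, i.e.\ covexillary.

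It remains to prove the reverse inclusion $\{\Ess^\circ(\tau):\tau\text{ covexillary}\}\subseteq\{\Ess^\circ(\sigma):\sigma\text{ smooth}\}$. Fix a covexillary $\tau\in S_n$ and set $\sigma:=\pi(\tbl(\tau))$. By Lemma~\ref{lem: covexisadm} the set $\tbl(\tau)$ is {\adm}, so by Proposition~\ref{propos:bijection} (equivalently, by Theorem~\ref{thm: idemp}) the permutation $\sigma$ is smooth and $\tbl(\sigma)=\tbl(\tau)$. Now Observation~\ref{obs: 23tabfromess} gives $A_{\Ess^\circ(\rho)}=\tbl(\rho)$ for every $\rho\in S_n$, so
\[
A_{\Ess^\circ(\tau)}=\tbl(\tau)=\tbl(\sigma)=A_{\Ess^\circ(\sigma)}.
\]
Moreover $\Ess^\circ(\tau)\in\SEss$ by the lemma preceding the definition of $A_E$ (applicable since $\tau$ is covexillary), and likewise $\Ess^\circ(\sigma)\in\SEss$ since $\sigma$ is smooth, hence covexillary. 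Because $E\mapsto A_E$ is a bijection $\SEss\to\Adm$ by the Proposition, it is in particular injective on $\SEss$, whence $\Ess^\circ(\tau)=\Ess^\circ(\sigma)$. Thus $\Ess^\circ(\tau)$ lies in $\{\Ess^\circ(\sigma):\sigma\text{ smooth}\}$, which completes the chain of inclusions and the proof.

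\textbf{Main obstacle.}
There is no real obstacle: the substantive work has already been done in the commutative-diagram Proposition and in Theorem~\ref{thm: idemp}. The only points requiring care are (i) invoking injectivity of $E\mapsto A_E$ on all of $\SEss$, not merely on the essential sets of smooth permutations, and (ii) checking that $\Ess^\circ(\tau)\in\SEss$ for covexillary $\tau$, which is exactly the cited lemma. One could equally well phrase the argument through the inverse map $A\mapsto E(f_A,g_A)$, observing that $\Ess^\circ(\tau)=E(f_{\tbl(\tau)},g_{\tbl(\tau)})$ depends on $\tau$ only through $\tbl(\tau)$, and that $\tbl(\pi(\tbl(\tau)))=\tbl(\tau)$.
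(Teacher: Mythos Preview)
Your proof is correct and uses the same ingredients as the paper (the commutative-diagram Proposition and the Lemma that $\Ess^\circ(\tau)\in\SEss$ for covexillary $\tau$), but you do more work than necessary for the last inclusion. Once you have established $\SEss=\{\Ess^\circ(\sigma):\sigma\in(S_n)_{\smth}\}$ and cited the Lemma giving $\Ess^\circ(\tau)\in\SEss$ for covexillary $\tau$, the inclusion $\{\Ess^\circ(\tau):\tau\text{ covexillary}\}\subseteq\SEss=\{\Ess^\circ(\sigma):\sigma\text{ smooth}\}$ is immediate; there is no need to invoke Theorem~\ref{thm: idemp}, construct $\sigma=\pi(\tbl(\tau))$, or appeal to injectivity of $E\mapsto A_E$. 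Your longer argument is not wasted, however: by exhibiting the specific smooth $\sigma$ with $\Ess^\circ(\sigma)=\Ess^\circ(\tau)$ you have essentially also proved the next Corollary, $\Ess(\pi(\tbl(\tau)))=\Ess^\circ(\tau)$.
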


\begin{corollary}
For any covexillary $\tau\in S_n$ we have $\Ess(\pi(\tbl(\tau)))=\Ess^\circ(\tau)$.
\end{corollary}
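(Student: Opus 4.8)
The plan is to identify $\pi(\tbl(\tau))$ with the smooth permutation attached to $\tau$ through the material already developed, and then to read off the equality of essential sets from the injectivity of the map $E\mapsto A_E$ on $\SEss_n$.

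Write $\sigma:=\pi(\tbl(\tau))$. First I would recall that, since $\tau$ is covexillary, $\tbl(\tau)$ is {\adm} (Lemma \ref{lem: covexisadm}), so by Proposition \ref{propos:bijection} the permutation $\sigma$ is smooth and $\tbl(\sigma)=\tbl(\tau)$. Being smooth, $\sigma$ is in particular covexillary and {\dbi}, so that $\Ess(\sigma)=\Ess^\circ(\sigma)$. By Observation \ref{obs: 23tabfromess} (equivalently, by the identity $A_{\Ess^\circ(\rho)}=\tbl(\rho)$ noted in the proof of the previous proposition), we then have
\[
A_{\Ess^\circ(\sigma)}=\tbl(\sigma)=\tbl(\tau)=A_{\Ess^\circ(\tau)}.
\]

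Both $\Ess^\circ(\sigma)$ and $\Ess^\circ(\tau)$ lie in $\SEss_n$: the former because $\sigma$ is smooth, hence covexillary, the latter because $\tau$ is covexillary, in both cases by the lemma stating that $\Ess^\circ$ of a covexillary permutation lies in $\SEss$. Since the map $E\mapsto A_E$ is a bijection $\SEss_n\to\Adm_n$ (the right vertical arrow of the commutative diagram above), it is in particular injective on $\SEss_n$, and the displayed equality forces $\Ess^\circ(\sigma)=\Ess^\circ(\tau)$. Combining this with $\Ess(\sigma)=\Ess^\circ(\sigma)$ gives $\Ess(\pi(\tbl(\tau)))=\Ess(\sigma)=\Ess^\circ(\sigma)=\Ess^\circ(\tau)$, as claimed. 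There is no genuine obstacle here: the only point requiring care is to invoke injectivity of $E\mapsto A_E$ precisely on the class $\SEss_n$ — where it is available from the bijection established above — rather than on arbitrary subsets of $[n-1]^2$; the rest is assembly of results already in hand.
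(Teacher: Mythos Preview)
Your proof is correct and follows the natural derivation from the commutative diagram of bijections: you use that $\sigma:=\pi(\tbl(\tau))$ is smooth with $\tbl(\sigma)=\tbl(\tau)$, apply $A_{\Ess^\circ(\rho)}=\tbl(\rho)$ to both $\sigma$ and $\tau$, and invoke injectivity of $E\mapsto A_E$ on $\SEss_n$ together with $\Ess(\sigma)=\Ess^\circ(\sigma)$. This is exactly the argument implicit in the paper, which states the corollary without an explicit proof.
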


\end{document}